\newcommand{\ignore}[1]{}
\newenvironment{sproof}{\begin{proof}[Sketch proof.]}{\end{proof}}
\DeclareMathOperator{\rank}{rank}
\DeclareMathOperator{\cl}{cl}
\DeclareMathOperator{\lk}{lk}
\newcommand{\val}{{\rm val}}
\newcommand{\MS}{{\mathcal S}}
\newcommand{\MF}{{\mathcal F}}
\newcommand{\MC}{{\mathcal C}}
\newcommand{\MX}{{\mathcal X}}
\newcommand{\MY}{{\mathcal Y}}
\newcommand{\MM}{{\mathcal M}}
\newcommand{\MR}{{\mathcal R}}
\newcommand{\ZZ}{\Z} 
\newcommand{\RR}{{\R}}
\newcommand{\sm}{{\setminus}}
\newcommand{\DD}{{\vec{G}}}
\title{Rigidity of Graphs and  Frameworks:\\ A Matroid Theoretic Approach}
\author{James Cruickshank, Bill Jackson, Tibor Jord\'an, Shin-ichi Tanigawa}
\date{}
\begin{document}

\maketitle
\begin{abstract}
    A $d$-dimensional (bar-and-joint) framework $(G,p)$ consists of a graph $G=(V,E)$ and a realisation $p:V\to {\R}^d$. It is rigid if every continuous motion of the vertices which preserves the lengths of the edges is induced by an isometry of ${\R}^d$. The study of rigid frameworks has increased rapidly since the 1970’s stimulated by numerous applications in areas such as civil and mechanical engineering, CAD, molecular conformation, sensor network localisation and low rank matrix completion. We will describe some of the main results in combinatorial rigidity theory and their applications to other areas of combinatorics, putting an emphasis on links to matroid theory.
\end{abstract}

\section{Introduction}
The study of the rigidity of frameworks can be traced back to a claim made by Euler in 1776    that ``A closed spatial figure allows not changes, as long as it is not ripped apart." Giving a formal proof for Euler's claim turned out to be a challenging problem. A celebrated result of  Cauchy in 1813 implies that every triangulated convex spherical surface  in $\R^3$ is rigid.
Gluck~\cite{G} showed  that every generic triangulated spherical surface in $\R^3$ is rigid.
Rather surprisingly,  Connelly \cite{C77,C82} disproved Euler's original claim by constructing a flexible triangulated spherical surface.

We can use the language of rigidity theory to give a more precise  description of these results.
A $d$-dimensional {\em (bar-joint) framework} is defined as a pair  $(G,p)$  consisting of a graph $G$ and a {\em realisation} $p:V(G)\rightarrow \R^d$.
We will also say that $(G,p)$ is a {\em realisation} of $G$ in $\R^d$.
Two realisations $(G, p)$ and $(G, q)$ 
of $G$ 
in $\R^d$ 
are {\em congruent} 
if $(G, p)$ can be obtained from $(G, q)$ by an isometry of $\R^d$, i.e., a combination of translations, rotations
and reflections. The framework $(G, p)$ is {\em globally rigid} if every framework which has the
same edge lengths as $(G, p)$ is congruent to $(G, p)$. It is {\em rigid} if every continuous motion of
the vertices of $(G, p)$ in $\R^d$ which preserves the lengths of its edges results in a framework which
is congruent to $(G, p)$, and otherwise it is {\em flexible}. 
These concepts are illustrated in  Figure \ref{fig1}.

\begin{figure}[t]
\small
\begin{center}
\unitlength .8mm 
\linethickness{0.4pt}
\ifx\plotpoint\undefined\newsavebox{\plotpoint}\fi 
\begin{picture}(161,32.25)(100,35)
\put(122,41){\line(0,1){20}}
\put(208,41.25){\line(0,1){20}}
\put(122,61){\line(1,0){10}}
\put(208,61.25){\line(1,0){10}}
\put(132,61){\line(0,-1){20}}
\put(218,61.25){\line(0,-1){20}}
\put(132,41){\line(-1,0){10}}
\put(218,41.25){\line(-1,0){10}}
\multiput(162,41)(.042016807,.071428571){238}{\line(0,1){.071428571}}
\put(172,58){\line(1,0){10}}
\multiput(182,58)(-.042016807,-.071428571){238}{\line(0,-1){.071428571}}
\put(172,41){\line(-1,0){10}}
\put(122,41){\circle*{2}}
\put(208,41.25){\circle*{2}}
\put(122,61){\circle*{2}}
\put(208,61.25){\circle*{2}}
\put(132,61){\circle*{2}}
\put(218,61.25){\circle*{2}}
\put(132,41){\circle*{2}}
\put(218,41.25){\circle*{2}}
\put(172,41){\circle*{2}}
\put(182,58){\circle*{2}}
\put(162,41){\circle*{2}}
\put(172,58){\circle*{2}}
\put(119,41){\makebox(0,0)[cc]{$v_1$}}
\put(205,41.25){\makebox(0,0)[cc]{$v_1$}}
\put(119,61){\makebox(0,0)[cc]{$v_2$}}
\put(205,61.25){\makebox(0,0)[cc]{$v_2$}}
\put(135,61){\makebox(0,0)[cc]{$v_3$}}
\put(221,61.25){\makebox(0,0)[cc]{$v_3$}}
\put(135,41){\makebox(0,0)[cc]{$v_4$}}
\put(221,41.25){\makebox(0,0)[cc]{$v_4$}}
\put(159,41){\makebox(0,0)[cc]{$v_1$}}
\put(175,41){\makebox(0,0)[cc]{$v_4$}}
\put(169,58){\makebox(0,0)[cc]{$v_2$}}
\put(185,58){\makebox(0,0)[cc]{$v_3$}}
\put(126.25,35.75){\makebox(0,0)[cc]{$(G,p_0)$}}
\put(213.5,34.75){\makebox(0,0)[cc]{$(G+e,p_0)$}}
\put(171.25,35.75){\makebox(0,0)[cc]{$(G,p_1)$}}
\multiput(207.75,41)(.0421686747,.0833333333){249}{\line(0,1){.0833333333}}
\put(142.25,50){\vector(1,0){13.5}}
\end{picture}
\caption{The 2-dimensional frameworks $(G,p_0)$ and $(G,p_1)$ are not rigid since $(G,p_1)$ can be obtained from  $(G,p_0)$ by a
continuous motion of its vertices in $\R^2$ which preserves all edge lengths, but
changes the distance between $v_1$ and $v_3$. 
The 2-dimensional framework $(G+e,p_0)$ 
is rigid. It is not globally rigid, however, since we can obtain a non-congruent framework with the same edge lengths by reflecting $v_4$ in the line through $v_1v_3$.
} \label{fig1}
\end{center}
\end{figure}
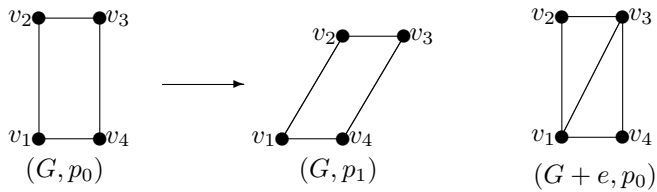

It is straightforward to show that a realisation of a graph on the real line is rigid if and only if the graph is connected: necessity is obvious and sufficiency follows by an easy induction on the number of vertices. In contrast, Abbot~\cite{Abbot08} showed that  it is NP-hard to determine whether a given
$d$-dimensional framework is rigid when $d\geq 2$, and Saxe~\cite{Saxe79} showed the same holds for  global rigidity when $d\geq 1$.
These problems become  more tractable, however, if we restrict attention
  to {\em generic frameworks} i.e.~frameworks $(G,p)$ in which the multiset of coordinates of all
points $p_v$, $v\in V$, is algebraically independent over $\mathbb Q$. 
Gluck~\cite{G} showed that the rigidity of a generic framework 
depends only on its underlying graph.
This allows us to define
a graph $G$ as being {\em rigid in $\R^d$} if some (or equivalently every) generic realisation of $G$
in $\R^d$ is rigid.
Analogous, but much deeper, results of Connelly \cite{C} and Gortler, Healy and Thurston \cite{GHT} 
imply that the global rigidity of a generic framework depends only on its underlying graph so we can define a graph
$G$ as being {\em globally rigid} in $\R^d$ if some (or equivalently every) generic realisation of $G$ in $\R^d$ is globally rigid.
The problem of finding a combinatorial characterisation of rigid or globally rigid graphs  in $\mathbb R^d$ has been solved when $d=1,2$ but is a major open problem in discrete geometry for $d \geq 3$.

Combinatorial rigidity theory has expanded rapidly since the 1970's. It has branched out  to include several other types of frameworks and has found applications in many diverse areas. The aim of this survey is to introduce the reader to some foundational results, proof techniques, and open problems in rigidity and to describe some striking applications to other areas of combinatorics. We refer the reader wishing to discover more to the notes \cite{Jnotes,Wchapter}, text books \cite{GSS93,CG-book} and survey articles on rigidity in \cite{encyc}.
We will assume a knowledge of basic concepts in matroid theory and refer the reader to \cite{Ox} as a source text for these.

\section{Terminology and preliminary  results}

\subsection{Graphs and matroids}\label{sec:g+m}
We will use the term {\em multigraph} for graphs which may contain loops or multiple edges and will reserve the term {\em graph} for graphs without loops and multiple edges. Given a finite set $V$, we use $K_V$ to denote  the complete graph with vertex set $V$ and $\binom{V}{2}$ to denote its edge set.  We say that a multigraph $G=(V,E)$  is {\em $(s,t)$-sparse} for two integers $s,t$ if $|F|\leq s|V(F)|-t$ for all $\emptyset \neq F\subseteq E$ and  {\em $(s,t)$-tight} if it also satisfies $|E|=s|V|-t$. Given $X\subseteq V$ we use $i_G(X)$ to denote the number of edges in $G[X]$, the subgraph of $G$ induced by $X$. Thus $G$ is $(s,t)$-sparse if and only if $i(X)\leq s|X|-t$ for all $X\subseteq V$ which induce at least one edge in $G$. We will use the notation $G/uv$ to denote the graph obtained from $G$ by contracting an edge $e=uv$.

The following result of Edmonds \cite{E} implies that the edge sets  of the $(s,t)$-sparse subgraphs of $K_V$ are the independent sets of a matroid on $\binom{V}{2}$.

\begin{theorem}
\label{thm:edmonds}
Let  $E$ be a finite set and $f:2^E\rightarrow \mathbb{Z}$ be a non-decreasing, submodular function. Put
\[
{\cal I}_f:=\{F\subseteq E: |I|\leq f(I) \text{ for any $I\subseteq F$ with $I\neq \emptyset$} \}.
\]
Then $\MM_f:=(E, {\cal I}_f)$ is a matroid with rank function $\hat f:2^E\rightarrow \mathbb{Z}$ given by
$$\hat{f}(F):=\min\left\{|F_0|+\sum_{i=1}^k f(F_i) : \text{$F_0\subseteq F$ and  $\{F_1,\dots, F_k\}$ is a partition  of $F\sm F_0$}\right\}.
$$
\end{theorem}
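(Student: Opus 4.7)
My plan is to verify the matroid axioms for $\mathcal{I}_f$ and then to derive the rank formula by matching upper and lower bounds.

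\textbf{Matroid axioms.} The empty set lies in $\mathcal{I}_f$ vacuously, and the defining condition makes $\mathcal{I}_f$ hereditary. The substantive work lies in the augmentation axiom: given $F_1, F_2 \in \mathcal{I}_f$ with $|F_1| < |F_2|$, some $e \in F_2 \setminus F_1$ must satisfy $F_1 + e \in \mathcal{I}_f$. Arguing by contradiction, for each $e \in F_2 \setminus F_1$ there exists a tight subset $S_e \subseteq F_1$ (meaning $|S_e| = f(S_e)$) with $f(S_e \cup \{e\}) = f(S_e)$. The submodularity of $f$ makes the tight subsets of $F_1$ closed under union and intersection, so amalgamating the $S_e$ yields a single tight $T \subseteq F_1$ with $f(T \cup \{e\}) = f(T)$ for every $e \in F_2 \setminus F_1$; an inductive application of submodularity then gives $f(T \cup (F_2 \setminus F_1)) = f(T) = |T|$. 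A minimal-counterexample argument (choosing $F_1, F_2 \in \mathcal{I}_f$ with $|F_1| < |F_2|$ and $|F_1 \triangle F_2|$ minimum) forces $F_1 \cap F_2 \subseteq T$, and hence $|F_2| \leq f(F_2) \leq f(T \cup (F_2 \setminus F_1)) = |T| \leq |F_1|$, the desired contradiction.

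\textbf{Rank formula.} Denote the matroid's rank function by $r$. The inequality $r(F) \leq \hat{f}(F)$ is straightforward: for any independent $I \subseteq F$ and any partition $F = F_0 \sqcup F_1 \sqcup \cdots \sqcup F_k$, each $I \cap F_i$ (for $i \geq 1$) lies in $\mathcal{I}_f$ and is contained in $F_i$, so $|I \cap F_i| \leq f(F_i)$, while $|I \cap F_0| \leq |F_0|$; summing yields the bound. For the reverse direction I would induct on $|F|$: pick $e \in F$ and obtain inductively an independent $I' \subseteq F \setminus e$ of size $\hat{f}(F \setminus e)$; then either take $I = I'$ when $\hat{f}(F) = \hat{f}(F \setminus e)$, or use the same tight-set manipulation as in the augmentation axiom to verify that $I' + e \in \mathcal{I}_f$ when $\hat{f}(F) = \hat{f}(F \setminus e) + 1$.

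\textbf{Main obstacle.} In both parts the crux is understanding how tight subsets of $\mathcal{I}_f$ behave under unions and under adjoining external elements. Submodularity of $f$ is the tool, but the bookkeeping is delicate: one must ensure that the tight set $T$ manufactured above genuinely covers $F_1 \cap F_2$, which is where the minimal-counterexample refinement becomes essential.
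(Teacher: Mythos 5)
The paper does not prove Theorem~\ref{thm:edmonds}: it simply cites Edmonds~\cite{E}, so I am evaluating your argument on its own merits rather than against an in-paper proof.

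Your setup for the augmentation axiom is correct up to the construction of the tight set $T$: the tight subsets of an independent $F_1$ are indeed closed under union and intersection by submodularity (note this implicitly uses $f(\emptyset)\geq 0$, a hypothesis the theorem statement itself silently needs), and your chain of submodularity inequalities giving $f(T\cup(F_2\setminus F_1))=f(T)=|T|$ is sound. Also note a point you should make explicit: each $S_e$ is automatically nonempty, because if $S_e=\emptyset$ then the violating set is $\{e\}$, forcing $f(\{e\})\leq 0$, which contradicts $e\in F_2\in\mathcal{I}_f$.

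The genuine gap is the final step. You assert, without argument, that a minimal-counterexample choice (minimizing $|F_1\triangle F_2|$) forces $F_1\cap F_2\subseteq T$, and you flag this as the crux. I do not see why this follows: deleting an element $g\in(F_1\cap F_2)\setminus T$ from one or both of $F_1,F_2$ does not reduce $|F_1\triangle F_2|$, and it may destroy the counterexample property, so the minimality hypothesis gives no traction on $g$. Moreover the claim is unnecessary. Since $F_2\in\mathcal{I}_f$, apply the defining inequality to the nonempty subset $(F_2\cap T)\cup(F_2\setminus F_1)$ of $F_2$, which is contained in $T\cup(F_2\setminus F_1)$. Monotonicity then gives
$|F_2\cap T|+|F_2\setminus F_1| \leq f\bigl((F_2\cap T)\cup(F_2\setminus F_1)\bigr) \leq f\bigl(T\cup(F_2\setminus F_1)\bigr)=|T|$,
so $|F_2\setminus F_1|\leq |T\setminus F_2|\leq |F_1\setminus F_2|$ and hence $|F_2|\leq|F_1|$, the desired contradiction. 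This replaces the unjustified containment by a direct count and closes the gap.

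For the rank formula, the inequality $r(F)\leq\hat f(F)$ is fine as you argue. The reverse inequality is where your sketch is too thin: inducting on $|F|$ via $\hat f(F\setminus e)$ requires you first to establish $\hat f(F\setminus e)\leq \hat f(F)\leq \hat f(F\setminus e)+1$ directly from the formula (doable, but you should say how), and then, in the case $\hat f(F)=\hat f(F\setminus e)+1$, showing $I'+e\in\mathcal{I}_f$ is not ``the same tight-set manipulation'' as before; you would need to connect a hypothetical tight set in $I'$ blocking $e$ with a partition of $F\setminus e$, which is a nontrivial extra step you have not supplied. A cleaner route, closer to Edmonds' original argument, is to fix a basis $B$ of $F$, let $T$ be the union of the tight sets $S_e\subseteq B$ arising from each $e\in F\setminus B$, show exactly as above that $f(T\cup(F\setminus B))=|T|$, and then exhibit the partition $F_0=B\setminus T$, $F_1=T\cup(F\setminus B)$ of $F$, which gives $\hat f(F)\leq|F_0|+f(F_1)=|B\setminus T|+|T|=|B|=r(F)$ directly, with no induction needed.
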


We refer to the matroid $\MM_f$ given by Edmonds' theorem as the {\em matroid induced by $f$}. For any finite set $V$ and integers $s,t$, the {\em $(s,t)$-sparsity matroid of $K_V$} is the matroid on $\binom{V}{2}$ induced by the function  $f:F\mapsto s|V(F)|-t$ for all $F\subseteq \binom{V}{2}$.

Nash-Williams \cite{NW} showed that  the $(k,k)$-sparsity matroid of $K_V$ is the matroid union of $k$ copies of the cycle matroid of $K_V$ for any integer $k\geq 1$. This implies,  in particular,  that it can be realised as the row matroid  of a  $|\binom{V}{2}|\times k|V|$-matrix $A$ defined as follows. We first choose a generic map $q:\binom{V}{2}\to \R^k$ 
and a reference ordering of the vertices in $V$. We then define the 
row of $A$ indexed by the edge $e=uv$ with $u<v$ 
to be
\[
\kbordermatrix{
  & &  u & & v & \\
 e=uv & 0 \dots 0 & q_e & 0\dots 0 & -q_e & 0\dots 0
}.
\]

\subsection{Infinitesimal rigidity and rigidity matroids}

A basic step in deciding whether  a given  $d$-dimensional realisation of a graph $G=(V,E)$ is rigid is to first linearise the problem.  We consider the {\em measurement map} $f_G:\R^{d|V|}\to \R^{|E|}$  which maps each $p\in \R^{d|V|}$ onto the vector of squared edge lengths of the framework $(G,p)$. The {\em rigidity matrix} $R(G,p)$ of the framework is the Jacobian matrix  of $f_G$ evaluated at $p$.
It is the $|E|\times d|V(G)|$ matrix in which each row is indexed by an edge, sets of $d$ consecutive columns are indexed by the vertices, and the row indexed by the  edge $e=uv$ has the form:
\[
\kbordermatrix{
  & &  u & & v & \\
 e=uv & 0 \dots 0 & p_u-p_v & 0\dots 0 & p_v-p_u & 0\dots 0
}.
\]
The space of {\em infinitesimal motions}  of $(G,p)$ is given by the right kernel of $R(G,p)$. It consists of the vectors $\dot p:V\to \R^d$ with the property that $\dot p_u-\dot p_v$ is orthogonal to $p_u-p_v$ for all $uv\in E$. Maxwell pointed out in his fundamental work on statics \cite{Max} that the space of infinitesimal motions will have a subspace of {\em trivial motions} of dimension $\binom{d+1}{2}$, generated by the infinitesimal translations and rotations of $\R^d$, whenever $p(V)$ affinely spans $\R^d$. He deduced that 
\begin{equation}\label{eq:max}
\mbox{$\rank R(G,p)\leq d|V|-\binom{d+1}{2}$
}
\end{equation}
when $|V|\geq d+1$, and that $(G,p)$ will be rigid whenever equality holds (since this will imply that every infinitesimal motion of  $(G,p)$ is an infinitesimal isometry of $\R^d$).  We say that $(G,p)$ is {\em infinitesimally rigid} if $|V|\geq d+1$ and equality holds in  \eqref{eq:max}, or $|V|\leq d$ and $\rank R(G,p)=\binom{|V|}{2}$. The 2-dimensional framework on the right of Figure \ref{fig:stress} is an example of a rigid framework which is not infinitesimally rigid (since it has a non-trivial infinitesimal motion which fixes each vertex of degree three and gives each vertex of degree two an arbitrary `infinitesimal velocity' in the direction perpendicular to the line through its neighbours).

Asimow and Roth \cite{AR} used elementary differential geometry to show that infinitesimal rigidity is equivalent to rigidity whenever $p$ is a regular point of the measurement map. Since the entries in $R(G,p)$ are polynomial functions of the coordinates of $p$ with integer coefficients, this gives

\begin{theorem}\label{thm:AR}
 A generic framework is rigid in $\R^d$ if and only if it is infinitesimally rigid in $\R^d$.   
\end{theorem}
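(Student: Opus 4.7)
My plan is to apply the constant rank theorem from differential geometry to the measurement map $f_G \colon \R^{d|V|} \to \R^{|E|}$, whose Jacobian at $p$ is (a constant multiple of) the rigidity matrix $R(G,p)$. I would first observe that the entries of $R(G,q)$ are polynomials in the coordinates of $q$, so every minor is a polynomial and the function $q \mapsto \rank R(G,q)$ is lower semicontinuous in the Zariski sense. If $p$ is generic then its coordinates are algebraically independent over $\mathbb Q$, so $p$ avoids the proper algebraic variety on which the rank drops below its maximum value; hence there is a Euclidean neighbourhood $U$ of $p$ on which $\rank R(G,q)$ equals the constant $r := \rank R(G,p)$.

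Applying the constant rank theorem on $U$, the fibre $f_G^{-1}(f_G(p)) \cap U$ is a smooth submanifold of $U$ of dimension $d|V|-r$. This fibre always contains the orbit of $p$ under the group of isometries of $\R^d$, and for generic $p$ with $|V|\geq d+1$ the set $p(V)$ affinely spans $\R^d$, which ensures that the orbit is a smooth submanifold of dimension $\binom{d+1}{2}$.

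The forward direction (infinitesimal rigidity implies rigidity) then reduces to a dimension comparison. If $r = d|V|-\binom{d+1}{2}$ then the fibre and the orbit have equal local dimension, so the connected component of $p$ in $f_G^{-1}(f_G(p))$ coincides locally with the orbit. Any continuous edge-length-preserving motion of $(G,p)$ stays in this component and is therefore induced by an isometry. For the converse I would argue contrapositively: if $r < d|V|-\binom{d+1}{2}$ then the fibre has strictly larger dimension than the orbit, so there exists a smooth curve in the fibre which leaves the orbit, producing a non-trivial edge-length-preserving motion of $(G,p)$ and contradicting rigidity.

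The main technical obstacle will be establishing that a generic $p$ is a regular point of $f_G$ in the sense that $\rank R(G,q)$ is locally constant at $q=p$; this is precisely what allows the constant rank theorem to be invoked, and it is where the genericity assumption is used in an essential way. Once this is in hand the differential geometry is essentially routine. The small case $|V|\leq d$, where $p(V)$ does not affinely span $\R^d$, needs to be handled separately with the threshold adjusted so that the infinitesimally rigid condition becomes $\rank R(G,p)=\binom{|V|}{2}$, but the same constant-rank argument applies with the dimension of the relevant isometry orbit replaced accordingly.
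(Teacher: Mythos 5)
Your proposal is correct and follows essentially the same route as the paper: observe that the polynomial entries of $R(G,p)$ together with genericity force $p$ to be a regular point of the measurement map, and then invoke the Asimow--Roth equivalence of rigidity and infinitesimal rigidity at regular points. The only difference is that you re-derive the Asimow--Roth result from the constant rank theorem (comparing the local dimension of the fibre $f_G^{-1}(f_G(p))$ with that of the isometry orbit of $p$), whereas the paper simply cites \cite{AR} for that step.
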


Equivalently, a graph  is (generically) rigid in $\R^d$ if and only if it has an infinitesimally rigid realisation in $\R^d$. These observations have important algorithmic consequences: they tell us that the problem of deciding whether a graph  is rigid in $\R^d$ belongs to NP and, in combination with the Schwarz-Zippel Lemma \cite{S,Z},  give rise to a randomised polynomial algorithm for solving this problem. Unfortunately, this tells us nothing about the structure of graphs which are rigid in $\R^d$. We will use techniques from graph theory,  matroid theory and geometry to obtain structural information on such graphs. 

The {\em rigidity matroid} $\MR(G,p)$ of a framework $(G,p)$ is the row matroid of the rigidity matrix $R(G,p)$. Thus,  $\MR(G,p)$ is a matroid on $E$ in which a set of edges $F\subseteq E$ is independent if the rows of $R(G,p)$ indexed by $F$ are linearly independent. Since the rows of $R(G,p)$ are nonzero scalar multiples of the rows of an oriented edge/vertex incidence matrix of $G$ when $p$ is an injection from $V$ to $\R$, the rigidity matroid of any such 1-dimensional  realisation of $G$ is just the cycle matroid of $G$.

The  {\em $d$-dimensional rigidity matroid} $\MR_d(G)$ of a graph $G$ is defined to be the rigidity matroid $\MR(G,p)$  of any generic realisation of $G$ in $\R^d$. Thus, $G$ is rigid in $\R^d$ if and only if the rank of $\MR_d(G)$ is
$d|V|-\binom{d+1}{2}$ when $|V|\geq d+1$. Since every independent set in a matroid can be extended to a base, an algorithm to decide whether any given set of edges of $G$ is independent in $\MR_d(G)$ would allow to determine whether  $G$ is rigid in $\R^d$.

We will simplify terminology by describing subgraphs $H$ of $G$   using properties of their edge set  in $\MR_d(G)$. For example we will say $H$ is $\MR_d$-independent to mean $E(H)$ is independent in $\MR_d(G)$ and will use $r_d(H)$ to denote the rank of $E(H)$ in $\MR_d(G)$. We can apply  \eqref{eq:max} to the induced subgraphs of $G$ to obtain the following graph theoretic necessary condition for $\MR_d$-independence.

\begin{lemma}[Maxwell's independence criterion] \label{lem:max}
Suppose $G$ is an $\MR_d$-independent graph. Then $i_G(X)\leq d|X|-\binom{d+1}{2}$ for all $X\subseteq V$ with $|X|\geq d+1$.
\end{lemma}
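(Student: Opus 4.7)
The plan is to reduce the global statement about the edge set $E$ to a local statement about the induced subframework on $X$, and then apply Maxwell's inequality \eqref{eq:max} to that subframework.

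First I would fix a generic realisation $(G,p)$ in $\R^d$, so that by definition $\MR_d(G)=\MR(G,p)$, and $\MR_d$-independence of $G$ translates to linear independence of the rows of the rigidity matrix $R(G,p)$. Next, given $X\subseteq V$ with $|X|\geq d+1$, I would restrict attention to the rows of $R(G,p)$ indexed by $E(G[X])$. Since each such row corresponding to $e=uv$ has all nonzero entries in the column blocks for $u$ and $v$, both of which lie in $X$, these rows vanish outside the $d|X|$ columns indexed by $X$. Thus, after deleting the zero columns outside $X$, they coincide with the rows of the rigidity matrix $R(G[X], p|_X)$ of the restricted framework, and they are linearly independent there as well.

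The second step is to observe that $p|_X$ is a generic realisation of $G[X]$ in $\R^d$, since restriction preserves algebraic independence of coordinates. Consequently the edges of $G[X]$ are independent in $\MR_d(G[X])$, so
\[
i_G(X) = |E(G[X])| = \rank R(G[X], p|_X).
\]
Because $|X|\geq d+1$, Maxwell's bound \eqref{eq:max} applied to $(G[X],p|_X)$ yields
\[
\rank R(G[X], p|_X) \leq d|X| - \binom{d+1}{2},
\]
which gives exactly the required inequality.

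I do not expect a real obstacle here: the only point requiring any care is the passage from independence of the rows of $R(G,p)$ to independence of the corresponding rows of $R(G[X], p|_X)$, and this is immediate from the column-support structure of the rigidity matrix. The remainder is just Maxwell's bound applied to the subframework.
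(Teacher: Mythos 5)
Your proof is correct and follows exactly the approach the paper indicates: the paper states the lemma is obtained by applying Maxwell's bound \eqref{eq:max} to the induced subgraphs of $G$, and your argument (restricting rows of $R(G,p)$ to $E(G[X])$, using the block structure to identify them with rows of $R(G[X],p|_X)$, noting $p|_X$ is generic, and applying \eqref{eq:max}) is precisely the standard way to make that reduction explicit.
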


Maxwell's criterion is also sufficient to imply  $\MR_d$-independence when $d=1$ since $\MR_1(G)$ is the cycle matroid of $G$ and Maxwell's criterion is equivalent to saying that $G$ is a forest. We will see in the following sections that it remains sufficient when $d=2$ (and hence the 2-dimensional rigidity matroid is equal to the $(2,3)$-sparsity matroid), but is not sufficient to imply $\MR_d$-independence when $d\geq 3$. 

\subsection{Global rigidity and stress matrices}
\label{sec:GRandStressMatrices}

The space of {\em equilibrium stresses}  of a $d$-dimensional framework $(G,p)$ is given by the left kernel of $R(G,p)$. It consists of the vectors $\omega:E\to \R$ with the property that, for all $v\in V$,  $\sum_{uv\in E}\omega_{uv} (p_u- p_v)={\bf 0}$, see Figure \ref{fig:stress}. Given an equilibrium stress $\omega$ of $(G,p)$ we can define the corresponding stress matrix $\Omega$ as the $|V|\times |V|$ symmetric matrix in which the entry in row $u$ and column $v$ is $-\omega_{uv}$ if $u\neq v$ and $uv\in E$, $0$ if $u\neq v$ and $uv\notin E$, and the diagonal entries are chosen so that the row and column sums of $\Omega$ are each equal to zero. Connelly showed  in his fundamental work on global rigidity \cite{C82a,C} that the rank of $\Omega$ is at most $|V|-d-1$ and that $(G,p)$ is globally rigid in the case when equality holds and $p$ is generic. Gortler, Healy, and Thurston \cite{GHT} subsequently verified a conjecture of Connelly by showing that the converse of the second part of the previous statement also holds (when $|V|\geq d+2$). This gives us

\begin{theorem}\label{thm:stress} A generic $d$-dimensional framework $(G,p)$  is globally rigid if and only if $G$ is a complete graph on at most $d+1$ vertices or $(G,p)$ has a stress matrix of rank $|V|-d-1$.
\end{theorem}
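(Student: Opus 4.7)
The case $|V|\le d+1$ with $G=K_{|V|}$ is immediate: a set of at most $d+1$ points in $\R^d$ is determined up to congruence by its pairwise distances. So assume $|V|\ge d+2$ and treat the two implications separately.

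For the sufficiency direction (Connelly), suppose $\omega$ is an equilibrium stress of $(G,p)$ with stress matrix $\Omega$ of rank $|V|-d-1$, and let $(G,q)$ be any framework with the same edge lengths. The plan is to show that $q$ must be an affine image of $p$, and then invoke genericity to reduce the affine map to an isometry. The main device is the \emph{stress energy} $\psi_\omega(x):=\sum_{uv\in E}\omega_{uv}|x_u-x_v|^2$, viewed as a quadratic function on $\R^{d|V|}$: its gradient at $x$ vanishes precisely when $\omega$ is an equilibrium stress of $(G,x)$, and its Hessian equals $\Omega\otimes I_d$ up to a constant. Hence $p$ is a critical point of $\psi_\omega$, while $\psi_\omega(q)=\psi_\omega(p)$ automatically because the energy depends only on edge lengths. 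Writing $p$ and $q$ as $|V|\times d$ matrices with columns $p^{(i)},q^{(i)}\in\R^{|V|}$, the Taylor expansion about the critical point $p$ then yields
\[
\sum_{i=1}^{d}(q^{(i)}-p^{(i)})^{T}\Omega(q^{(i)}-p^{(i)})=0.
\]
Assuming $\Omega$ is positive semidefinite---which in the generic case one arranges either by a suitable choice of $\omega$ or by transporting the stress to the doubled framework $v\mapsto(p_v,q_v)\in\R^{2d}$---each summand vanishes, so every $q^{(i)}-p^{(i)}$ lies in $\ker\Omega$. Since $\Omega$ has rank $|V|-d-1$, the identities $\Omega p=0$ and $\Omega\mathbf 1=0$ give $\ker\Omega=\mathrm{span}\{\mathbf 1,p^{(1)},\dots,p^{(d)}\}$, so $q_v=Ap_v+t$ is affine in $p_v$. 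Edge-length preservation combined with the fact that the vectors $p_u-p_v$ for $uv\in E$ span $\R^d$ (a consequence of rigidity of $(G,p)$) then forces $A\in O(d)$, and so $(G,q)$ is congruent to $(G,p)$.

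For the necessity direction (Gortler--Healy--Thurston), I would prove the contrapositive: assuming every equilibrium stress of a generic $(G,p)$ has stress matrix of rank at most $|V|-d-2$, construct a non-congruent framework $(G,q)$ with the same edge lengths. The plan is to analyse the fiber of the measurement map $f_G$ through $p$ modulo the action of the Euclidean group, using complex algebraic geometry: extend $f_G$ to $\mathbb{C}^{d|V|}$, identify the space of equilibrium stresses at $p$ with the conormal space to the image of $f_G$ at $f_G(p)$, and translate the rank hypothesis into the statement that the Zariski tangent space to the fiber at $p$ is strictly larger than the tangent to the congruence orbit. Algebraicity of the fiber combined with a careful irreducibility and monodromy analysis then produces an actual distinct complex point in the fiber, which by genericity of $p$ descends to a real non-congruent framework. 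The hardest step, and the main obstacle of the proof, is this promotion from infinitesimal data to a bona fide non-congruent Euclidean framework with the same edge lengths; it is precisely here that GHT's algebro-geometric machinery---and the detour through the complex locus---is essential.
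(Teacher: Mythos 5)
The paper does not prove Theorem~\ref{thm:stress}; being a survey it states it as a consequence of the cited results of Connelly~\cite{C82a,C} (sufficiency) and Gortler--Healy--Thurston~\cite{GHT} (necessity), so the only thing to judge is whether your sketch faithfully represents those arguments.

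The sufficiency sketch has a genuine gap at the step where you pass from $\sum_i (q^{(i)}-p^{(i)})^{T}\Omega(q^{(i)}-p^{(i)})=0$ to the conclusion that each $q^{(i)}-p^{(i)}$ lies in $\ker\Omega$. That deduction requires $\Omega$ to be positive semidefinite, and neither of your two suggested remedies actually secures it. A generic globally rigid framework need \emph{not} admit a PSD max-rank stress matrix: the existence of such a stress is precisely Connelly's super-stability condition, which characterises \emph{universal} rigidity, a strictly stronger property than generic global rigidity. So ``a suitable choice of $\omega$'' is simply not available in general. The doubled framework $v\mapsto(p_v,q_v)$ does not close the gap either, because for $\omega$ to be an equilibrium stress of that framework you need $\omega$ already to be an equilibrium stress of $(G,q)$, not just of $(G,p)$ --- and this is exactly the nontrivial ``shared stress'' lemma that Connelly's argument has to establish (via a genericity argument about the measurement map and its fiber through $p$) before the affine-kernel computation can be run. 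In short, the quadratic-form step you treat as routine is where the real content of the proof lives.

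The necessity sketch is a reasonable high-level paraphrase of the Gortler--Healy--Thurston strategy, and you correctly identify the crux: promoting the infinitesimal information in the stress-rank deficiency to an actual non-congruent framework via the complex fiber and a monodromy/connectedness argument. As a sketch it is sound, though it inevitably omits the technical core.
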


Theorem \ref{thm:stress} plays an analogous role for generic global rigidity as Theorem \ref{thm:AR} does for generic rigidity. In particular it implies that the global rigidity of a generic $d$-dimensional  framework $(G,p)$ depends only on the graph $G$, and that $G$ is (generically) globally rigid in $\R^d$ if and only if it has a  realisation in $\R^d$ which is infinitesimally rigid and has a stress matrix of rank $|V|-d-1$. This, in turn, gives rise to a randomised polynomial algorithm for deciding whether $G$ is (generically) globally rigid in $\R^d$, see \cite{GHT, CW10}. An example of a graph $G=(V,E)$ which is not generically globally rigid in $\R^2$ but still has a realisation with a stress matrix of  rank $|V|-3$ is given on the right of Figure \ref{fig:stress}.

\begin{figure}[t]
\begin{center}
\tiny
\input{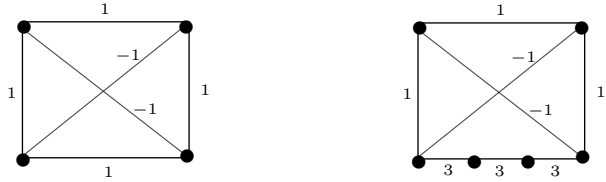}
\caption{Two 2-dimensional  stressed frameworks with an associated stress matrix of maximum possible rank. The framework on the right is obtained from the one on the left by  subdivisions. This operation preserves the maximum rank property of a stressed framework. The framework on the right is not infinitesimally rigid and its underlying graph does  not have enough edges to be generically (globally) rigid in $\R^2$.
} \label{fig:stress}
\end{center}
\end{figure}

Hendrickson \cite{H} gave the following graph theoretic necessary conditions for a graph to be generically globally rigid. We say that a graph $G$ is {\em redundantly rigid in $\R^d$} if $G-e$ is rigid in $\R^d$ for all edges $e$ of $G$.

\begin{theorem}\label{thm:hend} Suppose $G$ is a graph which is globally rigid in $\R^d$. Then either $G$ is a complete graph on at most $d+1$ vertices or $G$ is $(d+1)$-connected and redundantly rigid in $\R^d$.
\end{theorem}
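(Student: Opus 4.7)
The plan is to prove each condition separately by contradiction, starting from a generic realisation $(G,p)$ witnessing the global rigidity of $G$. The $(d+1)$-connectedness will follow from a reflection construction, while redundant rigidity will require analysing the configuration space of $G-e$.

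\textbf{$(d+1)$-connectedness.} Assume $G$ has a vertex cut $S$ with $|S|\leq d$, and write $V=V_1\cup V_2$ with $V_1\cap V_2=S$, each $V_i\sm S\neq\emptyset$, and no edges between $V_1\sm S$ and $V_2\sm S$. The affine hull of $p(S)$ has dimension at most $|S|-1\leq d-1$, so it lies in some hyperplane $H$ of $\R^d$. Let $\sigma$ be reflection in $H$, and define $p'$ by $p'=p$ on $V_1$ and $p'=\sigma\circ p$ on $V_2\sm S$. Then $(G,p)$ and $(G,p')$ have the same edge lengths: edges of $G[V_1]$ are unchanged, edges of $G[V_2]$ are preserved by the isometry $\sigma$, and no edges cross the cut. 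The hypothesis $|V|\geq d+2$ (which follows from $G$ not being complete on at most $d+1$ vertices), together with genericity of $p$ and, if $|S|<d$, a generic choice of $H$ within the $(d-|S|)$-parameter family of hyperplanes through $p(S)$, ensures that no isometry of $\R^d$ can equate $(G,p)$ with $(G,p')$. This contradicts global rigidity.

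\textbf{Redundant rigidity.} Assume $G-e$ is not rigid for some edge $e=uv\in E$. Since global rigidity implies rigidity, Theorem~\ref{thm:AR} gives $\rank R(G,p)=d|V|-\binom{d+1}{2}$ and hence $\rank R(G-e,p)=d|V|-\binom{d+1}{2}-1$. Therefore the level set $C:=f_{G-e}^{-1}(f_{G-e}(p))$ has local dimension $\binom{d+1}{2}+1$ at $p$, and its quotient $\bar C$ by the isometry group of $\R^d$ is one-dimensional at $[p]$. Let $Z$ be the irreducible component of $\bar C$ through $[p]$ and consider the polynomial $h([q]):=\|q_u-q_v\|^2$ on $Z$. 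Then $h$ is non-constant on $Z$ (otherwise every $[q]\in Z$ would give a realisation of $G$ with the same edge lengths as $(G,p)$, contradicting rigidity of $G$), and $dh([p])\neq 0$ (otherwise the row of $R(G,p)$ for $e$ would lie in the row span of $R(G-e,p)$, contradicting the rank computation). The goal is then to find $[q^*]\in Z$ with $[q^*]\neq[p]$ and $h([q^*])=\|p_u-p_v\|^2$; the corresponding $q^*$ is a realisation of $G$ with the same edge lengths as $(G,p)$ but non-congruent to it, contradicting global rigidity.

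\textbf{The main obstacle} lies in producing this second preimage. The set $Z$ is compact (edge-length constraints combined with the connectedness of $G-e$, which is guaranteed by the $(d+1)$-connectedness established above, bound all pairwise distances modulo isometry), connected, and one-dimensional, and $h$ is a non-constant real-analytic function on $Z$ whose value $h([p])$ lies strictly between its extrema on $Z$ (since $dh([p])\neq 0$). On a topological circle the intermediate value theorem would give two preimages of $h([p])$ immediately, but in general $Z$ may carry singular or branch points, so the final step must exploit the real-algebraic structure of $Z$ — for instance by passing to a smooth normalisation and applying a Morse-theoretic or degree argument — to conclude that every interior value of $h$ on $Z$ is attained at least twice. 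Handling this step is the analytically/topologically delicate heart of the proof.
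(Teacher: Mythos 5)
Your proposal follows essentially the same outline as the paper's sketch (and Hendrickson's original argument): a reflection argument for $(d+1)$-connectivity and a configuration-space argument for redundant rigidity. The connectivity part is fine; note that the paper's version is slightly more direct — it observes that the reflection \emph{changes the distance between any pair of vertices on opposite sides of the cut}, which immediately gives non-congruence, rather than arguing indirectly about isometries. On the redundant-rigidity side, you have correctly identified the technical heart of Hendrickson's proof and honestly flagged the gap. The way Hendrickson closes that gap is cleaner than the normalisation/Morse route you suggest: for generic $p$, the rank of $R(G-e,\cdot)$ is constant on a neighbourhood of the fibre through $p$, so the quotient $\bar C$ is a smooth compact $1$-manifold, hence a finite union of circles; on a circle the ``every interior value is attained at least twice'' claim follows from the intermediate value theorem, and no argument about singular or branch points is needed. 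So the missing ingredient in your writeup is precisely the lemma that generic fibres are smooth manifolds (a constant-rank/implicit-function-theorem statement used throughout generic rigidity theory, cf.\ Asimow--Roth), rather than any delicate real-algebraic normalisation. One further small point: your claim that $[p]$ lies strictly between the extrema of $h$ needs both $dh([p])\neq 0$ (so $[p]$ is not a local extremum) \emph{and} compactness of $Z$ (so the extrema are achieved); you have both ingredients but should state the deduction. Finally, your assertion that ``$G$ not complete on at most $d+1$ vertices'' gives $|V|\geq d+2$ deserves the one-line observation that a globally rigid graph on at most $d+1$ vertices must be complete (again by a reflection argument), since otherwise the hypothesis leaves that case open.
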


Necessity of the connectivity condition follows since, if $G$ has a vertex-cut of size less than $d+1$, then, for any generic realisation, we can reflect one side of the cut in a hyperplane containing the vertices of the cut. This  operation will preserve the length of all the edges but will change the distance between any  two vertices on opposite sides of the cut. Hendrickson 
verifies necessity of redundant rigidity by showing that, if $G$ has at least $d+2$ vertices and has  an edge $e=uv$ for which $G-e$ is not rigid, then, for any generic realisation $(G,p)$,
we can continuously deform $(G-e,p)$ in such a way that we preserve all its edge lengths and eventually arrive at a non-congruent framework $(G-e,p')$  which satisfies $\|p(u)-p(v)\|=\|p'(u)-p'(v)\|$.

We will see that Hendrickson's necessary conditions for global rigidity are also sufficient when $d=1,2$ but not for $d\geq 3$. Another necessary condition for global rigidity in $\R^d$ has recently been obtained by Garamv\"olgyi, Gortler and Jord\'an \cite{GGJ}. 

\begin{theorem}
\label{theorem:mconnected}
Let $G$ be a globally rigid graph in $\RR^d$ on at least $d+2$ vertices. 
Then $\MR_d(G)$ is a connected matroid.
\end{theorem}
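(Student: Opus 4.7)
The plan is to argue by contradiction, using Hendrickson's conditions (Theorem~\ref{thm:hend}) together with the stress matrix characterisation of global rigidity (Theorem~\ref{thm:stress}). Suppose $\MR_d(G)$ is disconnected, and write $E(G) = E_1 \sqcup E_2$ with both $E_i$ non-empty and $r_d(E_1) + r_d(E_2) = r_d(E(G))$. Let $G_i$ be the subgraph with edge set $E_i$, and set $V_i := V(E_i)$, $W := V_1 \cap V_2$, $A := V_1 \setminus W$, $B := V_2 \setminus W$. Matroid disconnectedness forces the stress space of $(G,p)$ to decompose as the direct sum of the stress spaces of $(G_1, p|_{V_1})$ and $(G_2, p|_{V_2})$, so every stress matrix of a realisation $(G, p)$ splits as $\Omega = \Omega_1 + \Omega_2$, with each $\Omega_i$ supported on $V_i \times V_i$. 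By Hendrickson's theorem $G$ is $(d+1)$-connected; a path between $A$ and $B$ in $G$ must transition from an $E_1$-edge to an $E_2$-edge at some vertex of $W$, so $W$ is a separator, forcing $|W| \geq d+1$ whenever both $A$ and $B$ are non-empty (if one of them is empty, $\Omega$ has zero rows outside a proper subset of $V$, directly giving $\rank(\Omega) < |V| - d - 1$).

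The key sub-claim is that $G_i \cup K_W$ is globally rigid on $V_i$ for each $i$. Suppose for contradiction that $(G_1 \cup K_W, p|_{V_1})$ admits a non-congruent realisation $p'$ with the same edge lengths. Since $K_W$ is globally rigid on $W$, we may compose $p'$ with an isometry of $\R^d$ so that $p'|_W = p|_W$, and then extend $p'$ to all of $V$ by setting $p'|_B := p|_B$. The resulting realisation preserves every edge length of $(G, p)$, agrees with $p$ on $V_2$, but differs from $p$ on $A$; since $|W| \geq d+1$ and $p|_W$ is generic, no non-trivial isometry of $\R^d$ fixes $p|_W$ pointwise, so the two realisations are non-congruent, contradicting the global rigidity of $G$. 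An identical argument shows $G_2 \cup K_W$ is globally rigid.

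Applying Theorem~\ref{thm:stress} to each globally rigid $G_i \cup K_W$ yields stress matrices $\Omega^{(i)}$ of $(G_i \cup K_W, p|_{V_i})$ of maximum rank $|V_i| - d - 1$. The main obstacle is the final contradiction: combining the decomposition $\Omega = \Omega_1 + \Omega_2$ of a hypothetical maximum-rank stress matrix of $G$ with the existence of the $\Omega^{(i)}$, and using the matroid rank identity $r_d(E_1 \cup K_W) + r_d(E_2 \cup K_W) = r_d(E(G)) + r_d(K_W)$, one must produce a vector in $\ker \Omega$ outside the $(d+1)$-dimensional space of trivial stresses of $(G, p)$. The delicate dimension count exploits the fact that the trivial stress subspaces $T_{V_i}$ project isomorphically onto $T_W$ when $|W| \geq d+1$ and $p|_W$ is generic, combined with the rank constraints on the $\Omega_i$ and the $\Omega^{(i)}$; it forces an extra kernel element and hence $\rank(\Omega) \leq |V| - d - 2$, contradicting global rigidity.
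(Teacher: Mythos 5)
Your proposal contains a genuine gap, and you have in fact flagged it yourself: the final step, which is supposed to convert the decomposition $\Omega = \Omega_1 + \Omega_2$ into a rank deficiency, is never carried out. You call it ``the main obstacle'' and describe only a plan. The difficulty is real: the naive estimate $\rank\Omega \le \rank\Omega_1 + \rank\Omega_2 \le (|V_1|-d-1)+(|V_2|-d-1) = |V|+|W|-2d-2$, combined with the bound $|W|\ge d+1$ you derived from Hendrickson's theorem, gives only $\rank\Omega \le |V|-d-1$, which is exactly the rank that global rigidity demands, not less. To improve this to a strict inequality you must show that the row spaces of $\Omega_1$ and $\Omega_2$ intersect nontrivially, or that one of the $\Omega_i$ has rank strictly less than $|V_i|-d-1$; neither follows from what you have written. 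Worse, the kernel of $\Omega$ is genuinely larger than $\ker\Omega_1\cap\ker\Omega_2$ (it consists of those $x$ with $\Omega_1 x = -\Omega_2 x$, both supported on $W$), so even a careful count along the lines you sketch will not immediately produce a vector in $\ker\Omega$ outside the trivial $(d+1)$-dimensional subspace. You have not explained how the globally-rigid pieces $G_i\cup K_W$ and their maximal-rank stress matrices $\Omega^{(i)}$ interact with the decomposition of $\Omega$, and this is where the actual content of the argument must live.

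A second, smaller problem is the parenthetical treatment of the degenerate case where $A$ or $B$ is empty. If, say, $A=\emptyset$ then $V_1 = W\subsetneq V_2 = V$, and while $\Omega_1$ is supported on $V_1\times V_1$, $\Omega_2$ is supported on all of $V\times V$, so $\Omega=\Omega_1+\Omega_2$ has no reason to have zero rows. The claim that this ``directly gives $\rank\Omega < |V|-d-1$'' is unjustified, and in this case the sub-claim about $G_1\cup K_W$ is vacuous (it reduces to $K_W$ itself), so the case needs a separate argument. Finally, note that the paper gives no proof of this theorem -- it cites \cite{GGJ} -- so this is a comparison against the cited proof rather than one in this survey, but the gaps above stand regardless: the structural setup and the sub-claim that $G_i\cup K_W$ is globally rigid are correct and promising, but the proposal does not complete the argument.
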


This necessary condition is implied by Hendrickson's conditions when $d=1,2$ but not for $d\geq 3$. The combination of all three conditions is still not sufficient to imply global rigidity in $\R^d$ when $d\geq 3$.

The following sufficient condition for global rigidity is given in \cite{ST}. It can be viewed as a partial converse to Theorem \ref{thm:hend}.

\begin{theorem}
If $G=(V,E)$ is a graph and $G-v$ is  rigid
in $\R^d$ for all $v\in V$, then $G$ is globally rigid in $\R^d$.
\label{shin2rigid}
\end{theorem}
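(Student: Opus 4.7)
The plan is to verify the sufficient condition in Theorem~\ref{thm:stress}: exhibit, at a generic realisation $(G,p)$ in $\R^d$, an equilibrium stress whose stress matrix attains the maximum rank $|V|-d-1$. The case $|V|\leq d+1$ is trivial since the hypothesis then forces $G=K_{|V|}$, so assume $|V|\geq d+2$. I first observe that $\deg_G(v)\geq d+1$ for every $v$ and that $G$ itself is rigid. For the degree bound, take any neighbour $u$ of $v$; the rigid framework $(G-u,p)$ has at least $d+1$ vertices, and a rigid framework on more than $d+1$ vertices has minimum degree at least $d$ (a vertex of lower degree would admit an extra infinitesimal motion perpendicular to the directions of its neighbours), so $\deg_G(v)\geq d+1$. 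Rigidity of $G$ follows because any infinitesimal motion restricts to an infinitesimal motion of each $(G-v,p)$, which is trivial; these trivial restrictions agree on $|V|-1\geq d+1$ common points and hence extend to a single isometry of $\R^d$.

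Next I construct a well-chosen equilibrium stress of $(G,p)$. For each $v$, let $\pi_v\colon S\to \R^{\deg(v)}$ denote the restriction of the stress space $S$ of $(G,p)$ to edges incident to $v$. Its kernel equals the stress space of $(G-v,p)$, and a dimension count using the rigidity of both $G$ and $G-v$ gives $\dim \operatorname{Im}(\pi_v)=\deg(v)-d\geq 1$. Hence there is $\omega^v\in S$ non-zero on some edge at $v$. Taking generic coefficients $c_v$, the combination $\omega=\sum_{v\in V} c_v\,\omega^v$ is an equilibrium stress of $(G,p)$ which is non-zero on at least one edge incident to every vertex.

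It remains to show that the stress matrix $\Omega$ of $\omega$ has rank exactly $|V|-d-1$. Its kernel always contains the $(d+1)$-dimensional ``trivial'' subspace $T=\operatorname{span}(\mathbf{1},p_1,\dots,p_d)$. Suppose for contradiction some $z\in\ker \Omega$ lies outside $T$, and consider the lift $\tilde p\colon V\to\R^{d+1}$ with $\tilde p_v=(p_v,z_v)$: then $\omega$ is also an equilibrium stress of $(G,\tilde p)$ in $\R^{d+1}$. I would like to derive a contradiction with the rigidity of some $G-v$ in $\R^d$, by exploiting that the $\omega^v$'s encode non-trivial stress at each vertex, so that a generic linear combination forces the ``extra dimension'' offered by $z$ to produce an infinitesimal flex of some deleted-vertex subframework.

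The \textbf{main obstacle} is carrying out this last step rigorously. Translating ``$z\notin T$ lies in $\ker\Omega$'' into an actual infinitesimal flex of some $(G-v,p)$ requires careful use of genericity of both $p$ and the coefficients $c_v$, since the stress matrix kernel depends subtly on the chosen combination. A promising route is a Zariski-openness argument on the parameter space: the set of parameters $(p,(c_v))$ for which $\ker\Omega=T$ is Zariski open, so it suffices to exhibit a single witness, which might be produced by induction on $|V|$ via a vertex-addition operation that preserves the ``rigid-after-vertex-deletion'' property.
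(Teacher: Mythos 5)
Your plan — verify the sufficient direction of Theorem~\ref{thm:stress} by producing a max-rank stress at a generic realisation — is the same high-level strategy used in \cite{ST}, and the preliminary facts (min degree $\geq d+1$, rigidity of $G$, existence of a stress non-zero near every vertex) are all correct modulo small patches in the boundary case $|V|=d+2$ (where $G-v$ has exactly $d+1$ vertices and $V\setminus\{v,w\}$ has only $d$ points, so your ``agree on $d+1$ common points'' count is off by one; in that regime $G$ is forced to be $K_{d+2}$, so this is easily handled). The real issue is that the construction of $\omega=\sum_v c_v\omega^v$ is not the decisive ingredient, and the step you flag as the obstacle is where the whole theorem lives. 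Having a stress non-zero on some edge at every vertex is far from enough: take $d=3$ and $G=K_{5,5}$ (a circuit in $\MR_3$), whose unique stress is non-zero on every edge, yet $K_{5,5}$ is not globally rigid in $\R^3$. So the property you establish for your $\omega$ holds already for graphs that fail the conclusion; what must be used is vertex-redundancy, and it is used precisely at the missing step, not in the choice of $\omega$.

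Concretely, the gap is showing that a non-trivial $z\in\ker\Omega$ forces a flex of some $(G-v,p|_{V-v})$, and a Zariski-openness remark about the coefficients $(c_v)$ does not do this. In Tanigawa's argument the object of study is the kernel of a \emph{generic} stress matrix (equivalently, the shared stress kernel), and the key technical work is a non-generic rigidity analysis of the lift $\hat p=(p,z)\colon V\to\R^{d+1}$: since $z$ lies in every stress kernel, $(G,\hat p)$ has the same stress space as $(G,p)$, hence the same rigidity-matrix rank, and because $\hat p$ affinely spans $\R^{d+1}$ this yields a non-trivial infinitesimal flex of $(G,\hat p)$; the hard part is then to show that such a lift — which is \emph{not} generic in $\R^{d+1}$ — nevertheless produces a flex of $(G-v,p)$ for some $v$, contradicting the hypothesis. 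That argument is the content of the theorem and is not supplied by your sketch. So: right framework, correct warm-up lemmas, but the core lemma is absent rather than merely unrigorous.
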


\subsection{Inductive constructions}\label{sec:induct}
Proofs that a given family of graphs  are rigid, or globally rigid,  in $\R^d$ are often inductive in nature and proceed by using a suitably chosen graph operation to reduce to a smaller graph in the family, applying induction to deduce that the smaller graph is rigid,  and then using the property that the inverse of our chosen operation preserves rigidity. We will give examples of such `rigidity preserving operations' 
in this subsection. The first two operations were introduced by Henneberg in \cite{henn}.

\subsubsection{0-extensions}
Given a graph $H$, the $d$-dimensional {\em $0$-extension} operation creates a new graph $G$ by adding a vertex $v$ of degree at most $d$ to $H$. It is well-known that the $d$-dimensional 0-extension operation preserves 
$\MR_d$-independence.
This follows from the following more general statement for the rigidity matrix of an  arbitrary framework. It can be easily verified using the fact that the only non-zero entries in the columns of this matrix labelled by $v$ occur in the $d_G(v)$ rows labelled by the edges incident to $v$.
\begin{lemma}\label{lem:0extension}
Suppose  $(G,p)$ is a framework in $\R^d$ and $v$ is a vertex in $G$.
If 
$\{p_v-p_u: u\in N_G(v)\}$ is linearly independent in $\R^d$
then $\rank R(G,p)=\rank  R(G-v, p|_{V-v})+d_G(v)$. 
\end{lemma}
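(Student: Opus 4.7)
The plan is to decompose $R(G,p)$ into a block form that isolates the contribution of $v$ and then perform a direct-sum style dimension count on the row space. Reorder so that the $d$ columns indexed by $v$ are placed last and the $d_G(v)$ rows indexed by edges incident to $v$ are placed at the bottom. Every edge of $G-v$ contributes a row that vanishes in the $v$-columns and coincides with the corresponding row of $R(G-v, p|_{V-v})$ on the remaining columns, so
\[
R(G,p) = \begin{pmatrix} R(G-v, p|_{V-v}) & 0 \\ B & C \end{pmatrix},
\]
where $C$ is the $d_G(v) \times d$ block whose row indexed by an edge $u_j v$ equals $p_v - p_{u_j}$ (up to sign, depending on the reference ordering). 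The hypothesis is precisely that the rows of $C$ form a linearly independent set in $\R^d$.

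Letting $U$ and $W$ denote the row spaces of the top and bottom blocks respectively, I would verify the two claims $\dim W = d_G(v)$ and $U \cap W = \{0\}$. Both rest on a single observation: any element of $W$ can be written as $\sum_j \alpha_j (b_j, p_v - p_{u_j})$, and if this element vanishes on the $v$-columns -- which is automatic both for the zero vector and for every element of $U$ -- then $\sum_j \alpha_j (p_v - p_{u_j}) = 0$, which by the independence hypothesis forces each $\alpha_j = 0$. Applying this observation with $U$ replaced by $\{0\}$ proves that the $d_G(v)$ bottom rows are linearly independent; applied in general it shows that $U \cap W = \{0\}$.

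The conclusion is then immediate from $\dim U = \rank R(G-v, p|_{V-v})$:
\[
\rank R(G,p) = \dim(U + W) = \dim U + \dim W = \rank R(G-v, p|_{V-v}) + d_G(v).
\]
I do not anticipate any serious obstacle. The argument is essentially a short piece of linear algebra once the block decomposition is written down, and the only thing that requires care is matching the rows of $C$ with the vectors appearing in the hypothesis, which the sign convention does not affect.
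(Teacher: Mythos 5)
Your proof is correct and follows essentially the same route as the paper, which does not give the details but simply notes that the claim is ``easily verified using the fact that the only non-zero entries in the columns of this matrix labelled by $v$ occur in the $d_G(v)$ rows labelled by the edges incident to $v$.'' Your block decomposition and the direct-sum count on row spaces are exactly the elaboration of that observation that the paper leaves to the reader.
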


\subsubsection{1-extensions}
Given a graph $H$, the $d$-dimensional {\em $1$-extension} operation creates a new graph $G$  from $H$ by deleting an edge $xy$ and then  adding a vertex $v$ and $d+1$ new edges from $v$ to $x$, $y$ and $d-1$ other vertices of $H$, see Figure \ref{fig:1ext}.

\begin{figure}[t]
\small
\begin{center}
\input{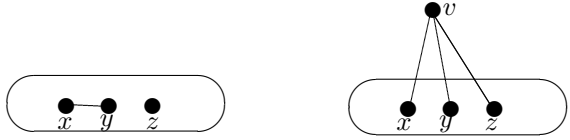}
\end{center}
\caption{The 2-dimensional $1$-extension operation.}
\label{fig:1ext}
\end{figure}

\begin{lemma}\label{lem:1extension}
The $d$-dimensional 1-extension operation preserves both  rigidity and global rigidity  in $\R^d$.
\end{lemma}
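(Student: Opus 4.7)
The plan is to prove both assertions by exhibiting a single realisation $(G,p')$ of $G$ that witnesses (global) rigidity via Theorems~\ref{thm:AR} and~\ref{thm:stress}. First I would take a generic realisation $(H,p)$; it is infinitesimally rigid by Theorem~\ref{thm:AR}, and (in the globally rigid case) Theorem~\ref{thm:stress} furnishes an equilibrium stress $\omega$ of $(H,p)$ whose stress matrix $\Omega_H$ has rank $|V(H)|-d-1$. Extend $p$ to $p'$ on $V(G)$ by placing $v$ on the open segment between $p(x)$ and $p(y)$, namely $p'(v)=\alpha p(x)+(1-\alpha)p(y)$ for some $\alpha\in(0,1)$.

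The key observation for the first assertion is that the collinearity of $p'(v),p(x),p(y)$ produces a linear dependence among the $v$-columns of the two rows of $R(G,p')$ indexed by $vx$ and $vy$: the combination $\alpha R_{vx}+(1-\alpha) R_{vy}$ has zero entries in the columns of $v$ and equals, up to a nonzero scalar, the row that edge $xy$ would contribute to $R(H,p)$. Hence the row space of $R(G,p')$ contains every row of $R(H,p)$, so any infinitesimal motion $\dot p'$ of $(G,p')$ restricts on $V(H)$ to an infinitesimal motion of $(H,p)$, and is therefore trivial. After subtracting the corresponding trivial motion of $\R^d$, genericity of $p$ guarantees that the $d$ vectors $p'(v)-p(x)$ and $p'(v)-p(z_i)$ for $i=1,\dots,d-1$ are linearly independent, forcing $\dot p'(v)=0$. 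Theorem~\ref{thm:AR} then gives the first assertion.

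For global rigidity I may assume $|V(H)|\geq d+2$, the case $H=K_{d+1}$ being straightforward. Theorem~\ref{thm:hend} then implies $H$ is redundantly rigid in $\R^d$, so $xy$ is not a coloop of $\MR_d(H)$ and the stress $\omega$ supplied by Theorem~\ref{thm:stress} may be chosen with $\omega_{xy}\neq 0$. I would transfer this stress to $G$ by setting $\omega'_e=\omega_e$ for $e\in E(H)\setminus\{xy\}$, $\omega'_{vx}=\omega_{xy}/(1-\alpha)$, $\omega'_{vy}=\omega_{xy}/\alpha$, and $\omega'_{vz_i}=0$. A direct calculation verifies equilibrium at every vertex of $G$: at $x$ the new edge $vx$ applies exactly the force that the deleted edge $xy$ used to apply (and similarly at $y$), while the two contributions at $v$ cancel by collinearity.

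To conclude via Theorem~\ref{thm:stress} I would analyse $\ker\Omega_G$. The equilibrium equation at $v$ reads $\omega'_{vx}(f(v)-f(x))+\omega'_{vy}(f(v)-f(y))=0$, which, since $\omega_{xy}\neq 0$, forces $f(v)=\alpha f(x)+(1-\alpha) f(y)$. Using the same transfer identities that certified equilibrium of $\omega'$, the remaining equations of $\Omega_G f=0$ at vertices of $V(H)$ reduce to $\Omega_H(f|_{V(H)})=0$; by the hypothesis on $\Omega_H$, $f|_{V(H)}$ lies in the span of $\mathbf{1}$ and the coordinate functions of $p$. Together with the formula for $f(v)$, this forces $\ker\Omega_G$ to be the span of $\mathbf{1}$ and the coordinate functions of $p'$, which is $(d+1)$-dimensional, so $\rank\Omega_G=|V(G)|-d-1$ as required. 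The main technical hurdle is precisely this kernel computation, which relies crucially on the availability of a stress $\omega$ with $\omega_{xy}\neq 0$; this is the step where Hendrickson's redundant-rigidity necessary condition enters the argument.
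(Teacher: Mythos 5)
Your rigidity argument is the same special-position idea that the paper sketches: place $v$ on the line through $p_x,p_y$ and exploit the resulting linear dependence among the rows for $vx$, $vy$ and $xy$. The paper phrases this via Lemma~\ref{lem:0extension} applied to a subgraph and a rank count, while you argue directly with the motion space, but the content is identical. For global rigidity the paper simply cites Connelly \cite{C}; your stress-transfer construction and kernel computation is precisely the argument being cited, so you are supplying the details rather than taking a genuinely different route, and the main part of your write-up is correct.

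Two points need attention. First, the aside that ``the case $H=K_{d+1}$ is straightforward'' is wrong: the $1$-extension of $K_{d+1}$ is $K_{d+2}$ with one edge removed, which has exactly $d(d+2)-\binom{d+1}{2}$ edges, is minimally rigid, and therefore by Theorem~\ref{thm:hend} is \emph{not} globally rigid in $\R^d$. The global-rigidity part of Lemma~\ref{lem:1extension} genuinely requires $|V(H)|\geq d+2$ (the paper's unqualified statement is slightly imprecise as well); you should state this hypothesis rather than claim the excluded case is easy. Second, the sentence ``the stress $\omega$ supplied by Theorem~\ref{thm:stress} may be chosen with $\omega_{xy}\neq 0$'' needs one more step: Theorem~\ref{thm:stress} only guarantees \emph{some} equilibrium stress with maximum-rank stress matrix, and redundant rigidity only guarantees \emph{some} stress with $\omega_{xy}\neq 0$, so you must observe that each condition cuts out a nonempty Zariski-open subset of the stress space of $(H,p)$ (the rank condition because maximal rank is generic, the coordinate condition because $xy$ is a dependent element of $\MR_d(H)$ so $\{\omega:\omega_{xy}=0\}$ is a proper subspace), hence the two can be satisfied simultaneously. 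With those two repairs your transfer formulas $\omega'_{vx}=\omega_{xy}/(1-\alpha)$, $\omega'_{vy}=\omega_{xy}/\alpha$, the equilibrium checks, and the reduction of $\ker\Omega_G$ to $\ker\Omega_H$ via $f(v)=\alpha f(x)+(1-\alpha)f(y)$ are all correct and give $\rank\Omega_G=|V(G)|-d-1$ as required.
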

\begin{sproof} Rigidity can be verified by using a `special position' argument. Suppose $H$ is rigid in $\R^d$, $G$ is a 1-extension of $H$ and $H=G-v+xy$.
Choose a generic framework $(H,p)$ in $\R^d$ and extend it to a framework $(G,p')$ by placing $v$ on the line through $p_x,p_y$. Then $(H,p)$ is infinitesimally  rigid in $\R^d$ and we can use Lemma \ref{lem:0extension} to deduce that $(G+xy,p')$ is also infinitesimally  rigid in $\R^d$. On the other hand, the fact that the points $p'_v,p'_x,p'_y$ are collinear implies that the rows of $R(G+xy,p')$ indexed by $xy,vx,vy$ are minimally linearly dependent and hence $R(G+xy,p')$ and $R(G,p')$ have the same rank. This gives us an infinitesimally rigid realisation of $G$ and implies that it is rigid in $\R^d$. 

Connelly \cite{C} applied a similar  special position argument to show that the 1-extension operation preserves  the property of having a `maximum rank stress matrix' and hence preserves global rigidity in $\R^d$ by Theorem \ref{thm:stress}. We refer the reader to \cite{C} for more details.
\end{sproof}

\subsubsection{Vertex splitting} 

Suppose $z$ is a vertex of a graph $H$. 
The {\em ($d$-dimensional) vertex splitting operation} constructs a new graph $G$ from $H$  by deleting $z$ and then adding two new vertices $u$ and $v$ with $N_G(u)\cup N_G(v)=N_H(z)\cup \{u,v\}$ and $|N_G(u)\cap N_G(v)|\geq d-1$, see Figure \ref{fig:vsplit}. (We can view vertex splitting as an inverse operation to contracting the edge $uv$.) 
Whiteley \cite{Wsplit} showed that 
vertex splitting preserves generic rigidity in $\R^d$.

\begin{figure}[t]
\small
\begin{center}
\input{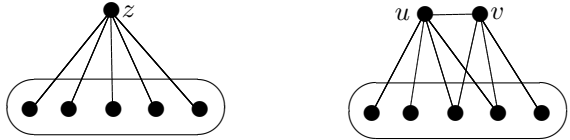}
\end{center}
\caption{The 3-dimensional vertex splitting operation.}
\label{fig:vsplit}
\end{figure}

\begin{lemma}\label{lem:split} 
Suppose $G$ is a graph, $uv\in E$ and  $|N_G(u)\cap N_G(v)|\geq d-1$.
If $G/uv$ is rigid in $\R^d$ then $G$ is rigid in $\R^d$.
\end{lemma}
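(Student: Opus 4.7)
My plan is to use a special-position argument analogous to Lemma \ref{lem:1extension}. Let $H = G/uv$ with contracted vertex $z$, and choose a generic realisation $(H, p)$ of $H$ in $\R^d$, which is infinitesimally rigid by Theorem \ref{thm:AR} and the hypothesis. Fix a set $W \subseteq N_G(u) \cap N_G(v)$ with $|W| = d-1$ (possible by assumption) and a unit vector $n \in \R^d$ orthogonal to every $p_w - p_z$ for $w \in W$; such $n$ exists since these $d-1$ vectors span a subspace of $\R^d$ of dimension at most $d-1$. For $\epsilon > 0$, define $(G, p'_\epsilon)$ by $p'_x = p_x$ for $x \notin \{u, v\}$, $p'_u = p_z - \tfrac{\epsilon}{2} n$, and $p'_v = p_z + \tfrac{\epsilon}{2} n$. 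The aim is to show that $(G, p'_\epsilon)$ is infinitesimally rigid for all sufficiently small $\epsilon > 0$; by lower-semicontinuity of matrix rank and Theorem \ref{thm:AR} this will imply that $G$ is rigid in $\R^d$.

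I would then analyse an arbitrary infinitesimal motion $\dot q$ of $(G, p'_\epsilon)$. The constraint for edge $uv$ yields $(\dot q_u - \dot q_v) \cdot n = 0$, and for each $w \in W$, subtracting the constraint for edge $uw$ from that for edge $vw$ gives $(\dot q_u - \dot q_v) \cdot (p_z - p_w) = O(\epsilon)$. Since $\{n\} \cup \{p_z - p_w : w \in W\}$ is a basis of $\R^d$ by the orthogonality choice, these $d$ equations uniquely express $\dot q_u - \dot q_v$ as an $O(\epsilon)$-linear function of the remaining variables. Thus $\dot q$ is parametrised by the $d|V(H)|$ effective variables $\dot p := (\dot p_z, \dot q_x)_{x \neq u, v}$ with $\dot p_z := \tfrac{1}{2}(\dot q_u + \dot q_v)$. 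After this elimination, the remaining edge constraints of $(G, p'_\epsilon)$ should reduce, modulo $O(\epsilon)$ perturbations, to the edge constraints of $(H, p)$ on $\dot p$: for each $w \in W$ the sum of the $uw$ and $vw$ constraints (using $(\dot q_u - \dot q_v) \cdot n = 0$) gives exactly the $zw$-constraint of $(H, p)$; for each other $w \in N_H(z)$, adjacent in $G$ to only one of $u, v$, the single constraint reduces to the $zw$-constraint modulo $O(\epsilon)$.

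Hence $\dot q$ lies in the kernel of a matrix $M(\epsilon)$ which is an $\epsilon$-perturbation of $R(H, p)$. Since $\dim \ker R(H, p) = \binom{d+1}{2}$ by infinitesimal rigidity of $(H, p)$, upper-semicontinuity of kernel dimension gives $\dim \ker M(\epsilon) \leq \binom{d+1}{2}$ for all small $\epsilon > 0$; combined with the trivial lower bound from the $\binom{d+1}{2}$-dimensional space of infinitesimal isometries of $\R^d$, equality holds, proving infinitesimal rigidity of $(G, p'_\epsilon)$. The main obstacle will be the bookkeeping in the reduction step: when $w$ is adjacent to only one of $u, v$ in $G$, the single edge constraint must be carefully expanded using the established relation $\dot q_u - \dot q_v = O(\epsilon)$ to extract the required $zw$-constraint of $(H, p)$ with only $O(\epsilon)$ error. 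This relies on both the special orthogonality choice of $n$ and the hypothesis $|N_G(u) \cap N_G(v)| \geq d - 1$.
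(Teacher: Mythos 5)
Your proof is correct and takes essentially the same route as the paper's sketch, which defers to Whiteley's special-position-plus-limiting argument: the paper places $p'_u = p'_v = p_z$ exactly and then invokes a limiting argument for nearby realisations, whereas you realise that limiting argument concretely by separating $u$ and $v$ by $\epsilon n$ (with $n$ chosen orthogonal to the common-neighbour directions) and showing that, after eliminating $\dot q_u - \dot q_v$ via the $uv$ row and the $d-1$ difference-rows, the reduced constraint matrix converges to a positive diagonal rescaling of $R(H,p)$ as $\epsilon\to 0$. This is the natural way to carry out the limit, since $R(G,p'_0)$ itself has a zero row at $uv$ and naive rank semicontinuity does not apply directly; your change of variables and your injectivity check (that $\dot q\mapsto\dot p$ is one-to-one on $\ker R(G,p'_\epsilon)$, using precisely the $d$ rows $uv$ and $uw,vw$ for $w\in W$) are the steps that make the argument work.
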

\begin{sproof} Whiteley's proof uses yet another special position argument. He chooses a generic framework $(G/uv,p)$ and extends it to a framework $(G,p')$ by putting $p'_u=p'_v=p_z$, where $z$ is the vertex created by contracting $uv$. He then applies a limiting argument to the rigidity matrix to deduce that it has full rank for all realisations sufficiently close to this special position. We refer the reader to \cite{Wsplit} for more details.
\end{sproof}

Whiteley conjectures that an analogous result holds for global rigidity in $\R^d$ as long as the vertex splitting is {\em non-trivial} i.e.~the degrees of $u$ and $v$ in $G$ are at least $d+1$. Connelly shows in his online notes \cite{Cnotes} that vertex splitting preserves global rigidity whenever $G$ has an infinitesimally rigid realisation in $\R^d$ in which $u$ and $v$ are coincident. A more precise statement and proof of his result is given in \cite{J}. Jord\'an and Tanigawa  verify another special case of Whiteley's conjecture in \cite{JT-braced} by showing that a graph is globally rigid in $\R^d$ whenever it can be constructed from a globally rigid graph of maximum degree at most $d+2$ by a sequence of non-trivial vertex splits.

\subsubsection{Gluing and substitution}
Our next two results describe situations in which we can construct a graph which is (globally) rigid in $\R^d$ by `gluing together' two smaller  graphs or replacing one subgraph by another subgraph. 

\begin{lemma}\label{lem:glue} Let $G$ be a graph and $G_1,G_2$ be subgraphs of $G$ with  $V(G_1)\cap V(G_2)=X$, $G=G_1\cup G_2$ and $V(G_1)\sm V(G_2)\neq \emptyset\neq V(G_2)\sm V(G_1)$.\\
(a) Suppose $G_1,G_2$ are rigid in $\R^d$. Then $G$ is rigid in $\R^d$ if and only if $|X|\geq d$.\\
(b) Suppose $G_1,G_2$ are globally rigid in $\R^d$. Then $G$ is globally rigid in $\R^d$ if and only if $|X|\geq d+1$.
\end{lemma}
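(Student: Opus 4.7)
My plan is to take a generic realisation $(G,p)$ and analyse its infinitesimal motions (for (a)) or edge-length-preserving realisations (for (b)) by using the hypotheses that the two sides are rigid/globally rigid, so the only allowable ``internal'' symmetries are restrictions of global isometries of $\R^d$. The arguments for the two parts are parallel, the only real difference being that part (a) uses Theorem~\ref{thm:AR} and works with infinitesimal isometries, while (b) works directly with isometries.

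For part (a), I would invoke Theorem~\ref{thm:AR} and prove infinitesimal rigidity. Any infinitesimal motion $\dot p$ of $(G,p)$ restricts to an infinitesimal motion of $(G_i, p|_{V(G_i)})$, which, being a generic realisation of the rigid graph $G_i$, is infinitesimally rigid; hence $\dot p|_{V(G_i)}$ is induced by a global infinitesimal isometry $T_i(x) = A_i x + b_i$ with $A_i$ skew-symmetric. The two isometries must then agree on $p(X)$, and the key step is to deduce $T_1 = T_2$ on the whole of $\R^d$. When $|X| \geq d+1$, generic position makes $p(X)$ affinely span $\R^d$ and this is immediate. When $|X| = d$, $p(X)$ spans only an affine hyperplane, and the decisive observation is that the skew-symmetric matrix $A := A_1 - A_2$ then has a $(d-1)$-dimensional kernel and hence rank at most $1$; since every skew-symmetric matrix has even rank, $A = 0$, whence $b_1 = b_2$ follows from $T_1 - T_2$ vanishing at any point of $p(X)$. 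Thus $(G,p)$ is infinitesimally rigid. Conversely, when $|X| \leq d-1$, a straightforward dimension count shows that the space of pairs $(T_1,T_2)$ agreeing on the $\leq(d-2)$-dimensional affine hull of $p(X)$ has dimension $\binom{d+1}{2}+\binom{d-k}{2}$ with $k\leq d-2$, strictly exceeding the dimension $\binom{d+1}{2}$ of trivial infinitesimal motions; so $(G,p)$ is flexible.

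For part (b) with $|X| \geq d+1$, I would argue directly from the definition. Given a generic $(G,p)$ and any $(G,q)$ with matching edge lengths, global rigidity of each $G_i$ applied to the generic realisation $(G_i, p|_{V(G_i)})$ provides an isometry $\phi_i$ of $\R^d$ with $\phi_i\circ p = q$ on $V(G_i)$. The two isometries agree on the $|X|\geq d+1$ generic points $p(X)$, which affinely span $\R^d$, so $\phi_1 = \phi_2$ extends to a single isometry realising the congruence of $(G,p)$ and $(G,q)$. For necessity, when $|X|\leq d$, $p(X)$ lies in an affine hyperplane $H$; reflecting $V(G_2)\setminus V(G_1)$ across $H$ produces a realisation with the same edge lengths that is not congruent to $(G,p)$, since generically no ambient isometry can simultaneously fix $V(G_1)$ pointwise and act on $V(G_2)\setminus V(G_1)$ as the reflection $\rho_H$.

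The hardest point is the $|X|=d$ case of sufficiency in (a): only $d$ generic points span just a hyperplane, and two distinct infinitesimal isometries of $\R^d$ could \emph{a priori} agree on them. The parity-of-rank property of skew-symmetric matrices is exactly what rules this out, and it is precisely what allows the tight threshold $|X|\geq d$ in (a) rather than the looser $|X|\geq d+1$ required in (b).
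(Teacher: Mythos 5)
Your proposal is correct and is considerably more detailed than the paper's own treatment: the paper only proves the necessity direction of both (a) and (b) by a short geometric construction (rotate a component of $G-X$ about the affine hull of $p(X)$ when $|X|<d$, reflect when $|X|=d$), and explicitly defers sufficiency to Whiteley's survey chapter. You, on the other hand, give both directions in full. Your identification of the parity-of-rank property of real skew-symmetric matrices as the exact reason why $|X|=d$ suffices for infinitesimal rigidity (a) but only $|X|=d+1$ suffices for global rigidity (b) is the right structural insight: the isotropy group of a generic hyperplane in the isometry group is the discrete group $\{\text{id},\rho_H\}$, so its Lie algebra is trivial, and the even-rank fact is precisely the linear-algebraic manifestation of this.

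For the necessity direction of (a) I would note that your dimension count, while correct, is more indirect than the paper's construction. To make the count rigorous you need the map $(T_1,T_2)\mapsto\dot p$ from pairs of infinitesimal isometries agreeing on $p(X)$ to infinitesimal motions of $(G,p)$ to be a bijection, which holds if $p(V(G_i))$ affinely spans $\R^d$ (true when $|V(G_i)|\geq d+1$; for $|V(G_i)|=d$ it is rescued by your own skew-symmetry argument, and for smaller $G_i$ a little more care is needed). The paper's approach sidesteps this entirely: fix $p$ on $V(G_1)$ and rotate $p(V(G_2)\setminus X)$ continuously about the $\le(d-2)$-dimensional affine hull of $p(X)$. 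This produces an explicit nontrivial continuous flex directly, with no discussion of motion spaces, and it also covers the case where one of the $G_i$ is small. In exchange, your approach proves the slightly stronger quantitative fact that the motion space has dimension at least $\binom{d+1}{2}+\binom{d-k}{2}$, where $k=|X|-1$. Either route gives the lemma; both are worth knowing.
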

\begin{sproof} Necessity is obvious since, for any generic realisation $(G,p)$ in $\R^d$, we can rotate a component of $G-X$ about a $(d-2)$-dimensional affine subspace containing $p(X)$ if $|X|<d$, and reflect it in a $(d-1)$-dimensional affine subspace containing $p(X)$ if $|X|=d$. We refer the reader to \cite{Wchapter} for  proofs of sufficiency.  
\end{sproof}

\begin{lemma}\label{lem:sub} Let $G_i=(V_i,E_i)$ be a graph for $i=1,2,3$, with $V_1\cap V_2=X=V_1\cap V_3$.\\
(a) If $G_1\cup G_2$ and $G_3$ are both rigid in $\R^d$ and $|X|\geq d$, then $G_1\cup G_3$ is rigid in $\R^d$.\\
(b) If $G_1\cup G_2$ and $G_3$ are both globally rigid in $\R^d$ and $|X|\geq d+1$, then $G_1\cup G_3$ is globally rigid in $\R^d$.
\end{lemma}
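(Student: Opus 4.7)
The plan is to prove both parts by working inside the three-part union $G_1 \cup G_2 \cup G_3$, using the hypothesis on $G_1 \cup G_2$ to control $G_1$ across the interface $X$ and then substituting $G_3$ for $G_2$. After relabelling, I may assume $V_2 \cap V_3 = X$ so that the three vertex sets pairwise intersect exactly in $X$.

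For (a), I would take a generic realisation $(G_1 \cup G_3, p)$ in $\R^d$ and extend it to $p^*$ on $V_1 \cup V_2 \cup V_3$ by placing the vertices of $V_2 \setminus X$ at points with coordinates algebraically independent over $\mathbb{Q}(p)$. Given any infinitesimal motion $\dot p$ of $(G_1 \cup G_3, p)$, its restriction to $V_3$ is an infinitesimal motion of $(G_3, p|_{V_3})$, which is generic and hence infinitesimally rigid by Theorem \ref{thm:AR}; so after subtracting a suitable global infinitesimal isometry, I may assume $\dot p|_{V_3} = 0$, in particular $\dot p|_X = 0$. Extend $\dot p$ to $\dot p^*$ on $V_1 \cup V_2 \cup V_3$ by $\dot p^*|_{V_2 \setminus X} = 0$. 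An edge-by-edge check through the three types of $G_2$-edges (both endpoints in $X$, one in $X$, both in $V_2 \setminus X$) shows that $\dot p^*|_{V_1 \cup V_2}$ is an infinitesimal motion of the generic (hence infinitesimally rigid) framework $(G_1 \cup G_2, p^*|_{V_1 \cup V_2})$, so it is a trivial motion that vanishes on $V_2$. Combined with the generic points of $V_3$ (on which $\dot p$ also vanishes), this pins the trivial motion to zero, yielding $\dot p|_{V_1} = 0$ and hence $\dot p$ was trivial.

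For (b), I would use the realisation definition of global rigidity directly. Let $(G_1 \cup G_3, r)$ be a generic framework and suppose $(G_1 \cup G_3, r')$ has the same edge lengths. Since $(G_3, r|_{V_3})$ is a generic realisation of the globally rigid graph $G_3$, it is congruent to $(G_3, r'|_{V_3})$, so after applying a suitable isometry I may assume $r'|_{V_3} = r|_{V_3}$ and hence $r'|_X = r|_X$. Extend $r$ to $\tilde r$ on $V_1 \cup V_2$ by placing $V_2 \setminus X$ at coordinates algebraically independent over $\mathbb{Q}(r)$, and define $\tilde r'$ by $\tilde r'|_{V_1} = r'|_{V_1}$ and $\tilde r'|_{V_2 \setminus X} = \tilde r|_{V_2 \setminus X}$. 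Then $\tilde r$ and $\tilde r'$ agree on $V_2$ and so give equal edge lengths on $G_2$, and they give equal edge lengths on $G_1$ by assumption. Since $(G_1 \cup G_2, \tilde r)$ is a generic realisation of a globally rigid graph, $\tilde r'$ is congruent to $\tilde r$ via some isometry $\phi$ fixing $X$ pointwise. Because $|X| \geq d+1$ and $r|_X$ is generic, $r(X)$ affinely spans $\R^d$ and so $\phi$ is the identity, giving $r'|_{V_1} = r|_{V_1}$ and completing the proof.

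The main technical step in each part is showing the resulting global isometry (trivial infinitesimal motion in (a), congruence in (b)) must be the identity. For (b) this is immediate from $|X| \geq d+1$. For (a) with only $|X| \geq d$, a small case analysis handles the possible residual rotation about $\mathrm{aff}(p(X))$: if $V_2 \setminus X$ or $V_3 \setminus X$ is non-empty, the extra generic point breaks the rotation, and the fully degenerate case $V_2 = V_3 = X$ with $|X|=d$ forces $G_3 = K_X$, so that $G_1 \cup G_3 \supseteq G_1 \cup G_2$ is immediately rigid.
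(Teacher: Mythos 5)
Your proof is correct and follows essentially the same approach as the paper's: in part (a) normalise the infinitesimal motion so that it vanishes on $V_3$ (hence on $X$) using the rigidity of $G_3$, transfer to an infinitesimal motion of $G_1\cup G_2$ that vanishes on all of $V_2$, and invoke the rigidity of $G_1\cup G_2$; part (b) is the analogous argument with finite congruences in place of infinitesimal motions. The paper phrases this via two compatible generic realisations with $p|_{G_1}=q|_{G_1}$ rather than a single extended realisation, but this is only a cosmetic difference; one small remark is that your closing case analysis about a ``residual rotation about $\mathrm{aff}(p(X))$'' is unnecessary, since a trivial infinitesimal motion vanishing on $d$ generic points is already zero (its skew-symmetric part would have rank at most one, hence rank zero).
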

\begin{sproof} (a) Choose generic realisations $p$ of $G_1\cup G_2$ and $q$ of $G_1\cup G_3$ in $\R^d$ such that $p|_{G_1}=q|_{G_1}$. Let $\dot q$ be an infinitesimal motion of $(G_1\cup G_3,q)$. Since $G_3$ is rigid in $\R^d$ we may assume (by composing $\dot q$ with a suitable trivial infinitesimal motion) that $\dot q|_{G_3}=0$. Then $\dot q$ corresponds to an infinitesimal motion $\dot p$ of $(G_1\cup G_2,p)$ with $\dot p|_{G_1}=\dot q|_{G_1}$ and $\dot p|_{G_2}=0$. Since $G_1\cup G_2$ is rigid and $|X|\geq d$ this gives $\dot p=0$. Hence $\dot q=0$ and $G_1\cup G_3$ is rigid.

Part (b) can be proved similarly.
\end{sproof}

\subsection{Linked pairs of vertices} 

Let $(G,p)$ be a $d$-dimensional framework. We say a pair of  vertices $\{u,v\}$ of $G$ are {\em linked} in $(G,p)$ if every continuous motion of its vertices in $\R^d$ which preserves the lengths of its edges, preserves the distance between $u$ and $v$. Thus $(G,p)$ is rigid if and only if all its pairs of vertices are linked. In the case when $p$ is generic, $\{u,v\}$ is linked in $(G,p)$ if and only $r_d(G+uv)=r_d(G)$; this follows from the fact that every infinitesimal motion of a generic framework can be extended to a finite motion. Hence '$\{u,v\}$-linkedness' is a generic property and we can define a pair of  vertices $\{u,v\}$ of $G$ to be {\em linked (in $G$) in $\R^d$} if they are linked in every (or equivalently some) generic realisation of $G$ in $\R^d$.

\begin{figure}[t]
\begin{center}
\includegraphics[scale=1.1]{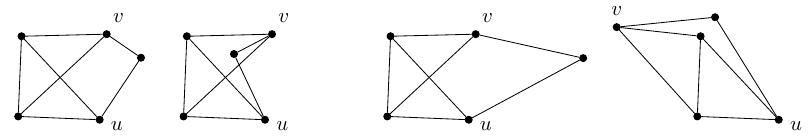}

\vspace{-0.5cm}
\end{center}
\caption{Two pairs of generic realisations of a graph $G$ in $\R^2$. The vertex pair $\{u,v\}$ is globally linked in the two frameworks on the left since these are the only two non-congruent realisations of $G$ in $\R^2$ with these edge lengths and the distance between $u$ and $v$ is the same in both of them. On the other hand, $\{u,v\}$ is not globally linked in the two frameworks on the right since they have the same edge lengths and the the distance between $u$ and $v$ is different. Thus $\{u,v\}$ is weakly globally linked, but not globally linked in $G$ in $\R^2$.}
\label{fig:gen1}
\end{figure}

Similarly, we say that $\{u,v\}$ is {\em globally linked} in $(G,p)$ if, for every framework $(G,q)$ in $\R^d$ which has the same edge lengths as $(G,p)$, we have $\|q(u)-q(v)|=\|p(u)-p(v)\|$.  It is not so  straightforward to define the generic version of this concept since $\{u,v\}$ can be globally linked in one generic realisation of $G$ in $\R^d$ and not in another, see Figure \ref{fig:gen1}.
Hence we say $\{u,v\}$ is {\em globally linked (in $G$) in $\R^d$}
if $\{u,v\}$ is globally inked in $(G,p)$ for {\em all} generic $p:V\to \R^d$, and 
$\{u,v\}$ is {\em weakly globally linked (in $G$) in $\R^d$}
if $\{u,v\}$ is globally inked in $(G,p)$ for {\em  some} generic $p:V\to \R^d$. It follows immediately that $G$ is globally rigid in $\R^d$ if and only every pair of vertices of $G$ is globally linked in $\R^d$. Somewhat surprisingly, Jord\'an and Vill\'anyi \cite{JV} have recently shown that global rigidity can also characterised by weak global linkedness.

\begin{theorem}\label{thm:JV} A graph $G$ is globally rigid in $\R^d$ if and only if every pair of vertices of $G$ is weakly globally linked in $\R^d$.
\end{theorem}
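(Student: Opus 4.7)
The ``only if'' direction is immediate: if $G$ is globally rigid in $\R^d$, then every generic realisation of $G$ has a unique realisation (up to congruence) with the same edge lengths, so every pair of vertices is globally linked in every generic realisation and hence is weakly globally linked.

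For the converse, I would argue by contradiction. Suppose every pair of vertices is weakly globally linked but $G$ is not globally rigid. First I note that $G$ must be rigid: weak global linkedness of $\{u,v\}$ supplies a generic realisation in which $\{u,v\}$ is globally, and hence in particular, linked; since linkedness at a generic realisation is characterised by the purely combinatorial condition $r_d(G+uv)=r_d(G)$, it follows that $\{u,v\}$ is linked in every generic realisation. As $(G,p)$ is rigid iff all its vertex pairs are linked, $G$ is rigid. So we may assume $G$ is rigid but not globally rigid; thus for every generic $p$ the fibre $f_G^{-1}(f_G(p))$ contains a realisation not congruent to $(G,p)$.

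To exploit this structurally, my plan is to parameterise the alternative realisations algebraically. The semi-algebraic set of pairs $(p,q)$ with $f_G(p)=f_G(q)$, modulo the action of $d$-dimensional congruences on $q$, decomposes into finitely many irreducible components; let $Y_1,\dots,Y_m$ be those that are distinct from the diagonal component and that dominantly project to the space $N$ of generic realisations of $G$ ($m\ge 1$ by the previous paragraph). Over each $Y_j$ one has, in a Zariski-local sense, an algebraic partner map $q=\sigma_j(p)$. For every pair $\{u,v\}$ the function
\[
\Phi^{u,v}_j(p) := \|\sigma_j(p)_u - \sigma_j(p)_v\|^2 - \|p_u-p_v\|^2
\]
is either identically zero on $Y_j$ or vanishes on a proper algebraic subset $L^{u,v}_j$ of $N$. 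Setting $L^{u,v}_j := N$ when $\Phi^{u,v}_j\equiv 0$, the pair $\{u,v\}$ is weakly globally linked precisely when $\bigcap_j L^{u,v}_j$ contains a generic point.

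The main obstacle is to exhibit some pair $\{u,v\}$ for which $\bigcap_j L^{u,v}_j = \emptyset$, contradicting the assumption of weak global linkedness. My strategy would be to exploit rigidity: each non-trivial branch $Y_j$ must alter at least one pairwise distance (otherwise $\sigma_j(p)$ would be congruent to $p$), so each $Y_j$ ``distinguishes'' a non-empty set of pairs. The difficulty is to combine these distinguished sets across branches into a single pair whose vanishing loci have empty joint intersection, and this is the step where the weak global linkedness of \emph{every} pair has to be brought to bear at once. I would attempt this by induction on $|V|$, reducing to a minimal counterexample via the rigidity-preserving operations of Section~\ref{sec:induct}; alternatively, one might try to leverage the stress-matrix characterisation of Theorem~\ref{thm:stress} to convert weak global linkedness of all pairs directly into the existence of a stress matrix of rank $|V|-d-1$ at some generic realisation, which would immediately yield global rigidity.
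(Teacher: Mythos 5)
Your only-if direction and the reduction to rigidity are sound, but the converse has a genuine gap, and the algebraic framework you set up is not correct as stated. The key issue is that the asserted equivalence ``$\{u,v\}$ is weakly globally linked precisely when $\bigcap_j L^{u,v}_j$ contains a generic point'' conflates real and complex geometry. A component $Y_j$ that dominates the configuration space over $\mathbb{C}$ need not have any \emph{real} points in its fibre over a given generic real $p$, and which branches contribute real partner realisations can vary as $p$ ranges over generic configurations. Over a generic $p$ where $Y_j$ has no real fibre, $Y_j$ imposes no constraint on global linkedness at all, even if $\Phi^{u,v}_j\not\equiv 0$. Consequently the set of $p$ at which $\{u,v\}$ is globally linked in $(G,p)$ is semi-algebraic rather than Zariski-closed, and a single non-vanishing $\Phi^{u,v}_j$ does not rule out weak global linkedness. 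This is exactly why the distinction between global linkedness and weak global linkedness is non-trivial, and it is the reason your scheme cannot extract the desired pair directly from the existence of one non-trivial branch.

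Beyond that, you explicitly leave the crucial step --- producing a pair $\{u,v\}$ whose joint vanishing locus avoids all generic points --- as a plan with two fallbacks, neither of which is worked out. A naive induction via deletion or contraction does not obviously preserve the hypothesis that \emph{every} pair is weakly globally linked; and converting distance-level information into a max-rank equilibrium stress is itself a substantial task, essentially re-deriving a form of Theorem~\ref{thm:stress}. This step is the whole content of the result, not book-keeping. Note also that the survey states Theorem~\ref{thm:JV} without proof, attributing it to~\cite{JV}; the Jord\'an--Vill\'anyi argument is combinatorial and structural (a characterisation of weak global linkedness via rigidity matroid connectivity and small separators) rather than an algebraic parametrisation of the realisation variety, so what you propose is neither the cited approach nor a complete alternative.
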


\section{Rigidity and global rigidity in dimensions at most two}

\subsection{Rigidity}
We have already seen that Maxwell's independence criterion characterises rigidity on the line. Pollaczek-Geiringer \cite{PG} showed that it also characterises generic rigidity in the plane in 1935 but her result went largely unnoticed until it was rediscovered by Laman \cite{Lam} in 1970. 
Pollaczek-Geiringer's original paper was brought to the attention of the current day rigidity community by Brigitte Servatius in 2010.

\begin{theorem} \label{thm:PG} A graph with at least two vertices is rigid in $\R^2$ if and only if it has a spanning subgraph $G=(V,E)$ which satisfies $|E|=2|V|-3$ and $i_G(X)\leq 2|X|-3$ for all $X\subseteq V$ with $|X|\geq 2$.
\end{theorem}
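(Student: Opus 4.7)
The plan is to prove both directions using the framework of $(2,3)$-sparsity and matroid induction via Henneberg operations.

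For \emph{necessity}, suppose $G$ is rigid in $\R^2$ with $|V|\geq 2$. Then $r_2(G)=2|V|-3$ (by definition of rigidity when $|V|\geq 3$, and directly for $|V|=2$). A basis of $\MR_2(G)$ is a spanning $\MR_2$-independent subgraph with $2|V|-3$ edges, and Maxwell's independence criterion (Lemma~\ref{lem:max}) yields the required sparsity condition $i(X)\leq 2|X|-3$ on every $X$ with $|X|\geq 2$ (the bound holding trivially when $|X|=2$).

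For \emph{sufficiency}, I would argue that every $(2,3)$-tight graph $G$ can be built from $K_2$ by a sequence of $2$-dimensional $0$- and $1$-extensions, and then invoke Lemmas~\ref{lem:0extension} and~\ref{lem:1extension} inductively. Base case: $K_2$ is rigid. For the inductive step, assume $G$ is $(2,3)$-tight with $|V|\geq 3$. Summing degrees gives $\sum_v d(v) = 4|V|-6 < 4|V|$, so $G$ has a vertex $v$ with $d(v)\in\{2,3\}$ (note $(2,3)$-sparsity forces $d(v)\geq 2$ since otherwise $v$ could be removed and we could never recover $2|V|-3$ edges). If $d(v)=2$, then $G-v$ is $(2,3)$-tight (easy verification), so by induction $G-v$ is rigid, and the $0$-extension lemma gives rigidity of $G$. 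If $d(v)=3$ with neighbours $a,b,c$, I would show that for some pair, say $\{a,b\}$, the graph $G':=G-v+ab$ is again $(2,3)$-tight; then by induction $G'$ is rigid, and the $1$-extension lemma (Lemma~\ref{lem:1extension}) shows $G$ is rigid.

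The \textbf{main obstacle} is the last claim: that an admissible inverse $1$-extension exists at a degree-$3$ vertex. The proof proceeds by contradiction and is the technical heart of the argument. Suppose no pair in $\{\{a,b\},\{b,c\},\{a,c\}\}$ can be added to $G-v$ without violating sparsity. Then for each such pair there is a vertex set $X\subseteq V\sm\{v\}$ containing that pair with $i_{G-v}(X)=2|X|-3$; call such a set \emph{critical}. Using submodularity of $i_G$ together with the identity $|X\cap Y|+|X\cup Y|=|X|+|Y|$, one shows that the union of two critical sets sharing at least one vertex is again a $(2,3)$-tight set in $G-v$. Iterating, one builds a critical set $X^*\subseteq V\sm\{v\}$ containing all three of $a,b,c$. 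But then $X^*\cup\{v\}$ satisfies
\[
i_G(X^*\cup\{v\}) = i_{G-v}(X^*) + 3 = 2|X^*|-3 + 3 = 2|X^*\cup\{v\}|-2,
\]
contradicting the $(2,3)$-sparsity of $G$. Hence an admissible neighbour pair exists, completing the induction.

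I would finally note the only subtlety: one must check that the inverse $1$-extension $G'=G-v+ab$ is actually a simple graph, i.e.\ that $ab\notin E(G)$. If $a,b,c$ induce a triangle in $G$, then $\{v,a,b,c\}$ spans $6$ edges, forcing $i_G(\{v,a,b,c\})=6>2\cdot 4-3$, contradicting sparsity; so at least one non-adjacent pair among the neighbours exists, and the argument above selects it.
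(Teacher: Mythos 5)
Your overall strategy — necessity via Maxwell's criterion, sufficiency via Henneberg $0$- and $1$-extension induction — matches the paper's first proof exactly. The necessity direction, the degree-count showing a vertex of degree $2$ or $3$ exists, the degree-$2$ case, and the observation that $N_G(v)$ cannot induce a triangle are all fine.

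However, the technical heart of the degree-$3$ case contains a false lemma. You claim: ``the union of two critical sets sharing at least one vertex is again a $(2,3)$-tight set in $G-v$.'' This is not true. The supermodularity inequality $i(X)+i(Y)\leq i(X\cup Y)+i(X\cap Y)$ combined with $i(X)=2|X|-3$, $i(Y)=2|Y|-3$ gives
\[
i(X\cup Y)\;\geq\; 2|X\cup Y|-3 \;+\;\bigl(2|X\cap Y|-3-i(X\cap Y)\bigr),
\]
and the bracketed term is nonnegative only when $|X\cap Y|\geq 2$ (so that $(2,3)$-sparsity applies to $X\cap Y$). When $|X\cap Y|=1$ the bracket equals $-1$, and the conclusion fails. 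A concrete counterexample: in the path $a\mbox{--}b\mbox{--}c$, the sets $\{a,b\}$ and $\{b,c\}$ are both critical and share the vertex $b$, but $\{a,b,c\}$ has $i=2<3=2\cdot 3-3$. Since your three candidate critical sets $X_{ab},X_{bc},X_{ac}$ are only guaranteed to overlap pairwise in a single vertex, the ``iterate the union'' step does not go through as written; you would need to split into the case where some pairwise intersection has size at least $2$ (union is then tight) and the case where all three pairwise intersections are singletons, in which case a more delicate count (using that at least one of $ab,bc,ac$ is a non-edge) is needed to reach a sparsity violation when $v$ is added back.

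The paper sidesteps this entirely with a matroid rank argument: it observes that if $G-v+uw$ is dependent in the $(2,3)$-sparsity matroid $M$ for every pair $uw\subseteq\{x,y,z\}$, then $r_M(G-v+xy+yz+xz)=r_M(G-v)$, and since $E(K_4)$ on $\{v,x,y,z\}$ is a circuit of $M$, adding the three edges $vx,vy,vz$ can raise the rank by at most $2$; this gives $r_M(G+xy+yz+xz)\leq r_M(G)-1$, contradicting monotonicity. This bypasses the case analysis on intersection sizes and edge multiplicities. You should either adopt that rank-function argument, or correct your union lemma to require intersections of size at least two and supply the extra case analysis for singleton intersections.
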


Necessity follows from Lemma \ref{lem:max}. To prove sufficiency it will suffice to show that every $(2,3)$-tight graph is rigid in $\R^2$. 
We will sketch two proofs of this statement  to illustrate proof techniques which are commonly used to verify rigidity.

\paragraph{First proof of Theorem \ref{thm:PG}} Let $G=(V,E)$ be a $(2,3)$-tight graph. We show that $G$ is rigid in $\R^2$ using  the inductive strategy outlined in Section \ref{sec:induct}.
Since $G$ is $(2,3)$-tight, $G$ has a vertex $v$ of degree two or three. If $d_G(v)=2$ then $G-v$ is $(2,3)$-tight, so is rigid in $\R^2$ by induction, and we can now apply Lemma \ref{lem:0extension} to deduce that $G$ is rigid in $\R^2$. Hence we may assume that $N_G(v)=\{x,y,z\}$. It will suffice to show that $G-v+uw$ is $(2,3)$-tight for some $u,w\in \{x,y,z\}$ since we can then deduce that $G$ is rigid in $\R^2$ by applying induction and  Lemma \ref{lem:1extension} to $G-v+uw$. 

Suppose for a contradiction that $G-v+uw$ is not $(2,3)$-tight for all $u,w\in \{x,y,z\}$. Let $M$ be the $(2,3)$-sparsity matroid of $K_V$. We have $r_{M}(G)=|E|$ and $r_{M}(G-v)=|E|-3$ since $G$ is $M$-independent. The assumption that $G-v+uw$ is $M$-dependent for all $u,w\in \{x,y,z\}$ now gives $r_{M}(G-v+xy+yz+xz)=r_{M}(G-v)$. Since $E(K_4)$ is a circuit in $M$, this implies that $r_M(G+xy+yz+xz)\leq r_{M}(G-v)+2= r_{M}(G)-1$, and contradicts the fact that  $r_{M}$ is non-decreasing. \qed

\medskip
Our second proof of Theorem \ref{thm:PG} is a version of a proof by Lov\'asz and Yemini \cite{LY} which was given by Whiteley in \cite{WW}.

\paragraph{Second proof of Theorem \ref{thm:PG}} 
Let $G=(V,E)$ be a $(2,3)$-tight graph. We will show that $G$ is rigid in $\R^2$ by constructing an infinitesimally rigid realisation of $G$. Choose distinct vertices $xy\in E$ and let $G^{xy}=(V,E^{xy})$ be the multigraph obtained by adding a new copy of $xy$ to $G$. The hypothesis that $G$ is $(2,3)$-tight implies that $G^{xy}$ is $(2,2)$-tight. Choose a generic map $q:E^{xy}\to \R^2$ and a reference ordering for the vertices in $V$. Let $A^{xy}$ be the $|E^{xy}|\times 2|V|$-matrix in which the entry in the row  indexed by the edge $e=uw$ with $u<w$ and columns indexed by the vertex $v$ is $q(e)$ if $v=u$, $-q(e)$ if $v=w$ and $\bf 0$ if $v\neq u,w$. Then $\rank A^{xy}=|E^{xy}|$ by the discussion in the last paragraph of Section \ref{sec:g+m}. Thus, if $A$ is the matrix obtained by deleting row $xy$ from $A^{xy}$, then we have $\rank A^{xy}=\rank A+1$. This implies that there exists a vector $\alpha^{xy}\in \ker A$ with $\alpha^{xy}_x\neq \alpha^{xy}_y$. By taking a suitable linear combination of the vectors $\alpha^{xy}$, we can construct a vector $\alpha\in \ker A$ with the property  that $\alpha_x\neq \alpha_y$ for all $xy\in E$. We can now define our desired framework $(G,p)$ by putting $p_v=\alpha_v^\perp$ for each $v\in V$, where $(a,b)^\perp:=(-b,a)$. Then each row of $R(G,p)$ will be  a non-zero scalar multiple of the corresponding row of $A$. Hence $\rank R(G,p)=\rank A=2|V|-3$ and $(G,p)$ is infinitesimally rigid. \qed

\medskip
Once we know that the 2-dimensional rigidity matroid is equal to the $(2,3)$-sparsity matroid, Theorem \ref{thm:edmonds} immediately  gives us an expression for the rank function of this matroid.  Lov\'asz and Yemini \cite{LY} showed that this expression can be simplified to
\begin{equation}\label{eq:LY}
r_2(G)=\min_\MX\left\{\sum_{X\in \MX}(2|X|-3)\right\}
\end{equation}
where the minimum is taken over all families $\MX$ of subsets of $V$, each of cardinality  at least two, which cover $E$ and are {\em $1$-thin} i.e. they satisfy $|X_i\cap X_j|\leq 1$ for all distinct $X_i,X_j\in \MX$. In addition they show that the right hand side of \eqref{eq:LY} achieves its minimum value when we take $\MX$ to be the vertex sets of the  {\em $2$-dimensional rigid components of $G$} i.e. the maximal subgraphs of $G$ which are rigid in $\R^2$.

\subsection{Global rigidity}

It is not difficult to see that a graph $G=(V,E)$ is globally rigid on the line if and only if $G$ is a complete graph on at most two vertices or is $2$-connected. Necessity follows from Lemma \ref{lem:glue}.
(Note that the redundant rigidity condition of Theorem \ref{thm:hend} follows from the connectivity condition in this 1-dimensional case.) We can prove  sufficiency by induction on $|E|$. We may assume that $|V|\geq 4$ and that $G$ is minimally 2-connected  i.e. $G-e$ is not 2-connected for all $e\in E$. Choose an ear decomposition of $G$. Since $G$ is minimally 2-connected, the last ear of this ear decomposition is an induced path of length at least two. Let $v$ be an internal vertex of this last ear and $x,y$ be the neighbours of $v$ in $G$. Then $G-v+xy$ is 2-connected and hence is globally rigid on the line by induction. We can now apply Lemma \ref{lem:1extension} to deduce that $G$ is globally rigid on the line. 

 A similar inductive proof technique was used in \cite{JJ} to show that Hendrickson's necessary conditions for global rigidity are also sufficient when $d=2$, based on the generalisation of ear decompositions from graphs to matroids given by Coullard and Hellerstein~\cite{CH}.

\begin{theorem}\label{thm:JJ}
A graph $G$ is globally rigid in $\R^2$ if and only if $G$ is a complete graph on at most $3$ vertices or $G$ is $3$-connected and redundantly rigid in $\R^2$.
\end{theorem}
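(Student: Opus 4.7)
The necessity direction is immediate from Theorem \ref{thm:hend} with $d=2$, since complete graphs on at most three vertices are trivially globally rigid in $\R^2$.

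For sufficiency I would argue by induction on $|V(G)|$, the base case $|V|\leq 3$ being vacuous. For the inductive step, suppose $G$ is $3$-connected and redundantly rigid in $\R^2$ with $|V|\geq 4$. The plan is to reduce $G$ to a smaller graph $G'$ that is still $3$-connected and redundantly rigid, by one of two operations: (i) deleting an edge $e$ such that $G-e$ retains both properties, or (ii) performing an \emph{inverse $1$-extension} at a degree-$3$ vertex $v$, replacing $v$ and its three incident edges by a single new edge between two of its non-adjacent neighbours. Case (i) lifts global rigidity because adding an edge back never destroys it (any globally rigid realisation of $G-e$ remains globally rigid when an extra edge-length constraint is imposed; equivalently, a maximum-rank stress matrix of $G-e$ extends by a zero entry to one for $G$ of the same rank, promoted via Theorem \ref{thm:stress}). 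Case (ii) lifts global rigidity directly by Lemma \ref{lem:1extension}.

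The heart of the argument is therefore the existence of such a reduction when $G$ is \emph{critical}, i.e.\ no single edge deletion preserves both $3$-connectivity and redundant rigidity. The key tool is the matroid ear-decomposition theorem of Coullard and Hellerstein~\cite{CH} applied to $\MR_2(G)$. Redundant rigidity guarantees that every edge lies in some $\MR_2$-circuit; combined with $3$-connectivity, one first establishes that $\MR_2(G)$ is in fact a \emph{connected} matroid, so it admits an ear decomposition by $\MR_2$-circuits $C_1,\ldots,C_k$. The structure of the last ear $C_k$, together with the $(2,3)$-sparsity bound of Lemma \ref{lem:max} (tight in $\R^2$ by Theorem \ref{thm:PG}) and the fact that any $3$-connected $(2,3)$-sparse graph must contain a vertex of degree exactly $3$, should produce a degree-$3$ vertex $v$ together with a non-adjacent pair $x,y$ in its neighbourhood such that $G' = G - v + xy$ is again $3$-connected and redundantly rigid; applying induction to $G'$ and Lemma \ref{lem:1extension} then completes the proof.

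The main obstacle is precisely this existence claim: it is not at all obvious that \emph{some} admissible $1$-reduction must exist, because each candidate reduction might fail either by exposing a $2$-vertex-cut in $G'$ or by producing an edge that belongs to no $\MR_2$-circuit in $G'$. Overcoming this requires a careful case analysis in which one trades reductions between different degree-$3$ vertices, using the ear decomposition to reroute $\MR_2$-circuits through potential small vertex cuts and restore both $3$-connectivity and redundant rigidity. This delicate exchange argument, carried out in \cite{JJ}, is the genuine technical crux of the proof.
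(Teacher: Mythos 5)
Your proposal matches the paper's proof strategy almost exactly: necessity from Hendrickson's Theorem~\ref{thm:hend}, and sufficiency by induction using the Coullard--Hellerstein ear decomposition of the connected matroid $\MR_2(G)$ to locate a degree-3 vertex $v$ in the last ear admitting an inverse $1$-extension that preserves $3$-connectivity and redundant rigidity, followed by Lemma~\ref{lem:1extension} to lift global rigidity. Two small points you would need to tidy in a full writeup: the induction must run on $|E|$ rather than $|V(G)|$, since your case~(i) does not reduce the vertex count; and the paper separately disposes of the case where $G$ itself is an $\MR_2$-circuit by invoking a prior result of Berg and Jord\'an before turning to the ear-decomposition argument.
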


The key idea in the proof of sufficiency in \cite{JJ} is to 
show that the 2-dimensional rigidity matroid of a $3$-connected, redundantly rigid graph $G=(V,E)$ is connected and then take an ear decomposition of $\MR_2(G)$. 
We can assume inductively that $G-e$ is not both  $3$-connected and redundantly rigid, and also, by a previous result of Berg and Jord\'an \cite{BJ}, that $G$ is not an $\MR_2$-circuit. We next show that  the subgraph of $G$ induced by the last ear in the ear decomposition of $\MR_2(G)$ contains a vertex $v$ of degree 3 such that $\MR_2(G-v+xy$) is connected (and hence $G-v+xy$ is redundantly rigid in $\R^2$) for some $x,y\in N_G(v)$. The argument is rather technical and an additional technical argument is needed to show we can find such a vertex $v$ such that $G-v+xy$ is 3-connected. Once we have such a vertex we can apply induction to $G-v+xy$ and then use Lemma \ref{lem:1extension} to deduce that $G$ is globally rigid. 

A simplified version of this proof appeared in \cite{ST}. A different, and even simpler, proof based on Theorem \ref{thm:JV} has recently been given by Jord\'an and Vill\'anyi in \cite{JV}. In fact, they go further and obtain a complete characterisation of when two vertices in a graph are weakly globally linked in $\R^2$. The related problem of  characterising when two vertices in a graph are globally linked in $\R^2$ is one of the few remaining open problems for generic rigidity in $\R^2$. A conjectured characterisation, together with a proof of sufficiency and other partial results, can be found in  \cite{JJS1, JJS2}.

\section{Rigidity and global rigidity in higher dimensions}

The problem of characterising graphs which are generically rigid or globally rigid in $\R^d$ is open for all $d\geq 3$. We will investigate how the rank formula for the 2-dimensional rigidity matroid given in \eqref{eq:LY} may be extended to higher dimensions. There are many results which show that special families of graphs are rigid in higher dimensions. We refer the reader to the comprehensive survey articles on rigidity in \cite{encyc} for information on these results.

\subsection{Rigidity in 3-space}\label{sec:3-rigidity}

We first note that Maxwell's necessary condition, Lemma \ref{lem:max},  is not sufficient to imply $\MR_3$-independence. The so called {\em double banana graph} of Figure \ref{fig:banana} satisfies Maxwell's condition for $\MR_3$-independence. Since it has $18$  edges, it would be rigid in $\R^3$ if it were $\MR_3$-independent and this would contradict Lemma \ref{lem:glue}(a). 

\begin{figure}[t]
\small
\begin{center}
\input{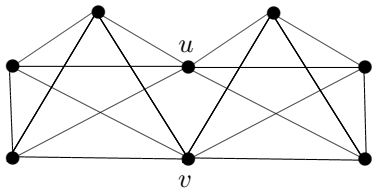}
\end{center}
\caption{The double banana graph.}
\label{fig:banana}
\end{figure}

To motivate a conjectured characterisation of rigidity in $\R^3$, we return to the expression for the rank function of the 2-dimensional rigidity matroid of a graph $G=(V,E)$ given in \eqref{eq:LY} and the result of \cite{LY} that the right hand side of \eqref{eq:LY} achieves its minimum value when we take the 1-thin cover $\MX$ to be the vertex sets of the 2-dimensional rigid components of $G$.  
It is easy to see that $r_2(G)$ is at most $\sum_{X\in \MX}(2|X|-3)$ for any cover $\MX$ of $E$ with vertex sets of size at least  two: this follows from the
fact that any base of $\MR_2(G)$ can contain at most $2|X|-3$ edges in $G[X]$ for each $X\in \MX$.

A similar reasoning tells us that 
\begin{equation}\label{eq:r3}
r_3(G)\leq \min\left\{|F|+\sum_{X\in \MX}(3|X|-6)\right\}
\end{equation}
where the minimum is taken over all $F\subseteq E$ and all covers $\MX$ of $E\sm F$ with sets of size at least three. 
 
Equality does not hold in \eqref{eq:r3} when we  take $G=(V,E)$ to be the double banana graph in Figure \ref{fig:banana} since, for each $X\subseteq V$ with $|X|\geq 3$, we have $i_G(X)\leq 3|X|-6$ and hence the right hand side of \eqref{eq:r3} is at least $|E|=18$. We can, however, obtain a tight upper bound on $r_3(G)$ if we take account of `hinges' in the cover. Consider the graph $G+uv$ and let $G_1$ and $G_2$ be the two copies of $K_5$ in $G+uv$. Let $B$ be a base of $\MR_2(G+uv)$ which contains $uv$ and let $B_i=B\cap E(G_i)$ for $i=1,2$. Then $|B_i|\leq 3|V(G_i)|-6=9$ for each $i=1,2$. Since $uv\in B_1\cap B_2$, this gives $r_3(G)= |B|\leq |B_1|+|B_2|-1= 17$. (We can also show that $r_2(G)\geq 17$ by choosing an arbitrary edge of $G$ and showing that $G-e$ can be constructed recursively from a path of length two by applying the 3-dimensional $0$- and $1$-extension operations. Or equivalently, that $G-e$ can be reduced to a path of length two by their inverse operations.)

We can generalise the argument in the preceding paragraph to obtain a better upper bound on $r_3(G)$ for an arbitrary graph $G=(V,E)$. Given a cover $\MX$  of $E$, we say that $\MX$ is {\em $t$-thin} if any two distinct sets in $\MX$ intersect in at most $t$ vertices, and define the {\em set of hinges of $\MX$} to be 
$$H(\MX)=\left\{\{x,y\}\in \binom{V}{2}:\{x,y\}\subseteq X_1\cap X_2 
\mbox{ for two distinct sets } X_1,X_2\in \MX\right\}.$$
For each hinge $h\in H(\MX\}$, the {\em degree of $h$ in $\MX$, }$d_\MX(h)$, is the number of sets in $\MX$ which contain $h$. 
The {\em $\MR_3$-closure of $G$}, $\cl_3(G)$, is the graph obtained from $G$ by adding an edge between all non-adjacent pairs of vertices which are linked in $\R^3$ (equivalently, adding  all edges $uv$ which satisfy $r_3(G+uv)=r_3(G)$ to $G$). We will refer to the maximal cliques in $\cl_3(G)$ as the {\em rigid clusters} of $G$. Dress et al \cite{D} conjectured that the cover of $E$ given by the rigid clusters of $G$ determines $r_3(G)$:

\begin{conj}
[Dress Conjecture] \label{con:dress} For any graph $G$ 
\begin{equation}\label{eq:dress}
r_3(G)=
|F|+\sum_{X\in \MX}(3|X|-6)
-\sum_{h\in H(\MX)}(d_\MX(h)-1)
\end{equation}
where $F$ is the set of edges of $G$ which belong to rigid clusters of  cardinality two and $\MX$ is the set of rigid clusters 
of cardinality at least three.
\end{conj}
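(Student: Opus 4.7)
The plan is to establish the two inequalities in \eqref{eq:dress} separately, while noting at the outset that Conjecture~\ref{con:dress} is a well-known open problem that has resisted proof for several decades, so the sketch below describes the natural line of attack together with an honest assessment of where it breaks down.

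For the upper bound $r_3(G) \leq |F| + \sum_X (3|X|-6) - \sum_h (d_\MX(h)-1)$, I would first reduce to the case in which every hinge pair of $\MX$ is already an edge of $G$. If $\{x,y\}$ is a hinge then $\{x,y\}$ lies in some rigid cluster and is therefore linked in $\R^3$, so adding $xy$ to $G$ leaves $r_3$, the cluster decomposition $\MX$, and the set $F$ unchanged. With this reduction in hand, take a base $B$ of $\MR_3(G)$ and count by clusters: each $X \in \MX$ contributes $|B \cap E(G[X])| \leq 3|X|-6$ (because $G[X]$ is rigid), each $e \in F$ is trivially bounded by $|F|$, and in $\sum_X |B \cap E(G[X])| + |B \cap F|$ every hinge edge $e$ is counted $d(e)$ times. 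This yields
$$|B| \;\leq\; |F| + \sum_{X \in \MX}(3|X|-6) \;-\; \sum_{e \in B,\ e \text{ a hinge edge}}(d(e)-1).$$
Comparing with the formula in \eqref{eq:dress}, one sees that the conjectured upper bound is equivalent to the assertion that a base $B$ of $\MR_3(G)$ can always be chosen to contain every hinge edge. This is plausible since hinge edges lie in the closure of their clusters, but making it rigorous at hinges of degree $\geq 3$, where several hinge edges through the same pair of vertices could in principle be forced out of a base by edges within the clusters, is already a genuine technical hurdle.

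For the lower bound I would try to exhibit an $\MR_3$-independent subset of $E$ whose size equals the right-hand side of \eqref{eq:dress}. The natural construction is: within each rigid cluster $X \in \MX$ choose a spanning independent set $I_X \subseteq E(G[X])$ with $|I_X| = 3|X|-6$ (such a set exists since $G[X]$ is rigid), include all edges of $F$, and at each hinge retain only one copy of the shared edge. The task is then to certify that the resulting union is independent in $\MR_3(G)$. The intended tools are the gluing and substitution Lemma~\ref{lem:sub} applied iteratively along the hinge structure of $\MX$, together with Henneberg-type constructions (Lemmas~\ref{lem:0extension}, \ref{lem:1extension}, \ref{lem:split}) to assemble compatible infinitesimally rigid realisations of each cluster that fit together consistently across shared hinges.

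The main obstacle, and the reason Conjecture~\ref{con:dress} remains open, is precisely this lower bound in the presence of hinges of degree three or more. When several rigid clusters meet along a common pair $\{x,y\}$, the dependency predicted by the formula is a genuinely global phenomenon with no local witness: it is not captured by submodularity of the set function $X \mapsto 3|X|-6$, nor by Lov\'asz--Yemini style thin covers, because two $\R^3$-rigid clusters sharing only a pair $\{x,y\}$ still admit a one-parameter rotational flex about the line through $p_x$ and $p_y$, and this flex interacts non-trivially with any further clusters sharing the same hinge. A successful proof would presumably require either new matroidal machinery going beyond Edmonds' theorem or a substantially more delicate geometric argument to realise the hinge structure consistently; to date \eqref{eq:dress} has been verified only for restricted families of graphs, such as those whose rigid clusters have a tree-like intersection pattern, or certain structured frameworks (for example body-bar-and-hinge systems) in which hinge interactions admit a clean combinatorial description.
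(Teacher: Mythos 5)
You correctly recognise that Conjecture~\ref{con:dress} is an open problem and that the paper offers no proof of it — it is stated precisely as a conjecture, not a theorem. Your sketch of a strategy together with an account of where it breaks down is in the right spirit, but a few points about the actual state of the art deserve sharpening.

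Your reduction of the upper-bound direction to ``there exists a base of $\MR_3(G)$ containing every hinge edge'' is a natural sufficient condition, but it is not the formulation used in the literature, and it should not be taken as equivalent to the $\leq$ direction of~\eqref{eq:dress}. What is actually known is~\eqref{eq:degen} from \cite{JJ06}: $r_3(G)\leq |F|+\val(\MX)$ holds for every $\MR_3$-\emph{degenerate} cover $\MX$. Degeneracy is a shelling-type condition — the hinges of $X_i$ lying in $\bigcup_{j<i}X_j$ must be $\MR_3$-independent at each step of a suitable ordering — which is not obviously the same as joint independence of the full set of hinge edges, nor obviously implied by it. To apply~\eqref{eq:degen} to the rigid-cluster cover one would need to show that this particular cover is $\MR_3$-degenerate, which the paper describes only as ``evidenced'' by the cofactor analogue (Theorem~\ref{thm:cjt}). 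So the $\leq$ direction of~\eqref{eq:dress} is genuinely open, not just the $\geq$ direction as your sketch tends to suggest. You should also be careful attributing the obstruction to hinges of degree three or more: the failure of the second Dress conjecture in \cite{JJ05} shows that the minimum over $2$-thin covers can be negative, which is a far more drastic pathology than the local rotational-flex picture you describe; the repair requires restricting the class of covers, not just tightening the count at high-degree hinges.

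Finally, the route the field actually regards as most promising is not the direct hinge-counting argument you outline but the transfer from the $\MC_2^1$-cofactor matroid: Theorem~\ref{thm:cjt} establishes the Dress-type identity for $c_2^1(G)$, and Whiteley's conjecture that $\MR_3(K_n)=\MC_2^1(K_n)$ (equivalently Conjecture~\ref{conj:maximality3}, via Theorem~\ref{thm:maximality3}) would transfer it to $r_3$. The concrete isolated missing step is Conjecture~\ref{con:cjt}, that the X-replacement and double-V-replacement operations preserve $\MR_3$-independence; both are already known to preserve $\MC_2^1$-independence. If you want to work on this problem, that is where effort is currently concentrated.
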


This conjecture is of strong theoretical interest but it would not immediately allow us to calculate $r_3(G)$ since it does not tell us how to determine $\cl_3(G)$. Dress et al offered a second conjecture in \cite{D} which would imply that the problem of determining whether $G$ is rigid in $\R^3$ belongs to NP $\cap$ coNP. They conjectured that $r_3(G)$ is equal to $\min\{|F|+\val(\MX)\} $ where the minimum
is taken over all $F\subseteq E$ and all $2$-thin covers $\MX$ of $E\sm F$ with vertex sets of cardinality  at least three, and 
$$\val(\MX):=\sum_{X\in \MX}(3|X|-6)
-\sum_{h\in H(\MX)}(d_\MX(h)-1)\}.$$ 
Unfortunately this conjecture turned out to be spectacularly false: it was shown in \cite{JJ05} that $\min\{|F|+\val(\MX)\}$ is negative for all graphs with 56 vertices.

We can try to repair the second Dress conjecture by restricting the pairs $(F,\MX)$ over which the minimum is taken to more closely resemble the cover of $E$ 
given by the rigid clusters of $G$. Lemma \ref{lem:glue} implies that the set $\MX$ of rigid clusters  of $G$  is 2-thin. In addition, Theorem \ref{thm:cjt} below gives some evidence that  $\MX$ is {\em $\MR_3$-degenerate} i.e.~$\MX$ 
 can be ordered as $(X_1,X_2,\ldots,X_t)$ in such a way that, for all $2\leq i\leq t$, the set of hinges of the subfamily $\{X_1,X_2,\ldots,X_{i}\}$ which are contained in $X_{i}$ is $\MR_3$-independent (when viewed as a set of edges of $K_V$).

It had already been shown in \cite[Lemma 5 and Conjecture 3.2]{JJ06} that, for any graph $G=(V,E)$,
\begin{equation}\label{eq:degen}
r_3(G)\leq \min\{|F|+\val(\MX)\} 
\end{equation}
where the minimum
is taken over all $F\subseteq E$ and all  $\MR_3$-degenerate covers $\MX$ of $E\sm F$ with sets of cardinality  at least three, and conjectured that equality holds.\footnote{The statement of \cite[Lemma 5]{JJ06} is  in terms of 'iterated covers' and contains the additional hypothesis that the cover is 2-thin. The formulation above is given as \cite[Lemma 6.2]{CJT}.} Theorem \ref{thm:cjt} below suggests that  this conjecture may remain true if we restrict our attention to a very simple type of $\MR_3$-degenerate cover. We say that a vertex cover $\MX$ of $E$ is {\em $k$-shellable} if  $\MX$ 
 can be ordered as $(X_1,X_2,\ldots,X_t)$ in such a way that, for all $2\leq i\leq t$, $|X_i\cap \bigcup_{j=1}^{i-1} X_j|\leq k$. 

\begin{conj}\label{con:cover}
 For any graph $G$ 
\begin{equation}\label{eq:cjt}
r_3(G)=\min\left\{
|F|+
\val(\MX)\right\}
\end{equation}
where the minimum
is taken over all $F\subseteq E$ and all $2$-thin, $4$-shellable covers $\MX$ of $E\sm F$ with sets of cardinality  at least five.
\end{conj}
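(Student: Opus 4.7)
The plan is to prove both inequalities implicit in the claimed equality. For the inequality $r_3(G) \leq \min\{|F|+\val(\MX)\}$ over 2-thin, 4-shellable covers with parts of size $\geq 5$, I would appeal to the inequality (\ref{eq:degen}) over the strictly larger class of $\MR_3$-degenerate covers and verify that every cover in the restricted class is $\MR_3$-degenerate. Concretely, fix a shellable ordering $X_1,\dots,X_t$ and, for each $i\geq 2$, put $W_i = X_i \cap \bigcup_{j<i} X_j$, so that $|W_i|\leq 4$. Every hinge of $\{X_1,\dots,X_i\}$ contained in $X_i$ is an edge of $K_{W_i}$; since $K_4$ is a base of $\MR_3(K_4)$, the full edge set of $K_{W_i}$ is $\MR_3$-independent, so any subset of hinges is too. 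Restricting the minimum to a smaller class only increases it, giving the desired inequality.

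The real content is the reverse inequality: exhibiting, for each $G$, a specific 2-thin, 4-shellable cover $\MX^*$ of $E\setminus F^*$ with parts of size $\geq 5$ for which $|F^*|+\val(\MX^*)\leq r_3(G)$. My first attempt would be to take $\MX^*$ to be the family of rigid clusters of $G$ of size at least $5$, with $F^*$ the remaining edges. Lemma \ref{lem:glue}(a) applied to pairs of rigid clusters already forces any two to meet in at most $3$ vertices; a separate argument using that a shared triangle (together with rigidity of both clusters) would force the union to be rigid, and so collapse the two clusters into one, should sharpen this to $|X\cap X'|\leq 2$, giving 2-thinness. Assuming the Dress Conjecture \ref{con:dress}, the value $|F^*|+\val(\MX^*)$ would equal $r_3(G)$ after a careful bookkeeping step that re-assigns rigid clusters of sizes $3$ and $4$ into $F^*$ without changing the total. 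It would then remain to show that $\MX^*$ admits a shellable ordering with intersections of size at most $4$; the natural strategy is a greedy ordering driven by the intersection graph of the clusters, using 2-thinness to bound $|W_i|$ coarsely and then refining via the rigidity structure to the sharper bound $|W_i|\leq 4$.

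The hard part is certainly this reverse inequality, and it splits into two genuinely difficult sub-problems. First, the equality $r_3(G) = |F^*|+\val(\MX^*)$ for the rigid-cluster cover is essentially the Dress Conjecture \ref{con:dress}, which has been open for decades and for which no reduction to standard matroid-union or matroid-intersection techniques is currently known. Second, even granting Conjecture \ref{con:dress}, establishing 4-shellability of the rigid-cluster cover requires a structural theorem about how rigid clusters in $\R^3$ can share vertices; the cautionary 56-vertex example from \cite{JJ05} shows that such structural statements in dimension three are fragile and can fail badly if the hypotheses are not chosen carefully. I would therefore treat a full proof as out of reach with present techniques and attack special cases first: graphs whose rigid clusters are all complete (where the cover is dictated by the maximal cliques of $\cl_3(G)$), graphs of bounded tree-width (amenable to dynamic programming on a tree decomposition), and graphs built by Henneberg-type 0- and 1-extensions (Lemmas \ref{lem:0extension}, \ref{lem:1extension}) from a small seed, with the aim of surfacing the structural cluster-intersection lemma that any general proof would need.
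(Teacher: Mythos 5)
The statement you were given is Conjecture \ref{con:cover}, which the paper leaves open; the paper proves only the analogous result for the $\MC_2^1$-cofactor matroid (Theorem \ref{thm:cjt}), not for $\MR_3$. Your treatment of the $\leq$ direction is correct and matches the paper's one-line remark that every $4$-shellable cover is $\MR_3$-degenerate because every graph on at most four vertices is $\MR_3$-independent: for a shellable ordering with $W_i = X_i \cap \bigcup_{j<i} X_j$, every hinge of $\{X_1,\dots,X_i\}$ contained in $X_i$ lies in $K_{W_i}$ with $|W_i|\leq 4$, and $E(K_4)$ is $\MR_3$-independent, so restricting \eqref{eq:degen} to this subclass of covers gives the upper bound on $r_3(G)$. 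Your assessment of the reverse inequality is likewise sound and honest: it is at least as strong as Conjecture \ref{con:dress}, which is open, and you correctly flag both the dependence on that conjecture and the missing structural lemma that would be needed to prove $4$-shellability of the rigid-cluster cover. One small simplification: Lemma \ref{lem:glue}(a) with $d=3$ already yields $2$-thinness of the rigid-cluster cover directly, since if two maximal cliques of $\cl_3(G)$ shared three or more vertices their union would again be a clique of $\cl_3(G)$, contradicting maximality; the intermediate bound of $3$ followed by a shared-triangle argument is not needed. Your conclusion that a complete proof is out of reach with present techniques and that one should target special cases is the appropriate one for this conjecture.
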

\noindent
Note that every $4$-shellable cover is $\MR_3$-degenerate since every graph on at most four vertices is $\MR_3$-independent

Clinch, Jackson and Tanigawa \cite[Theorems 6.3 and 6.1]{CJT} show that Conjectures \ref{con:dress} and \ref{con:cover} both hold if we replace the 3-dimensional rigidity matroid by another matroid on $\binom{V}{2}$, the {$\MC_2^1$-cofactor matroid}, which features in the study of bivariate splines.  
More generally, for any $d\geq 1$ and any graph $G=(V,E)$, the {\em $\MC_{d-1}^{d-2}$-cofactor matroid of $G$}, $\MC_{d-1}^{d-2}(G)$, is the row matroid of the $|E|\times d|V|$ matrix 
obtained by choosing a reference ordering for $V$ and a generic map $p:V\to \R^2$, and taking  the row of the matrix indexed by  $e=uv$ with $u<v$  to be:
\[
\kbordermatrix{
  & &  u & & v & \\
 e=uv & 0 \dots 0 & g_d(p_u-p_v) & 0\dots 0 & -g_d(p_u-p_v) & 0\dots 0
}
\]
where $g_d:(x,y)\mapsto (x^{d-1},x^{d-2}y,\ldots,y^{d-1})$.
Let $c_{d-1}^{d-2}(G)$ denote the rank of $\MC_{d-1}^{d-2}(G)$.  A {\em $C_{d-1}^{d-2}$-rigid-cluster} of $G$ is a maximum clique in the `$\MC_{d-1}^{d-2}$-closure' of $G$.

\begin{theorem}\label{thm:cjt}
For any graph $G$ 
\begin{equation}\label{eq:cof}
c^1_2(G)=\min\left\{
|F|+
\val(\MX)\right\}
\end{equation}
where the minimum
is taken over all $F\subseteq E$ and all $2$-thin, $4$-shellable covers $\MX$ of $E\sm F$ with sets of cardinality  at least five. In addition, equality holds in \eqref{eq:cof} when we take $\MX$ to be the family of 
$C_2^1$-rigid-clusters of $G$ of size at least five and $F$ to be the set of edges of $G$ which are not covered by $\MX$.
\end{theorem}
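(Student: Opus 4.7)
My plan is to prove Theorem \ref{thm:cjt} by establishing two inequalities: the upper bound $c_2^1(G) \leq |F| + \val(\MX)$ for every 2-thin, 4-shellable cover $\MX$ of $E \setminus F$ with sets of cardinality at least five, and the matching lower bound realised by the canonical family $\MX^*$ of $C_2^1$-rigid-clusters of size at least five together with $F^*$ equal to the set of edges not in any such cluster. The upper bound is the cofactor analog of the easy direction of \eqref{eq:LY}, while the equality part encodes the real matroidal content of the theorem.

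For the upper bound, the basic local estimate is that $c_2^1(K_n) = 3n-6$ for every $n \geq 3$; this is the cofactor analog of Maxwell's bound and follows directly from the block form of the defining matrix of $\MC_2^1$. Hence for each $X \in \MX$ at most $3|X|-6$ rows of the cofactor matrix indexed by edges of $G[X]$ can be linearly independent. Summing $3|X|-6$ over $\MX$ and adding $|F|$ gives an upper bound for $c_2^1(G)$ that overcounts the hinge edges. The structural assumptions on $\MX$ handle this overcount precisely: 2-thinness forces every hinge to be a single edge, while 4-shellability provides an ordering $(X_1, \ldots, X_t)$ in which $X_i \cap \bigcup_{j < i} X_j$ spans at most four vertices and therefore induces a $\MC_2^1$-independent set of edges, since every graph on at most four vertices is $\MC_2^1$-independent. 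A standard submodular rank inequality, applied inductively along the shelling, then shows that the total overcount equals $\sum_{h \in H(\MX)}(d_\MX(h)-1)$, which yields the desired inequality.

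For the matching lower bound I work with $\MX^*$ and must verify (i) 2-thinness, (ii) 4-shellability, and (iii) tightness in \eqref{eq:cof}. Property (i) should follow from a cofactor analog of Lemma \ref{lem:glue}(a): two $C_2^1$-rigid graphs sharing at least three vertices have a $C_2^1$-rigid union, so by maximality distinct $C_2^1$-rigid-clusters meet in at most two vertices. For (ii) I would peel off, at each stage, a cluster whose intersection with the union of the remaining clusters has at most four vertices, using a counting argument on the hinge contributions to guarantee the existence of such a cluster. For (iii), equality is proved by induction on the shelling order: the base case is a single rigid cluster where $c_2^1(G[X]) = 3|X|-6$ by definition, and in the inductive step the newly restored rigid cluster $X_i$ contributes exactly $3|X_i|-6$ to the rank, with the interface edges — already independent and already counted — producing precisely the hinge correction $(d_{\MX^*}(h)-1)$.

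The hardest step will be the combined proof of shellability of $\MX^*$ and tightness of the rank bound, since the analogous statements for $\MR_3$ are known to be substantially harder (and Conjecture \ref{con:cover} for $\MR_3$ remains open). What should make the argument work specifically for $\MC_2^1$ is its richer algebraic structure: Whiteley's vertex-splitting theorem is known to hold cleanly for cofactor matroids, and the polynomial map $g_3:(x,y) \mapsto (x^2, xy, y^2)$ admits a Taylor-expansion style argument that is not available for the generic rigidity matrix. I expect the bulk of the technical work to go into showing that the rigid clusters can always be ordered to satisfy the 4-shellability constraint; this will likely require a careful analysis of how two rigid clusters interact when their intersection is a hinge, together with a delicate choice, at each stage, of which cluster to peel off in order to preserve the inductive hypotheses.
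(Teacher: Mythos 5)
Your upper bound direction is essentially correct and matches the paper's own observation: since every graph on at most four vertices is $\MC_2^1$-independent, every 2-thin, 4-shellable cover is ``$\MC_2^1$-degenerate'' in the sense of the discussion around \eqref{eq:degen}, and the cofactor analogue of that inequality then gives $c_2^1(G) \leq |F| + \val(\MX)$ for every qualifying pair $(F,\MX)$. Your observation that 2-thinness plus the gluing axiom (Lemma \ref{lem:glue}(a) transplanted to abstract 3-rigidity) forces distinct rigid clusters to meet in at most two vertices is also correct.

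The gap is in the lower bound, and you have in fact correctly located it yourself as the ``hardest step.'' Your plan is to directly prove that the rigid-cluster cover $\MX^*$ is 4-shellable and achieves equality, by ``peeling off'' a cluster with small interface and tracking rank inductively. But you offer no mechanism that guarantees such a peelable cluster exists at each stage, and the appeals to vertex splitting and to the ``Taylor-expansion'' structure of $g_3$ do not supply one. The route the paper actually attributes to Clinch, Jackson and Tanigawa is quite different and supplies precisely the missing structural leverage: one first proves the abstract-rigidity maximality result (Theorem \ref{thm:maximality3}, that $\MC_2^1$ is the \emph{unique} maximal abstract 3-rigidity matroid on $E(K_n)$), deduces from uniqueness the covering property of cyclic flats (every edge of a cyclic flat of $\MC_2^1$ lies in a copy of $K_5$ contained in that flat), and uses this to prove the ``proper $K_5$-cover'' rank formula, Theorem \ref{thm:cjtproper}. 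Theorem \ref{thm:cjt} is then derived from Theorem \ref{thm:cjtproper} by aggregating the $K_5$'s into rigid clusters. Without the $K_5$-covering property of cyclic flats, you have no way to certify that the rigid clusters of size at least five together with the leftover edges actually exhaust the rank, nor that they admit the required shelling order; the delicate inductive bookkeeping you envision has no foothold. If you want to salvage your direct approach you would have to rediscover, in some form, the cyclic-flat $K_5$-covering property --- at which point you are effectively reproving Theorem \ref{thm:cjtproper} first, which is what the paper's route does.
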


The $\MC_2^1$-cofactor matroid has many properties in common with the 3-dimensional rigidity matroid and the two matroids are conjectured to be identical by Whiteley in \cite{Wchapter}. Whiteley's conjecture, and hence also Conjectures \ref{con:dress} and \ref{con:cjt}, would follow from the following conjecture by a simple inductive argument.

\begin{conj}\label{con:cjt} Suppose $G=(V,E)$ is a graph and $v\in V$  with $N_G(v)=\{v_1,v_2,v_3,v_4,v_5\}$.\\
(a) If $v_1v_2,v_3v_4\notin E$ and $G-v+v_1v_2+v_3v_4$ is $\MR_3$-independent,
then $G$ is $\MR_3$-independent.\\
(b) If $v_1v_2,v_2v_3,v_iv_j,v_jv_k\not\in E$  for some $v_i,v_j,v_k\in N_G(v)$ with $v_j\neq v_2$, and $G-v+v_1v_2+v_2v_3$  and $G-v+v_iv_j+v_jv_k$ are both $\MR_3$-independent, then $G$ is $\MR_3$-independent.
\end{conj}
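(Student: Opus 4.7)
The natural approach is to adapt the special position strategy used in Lemma~\ref{lem:1extension} and Lemma~\ref{lem:split}: construct an infinitesimally $\MR_3$-independent realisation of $G$ by extending an infinitesimally independent realisation of the reduced graph and placing $p_v$ in a carefully chosen degenerate position so as to create row dependencies in the rigidity matrix that mirror the substituted edges.

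For part (a), choose an infinitesimally independent generic realisation $(G-v+v_1v_2+v_3v_4, p')$ in $\R^3$ and perturb so that the line $\ell_{12}$ through $p'_{v_1}, p'_{v_2}$ meets the line $\ell_{34}$ through $p'_{v_3}, p'_{v_4}$. Define $p$ by $p_u = p'_u$ for $u \neq v$ and take $p_v$ to be the intersection point of $\ell_{12}$ and $\ell_{34}$. The collinearity of $v, v_1, v_2$ forces the rows indexed by $vv_1, vv_2, v_1v_2$ of $R(G + v_1v_2 + v_3v_4, p)$ to be minimally linearly dependent, and similarly for $vv_3, vv_4, v_3v_4$. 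Hence
\[
\rank R(G, p) = \rank R\bigl((G - vv_1 - vv_3) + v_1v_2 + v_3v_4,\, p\bigr).
\]
The right-hand graph is the $3$-dimensional $0$-extension of $G - v + v_1v_2 + v_3v_4$ by the vertex $v$ with neighbourhood $\{v_2, v_4, v_5\}$, so Lemma~\ref{lem:0extension} together with the hypothesis yields $\rank R(G, p) = |E(G)|$, provided $p_v - p_{v_2}, p_v - p_{v_4}, p_v - p_{v_5}$ are linearly independent (arrange this by choosing $p_{v_5}$ generically) and $p|_{V-v}$ remains a regular point of the rigidity map of the reduced graph. The key technical ingredient here is a transversality argument: the locus in configuration space where $\ell_{12}$ and $\ell_{34}$ meet is codimension one, and one must verify that it intersects the Zariski-open set of configurations at which the reduced rigidity matrix attains its generic rank.

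For part (b) the analogous constraint---$v$ collinear with $v_1, v_2$ and with $v_2, v_3$---forces $p_{v_1}, p_{v_2}, p_{v_3}$ themselves to be collinear, a codimension-two degeneracy which may be incompatible with the generic rank of the reduced matrix $R(G - v + v_1v_2 + v_2v_3)$. This is precisely why the second hypothesis is invoked: if the first special position does not achieve maximum rank, switch to the alternate reduced graph $G - v + v_iv_j + v_jv_k$ and perform the analogous construction with $p_v$ on the line through $p_{v_i}, p_{v_j}, p_{v_k}$. Since $v_j \neq v_2$, the two collinearity loci define genuinely distinct subvarieties of configuration space. The main obstacle, and the source of the asymmetry with part (a), is to show that at least one of the two candidate degenerate positions is compatible with maximum rank of the corresponding reduced rigidity matrix. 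A plausible route is a limit argument in the style of Lemma~\ref{lem:split}: from a generic realisation of one reduced graph, approach the collinear configuration along a one-parameter family and track the rigidity matrix of $(G, p)$; if the rank drops in the limit, the resulting nontrivial dependency should yield an algebraic obstruction contradicting $\MR_3$-independence of the other reduced graph. Formalising this dichotomy---and ruling out the simultaneous failure of both reductions---will be the most delicate part of the proof.
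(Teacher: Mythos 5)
This statement is a \emph{conjecture}, not a theorem; the paper does not prove it, and indeed presents it as an open problem whose resolution (together with a simple induction) would settle both Whiteley's conjecture that $\MR_3 = \MC^1_2$ and the Dress conjecture. So there is no ``paper's own proof'' to compare against, and your proposal cannot be correct as a complete proof.

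That said, it is worth being precise about where the proposal stalls, because you have in fact located the genuine difficulty. Your part (a) is exactly the standard special-position argument for the X-replacement operation, which the paper notes \emph{does} work for the $\MC^1_2$-cofactor matroid. For $\MR_3$ the obstruction is the step you flag as a ``transversality argument'': the locus $Z$ in configuration space where the lines $\ell_{12}$ and $\ell_{34}$ meet is an irreducible codimension-one subvariety, and the locus where the reduced rigidity matrix drops rank is a (possibly reducible) closed subvariety $Y$ of codimension $\geq 1$. Nothing forces $Z \not\subseteq Y$; a codimension-one subvariety of affine space can perfectly well be contained in another codimension-one subvariety, and no dimension count or general-position argument rules this out. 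This is not a technicality to be ``verified'' --- it is precisely the open content of the conjecture, and the analogous containment is known to \emph{occur} for $\MR_d$ with $d \geq 4$ (which is why $K_{d+2,d+2}$ is an $\MR_d$-circuit there even though the X-replacement argument would predict independence). Part (b), the double-V-replacement, is formulated specifically to circumvent this: the hope is that the two incompatible special positions cannot \emph{both} lie in the bad locus. Your proposed limit/dichotomy argument is a reasonable heuristic, but you have not supplied any mechanism by which failure of one reduction produces an ``algebraic obstruction'' to independence of the other reduced graph, and it is unclear that any such mechanism exists at this level of generality. In short: the proposal correctly identifies the right strategy and the right pressure points, but those pressure points are exactly where a new idea is needed, and no such idea is present.
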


We refer to the operation which constructs $G$ from $G-v+v_1v_2+v_3v_4$ in (a) as the  {\em X-replacement operation}, and the operation which constructs $G$ from $G-v+v_1v_2+v_2v_3$  and $G-v+v_iv_j+v_jv_k$ in (b) as the  {\em double-V-replacement operation}. We can use a simple special position argument to show that the X-replacement operation preserves $C^1_2$-independence, see \cite{Wchapter}. This fact is used in \cite{CJT0} to deduce that  the double-V-replacement operation also preserves $C^1_2$-independence.

Theorem \ref{thm:cjt} follows from a different expression  for the rank function of the $C_2^1$-cofactor matroid given as \cite[Theorem 5.7]{CJT}. 
Given a graph $G=(V,E)$,  a {\em $K_5$-cover of $G$} is a family of edge sets of copies of $K_5$ in $K_V$, which cover $E$. A $K_5$-cover is {\em proper}  if it has an ordering $(C_1,C_2,\ldots,C_t)$ such that $C_i\not\subseteq \bigcup_{j=1}^{i-1}C_j$ for all $2\leq i\leq t$.

\begin{theorem}\label{thm:cjtproper}
For any graph $G$ 
\begin{equation}\label{eq:cof1}
c^1_2(G)=\min\left\{
|F|+\mbox{$\left|\bigcup_{C\in \MY} C\right|$}-|\MY|
\right\}
\end{equation}
where the minimum is taken over all $F\subseteq E$ and all proper 
$K_5$-covers $\MY$ of $G-F$.
\end{theorem}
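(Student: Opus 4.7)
The plan is to prove the two inequalities of the claimed formula separately. The upper bound is short and relies on submodularity together with the fact that every copy of $K_5$ in $K_V$ is a $C^1_2$-circuit (ten edges of cofactor rank nine), while the lower bound is the substantial direction and requires constructing a proper $K_5$-cover that realises the rank.

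For the upper bound, I would fix an admissible pair $(F,\MY)$ and an ordering $C_1,\ldots,C_t$ of $\MY$ witnessing properness. Writing $U_i=\bigcup_{j\leq i}C_j$ and $U=U_t$, the condition $C_i\not\subseteq U_{i-1}$ (vacuous when $i=1$) means that $U_{i-1}\cap C_i$ is a proper subset of the circuit $C_i$, and hence is $C^1_2$-independent. Submodularity of the $C^1_2$-rank then gives
\begin{equation*}
c^1_2(U_i)-c^1_2(U_{i-1}) \leq c^1_2(C_i)-c^1_2(U_{i-1}\cap C_i) = (|C_i|-1)-|U_{i-1}\cap C_i| = |C_i\setminus U_{i-1}|-1.
\end{equation*}
Telescoping gives $c^1_2(U)\leq |U|-|\MY|$, and since $E(G)\setminus F\subseteq U$, monotonicity of the rank then yields $c^1_2(G)\leq |F|+|U|-|\MY|$.

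For the lower bound I would proceed by induction on $|E(G)|$. In the base case $G$ is $C^1_2$-independent, so it contains no $K_5$ subgraph (since $K_5$ is a circuit), and the admissible pair $(F,\MY)=(E(G),\emptyset)$ attains the value $|E(G)|=c^1_2(G)$. For the inductive step, if $G$ is $C^1_2$-dependent I would pick an edge $e\in E(G)$ lying in a $C^1_2$-circuit and apply induction to $G-e$ to obtain an optimal pair $(F',\MY')$ with union $U'$, whose value is $c^1_2(G-e)=c^1_2(G)$. The goal is to exhibit a $K_5$-copy $C$ of $K_V$ with $e\in C$ and $C\setminus\{e\}\subseteq U'$; then the pair $(F',\MY'\cup\{C\})$ with $C$ adjoined last is admissible and proper for $G$, its value equals $|F'|+|U'|-|\MY'|$ (the $+1$ from $e$ entering $U$ is cancelled by the $-1$ from the new set in $\MY$), and this matches $c^1_2(G)$. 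If no such $C$ exists but $e$ is instead $C^1_2$-independent relative to $G-e$, the alternative modification is to set $F=F'\cup\{e\}$, $\MY=\MY'$, and the value increases by $1$ in step with the rank.

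The main obstacle is the structural lemma underlying the inductive reduction: whenever $e\in E(G)$ lies in a $C^1_2$-circuit, an optimal cover $\MY'$ of $G-e$ must always admit a $K_5$-copy $C\subseteq U'\cup\{e\}$ with $e\in C$. This is the cofactor analogue of the statement that in a graphic matroid every circuit lies in a sum of triangles, and I would expect its proof to rest on the X-replacement operation (which preserves $C^1_2$-independence) together with a careful refinement argument that rearranges $\MY'$ so as to expose the relevant $K_5$-copy. Without such a lemma the inductive step collapses, because a generic $C^1_2$-circuit through $e$ need not share enough edges with any single $K_5$-copy to allow the local extension, and this is where I expect the bulk of the technical work of \cite{CJT} to lie.
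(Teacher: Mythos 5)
Your upper bound argument is correct: the telescoping submodularity estimate on a proper ordering $C_1,\ldots,C_t$ gives $c^1_2\bigl(\bigcup_{i}C_i\bigr)\leq \bigl|\bigcup_i C_i\bigr|-t$, using that each $C_i$ is a $C^1_2$-circuit so $c^1_2(C_i)=|C_i|-1$ and that any proper subset of $C_i$ is $C^1_2$-independent, and combining this with subadditivity over $F$ and $\bigcup\MY$ gives $c^1_2(G)\leq |F|+\bigl|\bigcup\MY\bigr|-|\MY|$.

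For the lower bound you have correctly identified that the proof hinges on a single covering lemma and that this lemma is where almost all of the difficulty sits, but there are two problems with how you state and propose to obtain it. First, your formulation of the lemma (that \emph{any} optimal pair $(F',\MY')$ for $G-e$ admits a $K_5$-copy $C\subseteq \bigl(\bigcup\MY'\bigr)\cup\{e\}$ with $e\in C$) is not quite right: take $G=K_5$ and $e$ any edge, so $G-e$ is $C^1_2$-independent and $(F',\MY')=(E(G-e),\emptyset)$ is an optimal pair with empty union, in which no $K_5$-copy containing $e$ can live. The correct statement is the covering property for cyclic flats that the survey quotes from \cite{CJT}: for each cyclic flat $F$ of the $C^1_2$-cofactor matroid, every $e\in F$ is contained in a copy of $K_5$ inside $F$. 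That the ambient set is a cyclic flat — not the union of an arbitrary optimal cover — is essential. Second, the route to that covering lemma is not a local rearrangement argument on a cover. It is derived in \cite{CJT} from the fact that the generic $C^1_2$-cofactor matroid of $K_n$ is the \emph{unique maximal} abstract $3$-rigidity matroid on $E(K_n)$ (Theorem~\ref{thm:maximality3}, proved in \cite{CJT0}); the X-replacement operation you invoke appears as one ingredient of the maximality proof (via preservation of independence under double-V-replacement), but the bridge from there to the covering property runs through the abstract rigidity framework of Sections~\ref{sec:abstract}--\ref{sec:max}, not through a refinement of $\MY'$. The survey itself does not spell out the proof — it cites \cite[Theorem 5.7]{CJT} — but the surrounding discussion makes clear that the load-bearing lemma is the cyclic-flat covering property and that it rests on matroid maximality, a substantially heavier tool than your proposal suggests.
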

\noindent
This result can also be viewed as a characterisation of $c_2^1(G)$ in terms of `$\MC^1_2$-degenerate covers' of $E$ with vertex sets of cardinality 5 by putting   $\MX=\{V(C):C\in \MY\}$ and noting that
$\val \,\MX=|\bigcup_{C\in \MY} C|-|\MY|$.

Theorems \ref{thm:cjt} and \ref{thm:cjtproper} each imply that the problem of determining whether a graph $G=(V,E)$ satisfies $c^1_2(G)=3|V|-6$, belongs to NP $\cap$ coNP. On the other hand, there is no known polynomial algorithm for determining the value of the functions on the right hand side of \eqref{eq:cof} or\eqref{eq:cof1}.

\subsection{Rigidity in dimensions at least four}\label{sec:d4}
The problem of characterising when a graph is rigid in $\R^d$ seems to be significantly more difficult when $d\geq 4$. This is signified by the results of Bolker and Roth \cite{BR} which imply that  $K_{d+2,d+2}$ is a closed, non-rigid circuit in the $d$-dimensional rigidity matroid whenever  $d\geq 4$. This in turn implies that the $d$-dimensional X-replacement operation does not preserve $\MR_d$-independence when $d\geq 4$ (since $K_{d+2,d+2}$ can be constructed from $K_2$  by a sequence of  $d$-dimensional 0- and 1-extension operations and an X-replacement).  In addition, the $d$-dimensional version of  Conjecture \ref{con:dress} fails, since every $\MR_d$-rigid cluster of  $K_{d+2,d+2}$  is a copy of $K_2$. 

On the other hand, it is at least conceivable that a modified version of Theorem \ref{thm:cjtproper} may hold for $\MR_d$.  A {\em $\{K_{d+2}, K_{d+2,d+2}\}$-cover of  a graph $G=(V,E)$} is a family of edge sets of subgraphs of $K_V$ each of which is a copy of $K_{d+2}$ or  $K_{d+2,d+2}$. The following conjecture appeared  in the special case when $d=4$ as \cite[Conjecture 8.2]{CJT} and was then extended to arbitrary $d$ in \cite[Conjecture 6]{JTmax}.

\begin{conj}\label{con:cjtproper}
For any graph $G$ 
\begin{equation}\label{eq:cof2}
r_d(G)=\min\left\{
|F|+\mbox{$\left|\bigcup_{C\in \MY} C\right|$}-|\MY|
\right\}
\end{equation}
where the minimum is taken over all $F\subseteq E$ and all proper 
$\{K_{d+2}, K_{d+2,d+2}\}$-covers $\MY$ of $G-F$.
\end{conj}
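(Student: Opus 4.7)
First I would prove the upper bound
\[
r_d(G)\le |F|+\Bigl|\bigcup_{C\in\MY} C\Bigr|-|\MY|.
\]
The key fact is that each member of $\MY$ is an $\MR_d$-circuit: for $C=K_{d+2}$ this follows from Maxwell's bound (Lemma~\ref{lem:max}), since $\binom{d+2}{2}=d(d+2)-\binom{d+1}{2}+1$ and $K_{d+2}$ is rigid in $\R^d$, and a short edge-transitivity argument shows $K_{d+2}-e$ is independent for every edge $e$; for $C=K_{d+2,d+2}$ the circuit property is the Bolker--Roth statement recalled at the start of Section~\ref{sec:d4}. Hence each $C_i$ carries an equilibrium stress $\sigma_i$ of $\MR_d$ with support exactly $C_i$. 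Using the proper ordering $(C_1,\dots,C_t)$, pick for each $i$ an edge $e_i\in C_i\setminus(C_1\cup\cdots\cup C_{i-1})$; then $\sigma_j(e_i)=0$ for all $j<i$ while $\sigma_i(e_i)\ne 0$, so $\sigma_1,\dots,\sigma_t$ are linearly independent stresses of $\bigcup_i C_i$. Thus $r_d(\bigcup_i C_i)\le |\bigcup_i C_i|-t$, and since $E(G)\subseteq F\cup\bigcup_i C_i$ the inequality for $G$ follows by subadditivity of the matroid rank.

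\textbf{Hard direction.} For the matching lower bound I would try to exhibit a witnessing pair $(F,\MY)$ for every $G$, modelled on the inductive argument of Clinch, Jackson and Tanigawa for Theorem~\ref{thm:cjtproper}. The plan is induction on $|E(G)|$: if $G$ is $\MR_d$-independent, take $F=E$ and $\MY=\emptyset$; otherwise, select a short $\MR_d$-circuit $D\subseteq E(G)$, argue that $D$ is contained in a copy $K$ of $K_{d+2}$ or $K_{d+2,d+2}$ in $K_V$, choose an edge $e\in D$, and invoke the induction hypothesis on $G-e$. The aim is to upgrade a witnessing pair $(F',\MY')$ for $G-e$ to a pair for $G$ by either appending $E(K)$ to $\MY'$ (the proper ordering is automatically preserved because $e\in E(K)$ is new) or by absorbing $e$ into $F'$, while verifying that the corank equality in \eqref{eq:cof2} is maintained throughout.

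\textbf{Main obstacle.} The crucial structural step---that every $\MR_d$-circuit is supported inside a copy of $K_{d+2}$ or $K_{d+2,d+2}$---is a deep open problem about the short circuits of $\MR_d$ and encapsulates much of the difficulty of characterising rigidity in high dimension. Already the double banana of Figure~\ref{fig:banana} shows that $K_{d+2}$-covers alone do not suffice, which is precisely why the family $\{K_{d+2},K_{d+2,d+2}\}$ is required. A promising line of attack is to first prove the analogue of Theorem~\ref{thm:cjtproper} for the higher-dimensional $\MC_{d-1}^{d-2}$-cofactor matroid, exploiting the explicit polynomial structure of the cofactor matrix to control stresses, and then transfer the result to $\MR_d$ via Whiteley's conjectured identification of the two matroids. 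A complementary approach is to combine the vertex-splitting and gluing operations of Section~\ref{sec:induct}, which do preserve $\MR_d$-independence in every dimension, with a detailed case analysis of minimal circuits, even though the $X$-replacement argument that drives the $\MC_2^1$ proof breaks down in $\R^d$ for $d\ge 4$ because of the Bolker--Roth circuit $K_{d+2,d+2}$.
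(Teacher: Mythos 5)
This statement is an open conjecture in the paper --- the surrounding text only records that it was posed in \cite{CJT} for $d=4$ and extended to general $d$ in \cite{JTmax} --- so there is no proof in the paper to compare against, and your proposal cannot be a complete proof. The upper-bound direction you sketch is the routine half and it is correct for $d\geq 3$: $K_{d+2}$ is an $\MR_d$-circuit (one more edge than the bound \eqref{eq:max}, and $K_{d+2}-e$ is $\MR_d$-independent by Lemma~\ref{lem:0extension}); $K_{d+2,d+2}$ is an $\MR_d$-circuit by Bolker--Roth; and the proper ordering produces a triangular system showing the $|\MY|$ associated equilibrium stresses, extended by zero, are linearly independent. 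Rank subadditivity then gives $r_d(G)\le |F|+\bigl|\bigcup_{C\in\MY}C\bigr|-|\MY|$.

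Your strategy for the matching lower bound, however, relies on a false structural claim. It is not true that every $\MR_d$-circuit of $G$ --- even a shortest one --- is contained in a copy of $K_{d+2}$ or $K_{d+2,d+2}$ inside $K_V$: any circuit on more than $2(d+2)$ vertices cannot fit inside either graph, and such circuits exist for every $d$ (e.g.\ the wheels $W_n$, $n\geq 3$, are $\MR_2$-circuits of unbounded size, and repeated $d$-dimensional $1$-extensions of $K_{d+2}$ produce arbitrarily large rigid $\MR_d$-circuits). A graph whose smallest circuit is large gives your induction no foothold, so the scheme ``pick a circuit, embed it in a small clique, recurse'' breaks down at the very first step. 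The conjecture asserts a covering structure for the whole matroid, not a localisation of its circuits, and the actual proof of Theorem~\ref{thm:cjtproper} for the $\MC^1_2$-cofactor matroid takes a quite different route: it combines the maximality Theorem~\ref{thm:maximality3} with a covering property of cyclic flats (every element of a cyclic flat $F$ lies in a copy of $K_5$ inside $F$), rather than decomposing circuits. Your suggested detour through the cofactor matroid and Whiteley's identification conjecture could at best recover $d=3$, since for $d\geq 4$ the two matroids are already known to differ on $K_{d+2,d+2}$ (Lemma~\ref{lem:complete_bipartite}) --- which is exactly why the conjecture must allow both clique types. In short, the only portion of your proposal that is a proof is the upper bound; the key reduction you propose for the lower bound, as stated, would fail.
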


\subsection{Global Rigidity}
Connelly \cite{Con91} showed that the family of complete bipartite graphs provide examples which show that the  necessary conditions for generic global rigidity in $\R^d$ given in Theorem \ref{thm:hend} are not sufficient when $d\geq 3$.  The following  characterization of globally rigid complete bipartite graphs is stated without proof in \cite[Theorem 63.2.2]{JW}. A proof can be found at \cite{Jegres}.

\begin{theorem}
The complete bipartite graph $K_{s,t}$ is globally rigid in $\R^d$ if and only if $s,t\geq d+1$ and $s+t\geq \binom{d+2}{2}+1$.
\end{theorem}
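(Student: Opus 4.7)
For necessity, the bound $\min(s,t) \geq d+1$ is immediate from Hendrickson's theorem (Theorem~\ref{thm:hend}), since the vertex connectivity of $K_{s,t}$ equals $\min(s,t)$. For the bound $s+t \geq \binom{d+2}{2}+1$, I would appeal to the stress matrix criterion (Theorem~\ref{thm:stress}) together with the classical Bolker--Roth analysis of equilibrium stresses of complete bipartite frameworks, which gives an explicit description of the stress space of a generic realisation of $K_{s,t}$ in $\R^d$ in terms of the augmented coordinate matrices of the two sides. From this one deduces that whenever $s+t \leq \binom{d+2}{2}$, every stress matrix of $K_{s,t}$ has rank strictly less than the required value $s+t-d-1$, contradicting Theorem~\ref{thm:stress}.

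For sufficiency I would induct on $s+t$, splitting into cases according to $\min(s,t)$. The easy case is $\min(s,t) \geq d+2$: for every vertex $v$, the subgraph $K_{s,t}-v$ is some $K_{s',t'}$ with $\min(s',t') \geq d+1$ and $s'+t' \geq \binom{d+2}{2}$, hence is generically rigid in $\R^d$ by Bolker--Roth, whence Theorem~\ref{shin2rigid} yields the global rigidity of $K_{s,t}$. For the remaining case $\min(s,t) = d+1$, say $s=d+1$, I would induct on $t \geq \binom{d+1}{2}+1$. In the inductive step $t \geq \binom{d+1}{2}+2$, the graph $K_{d+1, t-1}$ is globally rigid by the induction hypothesis; adding any edge $xy$ between two vertices of the $s$-side preserves global rigidity; and then the $d$-dimensional 1-extension that deletes $xy$ and introduces a new $t$-side vertex adjacent to every vertex of the $s$-side recovers $K_{d+1, t}$, which is globally rigid by Lemma~\ref{lem:1extension}.

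The hardest step is the base case $K_{d+1, \binom{d+1}{2}+1}$, which cannot be reduced further. For $d \leq 2$ this is absorbed by the earlier 1- and 2-dimensional characterisations (Theorem~\ref{thm:JJ} handles $K_{3,4}$ in $\R^2$, as it is $3$-connected and redundantly rigid), but for $d \geq 3$ Theorem~\ref{shin2rigid} fails because $K_{d, \binom{d+1}{2}+1}$ is not generically rigid in $\R^d$. One must therefore construct by hand a generic realisation whose (essentially unique, by the Bolker--Roth dimension count that gives a one-dimensional stress space here) equilibrium stress has stress matrix of the maximum possible rank $\binom{d+1}{2}+1$. My plan is to place the $d+1$ vertices of the smaller side at an affinely independent configuration and the $\binom{d+1}{2}+1$ vertices of the larger side in sufficiently generic position, and then verify the rank directly via the Schur complement of the block decomposition $\Omega=\begin{pmatrix} D_1 & -W \\ -W^\top & D_2\end{pmatrix}$ of the stress matrix, concluding global rigidity by Theorem~\ref{thm:stress}.
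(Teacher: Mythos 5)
The paper itself does not prove this theorem; it is stated as known, attributed to \cite[Theorem 63.2.2]{JW} and with the proof deferred to the external reference \cite{Jegres}, so there is no in-paper argument to compare against. Your proposal therefore has to be assessed on its own merits.

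Your overall architecture is sound and matches the standard strategy for results of this kind: necessity of $\min(s,t)\geq d+1$ via Theorem~\ref{thm:hend}; necessity of the vertex count via the Bolker--Roth/Connelly analysis of stresses of complete bipartite frameworks (Connelly's ``separating quadric'' argument is the usual way to phrase it, and it is indeed the reason the threshold is $\binom{d+2}{2}+1$ rather than $\binom{d+2}{2}$); sufficiency via Theorem~\ref{shin2rigid} when $\min(s,t)\geq d+2$, using the Bolker--Roth rigidity criterion for $K_{s',t'}$ with $\min(s',t')\geq d+1$ and $s'+t'\geq\binom{d+2}{2}$; and a $1$-extension induction on $t$ when $\min(s,t)=d+1$, using Lemma~\ref{lem:1extension}. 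All of these steps are correct in outline, and your accounting of why the induction bottoms out exactly at $K_{d+1,\binom{d+1}{2}+1}$ is right (for $d\geq 3$, removing a vertex from the small side gives $K_{d,\cdot}$, which has too few edges to be rigid in $\R^d$, so Theorem~\ref{shin2rigid} cannot close the loop).

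The genuine gap is precisely the base case $K_{d+1,\binom{d+1}{2}+1}$ for $d\geq 3$, which you do not prove but only sketch a ``plan'' for. This is not a peripheral detail: it is the one place where neither of the two sufficiency mechanisms you invoke (vertex-redundant rigidity, $1$-extension) can be used, and it is the crux of the theorem. Constructing an infinitesimally rigid realisation whose (essentially unique) equilibrium stress has a stress matrix of rank $\binom{d+1}{2}+1$ requires an actual computation; the Schur-complement block form $\Omega=\begin{pmatrix} D_1 & -W \\ -W^\top & D_2\end{pmatrix}$ is a reasonable starting point, but whether $D_1$ is invertible and whether the Schur complement $D_2-W^\top D_1^{-1}W$ has the required rank $\binom{d}{2}$ for your proposed placement (small side affinely independent, large side generic) has to be verified, and this does not follow from anything already in the paper. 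You also implicitly rely on $K_{d+1,\binom{d+1}{2}+1}$ being a circuit in $\MR_d$ (so that the stress space is exactly one-dimensional), which is a nontrivial consequence of Bolker--Roth that should be stated explicitly. Until the base case is carried through, the proof of sufficiency is incomplete.

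A smaller issue worth flagging: for necessity of $s+t\geq\binom{d+2}{2}+1$, ``every stress matrix has rank $<s+t-d-1$'' is the right target, but the deduction from Bolker--Roth's description of the stress space is not immediate and needs to be written out --- the threshold $\binom{d+2}{2}$ (as opposed to $\binom{d+2}{2}-1$, which is the generic threshold for $s+t$ points to lie on a quadric) comes from the presence of the two extra free constants $c_A,c_B$ in Connelly's quadric-separator condition, and that bookkeeping is easy to get off by one.
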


This implies that $K_{5,5}$ is not globally rigid in $\R^3$ and, more generally, gives a finite family of graphs which satisfy the necessary conditions for global rigidity given in Theorem \ref{thm:hend}  
and are not globally rigid in $\R^d$, for any fixed $d$. 
Infinite families of such graphs were constructed by Frank and Jiang~\cite{FJ}  for each $d\geq 5$ and subsequently by Jord\'an, Kir\'aly and Tanigawa~\cite{JKT} for each $d\geq 3$. The smallest graph in their family for $d=3$ is shown in Figure \ref{fig:C6hinge}. To see that this graph $G$  is not globally rigid in $\R^3$, consider the graph $G'$ obtained from $G$ by replacing one of the $K_5$-subgraphs $H$ by a $K_4$-subgraph $H'$  with the same vertices of attachment as $H$. Let $F$ be the set of four edges of $H'$ which do not belong to any of the $K_5$-subgraphs of $G'$, and let $e\in F$. Then we can use \eqref{eq:degen} and the 2-thin,  $4$-shellable cover $\MX$ of $G'-F$ consisting of the five $K_5$-subgraphs  to show that $G'-e$ is not rigid in $\R^3$: we have $r_3(G'-e)\leq |F-e|+\val \,(\MX)=3+5\times 9-4=44=3|V(G')|-7$. This implies  that $G'$ is not redundantly rigid in $\R^3$ and hence is not globally rigid in $\R^3$ by Theorem \ref{thm:hend}. We can now use Lemma \ref{lem:sub}(b)
to deduce that  $G$ is not  globally rigid in $\R^3$. 

Lemma \ref{lem:sub}(b) also tells us that we can construct an infinite family of 4-connected graphs which are redundantly rigid but not globally rigid in $\R^3$ by replacing  a $K_5$-subgraph of the graph $G$ in Figure \ref{fig:C6hinge} by any  graph on at least five vertices which is globally rigid in $\R^3$.

\begin{figure}[t]
\begin{center}
\includegraphics[scale=0.6]{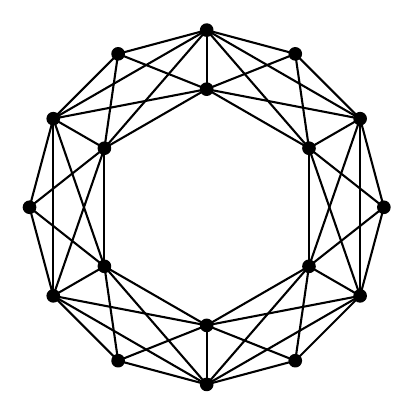}
\vspace{-0.5cm}
\end{center}
\caption{A $4$-connected graph which is redundantly rigid  but
not globally rigid in $\R^3$.}
\label{fig:C6hinge}
\end{figure}

The above discussion suggests the following new necessary condition for global rigidity in $\R^3$.

\begin{lemma}\label{lem:glob} Let $G=(V,E)$ be a graph on at least five vertices which is globally rigid in $\R^3$.  Suppose there exists a set   $F\subseteq E$  and a $3$-thin, $4$-shellable cover $\MX$ of $E\sm F$ with sets of cardinality at least five  such that $|F|+\val\, (\MX)=3|V|-6$. Then $|F|=\emptyset$ and $|\MX|=1$.
\end{lemma}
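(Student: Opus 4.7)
The plan is to induct on $|\MX|$, and the first step is to dispose of $F$ in every case. If $F$ contained an edge $e$, then $(F-e,\MX)$ remains a valid pair for the bound \eqref{eq:degen} applied to $G-e$, because $\MX$ still covers $E(G-e)\sm(F-e)=E\sm F$. This gives $r_3(G-e)\le |F|-1+\val(\MX)=3|V|-7$, so $G-e$ is not rigid and $G$ fails redundant rigidity; since $|V|\ge d+2=5$ this contradicts Theorem \ref{thm:hend}. Hence $F=\emptyset$, and in particular the base case $|\MX|=1$ of the induction is settled.

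For the inductive step, assume $|\MX|\ge 2$ and fix the $4$-shellability ordering $(X_1,\dots,X_t)$. Set $A_t=X_t\cap\bigcup_{j<t}X_j$, $G_2=G[X_t]$, and $G_1=G[V\sm(X_t\sm A_t)]$. Since $F=\emptyset$, every edge of $G$ lies in some $G[X_i]$: edges with $i<t$ sit in $G_1$ (because $X_i\cap X_t\subseteq A_t$ forces $X_i\subseteq V(G_1)$), while edges with $i=t$ sit in $G_2$. Thus $G=G_1\cup G_2$ with $V(G_1)\cap V(G_2)=A_t$, and the separation is proper: $X_t\sm A_t\ne\emptyset$ since $|X_t|\ge 5>|A_t|$, while $3$-thinness gives $X_j\sm X_t\ne\emptyset$ for each $j<t$. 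If $|A_t|\le 3$, the necessity direction of Lemma \ref{lem:glue} yields a noncongruent realisation of $G$ with identical edge lengths by reflecting or rotating $G_2-A_t$ in an affine subspace through $p(A_t)$, contradicting global rigidity. Since $|\MX|=2$ forces $|A_t|=|X_1\cap X_2|\le 3$, we may further assume $|\MX|\ge 3$ and $|A_t|=4$.

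In this remaining case, apply Lemma \ref{lem:sub}(b) with $G_3=K_{A_t}$: both $G=G_1\cup G_2$ and $K_4$ are globally rigid in $\R^3$ and $|A_t|=d+1=4$, so $G':=G_1\cup K_{A_t}$ is globally rigid in $\R^3$. Put $\MX'=\MX\sm\{X_t\}$ and $F^*=\binom{A_t}{2}\sm\bigcup_{j<t}\binom{X_j\cap X_t}{2}$. A routine check shows $\MX'$ is a $3$-thin, $4$-shellable cover of $E(G')\sm F^*$ with sets of cardinality at least five (inherited from $\MX$), and $|V(G')|\ge 5$ because $X_j\subseteq V(G')$ for any $j<t$. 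The central bookkeeping identity
\[
\val(\MX')=\val(\MX)-(3|X_t|-6)+\Bigl|\bigcup_{j<t}\binom{X_j\cap X_t}{2}\Bigr|
\]
holds because removing $X_t$ decreases $\sum_X(3|X|-6)$ by $3|X_t|-6$ and decreases the hinge correction by exactly the number of pairs inside $X_t$ that were hinges of $\MX$, namely those lying in some $X_j\cap X_t$ with $j<t$. Combining this with $|F^*|=6-\bigl|\bigcup_{j<t}\binom{X_j\cap X_t}{2}\bigr|$ and the tightness $\val(\MX)=3|V|-6$ yields $|F^*|+\val(\MX')=3|V(G')|-6$. The inductive hypothesis applied to $G'$ with $(F^*,\MX')$ now forces $|\MX'|=1$, so $|\MX|=2$, contradicting $|\MX|\ge 3$.

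The principal obstacle is the hinge bookkeeping in the last paragraph: one must verify that the hinges of $\MX$ contained in $X_t$ are precisely the pairs in $\bigcup_{j<t}\binom{X_j\cap X_t}{2}$, and that the loss in hinge contribution when $X_t$ is deleted cancels exactly against the newly uncovered pairs in $F^*\subseteq\binom{A_t}{2}$, so that tightness transfers faithfully from $(F,\MX)$ on $G$ to $(F^*,\MX')$ on $G'$ and licenses the inductive step.
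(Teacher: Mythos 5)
Your proof is correct and follows essentially the same line as the paper's: deduce redundant rigidity and $4$-connectivity from Theorem~\ref{thm:hend}, use that to kill $F$, identify the last shell $A_t$ (the paper's $U$) as a separator of size exactly four, replace $G[X_t]$ by $K_{A_t}$ to obtain a smaller globally rigid graph $G'$ via Lemma~\ref{lem:sub}(b), and close by applying the lemma inductively to $(G',F^*,\MX')$. Your $F^*$ coincides with the paper's $F'$ (both are the non-hinge pairs in $\binom{A_t}{2}$), and the bookkeeping identity you state is exactly what the paper leaves implicit.

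The two variations are small but worth noting. First, you induct on $|\MX|$ while the paper inducts on $|V|$; both parameters strictly decrease (indeed $X_t\setminus A_t\neq\emptyset$ since $|X_t|\geq 5>|A_t|=4$), so both work. Second, and more substantively, you deduce $|\MX|\geq 3$ directly: $|\MX|=2$ would give $A_t=X_1\cap X_2$, so $|A_t|\leq 3$ by $3$-thinness, contradicting $|A_t|=4$. The paper instead first applies its directly-proved ``$F=\emptyset$'' conclusion to the new triple to get $F'=\emptyset$, then argues that the six hinge-pairs in $\binom{U}{2}$ cannot all be accounted for by a single $X_j$ with $j<t$ under $3$-thinness, hence $|\MX|\geq 3$. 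Your route avoids invoking the first part of the lemma at this intermediate stage; the induction hypothesis alone (which forces both $F^*=\emptyset$ and $|\MX'|=1$) already delivers the contradiction. This is a genuine but mild streamlining of the same argument, not a different proof.
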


\begin{proof} Theorem \ref{thm:hend} and the hypothesis that $G$ is a globally rigid graph with at least five vertices imply that $G$ is 4-connected and redundantly rigid in $\R^3$. 

We first show that $F=\emptyset$. Suppose, for a contradiction, that  $F\neq \emptyset$ and choose $e\in F$. Then $|F-e|+\val\, (\MX)=3|V|-7$ and hence $G-e$ is not rigid in $\R^3$ by \eqref{eq:degen}, contradicting the redundant rigidity of $G$. Hence $F=\emptyset$.

To prove the second part of the lemma we assume, for a contradiction, that  $|\MX|\geq 2$. Let $(X_1,X_2,\ldots,X_t)$ be a `4-shellable ordering' of  $\MX$ and  put
$U=X_t\cap \bigcup_{i=1}^{t-1}X_i$.
 Then $|U|=4$ since $G$ is 4-connected and $X_t$ is the last set in the 4-shellable ordering of $\MX$. 

Let $F'$ be the  set of all edges of $K_U$ which are not hinges of $\MX$ and let $G'$ be obtained from $G$ by replacing the subgraph induced by $X_t$ by  $K_U$. Then $\MX':=\MX-X_t$ is a 3-thin,  4-shellable cover of $G'-F'$ and $|F'|+\val \, (\MX')=3|V(G')|-6$. If $F'\neq \emptyset$ then we could use the first part of the lemma to deduce that $G'$ is not globally rigid in $\R^3$. This would contradict Lemma \ref{lem:sub}(b), since $G$ is globally rigid and $G'$ is obtained from $G$ by replacing $G[X_t]$ by a copy of $K_4$. Hence $F'=\emptyset$ and every edge of $K_U$ is a hinge of $\MX$. Since $\MX$ is 3-thin this implies that $|\MX|\geq 3$, and hence $|\MX'|\geq 2$. We can now deduce by induction on $|V|$ that $G'$ is not globally rigid in $\R^3$. This again contradicts Lemma \ref{lem:sub}(b) and completes the proof of the lemma.
\end{proof}

Note that the lemma becomes false if we remove the hypothesis that $\MX$ is 3-thin. Consider the graph $G$ obtained by taking the union of two copies of  $K_5$ with four vertices in common. Then $G$ is globally rigid in $\R^3$ and the cover $\MX$ of $G$  consisting of the vertex sets of each of the $K_5$-subgraphs is $\MR_3$-degenerate and satisfies $\val\,(\MX)=3|V|-6$.

It is conceivable that the necessary condition for global rigidity given by Lemma \ref{lem:glob} is also sufficient for all graphs other than $K_{5,5}$. 

\begin{conj}
Let $G=(V,E)$ be a graph on at least five vertices which is not a copy of $K_{5,5}$. Then $G$ is globally rigid in $\R^3$ if and only if, for all $F\subseteq E$ and all 
3-thin, 4-shellable covers of $E\sm F$, 
$$|F|+\val\, (\MX)\geq 3|V|-6$$ 
with equality only when  $F=\emptyset$ and $\MX=\{V\}$.
\end{conj}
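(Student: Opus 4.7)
The necessity direction is precisely Lemma~\ref{lem:glob}, so the work is entirely in establishing sufficiency. My plan is to argue by contrapositive: assume $G$ is a graph on at least five vertices, $G\neq K_{5,5}$, and $G$ is not globally rigid in $\R^3$, and construct an explicit $3$-thin, $4$-shellable cover $(F,\MX)$ of $E$ with cover sets of size at least five that either satisfies $|F|+\val(\MX)<3|V|-6$ or realises equality in a nontrivial way (i.e.~with $F\neq\emptyset$ or $|\MX|\geq 2$).

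By Theorems~\ref{thm:hend} and~\ref{theorem:mconnected}, the failure of global rigidity (in the absence of a $K_{5,5}$-type obstruction, which is excluded by hypothesis) forces at least one of: a vertex cut of size at most three, an edge $e$ with $G-e$ not rigid in $\R^3$, a disconnection of $\MR_3(G)$, or a $K_4$-sum decomposition of the type ruled out by Lemma~\ref{lem:sub}(b) and exhibited in Figure~\ref{fig:C6hinge}. Each such obstruction should yield a witnessing cover. For a vertex cut $S$ with $|S|\leq 3$ and both sides of size at least five, the family $\MX=\{V_1,V_2\}$ is $3$-thin and $4$-shellable; using $|V_1|+|V_2|=|V|+|S|$ and subtracting the at most $\binom{|S|}{2}$ hinges of degree two gives $\val(\MX)=3|V|-6$ when $|S|=3$ and strictly less when $|S|\leq 2$, a nontrivial cover violating the hypothesis. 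The small-side case is handled by moving the few excess edges into $F$. The $K_4$-sum case is handled exactly by the five-clique cover of Figure~\ref{fig:C6hinge} computed just before Lemma~\ref{lem:glob}, which already realises $|F|+\val(\MX)=3|V|-6$ with $|\MX|=5$ and $|F|=4$.

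For the redundant rigidity obstruction, suppose $e\in E$ with $G-e$ not rigid in $\R^3$. If Conjecture~\ref{con:cjtproper} for $d=3$ holds, then $G-e$ admits a $3$-thin, $4$-shellable cover $(F',\MX')$ with $|F'|+\val(\MX')\leq 3|V|-7$; simply adding $e$ to $F'$ gives a cover of $G$ with value at most $3|V|-6$, again nontrivial. A parallel reduction should work for $\MR_3$-disconnection, separating the matroid components into the cover. An induction on $|V|$ then allows one to handle iterated obstructions, with the base case $|V|=5$ verified directly from the list of globally rigid graphs on five vertices.

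The main obstacle is that this program rests critically on Conjecture~\ref{con:cjtproper}: without the conjectured rank formula for $\MR_3$, there is no direct passage from the non-rigidity of $G-e$ to an explicit low-value cover, and the proof cannot be completed. A second, more philosophical obstacle is that the list of known obstructions to generic global rigidity in $\R^3$ (small cuts, redundancy failure, matroid disconnection, $K_4$-sums, and $K_{5,5}$-like bipartite examples) is not proved to be exhaustive; the present conjecture implicitly predicts that any further obstruction must also leave a combinatorial trace in the form of a low-value $3$-thin, $4$-shellable cover. Identifying all such obstructions and constructing the witnessing covers uniformly is where a genuinely new idea would be required, and it is why the conjecture appears to be as hard as the full combinatorial characterisation of generic global rigidity in $\R^3$.
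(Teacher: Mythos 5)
This statement is an \emph{open conjecture} in the paper; the authors do not prove it. What they establish is only the necessity direction: the inequality $|F|+\val(\MX)\geq 3|V|-6$ follows from \eqref{eq:degen} together with the fact that a globally rigid graph is rigid (so $r_3(G)=3|V|-6$), and the analysis of the equality case is Lemma~\ref{lem:glob}. Sufficiency is introduced with the hedge ``it is conceivable that'' and, if true, would resolve an open problem of Garamv\"olgyi, Gortler and Jord\'an. So there is no paper proof to compare your attempt against, and a genuine proof of sufficiency does not currently exist.

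Your sketch correctly reduces the problem to sufficiency and correctly constructs witnessing covers for the small vertex-cut obstructions (your computation $\val(\{V_1,V_2\})=3(|V|+|S|)-12-\binom{|S|}{2}$ is right), and you are candid that the argument cannot be closed. Two specific points to tighten up. First, to pass from the non-rigidity of $G-e$ to a low-value cover you invoke Conjecture~\ref{con:cjtproper}, which concerns proper $\{K_5,K_{5,5}\}$-covers and a different value function; the relevant conjectured rank formula here is Conjecture~\ref{con:cover}, which directly gives a $2$-thin (hence $3$-thin), $4$-shellable cover, though of course both remain open. Second, and fundamentally, your contrapositive strategy presupposes a complete list of obstructions to generic global rigidity in $\R^3$. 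Theorems~\ref{thm:hend} and~\ref{theorem:mconnected} give only necessary conditions, and the $K_4$-sum and $K_{5,5}$-type examples show these conditions are insufficient; but nobody has proved that a graph satisfying all of them and avoiding those explicit examples must be globally rigid. Without such a structural theorem there is no uniform way to produce the witnessing low-value cover, which is precisely why the statement is posed as a conjecture rather than a theorem. Your closing paragraph identifies exactly this obstacle, which is the correct assessment of where the difficulty lies.
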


If true, this conjecture would imply that the complete bipartite graph $K_{5,5}$ is the only 5-connected graph which is redundantly rigid but not globally rigid in $\R^3$, and hence give an affirmative answer to \cite[Problem 4.2]{GGJ}.

To our knowledge, there are no conjectured combinatorial characterisations of graphs which are globally rigid in $\R^d$ when $d\geq 4$. This makes the following recent result of Vill\'anyi \cite{V} even more impressive.

\begin{theorem}
\label{hcr}
Every $d(d+1)$-connected graph is globally rigid in $\R^d$.
\end{theorem}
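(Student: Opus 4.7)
The plan is to invoke Theorem \ref{shin2rigid}: to prove that $G$ is globally rigid in $\R^d$, it suffices to show that $G - v$ is rigid in $\R^d$ for every vertex $v$. Since $G$ is $d(d+1)$-connected, each $G - v$ has at least $d(d+1)$ vertices and is $(d(d+1) - 1)$-connected. The theorem therefore reduces to the auxiliary claim: \emph{every $(d(d+1) - 1)$-connected graph on at least $d+1$ vertices is rigid in $\R^d$}. For $d=1$ this is trivial, and for $d=2$ it is a (mild) strengthening of the classical Lov\'asz--Yemini $6$-connectivity theorem; the crux of the proof is to establish this auxiliary claim in all dimensions.

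My approach to the auxiliary claim is by contradiction. Suppose $H = (V,E)$ is $(d(d+1)-1)$-connected but not rigid in $\R^d$, so $H$ admits at least two distinct maximal rigid subgraphs. By Lemma \ref{lem:glue}(a), any two such subgraphs share at most $d-1$ vertices (otherwise their union would be a larger rigid subgraph, contradicting maximality). The goal is to leverage this ``small overlap'' property across the whole family of maximal rigid subgraphs covering $H$ to extract a vertex cut of size strictly less than $d(d+1) - 1$, contradicting the connectivity hypothesis. A natural route is a Menger-style argument: for a suitable vertex $v$, the internally disjoint paths between $v$ and the rest of $H$ must pass through boundary vertices of the maximal rigid subgraphs containing $v$, and the $(d-1)$-overlap bound restricts how many such paths can coexist. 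A careful count, treating the family of maximal rigid subgraphs as a highly constrained cover of $V$, should then force the required small cut.

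The main obstacle is that for $d \geq 3$, Maxwell's criterion (Lemma \ref{lem:max}) is not sufficient for $\MR_d$-independence, as witnessed by the double banana of Figure \ref{fig:banana}. Hence, unlike the $\R^2$ case where the Lov\'asz--Yemini rank formula \eqref{eq:LY} gives a clean sparsity-based account of how rigid components fit together, in higher dimensions no such formula is available (Conjectures \ref{con:dress} and \ref{con:cover} notwithstanding). The arithmetic relating the number and sizes of the maximal rigid subgraphs of $H$ to its connectivity is therefore substantially more delicate, and one may have to invoke stress-matrix tools (Section \ref{sec:GRandStressMatrices}) or properties of the cofactor matroid $\MC_{d-1}^{d-2}$ (Theorem \ref{thm:cjt}) to sharpen the count. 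Reaching the precise threshold $d(d+1)$ — rather than some weaker $O(d^3)$ bound extractable from cruder tree-packing arguments — is what makes Vill\'anyi's theorem particularly striking.
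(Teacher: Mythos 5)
Your proposed reduction is invalid, and the auxiliary claim you state is false. The paper itself records, immediately after the theorem, that ``the connectivity hypothesis in the theorem is best possible since there exist $(d(d+1)-1)$-connected graphs which are not even rigid in $\R^d$.'' Already for $d=2$ your auxiliary claim would assert that every $5$-connected graph is rigid in $\R^2$; Lov\'asz and Yemini proved the $6$-connectivity theorem \emph{and} exhibited $5$-connected graphs that are flexible in $\R^2$, so the claim is not a ``mild strengthening'' of their result but is precisely what their counterexamples rule out.

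The problem is not merely that your chosen auxiliary statement is too strong --- the route through Theorem~\ref{shin2rigid} is blocked in principle. Take any $(d(d+1)-1)$-connected graph $H$ that is not rigid in $\R^d$, and let $G$ be obtained from $H$ by adding one new vertex $w$ joined to every vertex of $H$. Then $G$ is $d(d+1)$-connected (any separator must either contain $w$ together with a separator of $H$, or be a subset of $V(H)$ leaving $w$ and at least one $H$-vertex in the same component), so $G$ falls under the hypothesis of Theorem~\ref{hcr}; yet $G-w=H$ is not rigid, so $G$ fails the hypothesis of Theorem~\ref{shin2rigid}. Thus $d(d+1)$-connected graphs are not in general vertex-redundantly rigid, and no amount of repair to the covering or Menger-type counting argument you sketch can make this reduction produce the theorem. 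Any proof has to argue global rigidity of $G$ directly (for instance by producing a generic stress matrix of rank $|V|-d-1$ as in Theorem~\ref{thm:stress}), rather than through vertex-redundant rigidity. Vill\'anyi's actual argument, as the survey notes, is a probabilistic proof that establishes sufficient connectivity conditions for $d$-rigidity without access to a rank formula for $\MR_d$, and it does not pass through the deletion-of-a-vertex criterion you invoke.
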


This result answers a conjecture of  Connelly, Jord\'an and Whiteley \cite{CJW} and also resolves a longstanding conjecture of Lov\'asz and Yemini \cite{LY} for rigidity in $\R^d$. The connectivity hypothesis in the theorem is best possible since there exist $(d(d+1)-1)$-connected graphs which are not even rigid in $\R^d$.

Another interesting recent development in generic (global) rigidity is the result by Lew et al.~\cite{LNPR}, who show that the threshold probabilities for rigidity and global rigidity of Erdős–Rényi random graphs coincide with that for having minimum degree at least $d$ and $d+1$, respectively. Their proof cleverly exploits properties of the rigidity matroid of the Erd{\"o}s-R{\'e}nyi random graph.

\section{Applications to other areas of combinatorics}

\subsection{Orientations and packings of graphs}

Tools from rigidity theory, along with matroid and graph theoretic methods,
have recently been used  to solve two long standing open problems in graph theory
concerning highly connected orientations and removable spanning trees, respectively.
It follows from a classical theorem of Nash-Williams 
\cite{NWori}
that every $2k$-edge-connected graph has a $k$-arc-connected orientation. In 1985, Thomassen \cite{thomassen_1989}
asked whether a similar statement is true for vertex-connectivity.
He conjectured that, 
for every positive integer $k$, there exists a (smallest) integer $f(k)$ such that every $f(k)$-connected graph has a $k$-connected orientation.
It is easy to see that if $f(k)$ exists, then $f(k) \geq 2k$. 
It follows from the above mentioned result of Nash-Williams that $f(1) = 2$.  Jord\'an\cite{Jdisjoint} showed that $f(2) \leq 18$. His proof uses the
following result on packing rigid subgraphs, together with results on
$2$-connected orientations of Eulerian graphs. Thoughout this section, we will say that a graph $G$ is {\em $d$-rigid} if it is (generically) rigid in $\R^d$,
and that $G$ is {\em minimally $d$-rigid} if it is $d$-rigid but $G-e$ is not $d$-rigid for all edges $e$ of $G$.

\begin{theorem}\label{theorem:jordan}
    Every $6t$-connected graph contains $t$ edge-disjoint $2$-rigid (and hence $2$-connected) spanning subgraphs. 
\end{theorem}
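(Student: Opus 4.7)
The plan is to apply Edmonds' matroid union theorem to $\MR_2(K_V)$, reducing the statement to a sparsity inequality, and then verify this inequality using the $6t$-edge-connectivity of $G$. By Theorem~\ref{thm:PG}, $\MR_2$ coincides with the $(2,3)$-sparsity matroid, and a graph is $2$-rigid if and only if it contains a $(2,3)$-tight spanning subgraph. Hence $G$ contains $t$ edge-disjoint $2$-rigid spanning subgraphs if and only if $E(G)$ has rank $t(2n-3)$ in the matroid union $t\cdot\MR_2(K_V)$, where $n=|V(G)|$. By Edmonds' theorem this rank equals $\min_{F\subseteq E(G)}\bigl(|E(G)\setminus F| + t\cdot r_2(F)\bigr)$, so the task reduces to proving
\[
|E(G)\setminus F| + t\cdot r_2(F) \;\geq\; t(2n-3) \qquad (\ast)
\]
for every $F\subseteq E(G)$.

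Next I would apply the Lov\'asz--Yemini formula \eqref{eq:LY} to $F$. Let $X_1,\ldots,X_m$ be the vertex sets of the $2$-rigid components of $(V,F)$ of size at least two, let $s$ be the number of isolated vertices of $(V,F)$, and set $e=\sum_i |X_i|-|\bigcup_i X_i|\geq 0$ to record the overlap. By Lemma~\ref{lem:glue}(a) the family $\{X_1,\ldots,X_m\}$ is $1$-thin, and a direct computation gives $r_2(F)=2(|U|+e)-3m$ with $U=\bigcup_i X_i$, so
\[
(2n-3)-r_2(F) \;=\; 2s+3m-2e-3.
\]
If some $X_i$ equals $V$ then $r_2(F)=2n-3$ and $(\ast)$ is immediate, so I may assume every rigid component is a proper subset of $V$.

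The heart of the proof is an edge-counting argument powered by the $6t$-connectivity (which implies $6t$-edge-connectivity) of $G$. Each isolated vertex of $(V,F)$ has $G$-degree at least $6t$, and all its incident edges lie in $E(G)\setminus F$; each component $X_i$ satisfies $|\partial_G X_i|\geq 6t$. Not every edge of $\partial_G X_i$ belongs to $E(G)\setminus F$: an $F$-edge can leave $X_i$ only through a vertex $u$ shared with another rigid component $X_j$, since $F$-edges partition among the rigid components and $|X_i\cap X_j|\leq 1$. To absorb this loss I would exploit that, for each overlap vertex $u\in X_i\cap X_j$, any $G$-edge between $X_i\setminus\{u\}$ and $X_j\setminus\{u\}$ is forced into $E(G)\setminus F$ (no single rigid component can contain both endpoints), and the $6t$-vertex-connectivity yields $(6t-1)$-edge-connectivity of $G-u$, producing plenty of such cross-edges. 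Combining the boundary counts at isolated vertices, the boundary counts at proper components, and the cross-cut counts at overlap vertices--each with its appropriate double-counting factor--should yield $|E(G)\setminus F|\geq t(2s+3m-2e-3)$ with room to spare, since $3(s+m)\geq 2s+3m-2e-3$ holds unconditionally.

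The main obstacle is organising this accounting cleanly, because a naive sum of boundary inequalities double-counts both edges and overlap vertices and could dissipate the connectivity slack. A plausible remedy is induction on $m$: contract a rigid component with the smallest overlap to a single vertex, invoke the inductive hypothesis on the resulting graph (whose vertex connectivity is at least that of $G$ minus the overlap), and combine the resulting packing with a packing of the contracted component built from its internal $F$-edges. Alternatively, one could work with an extremal counterexample $F$ to $(\ast)$, use matroid optimality to show that $F$ is a flat of $\MR_2(G)$, and then extract a small vertex cut contradicting the $6t$-connectivity hypothesis.
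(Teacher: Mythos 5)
Your overall framework—reduce via Edmonds' matroid-union rank formula to the inequality $|E(G)\setminus F| + t\cdot r_2(F)\geq t(2n-3)$, then bound the deficit using the Lov\'asz--Yemini formula \eqref{eq:LY} and $6t$-connectivity—is exactly the strategy the paper attributes to Jord\'an's proof. Your algebra is also correct: with $X_1,\dots,X_m$ the rigid components of $(V,F)$ of size at least two, $s$ the number of isolated vertices, and $e=\sum_i|X_i|-|\bigcup_iX_i|$, one indeed has $(2n-3)-r_2(F)=2s+3m-2e-3$, so the task is to verify $|E(G)\setminus F|\geq t(2s+3m-2e-3)$.

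However, the counting argument you sketch to verify this rests on a false claim. You assert that for an overlap vertex $u\in X_i\cap X_j$, every $G$-edge $ab$ with $a\in X_i\setminus\{u\}$ and $b\in X_j\setminus\{u\}$ must lie in $E(G)\setminus F$, ``since no single rigid component can contain both endpoints.'' This is wrong: a \emph{third} rigid component $X_k$ can contain both $a$ and $b$, with $X_k\cap X_i=\{a\}$ and $X_k\cap X_j=\{b\}$, which is perfectly compatible with $1$-thinness. For a concrete counterexample, let $F$ consist of the edges of three triangles on $\{1,2,3\}$, $\{3,4,5\}$, $\{5,6,1\}$; these three triangles are the rigid components, and taking $X_i=\{1,2,3\}$, $X_j=\{3,4,5\}$, $u=3$, $a=1$, $b=5$, the edge $\{1,5\}$ lies in $F$. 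So the source of ``extra'' edges you plan to use to offset the $F$-edges leaving $X_i$ through overlap vertices does not exist in general. The remainder of the proposal (``should yield \dots with room to spare'', ``a plausible remedy is induction on $m$ \dots or work with an extremal counterexample $F$'') names two alternative strategies but develops neither, so the core inequality $|E(G)\setminus F|\geq t(2s+3m-2e-3)$ is left unproved. To complete the argument along the intended lines you need a correct accounting scheme---e.g., a weight-distribution argument that charges the value $2|X|-3$ to vertices and shows, using $6t$-connectivity and $1$-thinness, that each vertex absorbs enough weight---rather than the cross-edge absorption step as stated.
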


The proof of Theorem \ref{theorem:jordan} used Theorem \ref{thm:PG} and 
(\ref{eq:LY}) 
together with
Edmonds' formula for the rank function of unions of matroids \cite{Esum}. It also showed that
a packing of $2$-rigid spanning subgraphs can be found in polynomial time.

Subsequently, Thomassen \cite{thomassen_2015} proved that $f(2) = 4$. Durand de Gevigney \cite{duranddegevigney_2020} showed that for $k\geq 3$, deciding whether a graph has a $k$-connected orientation is NP-hard.

The proof of Theorem \ref{hcr} showed that sufficient connectivity conditions for
$d$-rigidity can be obtained without knowing the
rank function of ${\cal R}_d(G)$. Soon after the appearance of Vill\'anyi's probabilistic proof, a similar approach was used 
by Garamv\"olgyi, Jord\'an, Kir\'aly and Vill\'anyi \cite{GJKV} to obtain the following packing result, which
can be considered as a result on the matroid union of $t$ copies of the $d$-dimensional
generic rigidity matroid. 

\begin{theorem}\label{theorem:main1}
    Every $\left(t \cdot 10d(d+1)\right)$-connected graph contains $t$ edge-disjoint $d$-rigid (and hence $d$-connected) spanning subgraphs.
\end{theorem}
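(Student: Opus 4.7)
The plan is to reduce the existence of $t$ edge-disjoint $d$-rigid spanning subgraphs to a rank bound on the matroid union $\MM_t:=\bigvee_{i=1}^t \MR_d(G)$ and then attack that bound with the probabilistic machinery that Vill\'anyi developed for Theorem \ref{hcr}. By a standard application of the matroid union theorem, $G$ admits the desired packing if and only if $r_{\MM_t}(E)=t\bigl(d|V|-\binom{d+1}{2}\bigr)$, and the union rank formula translates this into the inequality
\[
|E\setminus F|\;\geq\; t\bigl(d|V|-\tbinom{d+1}{2}-r_d(F)\bigr) \qquad \text{for all } F\subseteq E.
\]

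The first step is structural: unpack the rigidity deficiency $s(F):=d|V|-\binom{d+1}{2}-r_d(F)$ in terms of the partition of $V$ into the rigid components of $G[F]$, using Lemmas \ref{lem:glue} and \ref{lem:sub} to interpret $s(F)$ as a measure of the shortfall of ``cross-component'' rigidity. The inequality to be proved then becomes a combinatorial statement about the abundance of edges of $E\setminus F$ running between distinct rigid components of $G[F]$, which should follow from the $(t\cdot 10d(d+1))$-connectivity of $G$.

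The second and main step is a probabilistic colouring argument, adapting Vill\'anyi's proof of Theorem \ref{hcr}. I would colour each edge of $G$ independently and uniformly at random with one of $t$ colours, and, for each colour class $i$, bound from below the number of its edges that cross any given obstructing partition of $V$ via Chernoff-type concentration; the hypothesis ensures that each colour class retains on average at least $10d(d+1)$ cross-component edges, which one argues is enough to augment $F$ to a base of $\MR_d(G)$. The hard part will be controlling the union bound over all dangerous partitions: the key observation is that, by Theorem \ref{hcr}, only partitions cutting $G$ into $d(d+1)$-sparse pieces can violate rigidity, so only such partitions need to be handled, and this restricts the bad events enough to close the argument. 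The constant $10d(d+1)$ then falls out of balancing the concentration inequality against the union bound; tightening it to the conjecturally correct $d(d+1)$ matching the Lov\'asz--Yemini bound for a single copy appears to require a substantially finer probabilistic analysis.
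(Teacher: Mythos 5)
The paper does not give a proof of this theorem; it cites it from Garamv\"olgyi, Jord\'an, Kir\'aly and Vill\'anyi \cite{GJKV} and only remarks that the proof uses a probabilistic approach \emph{similar} to Vill\'anyi's proof of Theorem \ref{hcr}, one that, crucially, avoids any knowledge of the rank function of $\MR_d(G)$. Your proposal does invoke the same general philosophy (random colouring in the spirit of Vill\'anyi, and the matroid-union lens that the paper itself mentions as a way to \emph{view} the conclusion), so the high-level intuition is aimed in the right direction. However, the proposal as written has gaps that would prevent it from going through.

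The two ``steps'' you describe are really two distinct proof strategies that are not reconciled. The first step --- reducing via Edmonds' union rank formula and then ``unpacking'' the deficiency $s(F)=d|V|-\binom{d+1}{2}-r_d(F)$ in terms of rigid components of $G[F]$ --- is exactly the route used by Jord\'an for $d=2$ (Theorem \ref{theorem:jordan}), and it works there only because the Lov\'asz--Yemini formula \eqref{eq:LY} makes $r_2$ explicit. For $d\geq 3$ no such formula is known, so the claim that the deficiency inequality ``should follow from'' high connectivity is precisely the open combinatorial core that the probabilistic method was invented to sidestep; the paper stresses this point explicitly. Your second step (random colouring) would, if it worked, produce the packing directly and make the first step superfluous, so you should drop the first step entirely.

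The random-colouring step itself has a genuine unresolved gap in the union bound. You correctly identify that Theorem \ref{hcr} lets you aim for the weaker target that every colour class be $d(d+1)$-connected, so the bad events become vertex cuts of size below $d(d+1)$ rather than rigidity failures. But this does not tame the union bound: for each such cut $S$ you must rule out every bipartition $(A,B)$ of $V\setminus S$ that receives no colour-$i$ edges, and the number of these bipartitions is exponential in $|V|$. To close the argument you need a quantitative lower bound of the form $e_G(A\cup S, B\cup S)\geq c\cdot\min(|A|,|B|)$ coming from the $(t\cdot 10d(d+1))$-connectivity of $G$, strong enough that the failure probability $(1-1/t)^{e_G(A,B)}$ beats the entropy of the bipartition count, and then you must verify this gives exactly the claimed constant. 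Saying ``the constant falls out of balancing the concentration inequality against the union bound'' names the difficulty without resolving it; this balancing is precisely where the argument lives or dies, and your sketch gives no reason it succeeds. Also, independent Chernoff bounds on crossing edges for a \emph{fixed} cut are elementary; the hard part is the dependence across exponentially many cuts, which Chernoff alone does not handle.
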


The existence of a constant $h(d)$ such that every $(t \cdot h(d))$-connected graph contains $t$ edge-disjoint $d$-connected spanning subgraphs was conjectured in \cite{garamvolgyi.etal_2024}.
The bound on $h(d)$ is Theorem \ref{theorem:main1} is certainly not tight.
It is conceivable, that $h(d) \leq d(d+1)$. For $t = 1$, this is
Theorem \ref{hcr}.

Theorem \ref{theorem:main1} was combined with our next two lemmas in \cite{GJKV} to solve Thomassen's orientation problem.  
 Given a graph $G = (V,E)$ and a function  $g:V \to \mathbb{Z}_+$, we shall use the notation $g(X) = \sum_{v \in X} g(v)$ for subsets $X \subseteq V$.
 The in-degree of a subset $X$ of vertices in a digraph $D$ is denoted by
 $\rho_D(X)$.
Hakimi \cite{hakimi_1965} proved that 
$G$ has an orientation $\DD$ in  
$\rho_\DD(v) = g(v)$ for all $v\in V$ if and only if
(i) $i_G(X)\leq g(X)$ for all nonempty $X\subseteq V$, and
  (ii)  $|E|=g(V)$ hold. 

We may use Hakimi's result and the sparsity count of Lemma \ref{lem:max} to show the existence of
certain in-degree specified orientations of minimally $d$-rigid graphs. 
Given a minimally $d$-rigid graph $G=(V,E)$ with $|V|\geq \binom{d+1}{2}$ and a subset
$R\subseteq V$ with $|R|=\binom{d+1}{2}$, we define the 
in-degree specification function $g_{d,R}$ by putting
$g_{d,R}(v)=d$ for all $v\in V-R$ and $g_{d,R}(r)=d-1$ for all $r\in R$.
We say that an orientation $\DD$ of $G$ is a
{\it $(d,R)$-orientation} if its in-degrees respect the specification $g_{d,R}$.

\begin{lemma}
\label{l1}
Let $G=(V,E)$ be a minimally $d$-rigid graph with $|V|\geq \binom{d+1}{2}$ and let 
$R\subseteq V$ be a set of vertices with $|R|=\binom{d+1}{2}$. Then $G$ has a
$(d,R)$-orientation.
\end{lemma}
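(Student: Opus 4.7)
The plan is to apply Hakimi's orientation theorem (quoted just above the lemma) to the specification function $g_{d,R}$. This reduces the problem to checking the two Hakimi conditions: (i) $i_G(X)\le g_{d,R}(X)$ for every nonempty $X\subseteq V$, and (ii) $|E|=g_{d,R}(V)$.

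Condition (ii) is an immediate computation. Minimal $d$-rigidity forces $|E|=d|V|-\binom{d+1}{2}$, and
\[
g_{d,R}(V)=d\,|V\sm R|+(d-1)|R|=d|V|-|R|=d|V|-\binom{d+1}{2},
\]
so the two sides agree.

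For condition (i) I would split into two cases according to the size of $X$. If $|X|\ge d+1$, then since $G$, being minimally $d$-rigid, is $\MR_d$-independent, Maxwell's independence criterion (Lemma \ref{lem:max}) gives $i_G(X)\le d|X|-\binom{d+1}{2}$; combining this with $|X\cap R|\le|R|=\binom{d+1}{2}$ yields
\[
i_G(X)\le d|X|-\binom{d+1}{2}\le d|X|-|X\cap R|=g_{d,R}(X).
\]
If $|X|\le d$, I would use the trivial bound $i_G(X)\le \binom{|X|}{2}$ and the estimate $|X\cap R|\le|X|$; the required inequality $\binom{|X|}{2}\le d|X|-|X\cap R|$ then reduces to $|X|-1\le 2(d-1)$, which holds whenever $|X|\le d$ and $d\ge 1$.

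The proof has essentially no hard step: the only mild subtlety is that Maxwell's bound is only available for $|X|\ge d+1$, so the small-$X$ range needs the separate (elementary) argument above. Once both Hakimi conditions are verified, the theorem of Hakimi supplies the desired orientation $\DD$, completing the proof.
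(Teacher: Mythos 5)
Your proof is correct and follows essentially the same route the paper indicates, namely verifying Hakimi's two conditions for the specification $g_{d,R}$ using the edge count $|E|=d|V|-\binom{d+1}{2}$ of a minimally $d$-rigid graph and the sparsity bound from Maxwell's criterion (Lemma~\ref{lem:max}). The separate elementary check for $|X|\le d$ is the right way to handle the fact that Lemma~\ref{lem:max} only applies for $|X|\ge d+1$.
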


The second lemma provides a lower bound on the number of in-neighbors of certain subsets in $(d,R)$-orientations, 
establishing a link between degree-specified orientations and high vertex-connectivity.

\begin{lemma}
\label{l2}
Let $d$ and $k$ be integers with $k \geq 2$ and $d \geq 4k-4$. Let $G=(V,E)$ be a minimally $d$-rigid graph with $|V|\geq \binom{d+1}{2}$, 
$R \subseteq V$ a set of vertices with $|R|=\binom{d+1}{2}$, and let 
$\DD$ be a $(d,R)$-orientation of $G$. Finally, let $X \subseteq V$ be a set of vertices. If $|X\cap R|\leq \frac{{\binom{d+1}{2}}}2$,
then $X$ has at least $k$ in-neighbors in $\DD$.
\end{lemma}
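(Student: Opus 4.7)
The plan is to assume for contradiction that $T := N^-(X) \subseteq V \setminus X$ satisfies $|T| \leq k-1$ and derive a contradiction by combining the $(d,R)$-orientation's in-degree specification with Maxwell's sparsity criterion (Lemma \ref{lem:max}) applied to the minimally $d$-rigid graph $G$. The starting point is the standard identity $\sum_{v \in X} \rho_\DD(v) = i_G(X) + \rho_\DD(X)$, which together with $\rho_\DD(v) = g_{d,R}(v)$ gives
\[
\rho_\DD(X) \;=\; d|X| - |X \cap R| - i_G(X).
\]
Since every in-arc of $X$ emanates from $T$, we have $\rho_\DD(X) \leq e_G(T, X)$, and I will upper-bound $e_G(T,X)$ in two different ways depending on the size of $Y := X \cup T$.

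In the first case, $|Y| \geq d+1$, I would apply Maxwell to $Y$ to get $i_G(Y) \leq d|Y| - \binom{d+1}{2}$ and combine with $e_G(T, X) \leq i_G(Y) - i_G(X)$; the $i_G(X)$ terms cancel and yield
\[
d|T| \;\geq\; \binom{d+1}{2} - |X \cap R| \;\geq\; \frac{1}{2}\binom{d+1}{2} \;=\; \frac{d(d+1)}{4},
\]
so $|T| \geq (d+1)/4 \geq k - 3/4$, and integrality forces $|T| \geq k$. This is precisely where the hypothesis $|X \cap R| \leq \binom{d+1}{2}/2$ and the threshold $d \geq 4k-4$ are used. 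In the second case $|Y| \leq d$, so $|X| \leq d$, and I would instead use the elementary bounds $i_G(X) \leq \binom{|X|}{2}$ and $e_G(T, X) \leq |T||X|$. Substituting these together with $|X \cap R| \leq |X|$ into the identity gives
\[
|T| \;\geq\; \frac{2d - |X| - 1}{2} \;\geq\; \frac{d-1}{2} \;\geq\; 2k - \frac{5}{2},
\]
hence $|T| \geq 2k - 2 \geq k$ since $k \geq 2$, a contradiction.

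The main subtlety is that Maxwell must be applied to the enlarged set $Y = X \cup T$ rather than to $X$ itself: applied directly to $X$ it only bounds $i_G(X)$, whereas the natural upper bound on the number of in-arcs into $X$ is the $X$-to-$T$ edge count, which forces us to work with $i_G(Y)$. Once this is recognised, the threshold $d \geq 4k - 4$ emerges naturally from the large-$Y$ computation, the assumption $|X \cap R| \leq \binom{d+1}{2}/2$ is exactly what is needed to close the arithmetic there, and the small-$Y$ case becomes a routine check since $d \geq 4k-4 \geq 2k$ is more than sufficient.
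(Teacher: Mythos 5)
Your proof is correct. Since the paper only states Lemma~\ref{l2} and defers its proof to \cite{GJKV}, there is no in-text argument to compare against, but your derivation uses exactly the two ingredients one expects here --- the in-degree identity $\sum_{v\in X}\rho_{\DD}(v)=i_G(X)+\rho_{\DD}(X)$ specialised via $g_{d,R}$, and Maxwell's sparsity bound (Lemma~\ref{lem:max}) applied to $Y=X\cup T$ --- and the arithmetic in both cases is right. In Case~1, $d|T|\geq\binom{d+1}{2}-|X\cap R|\geq d(d+1)/4$ gives $|T|\geq(d+1)/4\geq k-3/4$, hence $|T|\geq k$ by integrality; in Case~2, $|T|\geq(d-1)/2\geq 2k-5/2$ gives $|T|\geq 2k-2\geq k$ for $k\geq 2$. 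The key insight --- that Maxwell must be applied to the enlarged set $Y$ so that the $i_G(X)$ terms cancel --- is correctly identified and is indeed what makes the large-$|Y|$ case close.

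Two cosmetic points. First, the ``proof by contradiction'' framing is vestigial: neither case actually uses the hypothesis $|T|\leq k-1$, so you simply prove $|T|\geq k$ directly, and you could drop the contradiction setup entirely. Second, Case~2 divides by $|X|$, so you are implicitly assuming $X\neq\emptyset$; this is harmless since the lemma's conclusion is clearly intended for nonempty $X$ (and $X=V$ is already excluded because it violates $|X\cap R|\leq\frac{1}{2}\binom{d+1}{2}$), but a one-line remark excluding $X=\emptyset$ would tidy this up. You might also note that in Case~2 the constraint $|X|+|T|\leq d$ actually yields the much stronger bound $|T|\geq d-1$ if you substitute $|X|\leq d-|T|$ rather than $|X|\leq d$, though your weaker estimate suffices.
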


These lemmas were combined with  Theorem \ref{theorem:main1}  in \cite{GJKV} to give:

\begin{theorem}\label{theorem:main2}
    Every $(320 \cdot k^2)$-connected graph has a $k$-connected orientation.
\end{theorem}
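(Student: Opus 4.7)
The plan is to combine the packing result Theorem~\ref{theorem:main1} with the degree-specified orientation results of Lemmas~\ref{l1} and~\ref{l2}. Set $d:=4k-4$, so Lemma~\ref{l2}'s hypothesis $d\ge 4k-4$ is met, and observe
\[
2\cdot 10\,d(d+1)=20(4k-4)(4k-3)=320k^{2}-560k+240\le 320k^{2}
\]
for every $k\ge 1$. Hence $G$ is in particular $\bigl(2\cdot 10 d(d+1)\bigr)$-connected and Theorem~\ref{theorem:main1} with $t=2$ produces two edge-disjoint $d$-rigid spanning subgraphs $G_{1},G_{2}$ of $G$; within each fix a minimally $d$-rigid spanning subgraph $H_i\subseteq G_i$.

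Next I would pick two sets $R_{1},R_{2}\subseteq V$, each of size $\binom{d+1}{2}$, apply Lemma~\ref{l1} to obtain a $(d,R_{i})$-orientation $\DD_{i}$ of $H_i$, and combine $\DD_1\cup\DD_2$ with an arbitrary orientation of the remaining edges of $G$ to form an orientation $\DD$. To prove $\DD$ is $k$-vertex-connected it is sufficient, by the standard criterion, to establish that every $X\subsetneq V$ with $1\le |X|\le |V|-k$ has at least $k$ distinct in-neighbors and at least $k$ distinct out-neighbors in $\DD$. Lemma~\ref{l2} applied to $\DD_{i}$ immediately gives the in-neighbor bound at $X$ whenever $|X\cap R_{i}|\le \binom{d+1}{2}/2$.

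The main obstacle is the combinatorial bookkeeping. One has to choose $R_1, R_2$ (I would take them disjoint, which is possible because $|V|\ge 320 k^{2}\ge 2\binom{d+1}{2}$) and, since Lemma~\ref{l2} controls only in-neighbors, orient one of the two subgraphs according to an out-degree specification via the symmetric version of Lemmas~\ref{l1}--\ref{l2} (obtained by reversing arcs) so that every relevant $X$ is covered by at least one of the two orientations. For a disjoint pair $R_1,R_2$, the conditions $|X\cap R_i|\le \binom{d+1}{2}/2$ and $|(V\setminus X)\cap R_i|<\binom{d+1}{2}/2$ are strict complements, so together with the edge-disjointness of $H_1,H_2$ the verification reduces to a finite case analysis on which of the four inequalities $|X\cap R_i|\lessgtr \binom{d+1}{2}/2$ and $|(V\setminus X)\cap R_i|\lessgtr \binom{d+1}{2}/2$ hold. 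I expect this final case analysis — in particular handling the ``doubly-bad'' case $|X\cap R_1|,|X\cap R_2|>\binom{d+1}{2}/2$ (which forces $|X|>\binom{d+1}{2}$ and must be resolved by invoking Lemma~\ref{l2} for $V\setminus X$ in both orientations and exploiting edge-disjointness to harvest $k$ distinct arc-endpoints on each side) — to be the most delicate step; the rigidity and orientation ingredients themselves are dispatched by direct appeal to the cited results.
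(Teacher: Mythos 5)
Your high-level strategy --- apply Theorem~\ref{theorem:main1} with $t=2$ and $d=4k-4$, pass to minimally $d$-rigid spanning subgraphs, and orient one with an in-degree specification and the other with an out-degree (reversed) specification via Lemma~\ref{l1} --- is the same as the paper's, and your arithmetic check that $20d(d+1)\le 320k^{2}$ is correct. However, your verification of $k$-connectivity has a genuine gap. The paper uses a \emph{single} set $R$ for both orientations: $G_1$ gets a $(d,R)$-orientation and $G_2$ gets a reversed $(d,R)$-orientation. With one $R$, for each candidate cut exactly one of $|X\cap R|\le\binom{d+1}{2}/2$ and $|(V\setminus X)\cap R|<\binom{d+1}{2}/2$ holds, so your ``doubly-bad'' case never arises; introducing two disjoint sets $R_1,R_2$ manufactures a case the intended argument avoids. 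More importantly, the repair you sketch for that case cannot work: Lemma~\ref{l2} applied to $V\setminus X$ in $\DD_1$ produces $\ge k$ vertices \emph{inside} $X$ that are tails of arcs crossing to $V\setminus X$, and these arcs could all share a single head, so you get no lower bound on $|N^{-}_{\DD}(X)|$ or $|N^{+}_{\DD}(X)|$. Edge-disjointness of $H_1,H_2$ gives distinct arcs but not distinct endpoints in $V\setminus X$, so ``harvesting $k$ distinct arc-endpoints on each side'' fails.

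The clean way to finish is to use the partition characterization of digraph connectivity rather than counting in- and out-neighbors of every small set: $\DD$ is $k$-connected iff for every partition $V=A\cup S\cup B$ with $A,B\ne\emptyset$ and $|S|<k$ there is an arc from $A$ to $B$. Fix a single $R$ with $|R|=\binom{d+1}{2}$. If $|B\cap R|\le\binom{d+1}{2}/2$, Lemma~\ref{l2} applied to $B$ in the $(d,R)$-orientation of $H_1$ gives $\ge k$ in-neighbors of $B$; since $|S|<k$, one of them lies in $A$, giving an arc $A\to B$ in $\DD_1\subseteq\DD$. Otherwise $|A\cap R|<\binom{d+1}{2}/2$, and applying Lemma~\ref{l2} to $A$ in the $(d,R)$-orientation of $H_2$ whose reverse is $\DD_2$ gives $\ge k$ in-neighbors of $A$ there, one of which lies in $B$; reversing that arc gives an arc from $A$ into $B$ in $\DD_2\subseteq\DD$. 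This single-$R$, partition-based argument is what the paper's sketch means by ``it is not hard to show, using Lemma~\ref{l2}.''
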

\begin{sproof}
    Suppose that $k\geq 2$ and let 
$d=4k-4$.
By Theorem \ref{theorem:main1} 
$G = (V,E)$ has
two edge-disjoint minimally $d$-rigid spanning subgraphs $G_1$ and $G_2$.

Let us fix a set $R \subseteq V$ of vertices with $|R|=\binom{d+1}{2}$.
The desired orientation $\DD$ of $G$ is defined by defining the orientations of $G_1$ and $G_2$, and then orienting the remaining edges arbitrarily.
The orientation of $G_1$ is chosen to be a $(d,R)$-orientation.
The orientation of $G_2$ is chosen to be a {\em reversed} $(d,R)$-orientation.
By Lemma \ref{l1}, these orientations exist.
It is not hard to show, using Lemma \ref{l2}, that 
the union $\DD$ of these oriented spanning subgraphs
is $k$-connected.
\end{sproof}

The bound on $f(k)$ given by Theorem \ref{theorem:main2} is probably far from being tight. In particular, it is still open whether $f(k) = 2k$ holds.

One may also consider the rooted version of the above orientation problem.
A digraph $D$ is  {\it rooted $k$-connected} (from a given root vertex $r$), if it contains $k$ internally disjoint directed paths from $r$ to each vertex 
$v\in V(D)-r$. Rooted $k$-connected graphs are defined analogously. 
It was recently conjectured
in \cite{Jproc} that sufficiently highly
rooted connected graphs have rooted $k$-connected orientations.
It was shown in \cite{Jproc} that every rooted $6$-connected graph has a rooted $2$-connected orientation, and such an orientation can be found in polynomial time.
The proof of this result also employs methods from 
rigidity theory. In particular, a key step of the proof is to show that 
if $G$ is rooted $6$-connected from $r$ and $G-r$ is $2$-edge connected, then
$G$ is redundantly rigid in $\RR^2$, which strengthens a result of \cite{LY}. 

The packing Theorem \ref{theorem:main1} immediately implies the solution of another
conjecture on graph connectivity concerning removable spanning trees.
Kriesell \cite[Problem 444]{mohar.etal_2007} conjectured in 2003 that 
for every positive integer $k$ there exists a (smallest) integer $g(k)$ such that every $g(k)$-connected graph $G$ contains a spanning tree $T$ for which $G - E(T)$ is $k$-connected.

As with Thomassen's conjecture, the edge-connected version of Kriesell's
conjecture is a well-known result: it follows from a classical theorem of Nash-Williams 
\cite{nash-williams_1961} 
and Tutte 
\cite{tutte_1961} 
that every $(2k+2)$-edge-connected graph $G$ contains a spanning tree $T$ such that $G - E(T)$ is $k$-edge-connected.
In particular, we have $g(1) = 4$.
Theorem \ref{theorem:jordan} implies that $g(2)\leq 12$.
The $k=3$ case was settled by   Garamv\"olgyi, Jord\'an, and Kir\'aly \cite{garamvolgyi.etal_2024} using the matroid union of  $t$ copies of the  $\mathcal{C}^1_2$-cofactor matroid and Theorem \ref{thm:cjt}.

\begin{theorem}\label{theorem:cofactor}
Every $12t$-connected graph contains $t$ edge-disjoint $\mathcal{C}^1_2$-rigid (and hence $3$-connected) spanning subgraphs. In particular, $g(3) \leq 24$.
\end{theorem}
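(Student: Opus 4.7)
The plan is to apply Edmonds' formula for the rank of a matroid union to $t$ copies of the $\mathcal{C}^1_2$-cofactor matroid on $E = E(G)$, paralleling the strategy behind Theorem \ref{theorem:jordan} (which combined Edmonds' matroid union formula with the Lov\'asz--Yemini rank expression \eqref{eq:LY}). Call the resulting union matroid $\mathcal{U}$. If we can show that $\mathcal{U}$ has rank $t(3|V|-6)$, then any basis of $\mathcal{U}$ partitions into $t$ sets each independent in $\mathcal{C}^1_2$, whose sizes sum to $t(3|V|-6)$. Since each part has size at most $3|V|-6$ (the rank of $\mathcal{C}^1_2$ on $K_V$), each part has size exactly $3|V|-6$, and is therefore a basis of $\mathcal{C}^1_2$. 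Such a basis must be incident to every vertex $v$, otherwise it would sit inside $E(K_{V-v})$ whose rank is at most $3(|V|-1)-6 < 3|V|-6$. Thus we obtain $t$ edge-disjoint spanning $\mathcal{C}^1_2$-rigid subgraphs, hence $3$-connected ones; the bound $g(3)\leq 24$ follows from the $k=3$ case applied to $G$ and the complement of a single spanning tree.

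By Edmonds' rank formula, $\mathcal{U}$ has rank $t(3|V|-6)$ if and only if
\begin{equation*}
|E \setminus A| \ \geq\ t\bigl((3|V|-6) - c^1_2(A)\bigr) \qquad \text{for every } A \subseteq E.
\end{equation*}
Fix $A$ and apply Theorem \ref{thm:cjtproper} to the subgraph $G[A]$ to obtain an exceptional set $F' \subseteq A$ and a proper $K_5$-cover $\MY = (C_1, \ldots, C_k)$ of $A \setminus F'$ with $c^1_2(A) = |F'| + |\bigcup_i C_i| - |\MY|$. The inequality to prove then reduces to
\begin{equation*}
|E \setminus A| \ \geq\ t\Bigl((3|V|-6) - |F'| - \bigl|\textstyle\bigcup_i C_i\bigr| + |\MY|\Bigr).
\end{equation*}

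The core step is to convert $12t$-connectivity into this lower bound. Define the attachment sets $W_i := V(C_i) \cap \bigcup_{j<i} V(C_j)$ for $i \geq 2$ in the proper ordering. Properness forces $V(C_i) \not\subseteq \bigcup_{j<i} V(C_j)$ (otherwise $C_i$ would lie in $\bigcup_{j<i} C_j$, contradicting the proper ordering), so $|W_i|\leq 4$ and each $W_i$ separates a nontrivial piece of $G$ from the rest. By $12t$-connectivity, at least $12t$ edges of $G$ leave each such piece, and at least $12t$ edges of $G$ are incident to each isolated vertex of $V \setminus \bigcup_i V(C_i)$. The constant $12$ is precisely $3\cdot 4$: each newly added cover set with attachment of size at most $4$ contributes at most $3\cdot 4 = 12$ to the rank deficit on the right-hand side (via a telescoping computation that expresses $|\bigcup_i C_i| - |\MY|$ in terms of $|V(C_i) \setminus \bigcup_{j<i} V(C_j)|$ and $|W_i|$), while $12t$-connectivity supplies the matching $12t$ edges per attachment.

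The main obstacle will be the bookkeeping: the same edge of $E \setminus A$ may leave several attachment sets, and care is required when hinges are shared between many cover sets. I would expect to handle this by induction on $|\MY|$, peeling off the last cover set $C_k$ using a substitution step in the spirit of Lemma \ref{lem:sub}(b) and applying the inductive hypothesis to a smaller reduced graph, using submodularity of $c^1_2$ to absorb the residual terms cleanly.
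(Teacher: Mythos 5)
Your high-level strategy — apply Edmonds' matroid union formula to $t$ copies of $\mathcal{C}^1_2$ and reduce to a covering inequality via the rank formula — is the approach the paper indicates was used in \cite{garamvolgyi.etal_2024}. However, there is a concrete error in the reduction, and the core counting argument is never actually carried out. The error: you claim that properness of the $K_5$-cover $\MY=(C_1,\ldots,C_k)$ forces $|W_i|=|V(C_i)\cap\bigcup_{j<i}V(C_j)|\leq 4$, on the grounds that $V(C_i)\subseteq\bigcup_{j<i}V(C_j)$ would imply $C_i\subseteq\bigcup_{j<i}C_j$. That implication is false: an edge $uv\in C_i$ with $u\in V(C_{j_1})$ and $v\in V(C_{j_2})$ for distinct $j_1,j_2<i$ need not lie in any single $C_j$. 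So properness is an edge condition, not a vertex condition, and it gives no control on $|W_i|$. The paper instead cites Theorem \ref{thm:cjt}, whose covers are by definition $4$-shellable, so the bound $|X_i\cap\bigcup_{j<i}X_j|\leq 4$ holds automatically; you should work with that theorem, not Theorem \ref{thm:cjtproper}.

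Even after that fix, the heart of the proof — showing that $12t$-connectivity forces $|E\setminus A|\geq t\bigl(3|V|-6-|F'|-\val(\MX)\bigr)$ — is only gestured at. Your heuristic that ``each attachment of size $\leq 4$ contributes at most $3\cdot 4=12$ to the rank deficit'' does not survive a direct check: if $\MX=\{X_1,X_2\}$ with $|X_1\cap X_2|=4$ then $\val(\MX)=3|V|-6$ (zero deficit), and with $|X_1\cap X_2|=0$ the deficit is $6$, not $12$. Moreover, the $4$-element sets $W_i$ do not separate $G$ (since $4<12t$), so ``$12t$-connectivity supplies the matching $12t$ edges per attachment'' is not a direct consequence of Menger but needs a careful account of which of the $\geq 12t$ escaping edges land in $E\setminus A$ versus being absorbed by other cover sets or by $F'$. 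You acknowledge this bookkeeping as ``the main obstacle'' and defer it to an unspecified induction. That is precisely where the content of the theorem lies, so as written the proposal establishes the framework but not the result.
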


By applying
Theorem \ref{theorem:main1} with $t=2$ and choosing a spanning tree
of one of the two $d$-rigid spanning subgraphs, we get
$g(d) \leq 20d(d+1)$. To improve upon this result, one can consider the union of the $d$-dimensional rigidity matroid and the graphic matroid. 
It was shown in \cite{GJKV} that
this leads to the bound $g(d) \leq d^2 + 3d + 5$.

\begin{theorem}\label{theorem:Kriesellimproved}
    Every $(d^2 + 3d + 5)$-connected graph has a 
    spanning tree $T$ for which $G - E(T)$ is $k$-connected.
\end{theorem}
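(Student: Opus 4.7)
The plan is to reduce the theorem to a matroid union problem via Edmonds' theorem and verify the resulting rank inequality by a case analysis exploiting the connectivity of $G$. Since a $d$-rigid graph on at least $d+2$ vertices is necessarily $d$-connected (a vertex cut of size less than $d$ would permit a non-trivial rotation about the affine hull of its images in any generic realisation, contradicting rigidity), and the hypothesis forces $|V| \geq d^2 + 3d + 6 \geq d+2$, it suffices to exhibit in $G$ edge-disjoint spanning subgraphs $T$ (a spanning tree) and $H$ (a spanning $d$-rigid subgraph); we then take $T$ as the required tree and note that $G - E(T) \supseteq H$ is $d$-connected. By Edmonds' matroid union theorem applied to the cycle matroid of $K_V$ and the rigidity matroid $\MR_d(K_V)$, such a pair exists if and only if
\[
|E(G) \setminus F| + r_{\cyc}(F) + r_d(F) \;\geq\; (d+1)|V| - 1 - \tbinom{d+1}{2}
\]
for every $F \subseteq E(G)$, where $r_{\cyc}$ denotes the rank function of the cycle matroid.

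Next I would analyse this inequality through the component structure of $(V, F)$. Let $V_1, \dots, V_c$ be the connected components of $(V,F)$ and set $F_i = F \cap E(G[V_i])$. Then $r_{\cyc}(F) = |V| - c$ and, because the rigidity matrix block-decomposes across components of $F$, $r_d(F) = \sum_i r_d(F_i)$. Writing $a_i := d|V_i| - r_d(F_i)$, the target inequality becomes
\[
|E(G) \setminus F| \;\geq\; \sum_{i=1}^{c} a_i + (c-1) - \tbinom{d+1}{2},
\]
where $a_i \geq \binom{d+1}{2}$ for $|V_i| \geq d+1$ by Lemma~\ref{lem:max} and $a_i \geq d$ otherwise. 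When $c = 1$ this reduces to $|E(G)\setminus F| + r_d(F) \geq d|V| - \binom{d+1}{2}$, which follows from the matroid inequality $r_d(G) - r_d(F) \leq |E(G) \setminus F|$ together with Theorem~\ref{hcr}: since $d^2 + 3d + 5 \geq d(d+1)$, the graph $G$ is $d$-rigid and $r_d(G) = d|V| - \binom{d+1}{2}$.

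For $c \geq 2$, I would combine several bounds on $|E(G) \setminus F|$. The $k$-edge-connectivity implied by $k$-connectivity (with $k = d^2+3d+5$) yields $|\delta_G(V_i)| \geq k$ for every component $V_i$, so inter-component edges contribute at least $kc/2$. The matroid deficits $r_d(G) - r_d(F) \leq |E(G)\setminus F|$ and $r_{\cyc}(G) - r_{\cyc}(F) \leq |E(G)\setminus F|$ give further lower bounds. For each component, an ``interior slack'' bound $r_d(G[V_i]) - r_d(F_i) \leq |E(G[V_i]) \setminus F_i|$ contributes additional edges to the left-hand side, provided $G[V_i]$ carries sufficient rigidity rank. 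Plugging the Lemma~\ref{lem:max} lower bounds on $a_i$ into the right-hand side and summing these contributions (while avoiding double-counting) is the main computation; the optimisation over component profiles is where $k = d^2 + 3d + 5$ enters.

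The main obstacle is handling components $V_i$ for which $F_i$ falls significantly short of being $d$-rigid (so $a_i$ strictly exceeds $\binom{d+1}{2}$), together with mixed profiles containing both large rigid pieces and small or tree-like pieces. In those configurations the inter-component cut bound alone does not absorb the slack, and one must propagate the connectivity of $G$ down to $G[V_i]$ in order to invoke Theorem~\ref{hcr} and secure enough interior slack, so that $r_d(G[V_i])$ is close to $d|V_i| - \binom{d+1}{2}$ and $|E(G[V_i]) \setminus F_i|$ is correspondingly large. Simultaneously balancing interior and exterior contributions across all admissible component profiles, and verifying that the constant $d^2+3d+5$ dominates in the worst case, is the technical heart of the argument.
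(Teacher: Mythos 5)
Your reduction to matroid union matches the strategy the paper indicates for this theorem: the paper attributes the bound $g(d)\leq d^2+3d+5$ to \cite{GJKV} and says it comes from ``the union of the $d$-dimensional rigidity matroid and the graphic matroid.'' Your setup is correct --- the Edmonds rank inequality is stated properly, the decomposition $r_d(F)=\sum_i r_d(F_i)$ over the components of $(V,F)$ is valid because the rigidity matrix block-decomposes across components, your Maxwell lower bounds on $a_i$ are right, and the $c=1$ case follows cleanly from Theorem~\ref{hcr} together with the matroid inequality $r_d(G)-r_d(F)\leq|E(G)\setminus F|$.

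However, the verification for $c\geq 2$ --- which you yourself call ``the technical heart of the argument'' --- is not carried out, and the plan you sketch for it contains a flaw. You propose to ``propagate the connectivity of $G$ down to $G[V_i]$ in order to invoke Theorem~\ref{hcr}'' and obtain interior slack. But an induced subgraph of a highly connected graph need not be highly connected --- indeed it need not be connected at all. In the bad case $G[V_i]$ consists of exactly the edges of $F_i$ (e.g.\ a spanning tree of $V_i$), so $E(G[V_i])\setminus F_i=\emptyset$ and the interior slack vanishes, while $a_i = d|V_i|-r_d(F_i)\geq (d-1)|V_i|+1$ grows linearly in $|V_i|$. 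The constant bound $|\delta_G(V_i)|\geq d^2+3d+5$ from vertex connectivity does not scale with $|V_i|$ and cannot absorb this deficiency on its own. What is needed instead is the degree-sum estimate $|\delta_G(V_i)|\geq k|V_i|-2|E(G[V_i])|$, which forces the boundary of a sparse piece to scale with its size; trading this off against the deficiency $a_i$ and the connectivity term $c-1$, and showing that $k=d^2+3d+5$ suffices uniformly over all component profiles, is precisely the missing computation. Without it the proposal is an outline rather than a proof, and the specific constant $d^2+3d+5$ remains unexplained.
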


The bound given by Theorem \ref{theorem:Kriesellimproved} is still not optimal. Perhaps every $(d(d+1)+2)$-connected graph contains edge-disjoint copies of a $d$-rigid spanning subgraph and a spanning tree. Again, this bound would be tight. 

An immediate corollary of Theorem \ref{theorem:Kriesellimproved} is that if $G$ is sufficiently highly connected then,
for all $s,t\in V(G)$, there exists a path $P$ from $s$ to $t$ in $G$ such that $G-E(P)$ is $k$-connected.
The existence of such paths was verified earlier in \cite{kawarabayashi.etal_2008}, assuming that $G$ is $(1600k^4+k+2)$-connected. 
 This connectivity requirement can be substantially weakened using Theorem \ref{theorem:Kriesellimproved}.

\subsection{Rigidity and simplicial complexes}

An {\em abstract simplicial complex} $\Delta$ is a finite family of sets with the property that $\tau \subseteq \sigma \in \Delta $ implies that $\tau \in \Delta$. An element $\sigma \in \Delta$ is called a {\em face of dimension $|\sigma| -1$},  or a {\em $(|\sigma|-1)$-face} for short, and we define $\dim(\Delta) = \max\{\dim \sigma: \sigma \in \Delta\}$. A {\em facet} of $\Delta$ is a maximal face and we say that $\Delta$ is {\em pure} if all its facets have the same dimension.  We write $\MF_j(\Delta)$ for  the set of $j$-faces of $\Delta$ and put $f_j(\Delta) = |\MF_j(\Delta)|$. The {\em $f$-vector of $\Delta$} is $f(\Delta) = (f_0,f_1,\ldots)$.
The {\em graph of $\Delta$} is $G(\Delta) = (\MF_0(\Delta), \MF_1(\Delta))$ and so elements of $\MF_0(\Delta)$, respectively $\MF_1(\Delta)$ are called {\em vertices}, respectively {\em edges}, of $\Delta$. Abusing notation, we will write $v$ instead of $\{v\}$ for the vertex whose single element is $v$ and, similarly $uv$ instead of $\{u,v\}$. The {\em link} of a face $\sigma \in \Delta$ is defined by $\lk_\Delta (\sigma) = \{\tau\setminus\sigma: \tau \in \Delta, \tau \supseteq \sigma\}$.

\subsubsection{Simplicial circuits and Fogelsanger's Theorem}

Given a finite set $X$ and positive integer $k$, the  {\em $k$-simplicial matroid} $\MS_k(X)$ is the row matroid of the $\binom{|X|}{k+1}\times \binom{|X|}{k}$ matrix $I$, defined over $\ZZ_2$, in which the rows are indexed by  $\binom{X}{k+1}$, the columns are indexed by $\binom{X}{k}$ and the $(\sigma,\tau)$-entry  is $1$ if $\tau \subset \sigma$ and $0$ otherwise. 
 Clearly $\MS_1([n])$ is the graphic matroid on $K_n$. See \cite[Chapter 10]{welsh} for more information on simplicial matroids.\footnote{In Welsh's notation the $(k+1)$-simplicial matroid is what we call $\MS_k(X)$. Our notation is more in line with the topological convention that the dimension of a face $\sigma$ is $|\sigma|-1$.}

A {\em simplicial $k$-circuit} is a pure $k$-dimensional simplicial complex $\Delta$ such that $\MF_k(\Delta)$ is a circuit in $\MS_k(\MF_0(\Delta))$. For example, a cycle graph is a simplicial 1-circuit. More explicitly we have the following characterisation. 

\begin{lemma}
    \label{lem:circuit}
    An abstract simplicial complex $\Delta$ is a simplicial $k$-circuit if and only if $\MF_k(\Delta) \neq \emptyset$, every $(k-1)$-face of $\Delta$ is contained in an even number of $k$-faces, and no proper subcomplex of $\Delta$ has these properties. 
\end{lemma}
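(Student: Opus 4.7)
The plan is to translate both the matroid-theoretic notion of a circuit and the listed combinatorial conditions into a common language of $\ZZ_2$-indicator vectors, and then verify equivalence directly.

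First I would unpack the matroid side. By definition of $\MS_k(X)$ with $X=\MF_0(\Delta)$, a subset $F \subseteq \binom{X}{k+1}$ is dependent if and only if the rows of $I$ indexed by $F$ are linearly dependent over $\ZZ_2$, which in turn happens exactly when there is a nonempty $F' \subseteq F$ such that $|\{\sigma \in F' : \tau \subset \sigma\}|$ is even for every $\tau \in \binom{X}{k}$. I will refer to this latter condition as the \emph{even-cover condition} on $F'$. Hence $\MF_k(\Delta)$ is a circuit precisely when it satisfies the even-cover condition and no nonempty proper subset does.

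Next I would align the even-cover condition with the combinatorial hypothesis on $(k-1)$-faces. Since $\Delta$ is pure $k$-dimensional, $\Delta$ is the downward closure of $\MF_k(\Delta)$, so any $\tau \in \binom{X}{k}$ either lies in $\MF_{k-1}(\Delta)$ (and then the count of $\sigma \in \MF_k(\Delta)$ containing $\tau$ agrees with the degree of $\tau$ in $\Delta$) or is contained in no element of $\MF_k(\Delta)$ (and the count is $0$, which is even). Consequently the condition ``every $(k-1)$-face of $\Delta$ lies in an even number of $k$-faces'' is literally the even-cover condition for $F=\MF_k(\Delta)$.

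The third step is the minimality correspondence. Given a nonempty proper $F' \subsetneq \MF_k(\Delta)$ satisfying the even-cover condition, let $\Delta'$ be the downward closure of $F'$; then $\Delta' \subsetneq \Delta$ (any $\sigma \in \MF_k(\Delta) \setminus F'$ has cardinality $k+1$ so cannot be a proper subface of anything in $\Delta'$), and $\Delta'$ satisfies both listed properties by the previous paragraph applied to $\Delta'$. Conversely, given a proper subcomplex $\Delta'' \subsetneq \Delta$ satisfying the two properties, set $F'' = \MF_k(\Delta'')$; this is a nonempty subset of $\MF_k(\Delta)$, and $F''=\MF_k(\Delta)$ would force $\Delta'' \supseteq \Delta$ by purity, so $F'' \subsetneq \MF_k(\Delta)$. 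Moreover $F''$ satisfies the even-cover condition in $\binom{X}{k}$: for $\tau \in \MF_{k-1}(\Delta'')$ this is the hypothesis on $\Delta''$, and for $\tau \in \binom{X}{k}\setminus\MF_{k-1}(\Delta'')$ no $\sigma \in F''$ can contain $\tau$ (else $\tau$ would lie in $\Delta''$), so the count is $0$. Note this handles possibly non-pure $\Delta''$ with no extra work.

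Combining these three steps yields the biconditional: $\MF_k(\Delta)$ is a circuit of $\MS_k(X)$ iff $\MF_k(\Delta)$ is nonempty, satisfies the even-cover condition (i.e.\ the $(k-1)$-face parity condition), and no proper nonempty subset does (equivalently, no proper subcomplex satisfies the two listed conditions). The only subtlety I anticipate is making sure the correspondence in the third step accommodates non-pure subcomplexes $\Delta''$, but as noted this is automatic because irrelevant $\tau$ contribute $0$ to the cover count; otherwise the argument is a direct dictionary between the two formulations.
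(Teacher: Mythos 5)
Your translation of $\MS_k$-circuits into the $\ZZ_2$ even-cover condition and your dictionary between proper nonempty subsets of $\MF_k(\Delta)$ and proper subcomplexes of $\Delta$ are essentially right, but there is a genuine gap: by definition a simplicial $k$-circuit must be a \emph{pure} $k$-dimensional complex, and you never derive purity of $\Delta$ from the three listed conditions. Your argument establishes the equivalence between ``$\MF_k(\Delta)$ is a circuit'' and the three conditions only under a standing assumption that $\Delta$ is pure $k$-dimensional, which you explicitly use in step two and again in the converse half of step three (``$F'' = \MF_k(\Delta)$ would force $\Delta'' \supseteq \Delta$ by purity''); invoking purity there is circular in the ``if'' direction. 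Indeed, the final biconditional as you state it is false: take $k=1$ and let $\Delta$ be the full triangle on $\{1,2,3\}$ together with the isolated vertex $\{4\}$. Then $\MF_1(\Delta)=\{12,13,23\}$ is a circuit in $\MS_1(\{1,2,3,4\})$, yet $\Delta$ is not a simplicial $1$-circuit since it is not pure, and the third listed condition fails (the subcomplex obtained by deleting $\{4\}$ still satisfies the first two). So ``$\MF_k(\Delta)$ is a circuit'' and ``no proper subcomplex satisfies the first two conditions'' are not equivalent in general.

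The missing step is short but necessary: show that the three conditions force $\Delta$ to be pure $k$-dimensional. If $\Delta$ were not, then some facet $\sigma$ of $\Delta$ would have dimension other than $k$, and such a $\sigma$ cannot lie in the downward closure $\Delta_k$ of $\MF_k(\Delta)$ (a facet of dimension $>k$ is too large, while a facet of dimension $<k$ cannot be a proper subset of any face). Hence $\Delta_k$ is a proper subcomplex of $\Delta$ with $\MF_k(\Delta_k)=\MF_k(\Delta)\neq\emptyset$, and every $(k-1)$-face of $\Delta_k$ lies in the same $k$-faces as it does in $\Delta$, so $\Delta_k$ satisfies the first two conditions, contradicting the minimality condition. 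With purity in hand, the remainder of your argument proves both directions of the lemma.
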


In particular, for $d\geq2$, the boundary complex of simplicial convex polytope  is a simplicial $(d-1)$-circuit.
A more general example is the following. A {\em $k$-pseudomanifold} is a pure $k$-dimensional simplicial complex $\Pi$ such that 
\begin{enumerate}
    \item every $(k-1)$-face of $\Pi$ belongs to exactly two $k$-faces of $\Pi$, and 
    \item $\Pi$ is strongly connected i.e. if $\sigma,\sigma' \in \MF_k(\Pi)$ then there is a sequence of $k$-faces $\sigma = \tau_1,\cdots,\tau_s = \sigma'$ such that $|\tau_i \cap \tau_{i+1}| = k$ for $i = 1,\cdots, s-1$.
\end{enumerate}
In particular, every triangulation of a $k$-dimensional manifold is a $k$-pseudomanifold.\footnote{Recall that a triangulation of a manifold is a simplicial complex which has a homeomorphism from its geometric realisation to the given manifold.} 
It follows easily from Lemma \ref{lem:circuit} that  a $k$-pseudomanifold is a simplicial $k$-circuit for all $k\geq 1$, although the converse is not necessarily true when $k\geq 2$.

\begin{theorem}[Fogelsanger's Rigidity Theorem]
    \label{thm:fog_rig}
    Let $\Delta$ be a simplicial $k$-circuit for some $k\geq 2$. Then $G(\Delta)$ is rigid in $\mathbb R^{k+1}$. 
\end{theorem}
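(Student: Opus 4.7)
The plan is to induct on $n = f_0(\Delta)$ and use Whiteley's vertex-splitting lemma (Lemma \ref{lem:split}). The base case is $n = k+2$: a simplicial $k$-circuit on $k+2$ vertices must contain all $k+2$ possible $k$-faces (by the parity condition of Lemma \ref{lem:circuit}, since omitting any would leave its subfaces in only one $k$-face), so $G(\Delta) = K_{k+2}$, which is rigid in $\R^{k+1}$.

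For the inductive step, the main idea is to find an edge $uv$ of $G(\Delta)$ such that (i) contracting $uv$ in $\Delta$ yields a simplicial $k$-circuit $\Delta'$ on $n-1$ vertices, and (ii) $|N_{G(\Delta)}(u) \cap N_{G(\Delta)}(v)| \geq k$. Given such an edge, induction gives $G(\Delta/uv)$ rigid in $\R^{k+1}$, and Lemma \ref{lem:split} lifts this to rigidity of $G(\Delta)$. To locate such an edge, I would exploit the structure of links: for any vertex $v$, the link $\lk_\Delta(v)$ is a pure $(k-1)$-complex in which every $(k-2)$-face lies in an even number of $(k-1)$-faces (a direct translation of the parity condition on $\Delta$, using the correspondence between $k$-faces of $\Delta$ containing $v$ and $(k-1)$-faces of the link), and hence by Lemma \ref{lem:circuit} contains a simplicial $(k-1)$-circuit. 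This rich substructure provides the combinatorial handle needed to produce candidate edges $uv$ satisfying the required link-overlap condition.

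The main obstacle is that naive edge contraction may fail to produce a simplicial $k$-circuit: the parity condition can break after identification, or the result can decompose as a sum of strictly smaller circuits in the simplicial matroid $\MS_k$. The resolution, following Fogelsanger, is a dichotomy: if no safe contraction exists, then $\Delta$ itself decomposes as a symmetric difference $\MF_k(\Delta) = C_1 \triangle C_2$ of two simplicial $k$-circuits with vertex sets intersecting in at least $k+1$ vertices. In that case, induction applied separately to the circuits determined by $C_1$ and $C_2$, combined with the gluing result of Lemma \ref{lem:glue}(a) with $d = k+1$, yields rigidity of $G(\Delta)$. Establishing this dichotomy — that any simplicial $k$-circuit admits either a parity-preserving contraction with the required link overlap or a clean decomposition with sufficient vertex overlap — is the technical core of the argument and is where the bulk of the combinatorial effort must be concentrated.
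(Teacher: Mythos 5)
Your high-level strategy is correct and close in spirit to the paper: induct on vertices, use vertex splitting (Lemma \ref{lem:split}) when an edge contraction yields a simplicial $k$-circuit, and use decomposition plus gluing (Lemma \ref{lem:glue}(a)) when contraction fails. You've also correctly identified that the $\geq k$ common-neighbour condition for Lemma \ref{lem:split} should come from the parity structure (in the paper this is automatic: any edge $uv$ lies in some $(k-1)$-face $\tau$, which lies in at least two $k$-faces since it lies in an even, nonzero number of them, and two such $k$-faces supply $k$ common neighbours of $u$ and $v$).

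However, there is a genuine gap in the decomposition half of your dichotomy, and its structure does not match Fogelsanger's. First, the paper's dichotomy is local: it fixes an \emph{arbitrary} edge $uv$ and splits on whether $\Delta/uv$ is a circuit; your version claims that if \emph{no} edge contracts cleanly then $\Delta$ globally decomposes, which is a different and stronger statement that you would have to justify from scratch. Second, and more seriously, writing $\MF_k(\Delta) = C_1 \triangle C_2$ in the simplicial matroid does \emph{not} give $G(\Delta) = G(C_1) \cup G(C_2)$: the $k$-faces of $C_1$ and $C_2$ that cancel in the symmetric difference can contribute edges to $G(C_1) \cup G(C_2)$ that are absent from $G(\Delta)$, so rigidity of $G(C_1)\cup G(C_2)$ says nothing about rigidity of the subgraph $G(\Delta)$. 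The whole point of Fogelsanger's decomposition (Lemma \ref{lem:fog_decomp}) is property (F1), which guarantees exact graph equality $G(\Delta) = \bigcup_i G(\Delta_i)$; this requires a careful construction, not merely a matroidal sum decomposition. Third, Fogelsanger's decomposition is generally into $m \geq 2$ circuits, not two, with the pieces chained via (F3) (consecutive unions share a $k$-face, hence $k+1$ vertices) and controlled by (F2) (each piece either is a simplex boundary, i.e.\ already $K_{k+2}$, or survives contraction of the fixed edge $uv$); your two-circuit claim with $\geq k+1$ shared vertices neither follows from nor implies this structure, and you give no argument for it. Until the graph-equality issue and the structure of the decomposition are nailed down, the decomposition branch of your argument does not go through.
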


The proof is an inductive argument based on edge contraction/vertex splitting. Given two vertices $u,v$  of a simplicial complex $\Delta$, we define $\Delta/uv$ to be the complex obtained from $\Delta$ by deleting every face that contains $\{u,v\}$ and then replacing every face $\sigma$ that contains $v$ by $\sigma \setminus\{v\} \cup \{u\}$. An immediate difficulty for the induction is that edge contraction does not necessarily preserve the property of being a simplicial circuit. Fogelsanger discovered a key decomposition argument to circumvent this problem. 

\begin{lemma}
    \label{lem:fog_decomp}
    Let $\Delta$ be a simplicial $k$-circuit for some $k \geq 2$, $uv \in \MF_1(\Delta)$ and suppose that $\Delta/uv$ is not a simplicial $k$-circuit. Then there exist simplicial $k$-circuits $\Delta_1, \cdots, \Delta_m$ such that 
    \begin{enumerate}
        \item[(F1)] $G(\Delta) = \bigcup_{i=1}^mG(\Delta_i)$,
        \item[(F2)] $uv \in \Delta_i$ for $1 \leq i \leq m$, and, either $\Delta_i$ is the boundary complex of a $(k+1)$-simplex or $\Delta_i/uv$ is a simplicial $k$-circuit,
        \item[(F3)] for $2 \leq j \leq m$, $\MF_k(\Delta_j) \cap \left(\bigcup_{i=1}^{j-1} \MF_k(\Delta_i)\right) \neq \emptyset$.
    \end{enumerate} 
\end{lemma}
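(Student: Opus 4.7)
The plan is to work inside the $\ZZ_2$ simplicial chain complex on $V:=\MF_0(\Delta)$, where simplicial $k$-circuits appear as minimal nonzero cycles (Lemma~\ref{lem:circuit}). The pieces $\Delta_1,\ldots,\Delta_m$ will be of two types, both containing the edge $uv$: type~(a) is the boundary $\partial\Sigma$ of a $(k+1)$-simplex $\Sigma\subseteq V$ whose $1$-skeleton is contained in $G(\Delta)$, which falls under the first alternative of (F2); type~(b) is a subcomplex $\Delta'$ with $\MF_k(\Delta')\subseteq\MF_k(\Delta)$ that is a $k$-circuit and whose contraction $\Delta'/uv$ is again a $k$-circuit, giving the second alternative. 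The main source of type~(a) pieces will be the \emph{collision pairs} $\{\sigma,\sigma'\}\subseteq\MF_k(\Delta)$ with $u\in\sigma$ and $\sigma'=\sigma\sm\{u\}\cup\{v\}$: for such a pair, the simplex $\Sigma:=\sigma\cup\{v\}$ has all $\binom{k+2}{2}$ edges in $G(\Delta)$ (the edges within $\sigma$ come from $\sigma\in\MF_k(\Delta)$, the edges $\{v,x\}$ with $x\in\sigma\sm\{u\}$ come from $\sigma'\in\MF_k(\Delta)$, and $uv$ is given).

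Let $c_\Delta:=\sum_{\sigma\in\MF_k(\Delta)}\sigma$, so $\partial c_\Delta=0$. Contraction along $uv$ induces a $\ZZ_2$-linear map $\pi$ on chains that kills every simplex containing both $u$ and $v$ and relabels $v$ as $u$ elsewhere; set $\bar c:=\pi(c_\Delta)$. Since $\Delta/uv$ is not a simplicial $k$-circuit, one of three cases occurs: $\bar c=0$; $\bar c$ is a nonzero cycle that splits as a sum of strictly smaller simplicial-circuit cycles in $\Delta/uv$; or $\bar c$ is not a cycle. In the third case the ``defective'' $(k-1)$-faces (those of odd incidence) all have the form $\omega\cup\{u\}$ with $\omega\in\lk_\Delta(u)\cap\lk_\Delta(v)$, and I would first correct $\bar c$ to a cycle by adding boundaries of the simplices on $\omega\cup\{u,v\}$ (which are type~(a) building blocks). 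The resulting corrected cycle is then written as a sum of fundamental cycles $\bar c_1,\ldots,\bar c_m$ of simplicial $k$-circuits $\bar\Delta_1,\ldots,\bar\Delta_m$ in $\Delta/uv$.

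Each $\bar\Delta_j$ is then lifted to a simplicial $k$-circuit $\Delta_j$ by choosing, for each $k$-face $\tau\in\MF_k(\bar\Delta_j)$ containing $u$, one of the (up to two) preimages in $\MF_k(\Delta)$. When the naive lift fails the cycle condition, the failure is absorbed by including additional type~(a) simplex boundaries from further collision pairs, producing a genuine $k$-circuit $\Delta_j\ni uv$ with $\Delta_j/uv=\bar\Delta_j$, so that (F2) holds. Property (F1) follows because the decomposition is chosen to account for every $k$-face of $\Delta$ (each $k$-face either sits in some lift $\Delta_j$ or appears as one of the two $k$-faces of $\Sigma$ in the associated type~(a) correction). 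Property (F3) is obtained by reordering, exploiting the fact that $\Delta$ is itself a single $k$-circuit, so the dual graph on $\MF_k(\Delta)$ under $(k-1)$-face-sharing is connected; this transfers to a strongly connected ordering of $\{\Delta_1,\ldots,\Delta_m\}$ because each $\Delta_i$ carries at least one $k$-face from $\MF_k(\Delta)$.

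The main obstacle is the lifting step. A $k$-face of $\bar\Delta_j$ containing $u$ has, a priori, two possible preimages in $\MF_k(\Delta)$, and these choices must be coordinated globally so that the lift is a $\ZZ_2$-cycle supported in $\MF_k(\Delta)$ together with a controlled set of type~(a) corrections. Showing that consistent choices exist, and that the resulting discrepancies can always be absorbed without introducing edges outside $G(\Delta)$, is the technical heart of Fogelsanger's argument; it rests on a careful local analysis of the collision pattern inside $\lk_\Delta(u)\cap\lk_\Delta(v)$, using the fact that $\Delta$ is a \emph{single} circuit to constrain the global structure of this pattern.
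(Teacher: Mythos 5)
The paper does not prove this lemma; it states it and refers to Fogelsanger \cite{F} for the proof of a more general version over an arbitrary field. Judged on its own terms, your outline has the right skeleton (work with $\ZZ_2$-chains, use the contraction map, correct with boundaries of $(k+1)$-simplices arising from collision pairs, then lift back), and you have correctly flagged the lifting step as the technical heart. However, your case analysis contains a concrete error. The map $\pi$ you describe --- kill simplices containing both $u$ and $v$, relabel $v$ as $u$ elsewhere --- is a $\ZZ_2$-chain map: a direct check on each type of $k$-simplex gives $\partial\circ\pi=\pi\circ\partial$. Since $c_\Delta$ is a cycle, $\bar c=\pi(c_\Delta)$ is \emph{always} a cycle, so your third case (``$\bar c$ is not a cycle'') can never occur. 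The object that can fail to be a cycle is the indicator chain $\sum_{\tau\in\MF_k(\Delta/uv)}\tau$ of the \emph{combinatorially} contracted complex, which differs from $\bar c$ by exactly the collision-pair images; dually, your first case $\bar c=0$ is possible for $\pi(c_\Delta)$ (take $\Delta=\partial\Sigma$ with $u,v\in\Sigma$) but impossible for the indicator chain. You have conflated these two notions of contraction, and the bookkeeping distinguishing them is precisely where the substance of Fogelsanger's argument lies. There is also a case you do not mention: $\bar c$ nonzero and a \emph{minimal} cycle, yet $\Delta/uv$ still not a circuit because collisions make $\MF_k(\Delta/uv)$ strictly larger than the support of $\bar c$. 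Together with the admitted gap in the global-coherence part of the lifting, what you have is a reasonable road map, not a proof.
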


In fact Fogelsanger proved more general versions of Theorems \ref{thm:fog_rig} and Lemma \ref{lem:fog_decomp} for {\em minimal homology cycles} over an arbitrary field. These coincide with our simplicial circuits when the field is $\ZZ_2$. See \cite{F} for details.

Next we outline the proof of Theorem \ref{thm:fog_rig},

\begin{sproof}
    Pick $uv \in \MF_1(\Delta)$.
    If $\Delta/uv$ is again a simplicial $k$-circuit then by induction on the number of vertices $G(\Delta)/uv = G(\Delta/uv)$ is rigid in $\R^{k+1}$. Now, since $\Delta$ is a simplicial $k$-circuit, $\{u,v\}$ is contained in at least two $k$-faces. This implies that $u$ and $v$ have at least $k$ common neighbours in $G(\Delta)$ and so, by Lemma \ref{lem:split}, $G$ is rigid in $\R^{k+1}$.

    Thus we can assume that $\Delta/uv$ is not a simplicial $k$-circuit. Let $\Delta_1,\cdots,\Delta_m$ be the simplicial $k$-circuits whose existence is asserted by Lemma \ref{lem:fog_decomp}. By (F2) and using the induction argument of the previous paragraph, we see that $G(\Delta_i)$ is rigid in $\R^{k+1}$ for each $i$. Now (F3) implies that, for $2\leq j \leq m$, $\bigcup_{i=1}^{j-1}G(\Delta_i)$ and $G(\Delta_j)$ have at least $k+1$ common vertices so an induction argument using Lemma \ref{lem:glue} and (F1) proves that $G(\Delta)$ is rigid in $\R^{k+1}$.
\end{sproof}

The following result on the  global rigidity of simplicial $k$-circuits was recently obtained in \cite{CJt-global-mfd}. 

\begin{theorem}
    \label{thm:CJTglobal}
    Let $\Delta$ be a simplicial $k$-circuit for $k\geq 2$. 
    \begin{enumerate}
        \item If $k\geq 3$ then $G(\Delta)$ is globally rigid in $\R^{k+1}$ if and only if $G(\Delta)$ is $(k+2)$-connected or is a copy of $K_{k+2}$.
        \item If $k=2$ then $G(\Delta)$ is globally rigid in $\R^3$ if and only if $G(\Delta)$ is $4$-connected and not planar, or, $G(\Delta)$ is a copy of $K_4$.
    \end{enumerate} 
\end{theorem}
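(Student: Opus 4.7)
Necessity will follow quickly from Theorem~\ref{thm:hend}: if $|\MF_0(\Delta)|\le k+2$, the circuit condition forces $\Delta$ to be the boundary complex of a $(k+1)$-simplex so that $G(\Delta)=K_{k+2}$, and if $|\MF_0(\Delta)|\ge k+3$ then Hendrickson's conditions deliver the $(k+2)$-connectivity. For the planar case when $k=2$, Euler's formula gives $|\MF_1(\Delta)|\le 3|\MF_0(\Delta)|-6$; combined with Fogelsanger's Theorem (Theorem~\ref{thm:fog_rig}) this forces equality, so $G(\Delta)$ is minimally $3$-rigid and hence not redundantly $3$-rigid, contradicting global rigidity via Theorem~\ref{thm:hend}.

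For sufficiency I would argue by induction on $|\MF_0(\Delta)|$, mirroring the architecture of Fogelsanger's proof of Theorem~\ref{thm:fog_rig}, with base case $K_{k+2}$. For the inductive step, fix an edge $uv\in\MF_1(\Delta)$. Since $G(\Delta)$ is $(k+2)$-connected, both $u$ and $v$ have degree at least $k+2$, so every vertex split obtained by reversing an edge contraction will be non-trivial. In the favourable case where $uv$ can be chosen so that $\Delta/uv$ is again a simplicial $k$-circuit whose graph is $(k+2)$-connected (and, when $k=2$, non-planar), induction gives global rigidity of $G(\Delta/uv)$, and the circuit condition forces $u,v$ to share at least $k$ common neighbours in $G(\Delta)$. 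One would then invoke a vertex-splitting-preserves-global-rigidity result, either the theorem of Jord\'an and Tanigawa from \cite{JT-braced} or Connelly's stress-matrix argument from \cite{Cnotes} applied to the coincident realisation $p_u=p_v$ built from the infinitesimally rigid realisation of $G(\Delta/uv)$ guaranteed by Theorem~\ref{thm:fog_rig}.

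When no such $uv$ is available, I would pick an edge for which $\Delta/uv$ fails to be a $k$-circuit and apply Fogelsanger's decomposition (Lemma~\ref{lem:fog_decomp}) to obtain simplicial $k$-circuits $\Delta_1,\ldots,\Delta_m$ satisfying (F1)--(F3). Each $\Delta_i$ has strictly fewer vertices than $\Delta$, so an inner induction (with the favourable case applied to each $\Delta_i/uv$) shows that every $G(\Delta_i)$ is globally rigid in $\R^{k+1}$. Properties (F2) and (F3) provide, for each $j\ge 2$, both a shared edge $uv$ and a shared $k$-face between $G(\Delta_j)$ and $\bigcup_{i<j}G(\Delta_i)$, giving generically $k+2$ common vertices and hence permitting Lemma~\ref{lem:sub}(b) to glue the pieces into a globally rigid $G(\Delta)$.

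The main obstacle will be the case where the common $k$-face supplied by (F3) actually contains the edge $uv$: then the overlap has only $k+1$ vertices, exactly what Fogelsanger needs for Lemma~\ref{lem:glue}(a) but one short of the $k+2$ demanded by Lemma~\ref{lem:sub}(b). This will force either a strengthening of Lemma~\ref{lem:fog_decomp} that produces an extra common vertex in each step, or an alternative gluing argument — for instance, a direct construction of a stress matrix of rank $|V|-k-2$ on $G(\Delta)$ from compatible stresses on the pieces. A secondary technical issue, specific to $k=2$, is that non-planarity must be tracked through the induction: a careless choice of $uv$ could concentrate all the non-planarity of $\Delta$ in a single piece $\Delta_i$, leaving the remaining pieces planar and hence not covered by the inductive hypothesis, so the selection of the contracting edge must be guided by the planar structure of $G(\Delta)$.
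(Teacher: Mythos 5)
Your architecture matches the paper's own account exactly: necessity via Hendrickson's theorem (Theorem~\ref{thm:hend}) plus the planarity/Euler count for $k=2$, and sufficiency by induction using vertex splitting (Lemma~\ref{lem:split}) together with Fogelsanger's decomposition (Lemma~\ref{lem:fog_decomp}), mirroring the proof of Theorem~\ref{thm:fog_rig}. The necessity half is essentially complete and correct, and you deserve credit for pinpointing, unprompted, two of the genuine technical obstacles: the overlap in (F3) may be only a $k$-face ($k+1$ vertices), one vertex short of what Lemma~\ref{lem:sub}(b) needs, and the needed vertex-splitting-preserves-global-rigidity statement is a conjecture in general, with only the special cases of \cite{Cnotes} and \cite{JT-braced} available.

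However, your sketch glosses over the third obstacle that the paper explicitly names as a ``significant extra complication'': controlling the $(k+2)$-connectedness of the intermediate complexes in the induction. You treat this only under the heading of $k=2$ planarity tracking, but the issue is more fundamental and applies for all $k$. Even if $G(\Delta)$ is $(k+2)$-connected, nothing in Lemma~\ref{lem:fog_decomp} guarantees that the pieces $G(\Delta_i)$, or their contractions $G(\Delta_i/uv)$, remain $(k+2)$-connected; indeed $G(\Delta_i)$ can be as weak as the $1$-skeleton of a $(k+1)$-simplex glued to something small. So the step ``an inner induction shows that every $G(\Delta_i)$ is globally rigid'' does not follow: the inductive hypothesis (global rigidity characterised by connectivity) simply may not apply to the pieces. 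The authors of \cite{CJt-global-mfd} have to work substantially to either choose the contracted edge so that connectivity is preserved, or to run the induction over a more refined invariant than vertex count. A secondary correction: it is not $\Delta_i$ that has strictly fewer vertices than $\Delta$ — $\Delta_i$ is a subcomplex on a subset of the vertex set and could, a priori, span all of $V(\Delta)$ — rather it is $\Delta_i/uv$ that drops the vertex count, which is what makes the induction well-founded. Finally, Connelly's coincident-point criterion is delicate here: the coincident realisation $p'_u=p'_v$ makes the row of the rigidity matrix indexed by $uv$ vanish, so ``infinitesimally rigid with $u,v$ coincident'' must be interpreted with care (e.g.\ via the framework on $G-uv$), and you should not claim it comes for free from Theorem~\ref{thm:fog_rig}.
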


The proof of Theorem \ref{thm:CJTglobal} is again an induction based on vertex splitting and on Fogelsanger's decomposition. However there are significant extra complications that arise from controlling the $(k+2)$-connectedness in the inductive arguments, and also from the fact that, as mentioned in Section \ref{sec:induct}, the natural analogue of Lemma \ref{lem:split} for global rigidity is still a conjecture. For more details see \cite{CJt-global-mfd}.

Theorems \ref{thm:fog_rig} and \ref{thm:CJTglobal} have some important consequences for the theory of $f$-vectors of simplicial $k$-polytopes and more generally of $k$-pseudomanifolds. It is natural to wonder what is the smallest possible number of $j$-faces in a simplicial $k$-polytope with a given number of vertices.
For $0 \leq j \leq k \leq n-2$ define 
$$
\phi_j(k,n) = \left\{\begin{array}{ll}\binom{k+1}j n  - j\binom{k+2}{j+1} & \text{ if }j < k\\ 
kn - (k-1)(k+2)&\text{ if }j = k. \end{array}\right. 
$$
The number $\phi_j(k,n)$ is the numbers of $j$-faces in a {\em stacked $k$-spheres}. This family of $k$-polytopes can be defined recursively as follows. The smallest 
stacked $k$-sphere 
is the boundary complex of the $(k+1)$-dimensional simplex. 
For $n\geq k+3$ a stacked $k$-sphere with $n$ vertices is a simplicial complex obtained by deleting one of the facets of a stacked $k$-sphere with $n-1$ vertices and adding a new vertex whose link is the boundary complex of the deleted facet. An easy induction argument proves that if $\Sigma$ is a stacked sphere with $n$ vertices then $f_j(\Sigma) = \phi_j(k,n)$. The following theorem is due to Barnette \cite{B71,B73}. 
\begin{theorem}[The Lower Bound Theorem]
    \label{thm:lbt}
    Let $k \geq 2$ be an integer and  $\Delta$ be the boundary complex of a simplicial $k+1$ dimensional polytope with $n$ vertices. Then, for $0\leq j \leq k$,
    \begin{equation}
    \label{eqn:lbt}
    f_j(\Delta) \geq \phi_j(k,n).
    \end{equation}
    Moreover, if $k \geq 3$ and equality holds in (\ref{eqn:lbt}) for some $1 \leq j \leq k$ then $\Delta$ is a stacked $k$-sphere. If $k=2$ then equality holds if and only if $\Delta$ is a triangulation of the 2-sphere.
\end{theorem}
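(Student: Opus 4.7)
The plan is to reduce the entire statement to the $j=1$ case, which follows almost immediately from Fogelsanger's Theorem, and then handle the equality case by a separate inductive argument. Since $\Delta$ is the boundary complex of a simplicial $(k+1)$-polytope, it is a $k$-pseudomanifold, hence a simplicial $k$-circuit. Theorem \ref{thm:fog_rig} therefore implies that $G(\Delta)$ is rigid in $\R^{k+1}$, and applying Maxwell's bound \eqref{eq:max} to a generic realisation yields
\[
f_1(\Delta) \geq (k+1)n - \binom{k+2}{2} = \phi_1(k,n),
\]
which establishes the $j=1$ case.

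To deduce the bounds for $j \geq 2$, I would invoke the Dehn--Sommerville equations, which hold for the boundary complex of any simplicial polytope. Passing to the $h$-vector $(h_0,\ldots,h_{k+1})$ of $\Delta$ converts Maxwell's bound into $h_2 \geq h_1$ and Dehn--Sommerville into $h_i = h_{k+1-i}$; a short manipulation (using that stacked $k$-spheres satisfy $h_2 = h_1$, with the remaining $h_i$ determined by symmetry) then shows that $f_j \geq \phi_j(k,n)$ for every $0 \leq j \leq k$, and that equality for \emph{some} $j \geq 1$ is equivalent to $h_2 = h_1$, i.e.\ to $G(\Delta)$ being minimally rigid in $\R^{k+1}$.

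For the equality characterisation when $k=2$, Euler's formula forces $f_1 = 3n - 6$ for every triangulation of the $2$-sphere, so the claim is immediate. The harder case is $k \geq 3$. Here I would induct on $n$, using that equality in \eqref{eqn:lbt} forces $G(\Delta)$ to be minimally rigid in $\R^{k+1}$. The Maxwell count then gives average vertex degree strictly less than $2(k+1)$, while every vertex of a $k$-pseudomanifold has degree at least $k+1$; hence some vertex $v$ of $\Delta$ has degree exactly $k+1$. A further combinatorial argument identifies $\lk_\Delta(v)$ with the boundary of a $k$-simplex $\sigma$ (a ``stacked vertex''). Deleting $v$ and inserting $\sigma$ gives a simplicial $k$-sphere on $n-1$ vertices that still attains equality in \eqref{eqn:lbt}, to which induction applies.

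The main obstacle is the last step: establishing that the complex obtained from $\Delta$ after removing a stacked vertex remains the boundary of a simplicial $(k+1)$-polytope, rather than merely a simplicial sphere. This geometric realisability issue lies outside the reach of the combinatorial rigidity tools developed in the paper; in practice the theorem is naturally proved for the wider class of simplicial spheres (or even homology spheres), in which the inductive step stays inside the class under consideration.
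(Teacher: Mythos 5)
Your handling of the $j=1$ case coincides with the paper's: Fogelsanger's theorem gives rigidity, and Maxwell's bound \eqref{eq:max} gives $f_1 \geq (k+1)n - \binom{k+2}{2}$. However, two of your other steps contain genuine gaps.

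\textbf{Reduction to $j=1$.} Passing to $h$-vectors, Maxwell's bound is indeed equivalent to $h_2 \geq h_1$, and Dehn--Sommerville gives $h_i = h_{k+1-i}$. But to conclude $f_j \geq \phi_j(k,n)$ for all $j$ you need $h_i \geq h_1$ for \emph{every} $2\leq i\leq k-1$, since
$f_{j-1}(\Delta) - \phi_{j-1}(k,n) = \sum_{i=2}^{k-1}\binom{k+1-i}{j-i}\bigl(h_i - h_1\bigr).$
The inequality $h_2 \geq h_1$ together with symmetry controls only $h_2$ and $h_{k-1}$; it says nothing about $h_3,\ldots,h_{k-2}$. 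The statement $h_i \geq h_1$ for all $i$ in this range is the Generalized Lower Bound Theorem, which is far deeper and not a consequence of Maxwell's count. The paper instead invokes the McMullen--Perles--Walkup reduction, which is an induction over the dimension of vertex links using $(j+1)f_j(\Delta) = \sum_v f_{j-1}(\lk_\Delta v)$, not an $h$-vector manipulation.

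\textbf{Equality case for $k\geq 3$.} Your degree-counting argument does not close. Minimal rigidity gives average degree strictly below $2(k+1)$, and pseudomanifold structure gives all degrees at least $k+1$, but these two facts together are perfectly consistent with every vertex having degree between $k+2$ and $2k+1$; they do not produce a vertex of degree exactly $k+1$. So you never get a ``stacked vertex'' to peel off. (You are right to be suspicious of the polytope-realisability of the residual complex as well, but the argument breaks down one step earlier.) The paper avoids degree counting entirely: minimal rigidity plus Hendrickson's theorem \ref{thm:hend} shows $(G(\Delta),p)$ is not globally rigid, so by Theorem \ref{thm:CJTglobal} the graph is not $(k+2)$-connected, and the resulting $(k+1)$-vertex separator is shown to be the vertex set of a facet along which $\Delta$ decomposes into two smaller polytope boundaries that each attain equality; induction then applies to these pieces. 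This is what the machinery on global rigidity of simplicial circuits is built to deliver, and it sidesteps both of the difficulties your route encounters.

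Your treatment of $k=2$ via Euler's formula is fine and essentially matches the paper's remark.
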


\begin{sproof}
    We outline a proof of the inequality using rigidity that is due to Kalai \cite{K}. The use of global rigidity to characterise the case of equality is taken from \cite{CJt-global-mfd}.
    
    It is well known that the link of a vertex of $\Delta$ is 
    the boundary complex of a simplicial polytope of dimension $k$. Based on this fact, McMullen, Perles and Walkup \cite{McMWal} gave an inductive argument to show that we can reduce the proof of 
    the theorem to the case $j = 1$. In this case (\ref{eqn:lbt}) becomes
    \begin{equation}\label{eqn:lbtcase1} f_1(\Delta) \geq (k+1)n - \binom{k+2}2.\end{equation}
    Let $p:\MF_0(\Delta) \to \R^{k+1}$ be the embedding of the vertices that arises naturally from the given polytope. Note that sufficiently small perturbations of a simplicial polytope do not change its boundary complex, so we assume that $p$ is generic from now on.
    From Theorem \ref{thm:fog_rig} the framework $(G(\Delta),p)$ is 
    infinitesimally rigid. Now (\ref{eqn:lbtcase1}) follows immediately from the fact that 
    equality holds in (\ref{eq:max}) for infinitesimally rigid frameworks.

    To deal with with the case of equality when $k\geq 3$, we observe that if equality holds in (\ref{eqn:lbtcase1}) then the framework
    of the polytope is minimally rigid. If $f_0(\Delta) \geq k+2$ then Theorem \ref{thm:hend} implies that $(G(\Delta),p)$ is not globally rigid. Since $p$ is generic Theorem \ref{thm:CJTglobal} implies that 
    $G(\Delta)$ 
    is not $(k+2)$-connected. Now it can be be shown that in this case $\Delta$ is obtained by gluing two smaller complexes together along a common facet, whose set of vertices is a $(k+1)$-vertex separator for $G(\Delta)$, and removing the common facet. It also follows that each of these two smaller complexes is the boundary complex of a polytope and that equality in (\ref{eqn:lbtcase1}) holds in both cases, so by induction each of these complexes is a stacked sphere and therefore $\Delta$ is also a stacked sphere.
    The case of equality for $k=2$ follows by a similar argument.
\end{sproof}

\paragraph{Some historical comments.} Rigidity of frameworks associated to polytopes  is a very classical topic, dating at least to Cauchy's Rigidity Theorem for 3-dimensional convex polyhedra\cite{Cauchy}.
Dehn proved an infinitesimal rigidity version of Cauchy's Theorem \cite{Dehn}, and later  Alexandrov extended Cauchy's result to higher dimensions \cite{Alexandrov}. Gluck extended Dehn's result to generic rigidity in $\R^3$ of plane triangulations \cite{G}. However, in a surprising development, Connelly showed that there is a non-convex polyhedron, i.e. an embedded simplicial $2$-sphere in $\R^3$, that is continuously flexible \cite{C82}.
Whiteley extended Alexandrov's theorem by showing that, for all $d \geq 3$, if we triangulate the $2$-faces of a non-simplicial polytope without adding new vertices then the resulting framework is infinitesimally rigid \cite{WW-polyhedra}.
Kalai \cite{K} proved Theorem \ref{thm:fog_rig} for triangulations of 
manifolds of dimension at least $3$  and used it to extend the Lower Bound Theorem to triangulated manifolds. Tay subsequently extended the Lower Bound Theorem to the class of normal pseudomanifolds \cite{Tay-LBT}.
Following Kalai's  paper \cite{K},
rigidity properties and lower bound theorems have been investigated for several other classes of complexes including balanced complexes \cite{KleeNov,Oba}, cubical complexes \cite{KNN} and symmetric complexes \cite{KNNZ,CJT-symm}.
As far as we are aware the following is still open.
\begin{conj}
    Let $k \geq 2$ be an integer and 
    $\Delta$ be a simplicial $k$-circuit with $n$ vertices. Then $f_{j}(\Delta) \geq \phi_j(k,n)$ for all $j \geq 0$. Moreover, if equality holds for some $1 \leq j \leq k$ then $\Delta$ is a simplicial $k$-sphere.
\end{conj}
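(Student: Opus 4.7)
The strategy is to imitate the rigidity-based proof of the Lower Bound Theorem outlined in the sketch of Theorem~\ref{thm:lbt}, systematically replacing each use of convexity or polytopal structure by an appeal to the simplicial-circuit hypothesis. The plan has three steps plus a combinatorial closing argument which is where the real difficulty lies.

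First, I would reduce the case of general $j$ to $j=1$ via the McMullen--Perles--Walkup identity
\[
(j+1)f_j(\Delta)=\sum_{v\in \MF_0(\Delta)} f_{j-1}(\lk_\Delta(v))
\]
together with induction on $k$. The input needed from induction is an inequality of the form $f_{j-1}(\lk_\Delta(v))\geq \phi_{j-1}(k-1,f_0(\lk_\Delta(v)))$. By Lemma~\ref{lem:circuit}, every $(k-2)$-face of $\lk_\Delta(v)$ lies in an even number of $(k-1)$-faces of $\lk_\Delta(v)$, so $\lk_\Delta(v)$ contains a simplicial $(k-1)$-circuit; extracting such a circuit on all vertices of the link, or summing over a family of circuits arising in a minimal decomposition, should give the required inductive bound after some bookkeeping.

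Second, for the base case $j=1$, Fogelsanger's Theorem~\ref{thm:fog_rig} gives that $G(\Delta)$ is rigid in $\R^{k+1}$, so by Maxwell's bound \eqref{eq:max}
\[
f_1(\Delta)\;=\;|E(G(\Delta))|\;\geq\;(k+1)n-\binom{k+2}{2}\;=\;\phi_1(k,n).
\]
Third, for the equality statement, suppose $f_j(\Delta)=\phi_j(k,n)$ for some $1\leq j\leq k$ and $n\geq k+3$. Tracking equality backwards through the first step forces equality in the $j=1$ case, so $G(\Delta)$ is minimally rigid in $\R^{k+1}$. Theorem~\ref{thm:hend} then implies that $G(\Delta)$ is not globally rigid in $\R^{k+1}$. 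For $k\geq 3$, Theorem~\ref{thm:CJTglobal}(1) furnishes a vertex cut $X$ of $G(\Delta)$ with $|X|\leq k+1$; the $k=2$ case is handled analogously using Theorem~\ref{thm:CJTglobal}(2), where either such a cut exists or $G(\Delta)$ is planar, the latter being enough (for a $2$-circuit) to conclude that $\Delta$ is a triangulated $2$-sphere.

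The main obstacle is the final combinatorial step: deducing from the existence of a small vertex cut $X$ that $X$ is actually the vertex set of a $k$-face of $\Delta$ and that $\Delta$ decomposes as a connected sum along $X$ of two strictly smaller simplicial $k$-circuits, each still attaining equality. For the polytopal Lower Bound Theorem this is immediate from convexity, but without that hypothesis one must argue via parity of $(k-1)$-face incidences across $X$: every $(k-1)$-subset of $X$ is contained in an even number of $k$-faces of $\Delta$, and the two sides of the cut can inherit the circuit property only if the missing $k$-face $X$ itself is supplied to each side. Fogelsanger's decomposition lemma (Lemma~\ref{lem:fog_decomp}), perhaps read as a structure theorem rather than an induction tool, should provide the right framework to formalise this step. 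Once the decomposition is in place, induction on $n$ yields that each smaller circuit is a simplicial $k$-sphere, hence so is $\Delta$. It is precisely this last combinatorial manoeuvre, rather than any of the rigidity inputs, where the proof is most delicate, and where a counterexample (if one exists) is most likely to appear.
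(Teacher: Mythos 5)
The statement you are attempting to prove is explicitly presented in the paper as an \emph{open conjecture} (``As far as we are aware the following is still open''), so there is no proof in the paper to compare against. Your proposal correctly transplants the rigidity machinery that proves Theorem~\ref{thm:lbt} for polytope boundaries --- Fogelsanger rigidity for the $j=1$ inequality, and Hendrickson plus Theorem~\ref{thm:CJTglobal} for the equality case --- and those pieces are sound exactly to the extent that the paper uses them. But the two steps you yourself flag as uncertain are where the argument genuinely breaks down, and the first is more serious than you acknowledge.

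The McMullen--Perles--Walkup reduction hinges on the class of complexes being closed under taking vertex links, which simplicial $k$-circuits are not. You correctly observe that $\lk_\Delta(v)$ is pure $(k-1)$-dimensional and that every $(k-2)$-face of the link lies in an even number of $(k-1)$-faces, so $\MF_{k-1}(\lk_\Delta(v))$ decomposes into disjoint $\MS_{k-1}$-circuits. But the induced bound is not what you need: if those circuits have vertex counts $n_1,\dots,n_m$, then summing $\phi_{j-1}(k-1,n_i)$ overcounts shared $(j-1)$-faces while subtracting $m$ copies of the additive constant $(j-1)\binom{k+1}{j}$, and this can easily fall short of $\phi_{j-1}(k-1,|V(\lk_\Delta v)|)$. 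The ``bookkeeping'' is not a finishing detail; it is the crux, and there is no obvious reason it closes. This is precisely why the paper proves Theorem~\ref{thm:lbt} only for polytope boundaries (where links are again polytope boundaries) and records the general simplicial-circuit statement as open, even though the $j=1$ inequality follows immediately from Theorem~\ref{thm:fog_rig} exactly as you say. Your final step --- promoting a $(k+1)$-vertex cut produced by non-global-rigidity to a face of $\Delta$ along which the complex splits as a connected sum of two smaller circuits each attaining equality --- is likewise a real gap rather than a deferred verification; Lemma~\ref{lem:fog_decomp} produces circuits whose graphs cover $G(\Delta)$ but gives no control on their face vectors, so it does not obviously deliver a face-count-preserving decomposition. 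In short, the outline is the right circle of ideas, but neither the MPW step nor the connected-sum step is established by the proposal, and closing either would be a new contribution beyond what the paper contains.
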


\subsubsection{Connections with commutative algebra}

In this section we will outline some connections between rigidity theory and combinatorial commutative algebra. We will assume that the reader is familiar with the basic concepts and terminology of commutative algebra. Since we are focused on the connection with rigidity, all of the rings that we discuss in this section will be $\R$-algebras. However many of the algebraic concepts and results are also valid over other fields. 
Let $\R[X] = \R[x_1,\cdots, x_n]$ be the polynomial ring in $n$ commuting variables. 
For  $\tau \subset [n]$  we will write $x_\tau$ for the {\em squarefree monomial} $\Pi_{i \in \tau}x_i$. The mapping $\tau \mapsto x_\tau$ is a bijection between nonempty subsets of $[n]$ and the set of squarefree monomials in $\R[X]$.

To a simplicial complex $\Delta$ with $\MF_0(\Delta) = [n]$ we associate an ideal of $\R[X]$ defined by
$$I_\Delta = (x_\tau: \tau \subset [n], \tau \not\in \Delta).$$ In words, $I_\Delta$ is the ideal of $\R[X]$ generated by squarefree monomials corresponding to non-faces of $\Delta$. The Stanley-Reisner ring of $\Delta$ is defined by 
$$\R[\Delta] = \frac{\R[X]}{I_\Delta}. $$
It is a commutative graded algebra whose $j$-homogeneous component  $\R[\Delta]_j$ is the vector space with a basis consisting of all monomials (including non-squarefree monomials) of total degree $j$ whose support is in $\Delta$. The Stanley-Reisner ring is one of the central objects in combinatorial commutative algebra, see \cite{Stanley,MillerSturmfels,HerzogHibi} for more comprehensive treatments.  

How does this connect with rigidity theory? Suppose that $p:[n] \to \R^{d}$. 
For $1\leq j \leq d$ let $\theta_j  = \sum_{i=1}^n p(i)_j x_i \in \R[\Delta]_1$ and write $\theta$ for the sequence $\theta_1,\ldots,\theta_{d}$. Let $\ell = x_1+\cdots +x_n$, Let $B = B(\Delta,\theta) =\frac{\R[\Delta]}{(\theta)}$ and let $C = C(\Delta,\theta) =\frac{\R[\Delta]}{(\theta,\ell)} $. Then $B$ and $C$ are graded $\R$-algebras.  Let $B_j$, respectively $C_j$ be the $j$-homogeneous component of $B$, respectively $C$. 
The connection with classical rigidity theory is given by the following result of Lee \cite{Lee}. Recall the definition of a stress matrix from Section \ref{sec:GRandStressMatrices}. 

\begin{theorem}
    \label{thm:lee}
    There is a natural isomorphism between $C_2$ and the vector space of all stress matrices of $R(G(\Delta),p)$.
\end{theorem}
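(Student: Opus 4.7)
The plan is to exhibit a natural non-degenerate pairing between $\R[\Delta]_2$ and a space of symmetric matrices supported on $\MF_0(\Delta)\cup\MF_1(\Delta)$, and to show that the degree-two piece of the ideal $(\theta,\ell)$ pairs to zero with a matrix if and only if that matrix is a stress matrix of $R(G(\Delta),p)$.

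Let $S$ be the space of symmetric $n\times n$ matrices $\Omega$ with $\Omega_{ij}=0$ whenever $i\neq j$ and $\{i,j\}\notin\MF_1(\Delta)$. Because the squarefree relations of $I_\Delta$ kill every monomial whose support is not a face, $\R[\Delta]_2$ has basis $\{x_i^2:i\in[n]\}\cup\{x_ix_j:\{i,j\}\in\MF_1(\Delta)\}$, and $\dim S$ matches this. I would define the coefficient pairing $\langle\cdot,\cdot\rangle:\R[\Delta]_2\times S\to\R$ by $\langle x_i^2,\Omega\rangle=\Omega_{ii}$ and $\langle x_ix_j,\Omega\rangle=\Omega_{ij}$, which is manifestly non-degenerate since $S$ is realising the dual basis to the monomial basis.

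The key computation is to evaluate this pairing on the degree-two generators of $(\theta,\ell)$. Using $x_ix_k=0$ in $\R[\Delta]$ for non-edges $\{i,k\}$, we have
\[
\ell\cdot x_i \;=\; x_i^2+\sum_{k:\{i,k\}\in\MF_1(\Delta)} x_ix_k,\qquad \theta_j\cdot x_i \;=\; p(i)_j\,x_i^2+\sum_{k:\{i,k\}\in\MF_1(\Delta)} p(k)_j\,x_ix_k,
\]
so that
\[
\langle\ell x_i,\Omega\rangle \;=\; \sum_{k}\Omega_{ik},\qquad \langle\theta_j x_i,\Omega\rangle \;=\; \sum_k p(k)_j\,\Omega_{ik} \;=\; (\Omega P)_{ij},
\]
where $P$ is the $n\times d$ matrix whose $k$-th row is $p(k)$. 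Hence $\Omega\in S$ is orthogonal to $(\theta,\ell)\cdot\R[\Delta]_1$ precisely when every row sum of $\Omega$ vanishes and $\Omega P=0$. Recalling the definition of a stress matrix from Section \ref{sec:GRandStressMatrices}, and using the row-sum-zero condition to rewrite the equilibrium equation $\sum_{uv\in E}\omega_{uv}(p_u-p_v)=0$ as $\Omega P=0$, these are exactly the defining conditions for the stress matrices of $R(G(\Delta),p)$.

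Consequently, the pairing descends to a non-degenerate pairing between $C_2=\R[\Delta]_2/((\theta,\ell)\cdot\R[\Delta]_1)$ and the space of stress matrices, yielding the claimed natural isomorphism (equivalently, using the Frobenius trace form to identify $S\cong S^*$, the natural surjection $\R[\Delta]_2\to S$ induces an isomorphism $C_2\cong K^\perp$, where $K$ is the image of $(\theta,\ell)\cdot\R[\Delta]_1$). There is no serious obstacle: the argument is essentially bookkeeping. The delicate points are matching the coefficient of $x_ix_j$ for $i\neq j$ to the single matrix entry $\Omega_{ij}$ (so that the pairing really is non-degenerate with the right normalisation), and exploiting the Stanley--Reisner relations $x_ix_k=0$ for non-edges to collapse the sums in $\ell\cdot x_i$ and $\theta_j\cdot x_i$ onto the closed neighbourhood of $i$, which is exactly the support pattern built into $S$.
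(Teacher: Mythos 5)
Your argument is correct and follows essentially the same route as the paper's sketch proof, which picks the monomial inner product on $\R[\Delta]_2$ and identifies $(\theta,\ell)_2^\perp$ with the space of stress matrices. You supply the computation the paper leaves implicit, and phrasing it as a perfect pairing against the space $S$ of graph-supported symmetric matrices (rather than an inner product on $\R[\Delta]_2$ itself) cleanly sidesteps the $\Omega_{ij}$ versus $2\Omega_{ij}$ normalisation subtlety lurking in the paper's displayed formula $\sum_{1\leq i,j\leq n}\Omega_{ij}x_ix_j$.
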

\begin{sproof}
    The basis $\{x_ix_j+(I_\Delta)_2:ij \in \MF_2(\Delta)\}$  induces an inner product on $\R[\Delta]_2$.
    It can be shown that $$(\theta,\ell)_2^\perp  = \left\{\left(\sum_{1\leq i,j\leq n} \Omega_{ij}x_ix_j\right) +(I_\Delta)_2: \Omega\text{ is a stress matrix of } (G(\Delta),p)\right\}.$$
\end{sproof}
In fact there are similar interpretations of $C_j$ for $2 \leq j \leq \dim \Delta +1$ in terms of the skeletal rigidity matrices associated to the pair $(\Delta,p)$, see \cite{Lee} for further details. In the case $j = 1$, we have $B_1$, respectively $C_1$, is naturally isomorphic to  the space of linear, respectively affine, dependencies among the points $p(1),\ldots,p(n)$ and so, it has dimension $n-d$, respectively $n-d-1$, when $p$ is in general position.

Up to now our discussion has only required elementary concepts of commutative algebra. The remainder of this section will make reference to some more advanced notions from commutative algebra, see \cite{MillerSturmfels,BrunsHerzog} for more detail on these algebraic aspects. 
For our purposes it suffices to mention the following facts. 

Suppose that $p:\MF_0(\Delta) \to \R^d$ is generic with $\dim\Delta = d-1$.
It is not too difficult to show that $B_j = 0$ for $j\geq d+1$. In particular, this means that in this case $B$ is an {\em artinian 
reduction} of $\R[\Delta]$. 

The Cohen-Macaulay property is extensively studied in commutative algebra. 
Macaulay has shown $\R[\Delta]$ is a Cohen-Macaulay ring if and only if 
\begin{equation}
    \label{eqn:hi}
    \dim B_i = h_i(\Delta)  = \sum_{j=0}^i (-1)^{i-j} \binom{d-j}{i-j} f_{j-1}(\Delta).
\end{equation}
See \cite{Stanley78} for an account of this. Also we note that it follows from a theorem of Reisner that the Stanley-Reisner ring of a simplicial sphere is Cohen-Macaulay, see \cite[Corollary 5.3.9]{BrunsHerzog}. 

Using (\ref{eqn:hi}) and Theorem \ref{thm:lee} we obtain the following.
\begin{lemma}
    \label{lem:CMrig}
    Suppose that $\R[\Delta]$ is Cohen-Macaulay and $\dim \Delta = d-1 \geq 1$. Then $G(\Delta)$ is rigid in $\R^d$ if and only if multiplication by $\ell$ is an injective map $B_1 \to B_2$.
\end{lemma}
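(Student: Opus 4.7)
The plan is to combine Theorem \ref{thm:lee} (which identifies $C_2$ with the space of stress matrices of $(G(\Delta),p)$) with a dimension count provided by the Cohen-Macaulay hypothesis. Since $C = B/\ell B$, the sequence
\[
0 \longrightarrow \ker(\cdot\ell \colon B_1 \to B_2) \longrightarrow B_1 \xrightarrow{\;\cdot \ell\;} B_2 \longrightarrow C_2 \longrightarrow 0
\]
is exact, so multiplication by $\ell$ is injective on $B_1$ if and only if $\dim C_2 = \dim B_2 - \dim B_1$. The bulk of the proof will be identifying each of these three dimensions in rigidity-theoretic terms.

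Using the Cohen-Macaulay identity (\ref{eqn:hi}) I would compute $\dim B_1 = h_1(\Delta) = n-d$ and $\dim B_2 = h_2(\Delta) = f_1(\Delta) - (d-1)n + \binom{d}{2}$, and hence
\[
\dim B_2 - \dim B_1 \;=\; f_1(\Delta) - dn + \binom{d+1}{2},
\]
using $\binom{d}{2} + d = \binom{d+1}{2}$. On the other hand, Theorem \ref{thm:lee} together with the fact that the assignment $\omega \mapsto \Omega$ from equilibrium stresses to stress matrices is a linear bijection (one recovers $\omega$ from the off-diagonal entries of $\Omega$) shows that $\dim C_2$ equals the dimension of the equilibrium stress space of $(G(\Delta),p)$, which in turn equals $f_1(\Delta) - \rank R(G(\Delta),p)$ by the definition of the rigidity matrix.

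Combining these two identities, injectivity of $\cdot\ell \colon B_1 \to B_2$ is equivalent to $\rank R(G(\Delta),p) = dn - \binom{d+1}{2}$, which is exactly the statement that $(G(\Delta),p)$ is infinitesimally rigid; since $p$ is generic, Theorem \ref{thm:AR} upgrades this to generic rigidity of $G(\Delta)$ in $\R^d$. The degenerate case $n = d$ is trivial (then $B_1 = 0$ and $G(\Delta) = K_d$ is rigid), so we may assume $n \geq d+1$ throughout. The only delicate bookkeeping point is to verify that the realisation $p$ used to build $B$ is generic enough to simultaneously guarantee $\dim\langle\theta_1,\dots,\theta_d\rangle = d$ in $\R[\Delta]_1$ (so that $\dim B_1 = h_1$) and to realise the generic rigidity matroid of $G(\Delta)$; both conditions are the non-vanishing of finitely many polynomials in the coordinates of $p$, so a single generic $p$ suffices.
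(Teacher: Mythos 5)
Your argument is correct and it is precisely the derivation the paper intends when it says the lemma follows from the Cohen--Macaulay identity~(\ref{eqn:hi}) and Theorem~\ref{thm:lee}: the exact sequence $0 \to \ker(\cdot\ell) \to B_1 \to B_2 \to C_2 \to 0$, the $h$-vector computation $\dim B_2 - \dim B_1 = f_1(\Delta) - dn + \binom{d+1}{2}$, and the identification $\dim C_2 = f_1(\Delta) - \rank R(G(\Delta),p)$ via the bijection between stress matrices and equilibrium stresses. Your concluding remark that a single generic $p$ makes $\theta$ a linear system of parameters while simultaneously realising $\MR_d(G(\Delta))$, and that the case $n=d$ is harmless, completes the bookkeeping the paper leaves implicit.
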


An element $m$ of a graded Artinian algebra $\mathcal{A}$ is said to have the {\em weak Lefschetz property} if $m \in \mathcal A_1$ and the multiplication map $x\mapsto mx$ from $\mathcal A_i$ to $\mathcal A_{i+1}$ has maximal rank for all $i$, i.e. this map is either injective or surjective. The weak Lefschetz property is of great interest in commutative algebra and algebraic geometry. 
Lemma \ref{lem:CMrig} indicates that  there is a connection between rigidity theory and the weak Lefschetz property for an artinian reduction of the Stanley-Reisner ring. This connection has been developed much further, see \cite{Tay96,TayWhiteWhiteleyI,TayWhiteWhiteleyII,TayWhiteley}. In particular we mention Adiprasito's recent proof of the $g$-theorem, which characterises the $f$-vectors of rational homology spheres. This breakthrough result proves, among other things, that artinian reductions of the corresponding Stanley-Reisner rings have the weak Lefschetz property. The proof makes crucial use of the coning property of skeletal rigidity \cite{adiprasito}.

Finally in this section we briefly discuss, without giving details, the theory of algebraic shifting. For a simplicial complex $\Delta$ with vertex set $[n]$, Kalai \cite{Kalai-shifting} constructs a complex $\Delta^s$ with the following properties.
\begin{enumerate}
    \item[(1)] $f_i(\Delta^s) = f_i(\Delta)$ for all $i \geq 0$.
    \item[(2)] If $j \in \sigma \in \Delta^s$ and $i<j$ such that $i \not\in \Delta^s$ then $\sigma \setminus \{j\} \cup \{i\} \in \Delta^s$.
    \item[(3)] For $d\geq 1$, $\rank \MR_d( G(\Delta^s)) =\rank \MR_d(G(\Delta))$.
\end{enumerate}
Roughly speaking the algebraic shifting operator $\Delta \to \Delta^s$ corresponds at the level of Stanley-Reisner ideals to the generic initial ideal construction - there are some technicalities to deal with the fact that generic initial ideals of squarefree monomial ideals need not be squarefree. 
Algebraic shifting has many other interesting algebraic and combinatorial properties. For example it is not difficult to deduce from (2) and (3) above that a graph $G$, considered as a 1-dimensional simplicial complex, with vertex set $[n]$ for some $n \geq d+1$ is rigid in $\R^d$ if and only if $\{d,n\} \in G^s$. We note that this does not yield a more efficient algorithm for determining rigidity since computing the generic initial ideal of a squarefree monomial ideal is computationally as difficult as computing the rank of the generic rigidity matrix. 

The basic construction of algebraic shifting was introduced in \cite{Kalai-hyperconnectivity} in the context of the hyperconnectivity matroid of a graph, which is a relative of the rigidity matroid. Indeed Kalai shows that analogous shifting operators can be constructed for any suitably symmetric matroid, and uses this to study growth functions of symmetric matroids, see \cite{Kalai-symm}. In his thesis Nevo developed the theory further and gave some applications to $f$-vectors of graded posets \cite{nevothesis}. We also note some recent work on volume rigidity and algebraic shifting \cite{BNP}.

\subsection{Abstract rigidity and matroid maximality problems}
We will describe Jack Graver's ingenious concept of abstract rigidity matroids, which now plays a central role in combinatorial rigidity and can be used to connect rigidity theory to other topics  in combinatorics such as weakly saturated sequences of graphs, tropical ideals and the problem of deciding if a given family of matroids contains a unique maximal element.


\subsubsection{Abstract rigidity}\label{sec:abstract}
A key observation of Graver~\cite{G91} for the introduction of abstract rigidity is that the gluing property,  one of the characteristic properties of graph rigidity  given in Lemma~\ref{lem:glue}(a),  can be described purely in terms of the closure operator of the rigidity matroid.

Let $M$ be a matroid on 
the edge set of the complete graph $K_n$ with $n\geq d+1$, 
and  closure operator 
${\rm cl}_M$. Then
$M$ is an {\em abstract $d$-rigidity matroid} if $M$ 
satisfies the following two axioms (which correspond to the `only-if' and `if' directions of Lemma~\ref{lem:glue}(a), respectively): 
\begin{description}
\item[(G1)] If $E_1, E_2\subseteq E(K_n)$ with $V(E_1)=V_1$, 
$V(E_2)=V_2$ and $|V_1\cap V_2|\leq d-1$, then
${\rm cl}_M(E_1\cup E_2)\subseteq\binom{V_1}{2}\cup \binom{V_2}{2}$;
\item[(G2)]  If $E_1, E_2\subseteq E(K_n)$ with $\cl_M(E_1)=\binom{V_1}{2}$, 
$\cl_M(E_2)=\binom{V_2}{2}$ and $|V_1\cap V_2|\geq d$, then\\
${\rm cl}_M(E_1\cup E_2)= \binom{V_1\cup V_2}{2}$.
\end{description}

It is known that Lemma~\ref{lem:glue} holds for any framework  $(G,p)$ such that                                                                                                                        $p$ is in general position in $\R^d$.
Hence the rigidity matroid of any $d$-dimensional framework $(K_n,p)$ in general position is an example of an abstract $d$-rigidity matroid, and this matroid may be distinct from
the generic $d$-dimensional rigidity matroid of $K_n$.

Although the axioms of abstract $d$-rigidity appear to be a rather specialised concept in rigidity theory, they actually define a very natural class of matroids on $E(K_n)$ if we consider some equivalent characterizations of abstract rigidity.  
To describe these, we first list some basic properties of a matroid $M$ on $E(K_n)$, which all hold when $M=\MR_d(K_n)$ and $n\geq d+1$.
\begin{description}
\item[(R1)] The rank of $M$ is $dn-{d+1\choose 2}$.
\item[(R2)] Every copy of $K_{d+2}$ is a circuit in $M$.
\item[(R3)] Every copy of $K_{1,n-d}$ is a cocircuit in $M$.
\item[(R4)] 
If $G\subseteq K_n$, $v$ is a vertex of $G$ of degree $d$ and $G-v$ is $M$-independent  then $G$ is $M$-independent.
\end{description}
\begin{theorem}[Graver-Servatius-Servatius\cite{GSS93} and Nguyen~\cite{N10}]\label{thm:hang}
Let $n,d$ be positive integers with $n\geq d+1$ and $M$ be a matroid on $E(K_n)$.
Then, the following statements are equivalent.
\begin{itemize}
    \item $M$ is an abstract $d$-rigidity matroid.
    \item $M$ satisfies (R1) and (R2).
    \item $M$ satisfies (R1) and (R3).
    \item $M$ satisfies (R1) and (R4).
    \item $M$ satisfies (R2) and (R3).
    \item $M$ satisfies (R2) and (R4).
\end{itemize}
\end{theorem}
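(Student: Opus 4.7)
The plan is to organize the six conditions around the hub (R1)+(R2), showing that it is equivalent to abstract $d$-rigidity and to each of the three other pairs (R1)+(R3), (R1)+(R4), (R2)+(R3), (R2)+(R4). A minor bookkeeping advantage of this structure is that each implication involves only a short step from the hub.

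For the equivalence (abstract $d$-rigidity) $\Leftrightarrow$ (R1)+(R2), the forward direction derives the rank formula by constructing a base explicitly: start from the clique on $d+1$ vertices and extend one vertex at a time by $d$ edges to a clique; G2 ensures that the closure at each stage is the full clique on the current vertex set, yielding the lower bound $dn - \binom{d+1}{2}$, while G1 applied to the decomposition $\binom{V}{2} = \binom{V-v}{2} \cup \{\text{star at } v\}$ gives the matching upper bound inductively. Property (R2) then follows because $K_{d+2}$ has exactly one edge more than the maximal rank $d(d+2) - \binom{d+1}{2} = \binom{d+2}{2} - 1$ allowed on $d+2$ vertices, and deleting any edge of $K_{d+2}$ yields a set realizable as a base by the same 0-extension construction. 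The reverse direction is more delicate: G1 falls out of the rank formula by a short count on $V_1 \cup V_2$, but G2 requires an iterative use of (R2). Given $|V_1 \cap V_2| \geq d$, one picks $d$ vertices in the intersection; then for each new vertex $v \in V_2 \setminus V_1$ and $w \in V_1 \setminus V_2$, the $K_{d+2}$-circuit on $\{v, w\}$ together with the $d$ common vertices forces $vw$ into the closure of $E_1 \cup E_2$, and iterating exhausts all missing edges.

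To connect (R3) and (R4) with the hub, I would argue as follows. Under (R1)+(R2), property (R4) (0-extension preserves independence) follows because any new circuit containing the degree-$d$ vertex $v$ would, by circuit elimination with a $K_{d+2}$-circuit through $v$, produce a dependency in $G-v$, contradicting its independence. Property (R3) follows from the rank drop of exactly $d$ upon removing a vertex (by applying (R1) to both $K_n$ and $K_{n-1}$): the star at $v$ has $n-1$ edges and rank $d$, so any $n-d$ of them form a minimal rank-drop set, i.e.\ a cocircuit. Conversely, (R2)+(R4) $\Rightarrow$ (R1) by constructing a base of size $dn - \binom{d+1}{2}$ using iterated 0-extension from $K_{d+1}$ (lower bound via (R4), upper bound via (R2) and a standard Maxwell-type count), and (R2)+(R3) $\Rightarrow$ (R1) by induction on $n$: the cocircuit structure at each vertex forces the rank drop on deletion to be exactly $d$, which combined with the inductive hypothesis gives the global rank formula.

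The main obstacle is the derivation of G2 from (R1)+(R2): the iterative closure argument must be set up carefully to guarantee that $d$ shared vertices suffice to propagate all missing edges via $K_{d+2}$-circuits. A secondary technical hurdle is (R2)+(R3) $\Rightarrow$ (R1), where neither (R2) nor (R3) directly encodes a global rank; one must combine local circuit and cocircuit information to pin down $r(K_n)$ without assuming it in advance, most naturally via induction on $n$ together with a careful analysis of how $K_{d+2}$-circuits interact with $K_{1,n-d}$-cocircuits at a deleted vertex.
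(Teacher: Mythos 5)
The paper does not prove Theorem~\ref{thm:hang}; it cites it from Graver--Servatius--Servatius and Nguyen, so there is no in-paper argument to compare against. Your hub architecture around (R1)+(R2) is a sensible organisational choice, and your derivation of (G2) from (R1)+(R2) via $K_{d+2}$-circuits (closing up each edge between $V_1\setminus V_2$ and $V_2\setminus V_1$ through the shared $d$-set) is essentially correct. However, there are concrete problems in the forward direction.

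First, you have the roles of (G1) and (G2) in establishing the rank formula exactly reversed, and your specific invocation of (G1) is invalid. Taking $E_1 = \binom{V-v}{2}$ and $E_2 = $ the star at $v$ gives $V_1 = V-v$ and $V_2 = V$, so $|V_1\cap V_2| = n-1 \geq d$, which violates the hypothesis of (G1). What actually works is the opposite pairing: (G2) yields the \emph{upper} bound, because your staircase of sets (after replacing the $d$ new star edges by the full $K_{d+1}$ on the new vertex plus its $d$ neighbours, so that $\cl(E_2)=\binom{V_2}{2}$ genuinely holds) spans $\binom{V}{2}$ with $dn-\binom{d+1}{2}$ edges; (G1) yields the \emph{lower} bound, because for $k\leq d-1$ star edges at $v$ the hypothesis $|V_1\cap V_2|=k\leq d-1$ is met, $\binom{V-v}{2}\cup\{e_1,\dots,e_k\}$ is then a flat, and each added star edge increases the rank by one, giving $r(K_n)\geq r(K_{n-1})+d$. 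Also note that $\cl(\{\text{$d$ star edges}\})$ is generally not $\binom{V_2}{2}$ since $\binom{V_2}{2}$ has $\binom{d+1}{2}>d$ edges; your (G2) step silently assumes this.

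Second, the claim that ``(G1) falls out of the rank formula by a short count on $V_1\cup V_2$'' is doing a lot of unacknowledged work. The Maxwell-type bound coming from (R2) (Lemma~\ref{lem:abstrac_max} in the paper) gives only an upper bound on ranks of subsets; combining it with (R1) via the obvious subadditivity counts does not pin down $r(\binom{W}{2})$ for proper subsets $W$, and hence does not immediately block an edge between $V_1\setminus V_2$ and $V_2\setminus V_1$ from lying in $\cl(E_1\cup E_2)$. This direction is in fact the delicate one, not (G2) as you suggest. Finally, the proposed induction on $n$ for (R2)+(R3) $\Rightarrow$ (R1) needs a lemma you have not supplied: cocircuits do not generally survive restriction, so you must verify that $M|_{\binom{V-v}{2}}$ again satisfies (R3) before deleting a vertex, or else argue the rank drop $r(\binom{V}{2})-r(\binom{V-v}{2})=d$ directly by playing the cocircuit $K_{1,n-d}$ at $v$ against a base, without recursing on (R3).
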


Let us consider the case when $d=1$ as an example.
The graphic matroid of $K_n$ is the generic $1$-dimensional rigidity matroid, and hence it is an abstract $1$-rigidity matroid.
Indeed, the graphic matroid has rank $n-1$, implying (R1),
and every copy of $K_3$ is a circuit, implying (R2).
Conversely, suppose $M$ is an abstract 1-rigidity matroid of $K_n$.
Then, by (R4), any forest is $M$-independent, and by (R1), any cycle is $M$-dependent.
Hence, $M$ is the graphic matroid of $K_n$.
Thus, the graphic matroid of $K_n$ is the unique  abstract 1-rigidity matroid of $K_n$.
We will see that
the class of abstract $d$-rigidity matroids is much rich when $d\geq 2$.

Whiteley \cite{Wchapter} observed that  the generic $C^{d-2}_{d-1}$-cofactor matroid of $K_n$ introduced in Section~\ref{sec:3-rigidity} is another important example of an abstract $d$-rigidity matroid. 

\begin{prop}\label{prop:abstract1}
The generic $C^{d-2}_{d-1}$-cofactor matroid of $K_n$  is an  abstract $d$-rigidity matroid.
\end{prop}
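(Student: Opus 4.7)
The plan is to invoke Theorem \ref{thm:hang} by verifying conditions (R1) and (R4) for the generic $\MC_{d-1}^{d-2}$-cofactor matroid of $K_n$. Since the cofactor matrix mirrors the rigidity matrix in structure, I expect the same inductive techniques used for $\MR_d(K_n)$ to carry through, modulo one nontrivial algebraic identity specific to the cofactor construction.

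I would first verify (R4) by the cofactor analogue of Lemma \ref{lem:0extension}. Suppose $G\subseteq K_n$ has a vertex $v$ of degree $d$ with neighbours $u_1,\ldots,u_d$, and that $G-v$ is $\MC_{d-1}^{d-2}$-independent. The rows added by $v$ are nonzero only in the columns indexed by $v$ and by the $u_i$. Restricting to the $d$ columns indexed by $v$ gives a $d\times d$ block with $i$-th row $g_d(p_v-p_{u_i})$. Writing $p_v-p_{u_i}=(a_i,b_i)$, the $(i,j)$-entry is $a_i^{d-1-j}b_i^{j}$, and the determinant factors as $\prod_{1\leq i<j\leq d}(a_i b_j - a_j b_i)$, a familiar homogeneous Vandermonde. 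Genericity of $p$ makes this nonzero, so the new rows are linearly independent from the old and independence is preserved. The same argument handles $k$-valent $0$-extensions for every $k\leq d$, reflecting the fact that $\{g_d(z):z\in\R^2\}$ parametrises a rational normal curve in $\R^d$ on which any $k\leq d$ generic points are linearly independent.

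Next I would establish (R1) via matching bounds on the rank. For the lower bound, start from a single vertex and apply a sequence of $0$-extensions: use degrees $1,2,\ldots,d$ in turn to construct $K_{d+1}$, then use degree-$d$ extensions with arbitrary choice of $d$ previously added neighbours to extend to a spanning subgraph $H$ of $K_n$. By the previous paragraph $H$ is $\MC_{d-1}^{d-2}$-independent, with $|E(H)|=\binom{d+1}{2}+(n-d-1)d=dn-\binom{d+1}{2}$, so the rank is at least $dn-\binom{d+1}{2}$. For the matching upper bound I would exhibit a $\binom{d+1}{2}$-dimensional subspace of trivial motions in the right kernel of the cofactor matrix: each bivariate polynomial $q\in\R[x,y]_{\leq d-1}$ should determine a vector $\dot p\in\R^{dn}$, built from the jet of $q$ evaluated at $p_v$ for each vertex $v$, which pairs to zero with every row $g_d(p_u-p_v)$. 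Conceptually, a global polynomial of degree at most $d-1$ is automatically $C^{d-2}$-smooth across every edge, so it corresponds to a trivial cofactor motion. Since $\dim\R[x,y]_{\leq d-1}=\binom{d+1}{2}$ and the assignment $q\mapsto\dot p$ is injective for generic $p$, the rank is at most $dn-\binom{d+1}{2}$; combined with the lower bound this yields (R1).

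The main obstacle I anticipate is the clean construction of the trivial-motion subspace and the verification of the pairing identity $\langle g_d(p_u-p_v),\dot p_u-\dot p_v\rangle=0$ for each $\dot p$ arising from a polynomial $q\in\R[x,y]_{\leq d-1}$. This is essentially the algebraic identity underpinning the Billera--Whiteley cofactor/spline correspondence; once it is in hand, the rest of the proof reduces to dimension counting and a direct appeal to Theorem \ref{thm:hang}.
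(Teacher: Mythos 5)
Your plan --- establish (R4) and (R1) and invoke Theorem~\ref{thm:hang} --- is a natural route, and since the paper records this statement as an observation of Whiteley without supplying a proof, there is no internal argument to compare against. The (R4) half is correct: the $d\times d$ block in the columns of a degree-$d$ vertex has determinant $\prod_{1\le i<j\le d}(a_ib_j-a_jb_i)$, which is nonzero for generic $p$, and the same reasoning gives the $0$-extension lemma for every degree $k\le d$ because any $k\le d$ generic points of the rational normal curve $\{g_d(z)\}$ are linearly independent. The rank lower bound you then obtain by constructing a $(d,\binom{d+1}{2})$-tight spanning subgraph by $0$-extensions is also correct.

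The gap is the rank upper bound, i.e.\ the construction of a $\binom{d+1}{2}$-dimensional right kernel of the cofactor matrix. You flag this yourself as the main obstacle, but the heuristic you offer --- that $\dot p_v$ is ``built from the jet of $q$ at $p_v$'' for $q\in\R[x,y]_{\le d-1}$ --- does not lead to the construction. The full jet of such a $q$ has $\binom{d+1}{2}$ coordinates rather than $d$, and its top-degree part is constant in the base point, so no truncation of the jet produces anything beyond the $d$ translational kernel vectors. The trivial kernel in fact has a different shape. Write $P_w$ for the degree-$(d-1)$ binary form with coefficient vector $w\in\R^d$, so that $\langle g_d(z),w\rangle=P_w(z)$; the kernel condition is then that $P_{\dot p_u}-P_{\dot p_v}$ vanishes at $p_u-p_v$ for each edge $uv$. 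For each $0\le m\le d-1$ and each homogeneous $C$ of degree $d-1-m$, taking $\dot p_v$ to be the coefficient vector of $L_{p_v}(x,y)^{m}C(x,y)$, where $L_a(x,y)=a_2x-a_1y$, gives a kernel vector: $L_a^m-L_b^m$ is divisible by $L_a-L_b=L_{a-b}$, and $L_{a-b}(a-b)=0$. Summing over $m$ gives $\sum_{m=0}^{d-1}(d-m)=\binom{d+1}{2}$ dimensions. One must then still record that for $n\ge d+1$ and generic $p$ the restriction of this polynomial family to $\{p_v\}$ is injective, so that the actual kernel has dimension at least $\binom{d+1}{2}$. So the strategy is sound and the (R4) and lower-bound pieces are complete, but the trivial-kernel construction --- the step on which (R1), and hence the whole argument, rests --- is genuinely missing, and the ``jet of $q$'' intuition for it points in the wrong direction.
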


Several basic properties of generic rigidity matroids are valid for any abstract rigidity matroid. 
For example, the following lemma shows the Maxwell's condition (Lemma~\ref{lem:max}) holds at the level of abstract rigidity, see \cite[Lemma 4.1]{CJT0}.

\begin{lemma}\label{lem:abstrac_max}
Let $M$ be a matroid on the edge set of $K_n$.
Suppose that $M$ satisfies (R2).
Then, for any $M$-independent subgraph $G=(V,E)$ of $K_n$, 
$i_G(X)\leq d|X|-{d+1\choose 2}$ for all $X\subseteq V$ with $|X|\geq d+1$ .
\end{lemma}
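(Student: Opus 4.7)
The plan is to prove a stronger statement, namely that $r_M\!\left(\binom{X}{2}\right)\leq d|X|-\binom{d+1}{2}$ for every $X\subseteq V(K_n)$ with $|X|\geq d+1$, from which the lemma follows immediately: since $G$ is $M$-independent, so is $E(G)\cap\binom{X}{2}$, and hence
$$i_G(X)=|E(G)\cap \tbinom{X}{2}|\leq r_M\!\left(\tbinom{X}{2}\right)\leq d|X|-\tbinom{d+1}{2}.$$

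I would prove the rank bound by induction on $|X|$. For the base case $|X|=d+1$ we simply use the trivial bound $r_M(\binom{X}{2})\leq |\binom{X}{2}|=\binom{d+1}{2}=d(d+1)-\binom{d+1}{2}$. For the inductive step, fix $v\in X$, set $X'=X\setminus\{v\}$, and write $E_v=\{uv:u\in X'\}$ for the edges of $K_X$ incident to $v$, so $\binom{X}{2}=\binom{X'}{2}\cup E_v$ (a disjoint union). The crux is to show that $r_{M/\binom{X'}{2}}(E_v)\leq d$, because then $r_M(\binom{X}{2})\leq r_M(\binom{X'}{2})+d$, and induction plus a short calculation finishes the argument.

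To prove $r_{M/\binom{X'}{2}}(E_v)\leq d$ I would show that every $(d+1)$-element subset of $E_v$ is dependent in $M/\binom{X'}{2}$. Given any $u_1,\dots,u_{d+1}\in X'$, axiom (R2) says that $\binom{\{u_1,\dots,u_{d+1},v\}}{2}$ is a circuit $C$ of $M$. Setting $A=\binom{\{u_1,\dots,u_{d+1}\}}{2}\subsetneq C$, a standard matroid fact (a proper subset of a circuit is independent, and the complement is a circuit in the contraction) shows that $C\setminus A=\{u_1v,\dots,u_{d+1}v\}$ is a circuit, hence dependent, in $M/A$. Since $A\subseteq \binom{X'}{2}$, contraction is transitive, so $\{u_1v,\dots,u_{d+1}v\}$ remains dependent in $M/\binom{X'}{2}$. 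Thus no $(d+1)$-subset of $E_v$ is independent in $M/\binom{X'}{2}$, giving $r_{M/\binom{X'}{2}}(E_v)\leq d$ as required.

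The main obstacle, and the only substantive step, is converting the hypothesis that $K_{d+2}$ is a circuit into a usable bound on how much adding one vertex to $X'$ can raise $r_M(\binom{\cdot}{2})$. The cleanest way is the contraction argument above, which exploits the fact that in a circuit every element lies in the closure of the rest; this is precisely what lets us argue that the contribution from the star $E_v$ is bounded by $d$ rather than by its size $|X|-1$. Everything else is either bookkeeping or the standard reduction from the rank bound on complete subgraphs to the $M$-independent case.
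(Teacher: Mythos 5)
Your proof is correct. The paper itself defers the proof to \cite[Lemma~4.1]{CJT0}, so a line-by-line comparison is not possible from the text, but your argument is a clean and standard way to derive the result from axiom (R2). Both the reduction (proving the stronger rank bound $r_M\bigl(\binom{X}{2}\bigr)\leq d|X|-\binom{d+1}{2}$, from which independence of $E(G)$ immediately gives the count on $i_G(X)$) and the inductive step are sound: for any $(d+1)$-subset $\{u_1v,\ldots,u_{d+1}v\}$ of $E_v$, the set $C=\binom{\{u_1,\ldots,u_{d+1},v\}}{2}$ is a circuit by (R2), its subset $A=\binom{\{u_1,\ldots,u_{d+1}\}}{2}$ is a proper subset and hence independent, so $r_{M/A}(C\setminus A)=|C|-1-|A|=|C\setminus A|-1$ and $C\setminus A$ is dependent in $M/A$; dependence persists under the further contraction to $M/\binom{X'}{2}$ since for disjoint $S,B$ one has $r_{N/B}(S)\leq r_N(S)$ by submodularity. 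Hence $r_{M/\binom{X'}{2}}(E_v)\leq d$, and the rank identity $r_M\bigl(\binom{X}{2}\bigr)=r_M\bigl(\binom{X'}{2}\bigr)+r_{M/\binom{X'}{2}}(E_v)$ closes the induction. This is essentially the natural argument one would expect in the cited source.
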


\subsubsection{The Graver-Whiteley Maximality Conjecture}\label{sec:max}
Graver's main motivation for defining abstract $d$-rigidity was to  gain a better understanding of the $d$-dimensional rigidity problem by characterising  the generic $d$-dimensional rigidity matroid as an extremal instance in the family of all abstract $d$-rigidity matroids.

For a family ${\cal M}$ of matroids on the same ground set,
the {\em weak order} $\preceq$ is defined as the partial order on $\MM$ such that, for any $M_1, M_2\in {\cal M}$,  $M_1\preceq M_2$ if every independent set in $M_1$ is independent in $M_2$.

Graver~\cite{G91} conjectured that the generic $d$-dimensional rigidity matroid of $K_n$ is the unique maximal matroid in the family of abstract $d$-rigidity matroids of $K_n$ under the weak order, and verified his conjecture when $d=2$.

\begin{theorem}\label{thm:abstract2}
The generic $2$-dimensional rigidity matroid of $K_n$ is the unique maximal matroid in the family of all abstract $2$-rigidity matroids of $K_n$ for sll $n\geq 3$.
\end{theorem}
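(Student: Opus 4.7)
The approach is to establish $M\preceq \MR_2(K_n)$ for every abstract $2$-rigidity matroid $M$ on $E(K_n)$, and then to observe that $\MR_2(K_n)$ itself lies in the family. The latter is a direct verification via Theorem~\ref{thm:hang}: $\MR_2(K_n)$ has rank $2n-3$ (so (R1) holds), and every copy of $K_4$ is dependent since it carries $6>2\cdot 4-3$ edges, while any five of those edges form a $(2,3)$-tight, hence $\MR_2$-independent, subgraph, so $K_4$ is a circuit (yielding (R2)). Once $\MR_2(K_n)$ is known to be both an abstract $2$-rigidity matroid and a maximum in the weak order, uniqueness is automatic from the antisymmetry of $\preceq$.

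To prove $M\preceq \MR_2(K_n)$, let $I\subseteq E(K_n)$ be $M$-independent and set $G=(V,I)$. Since $M$ is an abstract $2$-rigidity matroid it satisfies (R2) (Theorem~\ref{thm:hang}), so Lemma~\ref{lem:abstrac_max} yields
\[
i_G(X)\leq 2|X|-3 \quad \text{for every $X\subseteq V$ with $|X|\geq 3$,}
\]
and the cases $|X|\leq 2$ are trivial because $G$ is simple. Hence $I$ is $(2,3)$-sparse. By Theorem~\ref{thm:PG}, together with the subsequent identification of $\MR_2(K_n)$ with the $(2,3)$-sparsity matroid of $K_V$, every $(2,3)$-sparse edge set is $\MR_2$-independent. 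Therefore $I$ is independent in $\MR_2(K_n)$, giving $M\preceq \MR_2(K_n)$.

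There is essentially no obstacle to carrying this out: the entire theorem is a one-line deduction from Lemma~\ref{lem:abstrac_max} and Theorem~\ref{thm:PG}. The deeper point is that the argument succeeds in dimension two precisely because Maxwell's sparsity bound is both necessary \emph{and} sufficient for $\MR_2$-independence; the abstract axiom (R2) forces the Maxwell count on all $M$-independent sets, and in dimension two this count alone already characterises $\MR_2$. This is exactly why the analogous Graver--Whiteley maximality conjecture remains open for $d\geq 3$: Lemma~\ref{lem:abstrac_max} still constrains $M$-independent sets by the Maxwell count, but this count is strictly weaker than $\MR_d$-independence (the double-banana graph of Figure~\ref{fig:banana} being the standard witness), so the straightforward implication above no longer closes the argument.
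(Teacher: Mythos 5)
Your proof is correct and follows essentially the same route as the paper: use Lemma~\ref{lem:abstrac_max} (via axiom (R2)) to deduce $(2,3)$-sparsity of every $M$-independent set, then invoke Theorem~\ref{thm:PG} to conclude $\MR_2$-independence, so $M\preceq\MR_2(K_n)$. The explicit verification that $\MR_2(K_n)$ lies in the family is already made in the paper just before the theorem (where (R1)--(R4) are noted to hold for $\MR_d(K_n)$), so your argument and the paper's coincide in substance.
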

Indeed, Theorem~\ref{thm:abstract2} is a direct consequence of Geiringer's theorem 
since every independent set in any abstract $2$-rigidity matroid satisfies Maxwell's condition by Lemma~\ref{lem:abstrac_max}
and such a set is independent in the generic $2$-dimensional-rigidity matroid by Theorem~\ref{thm:PG}.

Graver's conjecture turned out to be false for $d\geq 4$.
The first counterexample  was  given by N.~J.~Thurston (cf.~\cite{GSS93}) for $d=4$. 
Whiteley subsequently used the complete bipartite graph $K_{d+2,d+2}$ to show that the generic $C^{d-2}_{d-1}$-cofactor matroid provides a counterexample for all $d\geq 4$.
\begin{lemma}\label{lem:complete_bipartite} For all $d\geq 4$,
$E(K_{d+2,d+2})$ is a circuit in the generic $d$-dimensional rigidity matroid 
and  is independent in the generic $C^{d-2}_{d-1}$-cofactor matroid.
\end{lemma}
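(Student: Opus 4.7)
The lemma has two parts, which I would address separately. The first statement, that $E(K_{d+2,d+2})$ is a circuit in $\MR_d$ for $d\geq 4$, is exactly the Bolker--Roth result~\cite{BR} already invoked in Section~\ref{sec:d4}, so I would simply cite it. For the second statement, independence in the generic $\MC^{d-2}_{d-1}$-cofactor matroid, the Maxwell-type count is compatible: since the rank of $\MC^{d-2}_{d-1}(K_n)$ equals $dn-\binom{d+1}{2}$ for $n\geq d+1$, independence of the $(d+2)^2$ edges of $K_{d+2,d+2}$ requires
\[
(d+2)^2 \;\leq\; 2d(d+2)-\binom{d+1}{2},
\]
which simplifies to $d^2-d-8\geq 0$ and holds precisely when $d\geq 4$. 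So the numerical bound does not obstruct independence in the prescribed range.

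To establish independence, I would reformulate the equilibrium stress equations as polynomial identities. Identify $\R^d$ with the space $\mathcal{H}_{d-1}$ of homogeneous polynomials of degree $d-1$ in variables $X,Y$ via the monomial basis; after a harmless diagonal rescaling of rows by binomial coefficients (which does not change the matroid), the row of the cofactor matrix indexed by $e=uv$ corresponds to the form $L_{uv}^{d-1}$ with $L_{uv}(X,Y)=(x_u-x_v)X+(y_u-y_v)Y$, placed in the $u$-block with the opposite sign in the $v$-block. Thus a stress $\omega:E\to\R$ lies in the left kernel if and only if, for every vertex $v$,
\[
\sum_{u\sim v}\omega_{uv}\,L_{uv}^{d-1}(X,Y)\;\equiv\;0 \quad \text{in } \mathcal{H}_{d-1}.
\]
For $K_{d+2,d+2}$ with parts $A=\{a_1,\dots,a_{d+2}\}$ and $B=\{b_1,\dots,b_{d+2}\}$, this amounts to $2(d+2)$ identities of degree $d-1$ on the $(d+2)^2$ unknowns $\omega_{ij}$. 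For each $a_i$, the $d+2$ forms $L_{a_ib_j}^{d-1}$ generically span the $d$-dimensional space $\mathcal{H}_{d-1}$, so the solutions of the $a_i$-equation form a $2$-dimensional family in the $(d+2)$-dimensional row indexed by $a_i$; the analogous statement holds for each column indexed by $b_j$. The plan is to show that the simultaneous imposition of all $A$- and $B$-equations admits only $\omega\equiv 0$ for generic $p$.

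The main obstacle is this joint row/column argument. The safest route is a specialisation: exhibit a particular (possibly non-generic) realisation of $K_{d+2,d+2}$ in $\R^2$ for which the cofactor matrix is explicitly verified to have full row rank, and then use that full row rank is a Zariski-open condition on $p$ to transfer independence to the generic case. Concretely, one can try placing the points of $A$ and $B$ on two distinct parallel lines, or on a translated rational normal curve, and exploit the resulting block structure of the system of identities above. An alternative, more combinatorial route would be to build $K_{d+2,d+2}$ via a sequence of cofactor-independence-preserving operations (such as $d$-dimensional $0$- and $1$-extensions and an analogue of the X-replacement operation recorded for $d=3$ in Section~\ref{sec:3-rigidity}) from a smaller independent graph; however, establishing such a construction for every $d\geq 4$ seems considerably harder than the direct specialisation approach, so the latter is the route I would pursue first.
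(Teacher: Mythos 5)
Your handling of the rigidity half is correct and matches the paper: it is exactly the Bolker--Roth result cited in Section~\ref{sec:d4}. The cofactor half, however, is not proven. The dimension count you verify is only a necessary condition for independence, and beyond it you describe two possible strategies but carry out neither. The specialisation route you favour would require exhibiting, for every $d\geq 4$, an explicit planar configuration of $K_{d+2,d+2}$ whose cofactor matrix can be shown to have full row rank, and you leave this entirely open; your reformulation of the left kernel in terms of forms $L_{uv}^{d-1}$ is a sound observation but does not by itself force triviality of the joint row/column system.

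The route you set aside as ``considerably harder'' is in fact the paper's proof and is the easier one given what is already available: the first paragraph of Section~\ref{sec:d4} records that $K_{d+2,d+2}$ is obtainable from $K_2$ by a sequence of $d$-dimensional $0$- and $1$-extensions followed by a single $X$-replacement, and the text cites that $0$-extension, $1$-extension and $X$-replacement all preserve independence in the generic $C^{d-2}_{d-1}$-cofactor matroid (the $X$-replacement case via a special-position argument, not just for $d=3$). Once those two facts are in hand the cofactor independence of $K_{d+2,d+2}$ follows immediately, since $K_2$ is trivially independent. So the genuine gap in your proposal is that the cofactor part is never actually established, and the assessment of relative difficulty that drives your choice of strategy is reversed.
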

The rigidity part of Lemma~\ref{lem:complete_bipartite} follows from  the Bolker-Roth~\cite{BR} characterization of the rigidity of complete bipartite graphs.
The cofactor part follows from the fact that  the X-replacement operation preserves  independence  in the generic $C^{d-2}_{d-1}$-cofactor matroid~\cite{Wchapter}, cf. the discussion in the first paragraph of Section~\ref{sec:d4}.

Lemma \ref{lem:complete_bipartite} motivates the following modified conjecture.

\begin{conj}[Graver-Whiteley Maximality Conjecture]\label{conj:maximality}
The generic $C^{d-2}_{d-1}$-cofactor matroid of $K_n$ is the unique maximal matroid in the family of abstract $d$-rigidity matroids of $K_n$ for all $n\geq d+1\geq 2$.
\end{conj}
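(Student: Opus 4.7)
The plan is to establish the conjecture by proving the equivalent statement that every abstract $d$-rigidity matroid $M$ on $E(K_n)$ satisfies $M \preceq \MC$, where $\MC := \MC^{d-2}_{d-1}(K_n)$. Since Proposition~\ref{prop:abstract1} asserts that $\MC$ is itself an abstract $d$-rigidity matroid, the inequality $M \preceq \MC$ for all such $M$ would immediately imply both maximality of $\MC$ and its uniqueness. I therefore aim to show that every $M$-independent subgraph $G = (V, E)$ of $K_n$ is also independent in $\MC$, proceeding by induction on $n$.

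The base case $n = d+1$ is trivial, since (R1) forces both matroids to have rank $\binom{d+1}{2} = |E(K_{d+1})|$, so both are free. For the inductive step, Lemma~\ref{lem:abstrac_max} gives $|E| \leq dn - \binom{d+1}{2}$, so the average degree of $G$ is strictly less than $2d$, yielding a vertex $v$ with $d_G(v) \leq 2d - 1$. If $d_G(v) \leq d$, then deleting $v$ leaves an $M$-independent subgraph of $K_{V-v}$; by induction this is $\MC$-independent, and since $\MC$ also satisfies (R4), reinserting the edges at $v$ preserves $\MC$-independence. When $d+1 \leq d_G(v) \leq 2d - 1$, writing $k = d_G(v) - d$, one would attempt an inverse $k$-extension: find a set $F$ of $k$ new edges with endpoints in $N_G(v)$ such that replacing the edges at $v$ by $F$ gives an $M$-independent subgraph of $K_{V-v}$. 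By induction this replacement is $\MC$-independent, and the corresponding extension lemma for $\MC$ (which for the cofactor matroid follows from a special-position argument on its defining matrix, see \cite{Wchapter}) would then yield $\MC$-independence of the original $E$.

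The main obstacle lies in carrying out the inverse extension step inside an \emph{arbitrary} abstract $d$-rigidity matroid $M$, using only the axioms (R1)--(R4) and their equivalences from Theorem~\ref{thm:hang}. Concretely, for $k=1$ one must show that the collection of ``bad'' pairs $\{x,y\} \subseteq \binom{N_G(v)}{2} \setminus E$ for which replacing the edges at $v$ by $\{xy\}$ produces an $M$-dependent set does not exhaust all non-edges inside $N_G(v)$. In the generic setting such statements follow from submodular arguments on the rank function of the generic matroid, but in the abstract setting the natural approach is to exploit the interplay between the circuit axiom (R2) and the cocircuit axiom (R3), together with a careful analysis of the closure operator of $M$ on cliques supported by $N_G(v)$. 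For $k \geq 2$ the difficulty is much greater, as the relevant operations become X-replacement, double-V-replacement, and their higher-dimensional cousins, whose $\MC$-preservation relies on coordinate-based arguments (cf.\ the remarks after Conjecture~\ref{con:cjt}) and need not \emph{a priori} respect the structure of an arbitrary abstract rigidity matroid. Overcoming this last point -- essentially, constructing a matroid-theoretic Henneberg-style reduction valid in every abstract $d$-rigidity matroid whose inverse operations are known to preserve independence in $\MC$ -- is where the argument meets its principal difficulty, and most plausibly explains why the conjecture has remained open.
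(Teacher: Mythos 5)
This statement is a \emph{conjecture}, not a theorem: the paper states it as Conjecture~\ref{conj:maximality} and provides no proof. It has been verified only for $d=2$ (Theorem~\ref{thm:abstract2}, where $\MC^0_1(K_n)=\MR_2(K_n)$) and for $d=3$ (Theorem~\ref{thm:maximality3}, from \cite{CJT0}); for $d\geq 4$ it is wide open. So there is nothing in the paper to compare your argument against, and you are right to present yours as an outline that runs into an obstruction rather than a proof.

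That said, your outline is accurate in spirit but slightly mis-locates where the wall is. Your reduction strategy is sound: Lemma~\ref{lem:abstrac_max} gives a vertex $v$ of degree at most $2d-1$; the $d_G(v)\leq d$ case is handled by (R4) for $\MC$; and the remaining cases require inverse $k$-extensions for $1\leq k\leq d-1$. However, the $k=1$ step (degree $d+1$) is \emph{not} the problem you make it out to be. The first proof of Theorem~\ref{thm:PG} in the paper generalises verbatim to any abstract $d$-rigidity matroid $M$: if every edge $v_iv_j$ inside $N_G(v)$ is either already in $G-v$ or lies in $\cl_M(G-v)$, then $\cl_M(G-v)\supseteq K_{N_G(v)}$, and submodularity applied to $\cl_M(G-v)$ and the (R2)-circuit $K_{d+2}$ on $\{v\}\cup N_G(v)$ forces $r_M(\cl_M(G-v)\cup K_{d+2})\leq r_M(G-v)+d-1<|E(G)|$, contradicting $r_M(G)=|E(G)|$. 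So the inverse 1-extension is available in the abstract setting; no coordinate argument is needed. The genuine gap begins at $k=2$: for $d\geq 3$ there can be vertices of degree $d+2,\ldots,2d-1$, and here one needs X-replacement, double-V-replacement, and their analogues to preserve $M$-independence, none of which follow from (R1)--(R4). That is exactly the terrain \cite{CJT0} has to navigate for $d=3$, and why $d\geq 4$ remains open. (Note also that for $d=2$ the paper bypasses the inductive argument entirely: since Maxwell's condition is sufficient for $\MR_2$-independence, Lemma~\ref{lem:abstrac_max} plus Theorem~\ref{thm:PG} gives $M\preceq\MR_2(K_n)$ immediately.)

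One further technical point worth flagging if you pursue this line: your induction implicitly uses that the restriction $M|_{E(K_{V-v})}$ is an abstract $d$-rigidity matroid on $K_{n-1}$. This is true, but it is not a triviality — one must verify (R1) for the restriction, which again uses (R3)/(R4) and Maxwell. It would be worth stating this as a preliminary lemma in any write-up.
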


If true, this conjecture would give useful structural information about  the generic $C^{d-2}_{d-1}$-cofactor matroid. Indeed, it is shown in ~\cite{CJT} that the partial result that 
there exists a unique maximal abstract $d$-rigidity matroid $M$ of $K_n$ would imply that the following covering property holds for $M$:  for each cyclic flat $F$  of $M$, every $e\in F$ is contained in a copy of $K_{d+2}$ in $F$.

Clinch, Jackson, and Tanigawa~\cite{CJT0} verified  Conjecture \ref{conj:maximality} for $d=3$.
\begin{theorem}\label{thm:maximality3}
The generic $C^{1}_{2}$-cofactor matroid of $K_n$ is the unique maximal matroid in the family of abstract $3$-rigidity matroids of $K_n$ for all $n\geq 4$.
\end{theorem}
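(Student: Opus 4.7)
The plan is to prove the pointwise rank inequality $r_M(G)\leq c^1_2(G)$ for every abstract $3$-rigidity matroid $M$ on $\binom{V}{2}$ and every subgraph $G$ of $K_n$. Since $c^1_2$ is the rank function of $\MC^1_2$, this is equivalent to $M\preceq \MC^1_2$ in the weak order, so combined with Proposition \ref{prop:abstract1} (which asserts that $\MC^1_2$ is itself an abstract $3$-rigidity matroid) it immediately establishes that $\MC^1_2$ is the unique maximal element. Thus the whole theorem reduces to a one-way comparison of rank functions.

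The strategy is to use the characterisation of $c^1_2$ given by Theorem \ref{thm:cjtproper}, a minimum over pairs $(F,\MY)$ where $F\subseteq E(G)$ and $\MY$ is a proper $K_5$-cover of $G-F$, and to match it from the $r_M$-side using the fact that any abstract $3$-rigidity matroid satisfies axiom (R2) by Theorem \ref{thm:hang}. Explicitly, (R2) says every $K_5$ is a circuit in $M$, which is precisely the structural feature that mirrors the form of the rank formula for $\MC^1_2$.

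Fix $(F,\MY)$ with $\MY$ properly ordered as $(C_1,\ldots,C_t)$, and set $G_i=\bigcup_{j\leq i}C_j$. Because $C_i$ is a circuit in $M$ and $C_i\not\subseteq G_{i-1}$, choosing any $e\in C_i\setminus G_{i-1}$ shows that $e$ lies in the $M$-closure of $G_{i-1}\cup (C_i\setminus (G_{i-1}\cup\{e\}))$, giving
\[
r_M(G_i)-r_M(G_{i-1})\leq |C_i\setminus G_{i-1}|-1.
\]
Telescoping yields $r_M\bigl(\bigcup_{C\in\MY}C\bigr)\leq \bigl|\bigcup_{C\in\MY}C\bigr|-|\MY|$, and combining with subadditivity $r_M(G)\leq |F|+r_M(G-F)$ gives $r_M(G)\leq |F|+\bigl|\bigcup_{C\in\MY}C\bigr|-|\MY|$. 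Minimising over $(F,\MY)$ and invoking Theorem \ref{thm:cjtproper} yields $r_M(G)\leq c^1_2(G)$, as required.

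Once Theorem \ref{thm:cjtproper} is available, axiom (R2) alone suffices, so the only subtlety is the careful use of the properness condition on the cover: each $C_i$ must contribute at least one new edge, whose redundancy as a member of the circuit $C_i$ is absorbed into the $-1$ term. The real ``main obstacle'' is therefore not in this proof plan at all but rather in the preceding combinatorial rank formula of Theorem \ref{thm:cjtproper}; once that is in hand, the comparison with an arbitrary abstract $3$-rigidity matroid is a short circuit-exchange calculation.
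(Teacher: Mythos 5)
Your circuit-exchange telescoping is correct as far as it goes: with Theorem \ref{thm:cjtproper} in hand, the bound $r_M(G)\leq c^1_2(G)$ does follow for any abstract $3$-rigidity matroid $M$ via the submodularity-plus-properness computation you sketch, and Proposition \ref{prop:abstract1} then upgrades this to uniqueness of the maximal element. The trouble is the logical order in which these results are actually established. As the discussion surrounding Theorem \ref{thm:maximality3} explains, the rank formula of Theorem \ref{thm:cjtproper} is not an independent input: it is proved in \cite{CJT} \emph{from} Theorem \ref{thm:maximality3}. The chain is: maximality of $\MC^1_2$ among abstract $3$-rigidity matroids implies the covering property that every edge of a cyclic flat $F$ of $\MC^1_2$ lies in a copy of $K_5$ inside $F$, and that covering property is what drives the proof of the proper-$K_5$-cover rank formula. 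Deploying Theorem \ref{thm:cjtproper} to establish Theorem \ref{thm:maximality3} is therefore circular.

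Your plan mirrors exactly how the $d=2$ case is handled, where Theorem \ref{thm:abstract2} is derived directly from the Pollaczek-Geiringer count (Theorem \ref{thm:PG}) together with Lemma \ref{lem:abstrac_max}, and that analogy is what makes the strategy tempting. But the symmetry breaks precisely because for $d=3$ no independent derivation of the rank formula is available: Theorem \ref{thm:PG} is proved in advance of and without the $2$-dimensional maximality statement, whereas Theorem \ref{thm:cjtproper} is downstream of the $3$-dimensional one. The actual argument of \cite{CJT0} establishes maximality without any explicit rank formula, by a delicate induction exploiting the $X$- and double-$V$-replacement operations for the $\MC^1_2$-cofactor matroid (cf.~Conjecture \ref{con:cjt} and the paragraph following it) and a careful analysis of how an arbitrary abstract $3$-rigidity matroid can differ from $\MC^1_2$. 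That is where the genuine difficulty lies, and it is not subsumed by your short calculation; you have in effect observed that Theorems \ref{thm:maximality3} and \ref{thm:cjtproper} are equivalent given the surrounding machinery, not supplied a proof of either.
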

They then used the above mentioned covering property from~\cite{CJT} to prove Theorem \ref{thm:cjtproper}.
Despite these positive results,  Graver's original conjecture still remains open when $d=3$.
\begin{conj}[Graver's maximality conjecture for $d=3$]\label{conj:maximality3}
Let $n$ be a positive integer. Then the generic $3$-rigidity matroid of $K_n$ is the unique maximal matroid in the family of abstract $3$-rigidity matroids of $K_n$.
\end{conj}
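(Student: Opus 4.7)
The plan is to derive Conjecture~\ref{conj:maximality3} from Whiteley's conjectured equality $\MR_3(K_n)=\MC^{1}_{2}(K_n)$, and to attack that equality via Conjecture~\ref{con:cjt}. The first reduction is immediate: every generic realisation of $K_n$ in $\R^3$ is in general position, so $\MR_3(K_n)$ is itself an abstract $3$-rigidity matroid, and Theorem~\ref{thm:maximality3} forces $\MR_3(K_n)\preceq\MC^{1}_{2}(K_n)$. If this inequality were strict then $\MR_3(K_n)$ would fail to be maximal, so Conjecture~\ref{conj:maximality3} is equivalent to proving that every $\MC^{1}_{2}$-independent subset of $\binom{V}{2}$ is also $\MR_3$-independent.

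To establish this I would assume Conjecture~\ref{con:cjt} and induct on $|V|$; the base case $n\leq 4$ is immediate since axiom (R1) forces both matroids to be free. For the inductive step, let $G=(V,E)$ be $\MC^{1}_{2}$-independent. Lemma~\ref{lem:abstrac_max} applied to $\MC^{1}_{2}$ supplies a vertex $v$ of degree at most $5$, and the inductive analyses underlying Theorems~\ref{thm:cjt} and \ref{thm:cjtproper} provide a valid reduction operation at some such $v$: an inverse $0$-extension if $d_G(v)\leq 3$, an inverse $1$-extension (with a compatible new non-edge) if $d_G(v)=4$, or an inverse X- or double-V-replacement if $d_G(v)=5$. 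In every case the reduced graph is $\MC^{1}_{2}$-independent and strictly smaller, hence $\MR_3$-independent by induction, and the forward operation preserves $\MR_3$-independence by Lemmas~\ref{lem:0extension} and \ref{lem:1extension} in the first two cases and by Conjecture~\ref{con:cjt} in the last. This would complete the proof.

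The main obstacle is of course Conjecture~\ref{con:cjt} itself, which has resisted proof despite the analogous statement being known for $\MC^{1}_{2}$ via a clean special-position argument. For part~(a), the natural attempt is to place $p_v$ so that the five rows of $R(G,p)$ indexed by $\{vv_1,\ldots,vv_5\}$ span the rows indexed by $v_1v_2$ and $v_3v_4$ in $R(G-v+v_1v_2+v_3v_4,p)$, and then to apply the rank-preservation limit argument of Lemma~\ref{lem:split}. A direct calculation with the rigidity matrix shows that absorbing the $v_1v_2$ row requires $p_v$ to be collinear with $p_{v_1}$ and $p_{v_2}$, and absorbing the $v_3v_4$ row requires $p_v$ to be collinear with $p_{v_3}$ and $p_{v_4}$; in $\R^3$ the two required lines are generically skew, so no such $p_v$ exists. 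One must therefore begin from a degenerate configuration in which the two lines intersect and then argue that the maximal-rank property of the rigidity matrix persists under a generic perturbation, a step considerably more delicate than the single-edge special position used for $1$-extension or for $\MC^{1}_{2}$-X-replacement. Part~(b) compounds the problem, since the two hypothesised independent parent graphs share only the vertex $v$ and the two special-position constructions must be merged coherently, most likely through a matroid-exchange argument substantially more intricate than the one underlying the second proof of Theorem~\ref{thm:PG}.
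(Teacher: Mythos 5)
This statement is a \emph{conjecture} that the paper explicitly leaves open (``Despite these positive results, Graver's original conjecture still remains open when $d=3$''), so there is no paper proof to compare against, and your proposal does not close the gap either: it is conditional on Conjecture~\ref{con:cjt}, which is itself open. What you have written is a reduction, not a proof. That said, the reduction you describe is accurate and is essentially the one the paper already records. Your first paragraph correctly observes that, in the presence of Theorem~\ref{thm:maximality3}, Conjecture~\ref{conj:maximality3} is equivalent to Whiteley's conjecture $\MR_3(K_n)=\MC^1_2(K_n)$; and the paper states (just before Conjecture~\ref{con:cjt}) that Whiteley's conjecture ``would follow from the following conjecture by a simple inductive argument.'' Your second paragraph supplies the skeleton of that inductive argument (a low-degree vertex exists by Lemma~\ref{lem:abstrac_max}; reduce by inverse $0$-, $1$-, X- or double-V-moves; re-extend using Lemmas~\ref{lem:0extension}, \ref{lem:1extension} and Conjecture~\ref{con:cjt}), which is in the spirit of what the paper intends, though you elide the nontrivial step of showing that a \emph{valid} reduction move (one whose image stays $\MC^1_2$-independent, with the right non-edges available) always exists at some vertex of degree at most five; that requires the structural lemmas from the cited work, not merely the existence of a low-degree vertex.

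Your diagnosis of \emph{why} Conjecture~\ref{con:cjt} resists the obvious attack is correct and worth recording: for the $\MC^1_2$-cofactor matroid the special position lives in $\R^2$, where the two constraint lines (through $p_{v_1},p_{v_2}$ and through $p_{v_3},p_{v_4}$) generically meet, so a single placement of $p_v$ absorbs both extra rows; in $\R^3$ those two lines are generically skew, so the naive single special position does not exist, and one would have to start from a non-generic configuration and control rank under perturbation. This is a genuine obstruction, and it is consistent with the fact (Lemma~\ref{lem:complete_bipartite} and the surrounding discussion) that X-replacement \emph{does} fail for $\MR_d$ when $d\geq 4$. In summary: your proposal is a faithful rederivation of the known chain of implications $\mbox{Conj.~\ref{con:cjt}} \Rightarrow \mbox{Whiteley} \Rightarrow \mbox{Conj.~\ref{conj:maximality3}}$, together with a sound explanation of where the first arrow is stuck, but it does not prove the statement because the first link remains open.
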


The truth of Conjecture~\ref{conj:maximality3} would give a combinatorial characterisation of generic rigidity in $\R^3$ by 
Theorems~\ref{thm:cjt} and \ref{thm:maximality3}.

As pointed out by Crespo and Santos \cite{CrespoSantos2023}, when $d\geq 4$ we do not even know if $\MR_d(K_n)\prec C^{d-2}_{d-1}(K_n)$ or the two matroids are incomparable in the weak order.

\subsubsection{Duality, symmetric powers, and Mason's problem}
We will describe a connection between 
abstract rigidity and the symmetric powers of a matroid introduced by Lov{\'a}sz~\cite{L} and Mason~\cite{M}.

Let $K_V^{\circ}$ be the complete graph with vertex set $V$ and with a loop at each vertex and $K_{1,t}^{\circ}$ be the star with $t=1$ vertices and a loop at its central vertex.
We denote the loop at a vertex $v$ by the multiset $\{v,v\}$.

Given a matroid $M$ on a finite set $V$ of rank $t$, 
a matroid $N$ on $E(K_V^{\circ})$ is said to be a {\em second symmetric power} of $M$ if 
\begin{itemize}
\item[(S1)] the rank of $N$ is ${t+1\choose 2}$, and 
\item[(S2)] the rank in  $N$ of each  copy of $K_{1,t}$ and $K^{\circ}_{1,t}$ 
in $K_V^{\circ}$ is equal to the rank of the neighbours of its `central vertex' 
in $M$. More precisely, 
$r_M(X)=r_N(vX)$ for all $X\subseteq V$ and all $v\in V$, where 
$vX=\{\{v, x\}: x\in X\}$.
\end{itemize} 

A given matroid $M$ may have several or no  second symmetric powers, see, for example, \cite{LasVergnas1981} for more details.
 In the special case when  $M$ is linear, however, we can always construct a  canonical example of a symmetric second power by using
 the symmetric power of the ambient space of its linear representation.
Specifically, suppose $M$ has a linear representation over a field $\mathbb{F}$ by associating  a vector $p_v\in \mathbb{F}^t$ to each  $v\in V$ .
Then we can construct  a linear matroid on $E(K_n^{\circ})$ by associating the symmetric power $p_up_v$ with each $uv\in E(K_n^{\circ})$.\footnote{We will be concerned with the case when $\mathbb{F}=\R$. In this case, or any other case when $\mathbb{F}$ has characteristic zero, we can take the symmetric power to be the symmetric matrix  $p_up_v^T+p_vp_u^T$.} The resulting  matroid $N$ will satisfy (S1)  by the fact that the second symmetric power of $\mathbb{F}^t$ has dimension ${t+1\choose 2}$ and $N$ will also satisfy (S2) by bilinearity.

For example, the uniform matroid $U_n^t$ of rank $t$ with $n$ elements   can be  linearly represented by 
assigning a generic vector $p_v\in \mathbb{R}^t$ to each $v\in V$, when $|V|=n$.
Thus we can obtain a second symmetric power of $U_n^t$  by assigning $p_up_v$ to each $uv\in E(K_n^{\circ})$. We will refer to this matroid as the {\em generic second power of $U_{n,t}$} and refer to
its restriction to $E(K_n)$ (by removing the loops $ \{v,v\}$ of $K_n^{\circ})$ as
the {\em generic symmetric $t$-tensor matroid of $K_n$.}
A recent result of Brakensiek et al.~\cite{Brakensiek2024} tells us that the generic symmetric $t$-tensor matroid of $K_n$
is the dual matroid of the $d$-dimensional generic rigidity matroid of $K_n$ when we take $d=n-1-t$.

\begin{theorem}\label{thm:dual}
Let $n, d, t$ be positive integers with $d+t=n-1$.
Then the generic symmetric $t$-tensor matroid of $K_n$ is the dual of the generic $d$-dimensional rigidity matroid.
\end{theorem}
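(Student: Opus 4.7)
The plan is to realise the dual of $\MR_d(K_n)$ as the row matroid of an explicit $\binom{n}{2}\times\binom{t+1}{2}$ matrix whose columns form a basis of the equilibrium stresses of a generic realisation, and then to recognise this matrix as a representation of the generic symmetric $t$-tensor matroid. The guiding principle is the standard linear-algebraic duality: if $M$ is the row matroid of a matrix $A$ on ground set $E$, then $M^*$ is represented by any matrix $S$ indexed by $E$ whose columns form a basis of the left null space of $A$.

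First I would form the $n\times(d+1)$ matrix $P$ whose row indexed by $v\in V$ is $(1,p_v^{\top})$, where $p$ is a generic realisation of $K_n$ in $\R^d$. Since $p$ is generic and $d+1\leq n$, $P$ has rank $d+1$, so the cokernel $W=\ker(P^{\top})\subseteq\R^n$ is $t$-dimensional. I would fix a basis $x^{(1)},\ldots,x^{(t)}$ of $W$ and set $x_v=(x^{(1)}_v,\ldots,x^{(t)}_v)\in\R^t$; the defining relations are $\sum_v x^{(i)}_v=0$ and $\sum_v x^{(i)}_v p_v=0$ for every $i$. For each pair $1\leq i\leq j\leq t$ define an edge-indexed vector
\[
\omega^{(ij)}_{uv}:=x^{(i)}_u x^{(j)}_v+x^{(j)}_u x^{(i)}_v.
\]
A short computation substituting the two orthogonality relations shows that $\sum_{u\neq w}\omega^{(ij)}_{uw}(p_u-p_w)$ vanishes at every vertex $w$, so each $\omega^{(ij)}$ is an equilibrium stress of $(K_n,p)$. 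A linear dependence $\sum c_{ij}\omega^{(ij)}=0$ with $C=(c_{ij})$ symmetric translates into $x_u^{\top}Cx_v=0$ for all $u\neq v$; since genericity makes any $t$ of the vectors $x_v$ linearly independent, this forces $C=0$. Because Maxwell's dimension count gives $\dim\ker R^{\top}=\binom{n}{2}-\bigl(dn-\binom{d+1}{2}\bigr)=\binom{t+1}{2}$, the stresses $\{\omega^{(ij)}\}_{i\leq j}$ form a basis of the stress space.

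Assembling these basis stresses as the columns of a matrix $S$, the duality principle above identifies $\MR_d(K_n)^*$ with the row matroid of $S$. By construction the row of $S$ indexed by $uv$ is precisely the symmetric tensor $x_u\otimes x_v+x_v\otimes x_u\in\mathrm{Sym}^2(\R^t)$, so $S$ represents the symmetric $t$-tensor matroid determined by the map $x:V\to\R^t$. The final and most delicate step is to identify this matroid with the \emph{generic} symmetric $t$-tensor matroid of $K_n$. The map $x$ is not generic in the coordinate sense, but the matroid of symmetric tensors is invariant under the natural $GL(t)$-action induced by basis changes in $\R^t$, hence depends only on the $t$-plane $W=\mathrm{col}(X)\in\mathrm{Gr}(t,n)$. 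The assignment $p\mapsto \ker(P^{\top})$ is a dominant algebraic map from generic realisations into $\mathrm{Gr}(t,n)$, so $W$ is a Zariski-generic point of the Grassmannian; the generic symmetric $t$-tensor matroid, defined via a generic $q:V\to\R^t$, equally arises as the symmetric-tensor matroid of $\mathrm{col}(Q)$ at a Zariski-generic point of $\mathrm{Gr}(t,n)$. Since both constructions evaluate the same Grassmannian-indexed matroid at a generic plane, they produce the same matroid, completing the identification $\MR_d(K_n)^*\cong$ the generic symmetric $t$-tensor matroid of $K_n$.
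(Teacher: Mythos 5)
The paper states Theorem \ref{thm:dual} with a citation to Brakensiek et al.~and does not reproduce a proof, so there is no in-paper argument to compare against; I will assess your argument on its own terms. The setup is sound: the dual of a row matroid is the row matroid of any matrix whose columns span the cokernel, your verification that each $\omega^{(ij)}$ lies in the left kernel of $R(K_n,p)$ is correct, the Maxwell dimension count $\binom{n}{2}-\bigl(dn-\binom{d+1}{2}\bigr)=\binom{t+1}{2}$ is right, and the $GL(t)$-invariance observation correctly reduces the identification to a statement about the $t$-plane $W=\operatorname{col}(X)$.

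The gap is in the final sentence. The map $p\mapsto\ker(P^{\top})$ is \emph{not} dominant onto $\mathrm{Gr}(t,n)$: since the first column of $P$ is the all-ones vector $\mathbf{1}$, every $W=\ker(P^{\top})=(\operatorname{col}P)^{\perp}$ satisfies $W\subseteq\mathbf{1}^{\perp}$, so the image lies in the proper subvariety $\mathrm{Gr}(t,\mathbf{1}^{\perp})$ of codimension $t$. Equivalently, your $x_v$ always satisfy $\sum_v x_v=0$, whereas the generic symmetric $t$-tensor matroid is defined via unconstrained generic $q:V\to\R^t$. A priori a matroid attached to a constrained configuration could be a strict specialisation of the generic one, so the identification does not follow as stated. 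The gap is fixable: the symmetric-tensor matroid is invariant under independent rescalings $x_v\mapsto\lambda_v x_v$ with all $\lambda_v\neq 0$ (each tensor is multiplied by $\lambda_u\lambda_v$), and for generic $q$ the kernel of the map $\lambda\mapsto\sum_v\lambda_v q_v$ has dimension $n-t=d+1\geq 2$ and is not contained in any coordinate hyperplane, so one can choose all-nonzero $\lambda_v$ with $\sum_v\lambda_v q_v=0$. Thus a generic $q$ can be rescaled to a configuration summing to zero with the same matroid; combined with semicontinuity of matroid rank this shows the generic matroid over $\mathrm{Gr}(t,\mathbf{1}^{\perp})$ coincides with the generic matroid over all of $\mathrm{Gr}(t,n)$. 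You should also tighten the linear-independence step: "any $t$ of the $x_v$ are independent" gives only that $C$ is diagonal in the basis $x_1,\dots,x_t$; you then need one more vector $x_{t+1}$ (available since $n\geq t+1$) and the already-established vanishing of off-diagonal entries to force the diagonal entries to vanish as well.
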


Motived by Theorem~\ref{thm:dual}, we can also consider the dual  of the other matroids in the family of abstract $d$-rigidity matroids.
We will see that this gives rise to other second symmetric powers of $U_n^t$ restricted to $E(K_n)$. 
Specifically, consider axiom (S2) in the case  when $M=U_n^t$ and we take the restriction  of $N$ to to $E(K_n)$.
Then  (S2) can be  simplified to: 
\begin{itemize}
\item[(S2')] Every copy of $K_{1,t+1}$ is a circuit in $N$.
\end{itemize}
We can now define the family of {\em abstract symmetric $t$-tensor matroids} to be the family of matroids on $E(K_n)$
satisfying (S1) and (S2').

For $d+t=n-1$, a matroid $N$ satisfies (S1) if and only if its dual $N^*$ satisfies (R1)
whereas $N$ satisfies (S2') if and only if $N^*$ satisfies (R3).
We can combine this observation with Theorem \ref{thm:hang} to obtain the following duality relation from~\cite{JacksonTanigawa2025}.

\begin{lemma}\label{lem:dual}
Let $n, d, t$ be positive integers with $d+t=n-1$. Then
a matroid $N$ on $E(K_n)$ is an abstract symmetric $t$-tensor matroid  if and only if its dual $N^*$ 
is an abstract $d$-rigidity matroid.
\end{lemma}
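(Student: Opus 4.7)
The plan is that both axioms (S1) and (S2') for $N$ translate directly, via standard matroid duality, into axioms (R1) and (R3) for $N^*$; the result will then follow from Theorem \ref{thm:hang}, which (among other equivalences) states that a matroid on $E(K_n)$ is an abstract $d$-rigidity matroid if and only if it satisfies the pair (R1) and (R3). Note that the hypothesis $d+t=n-1$ with $t\geq 1$ automatically gives $n\geq d+2$, so Theorem \ref{thm:hang} is available.

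First I would recall the two facts about duality that I will use: $\rank(N)+\rank(N^*)=|E(K_n)|=\binom{n}{2}$, and the circuits of $N$ coincide with the cocircuits of $N^*$. The latter fact makes the (S2') side of the translation immediate: since $n-d=t+1$, the copies of $K_{1,t+1}$ in $E(K_n)$ are exactly the copies of $K_{1,n-d}$, and the statement ``every such star is a circuit of $N$'' is literally identical to the statement ``every such star is a cocircuit of $N^*$''. So (S2') for $N$ is the same condition as (R3) for $N^*$, with no work required.

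For the rank side, (S1) says $\rank N=\binom{t+1}{2}$ and (R1) for $N^*$ says $\rank N^*=dn-\binom{d+1}{2}$. By the rank complementarity these two conditions are equivalent iff
\[
\binom{t+1}{2}+dn-\binom{d+1}{2}=\binom{n}{2},
\]
a routine identity once one uses $t+1=n-d$ (the left-hand side simplifies to $\binom{n-d}{2}+dn-\binom{d+1}{2}=\binom{n}{2}$). Thus (S1) for $N$ is equivalent to (R1) for $N^*$.

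Combining the two translations yields ``$N$ satisfies (S1) and (S2')'' iff ``$N^*$ satisfies (R1) and (R3)'', and the final step is simply to invoke Theorem \ref{thm:hang} to replace the right-hand side by ``$N^*$ is an abstract $d$-rigidity matroid''. I anticipate no real obstacle: all of the substantive content lives inside Theorem \ref{thm:hang}, and the lemma itself is just a clean transcription of that theorem through the duality dictionary, the only mild sanity check being the binomial identity above.
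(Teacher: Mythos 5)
Your proof is correct and takes essentially the same route as the paper: the paper's (terse) justification is precisely that (S1) for $N$ dualises to (R1) for $N^*$ and (S2') dualises to (R3) for $N^*$, then invokes Theorem~\ref{thm:hang}. Your write-up simply supplies the elementary duality facts and the binomial identity $\binom{t+1}{2}+dn-\binom{d+1}{2}=\binom{n}{2}$ that the paper leaves implicit.
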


Lemma~\ref{lem:dual} enables us to make dual formulations of  the conjectures in Section \ref{sec:max}  in terms of symmetric $t$-tensor matroids. The dual formulation of Graver's maximality conjecture can be verified using  techniques from rigidity theory when  $t$ is small.

\begin{theorem}\label{thm:dual_graver}
Let $n, t$ be positive integers with $2\leq t\leq n-1$ and $t\leq 5$.
Then the generic symmetric $t$-tensor matroid of $K_n$  is the unique maximal element in the family of abstract symmetric $t$-tensor matroids of $K_n$.
Thus, Graver's maximality conjecture for generic rigidity is true whenever  $d\geq n-6$. 
\end{theorem}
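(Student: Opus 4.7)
The plan is to reduce Theorem~\ref{thm:dual_graver} to a statement about abstract $d$-rigidity matroids via Lemma~\ref{lem:dual} and then attack the reduced statement by a case analysis on $t$. For matroids of the same rank $r$ on the same ground set, the identity
\[
r_{M^*}(A) = |A| - r + r_M(E \setminus A)
\]
shows that matroid duality preserves the weak order. By (R1), every abstract $d$-rigidity matroid of $K_n$ has the same rank $dn-\binom{d+1}{2}$, so Lemma~\ref{lem:dual} yields a weak-order-preserving bijection between the family of abstract $d$-rigidity matroids of $K_n$ and the family of abstract symmetric $t$-tensor matroids of $K_n$, with $t=n-1-d$. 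By Theorem~\ref{thm:dual}, this bijection sends $\MR_d(K_n)$ to the generic symmetric $t$-tensor matroid. Hence the theorem is equivalent to the assertion that, for $d\geq n-6$, $\MR_d(K_n)$ is the unique weak-order-maximal abstract $d$-rigidity matroid of $K_n$.

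I would first dispose of the cases $n \leq d+2$, where the matroid is either free or has $E(K_n)$ as its sole circuit. For $d+3 \leq n \leq d+6$, i.e.\ $t \in \{2,3,4,5\}$, fix an abstract $d$-rigidity matroid $M$ on $E(K_n)$; the goal is to prove $M \preceq \MR_d(K_n)$, equivalently that every $\MR_d(K_n)$-circuit is $M$-dependent. Axiom (R2) already forces every $K_{d+2}$-subgraph to be an $M$-circuit, and the gluing axiom (G2) promotes this: any union of two $K_{d+2}$-subgraphs whose vertex sets intersect in at least $d$ points has the same $M$-closure as in $\MR_d(K_n)$. Combined with the Maxwell bound (Lemma~\ref{lem:abstrac_max}) and the rank equation (R1), these forced dependencies should pin down $M$ entirely in the tight range $n \leq d+6$. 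It is encouraging that the $K_{d+2,d+2}$-counterexample of Lemma~\ref{lem:complete_bipartite} requires $n \geq 2d+4$, which is incompatible with $n \leq d+6$ for $d \geq 3$: the only known obstruction to Graver's conjecture is absent in our range.

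The main obstacle is the boundary case $t=5$ (hence $n=d+6$), where $\MR_d(K_n)$ admits circuits beyond those captured by $K_{d+2}$-subgraphs and their pairwise $(\geq d)$-gluings---for instance $1$-extensions of $K_{d+2}$, $X$-replacement circuits, and banana-type configurations glued along smaller separators. I would attack this by enumerating the finitely many isomorphism classes of $\MR_d(K_n)$-circuits on at most $d+6$ vertices, and for each class producing an explicit certificate of dependence in $M$ using axioms (R1)--(R4), (G1)--(G2), and submodularity of $r_M$---essentially by exhibiting a covering by $K_{d+2}$-subgraphs that is $\MR_d$-degenerate in the sense discussed in Section~\ref{sec:3-rigidity}. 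As an auxiliary strategy for $d=3$ one can try to invoke Theorem~\ref{thm:maximality3} together with the (conjectural) identification $\MR_3(K_n)=\MC_2^1(K_n)$ restricted to $n \leq 9$, which would handle $d=3$ without a direct enumeration. Making these combinatorial arguments robust across $t\in\{2,3,4,5\}$, while controlling the interactions between overlapping $K_{d+2}$-circuits forced by (G2), will be the decisive technical step.
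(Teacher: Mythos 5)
Your duality reduction is correct: by (R1) every abstract $d$-rigidity matroid of $K_n$ has rank $dn-\binom{d+1}{2}$, and for matroids of a common rank taking duals preserves the weak order, so Lemma~\ref{lem:dual} together with Theorem~\ref{thm:dual} does translate the statement into Graver's maximality conjecture in the range $d\geq n-6$. The disposal of $n\leq d+2$ and the observation that $K_{d+2,d+2}$ cannot sit inside $K_n$ when $n\leq d+6$ and $d\geq 3$ are both fine. Your main line of attack -- enumerate the $\MR_d$-circuits on at most $d+6$ vertices and certify each one $M$-dependent directly from the abstract axioms -- is in fact one of the two routes the paper explicitly attributes; the enumeration you would need is exactly the content of \cite{GraseggerGulerJacksonNixon2022}. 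The paper's primary cited proof, from \cite{JacksonTanigawa2025}, is different and avoids circuit enumeration: it provides a sparsity condition characterising independence in the generic symmetric $t$-tensor matroid for $t\le 5$, and maximality over all abstract symmetric $t$-tensor matroids then follows in the same way that Theorem~\ref{thm:abstract2} falls out of Theorem~\ref{thm:PG} together with Lemma~\ref{lem:abstrac_max}. That route handles all $t\le 5$ uniformly and has the advantage of producing a good characterisation on the tensor side, not just a maximality statement.

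The genuine gap in your write-up is in the verification step, which you yourself flag as decisive but do not supply. Producing an $\MR_d$-degenerate cover of a circuit $C$ by copies of $K_{d+2}$ does not by itself give $r_M(C)<|E(C)|$ for an arbitrary abstract $d$-rigidity matroid $M$: the cover bound \eqref{eq:degen} is established in the paper only for the generic matroid $\MR_3$, so you would first have to prove an abstract analogue using only (R1)--(R4), (G1)--(G2), submodularity of $r_M$, and Lemma~\ref{lem:abstrac_max}, and then check it against the explicit circuit list. Many circuits in the range are dispatched by the Maxwell count alone, but the interesting ones on $d+5$ and $d+6$ vertices satisfy Maxwell's condition and are not simple banana gluings of two $K_{d+2}$'s, so each isomorphism class needs its own dependence certificate. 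As written the proposal is a plausible plan aligned with the authors' alternative route, but its core technical content -- precisely the part that separates $t\in\{4,5\}$ from the easy cases -- is left undone.
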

 Theorem~\ref{thm:dual_graver} is proved in \cite{JacksonTanigawa2025} by providing a 
sparsity condition which characterises independence in  
 the generic symmetric $t$-tensor matroid when $t\leq 5$.
 It is also possible to derive Theorem~\ref{thm:dual_graver} from the characterisation of  all $\MR_d$-circuits on $n$ vertices when  $n\leq d+6$ given in \cite{GraseggerGulerJacksonNixon2022}.  

Mason~\cite{M} asked if there is always  a unique maximal element in the family of second symmetric powers of a matroid.
Las Vergnas~\cite{LasVergnas1981} showed that this property does not hold for  the closely related family of  {\em quasi second symmetric powers}, but  to the best of our knowledge Mason's original question is still  open. 
We formulate Mason's question for the special case of uniform matroids  as the following conjecture.

\begin{conj}\label{conj:maximality_symtensor}
Let $n, t$ be positive integers. Then the family of second symmetric powers of $U_n^t$ has  
a unique maximal element.
\end{conj}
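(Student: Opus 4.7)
The plan is to show that the generic second symmetric power $N^{\rm gen}$ of $U_n^t$, constructed by choosing generic points $p_1,\ldots,p_n\in\R^t$ and taking the linear matroid on $E(K_n^\circ)$ spanned by the symmetric products $p_up_v\in\mathrm{Sym}^2\R^t$ (with each loop $\{v,v\}$ mapped to $p_v^{\otimes 2}$), is the unique maximal element. The strategy is to combine the duality of Lemma~\ref{lem:dual} with an analysis of how the loops $L=\{\{v,v\}:v\in V\}$ interact with the edges $E(K_n)$: first compare an arbitrary second symmetric power $N$ to $N^{\rm gen}$ on the restriction to $E(K_n)$, then propagate the comparison to $L$.

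For the first step, I would verify that $N|_{E(K_n)}$ is an abstract symmetric $t$-tensor matroid. Axiom (S2') follows immediately from (S2) applied with $|X|=t+1$ and $v\notin X$, since then $vX$ is a copy of $K_{1,t+1}$ of rank $r_M(X)=t$. Axiom (S1) -- that the rank of $N|_{E(K_n)}$ equals $\binom{t+1}{2}$ -- requires a rank count using (S2) on overlapping stars of size $t$; this is straightforward for large $n$, while the small cases $n\leq t+1$ need to be handled by direct inspection. Once $N|_{E(K_n)}$ is identified as an abstract symmetric $t$-tensor matroid, Lemma~\ref{lem:dual} gives that its dual is an abstract $(n-1-t)$-rigidity matroid, and Theorem~\ref{thm:dual_graver} yields $N|_{E(K_n)}\preceq N^{\rm gen}|_{E(K_n)}$ when $t\leq 5$; the general case reduces to the Graver-Whiteley maximality conjecture (Conjecture~\ref{conj:maximality}).

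For the second step, I would lift the weak-order comparison from $E(K_n)$ to $E(K_n^\circ)$. The polarization identity $2p_up_v=(p_u+p_v)^{\otimes 2}-p_u^{\otimes 2}-p_v^{\otimes 2}$ shows that in $N^{\rm gen}$ each loop $p_v^{\otimes 2}$ is constrained by the star at $v$; at the matroid level, applying (S2) with $v\in X$ and $|X|=t+1$ forces $vX$ (which includes the loop $\{v,v\}$ and $t$ other edges) to be a circuit of rank $t$ in $N$, giving the analog of the polarization dependency. An induction, adding loops one at a time and using these star-circuits to transfer $N$-independence into $N^{\rm gen}$-independence, should then complete the argument.

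The main obstacle is the dependence of the first step on the Graver-Whiteley maximality conjecture, which is currently established only for $t\leq 5$ or $d\leq 3$. However, axiom (S2) is strictly stronger than the abstract rigidity axioms (R1)--(R4), because it simultaneously couples each star to its loop element through a single rank constraint, linking the loop structure to the edge structure in a way that has no analog in the rigidity setting. This extra rigidity suggests that second symmetric powers of $U_n^t$ are more tightly determined than abstract $d$-rigidity matroids, and a direct argument exploiting this coupling may establish Mason's uniqueness for $U_n^t$ even in cases where the full Graver-Whiteley conjecture remains open.
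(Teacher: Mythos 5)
This statement is labelled as a \emph{conjecture} in the paper, and the paper offers no proof of it: it records the question as open. More importantly, your proof plan is not merely incomplete --- it is refuted by the paper's own results, which appear in the paragraph immediately following the conjecture.

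Your argument identifies the generic second symmetric power $N^{\rm gen}$ of $U_n^t$ as the putative unique maximum and tries to show $N\preceq N^{\rm gen}$ for every second symmetric power $N$. But the paper proves (final theorem of that subsection) that $N^{\rm gen}$ is \emph{not} a maximal element whenever $t\geq 7$ and $n\geq 2t-2$: the construction starting from the dual of the generic $C^{d-2}_{d-1}$-cofactor matroid and taking the matroid union with the bicircular matroid $B_n$ yields a second symmetric power $M$ of $U_n^t$ with $M\not\preceq N^{\rm gen}$. Concretely, Lemma~\ref{lem:complete_bipartite} shows $E(K_{d+2,d+2})$ is a circuit in $\MR_d(K_n)$ but independent in $C^{d-2}_{d-1}(K_n)$ for $d\geq 4$, and passing to duals produces a cocircuit/co-independent discrepancy that obstructs the weak-order comparison with the generic symmetric tensor matroid. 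So even granting your first step (that the restriction $N|_{E(K_n)}$ is an abstract symmetric $t$-tensor matroid, which does follow from (S1) and (S2)) and granting the lifting step to the loops, the resulting theorem is false.

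The duality you invoke (Lemma~\ref{lem:dual} and Theorem~\ref{thm:dual}) identifies the dual of the \emph{generic} symmetric $t$-tensor matroid with the generic $d$-rigidity matroid ($d=n-1-t$). But the Graver--Whiteley maximality conjecture (Conjecture~\ref{conj:maximality}) singles out the $C^{d-2}_{d-1}$-cofactor matroid, not $\MR_d(K_n)$, as the conjectured unique maximal abstract $d$-rigidity matroid; these two matroids are known to differ for $d\geq 4$. So even in the best case where Graver--Whiteley is true, your step~3 would deliver comparison to the dual of the cofactor matroid rather than to $N^{\rm gen}|_{E(K_n)}$. Theorem~\ref{thm:dual_graver} rescues the identification only for $t\leq 5$, which is precisely why your ``strictly stronger (S2)'' heuristic fails: (S2) does couple loops to stars, but that coupling does not exclude the cofactor-derived counterexample, which the paper exhibits explicitly. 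Any correct attack on Mason's conjecture for $U_n^t$ must either identify a different candidate maximum (plausibly the cofactor-dual-based construction) or proceed without committing to a candidate at all.
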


In view of Theorem \ref{thm:dual_graver}, it is tempting to conjecture that the generic second symmetric power of $U_n^t$ will be the unique maximal element in Conjecture \ref{conj:maximality_symtensor}, but this is false when $t\geq 7$. 
We will use rigidity theory  to show this but 
we first need to fill a small gap between second symmetric powers of $U_{n,t}$, which are defined  on $E(K_n^{\circ})$, and abstract $d$-rigidity matroids, which are defined on $E(K_n)$.

Suppose $N$ is  a second symmetric power of $U_n^t$.
Then, its restriction to $E(K_n)$ always gives an  abstract symmetric $t$-tensor matroid
by definition.
We can use matroid duality to give an inverse construction.
Suppose $N$ is  an  abstract symmetric $t$-tensor matroid on $E(K_n)$ and let $N^*$ be its dual. Then $N^*$ is an abstract $d$-rigidity matroid with $d=n-t-1$ by Lemma \ref{lem:dual}.
We can extend $N^*$ to a matroid $\hat N^*$ on $E(K_n^{\circ})$ by appending each loop edge of $K_n^\circ$ as a loop element in the matroidal sense.
Let $B_n$ be the bicircular matroid on $E(K_n^{\circ})$
and let $\hat N^*\vee B_n$ be the matroid union of $\hat N^*$ and $B_n$.
Then  $\hat N^*\vee B_n$  has rank $(d+1)n-{d+1\choose 2}$ and 
every copy of $K_{1,n-d+1}$ and $K_{1,n-d}^{\circ}$ is a cocircuit.
Therefore, its dual  $(\hat N^*\vee B_n)^*$ satisfies (S1) and (S2) for $U_n^t$,
and hence is a second symmetric power of $U_n^t$.

We can now apply the above construction taking $N$ to be the dual of the generic $C_{d-1}^{d-2}$-cofactor matroid of $K_n$. Then 
$M=(\hat N^*\vee B_n)^*$  is a second symmetric power of $U_n^t$.
In addition, we have $M\not \preceq M'$ when, $M'$ is the generic second symmetric power of $U_n^t$, $t\geq 7$ and $n\geq 2t-2$.  This follows from Theorem~\ref{thm:dual} and Lemma~\ref{lem:complete_bipartite} by taking the dual of the restrictions of $M$ and $M'$ to $E(K_n)$. We can now deduce:

\begin{theorem}
The generic second symmetric power of $U_n^t$ is not the the unique maximal element in the family of all second symmetric powers of $U_n^t$ whenever $t\geq 7$ and $n\geq 2t-2$.
\end{theorem}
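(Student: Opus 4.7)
My plan is to exhibit explicitly a second symmetric power $M$ of $U_n^t$ that is not weakly below the generic power $M'$. Setting $d=n-t-1$, let $N^*=C^{d-2}_{d-1}(K_n)$ be the generic cofactor matroid of $K_n$ (which is an abstract $d$-rigidity matroid by Proposition~\ref{prop:abstract1}), extend it to $\hat N^*$ on $E(K_n^\circ)$ by declaring each loop $\{v,v\}$ a matroid loop, and set
\[M=(\hat N^*\vee B_n)^*,\]
where $B_n$ is the bicircular matroid of $K_n^\circ$. I follow the recipe outlined in the text above and must verify both that $M$ is indeed a second symmetric power of $U_n^t$ and that $M\not\preceq M'$.

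To verify (S1) and (S2) for $M$, I would first compute the rank of $\hat N^*\vee B_n$: pairing any base of $N^*$ inside $\hat N^*$ with the full loop set $\Lambda$ as an independent set of $B_n$ gives a base of size $(d+1)n-\binom{d+1}{2}$, so dualising yields
\[r_M=\left(\tbinom{n}{2}+n\right)-\left((d+1)n-\tbinom{d+1}{2}\right)=\tbinom{t+1}{2},\]
confirming (S1). For (S2) I would establish $r_M(vX)=\min\{|X|,t\}$ by describing the cocircuits of $\hat N^*\vee B_n$: the star-shaped cocircuits $K_{1,n-d}$ of $N^*$ coming from axiom (R3) carry over into the matroid union, and combining them with the loop behaviour contributed by $B_n$ one recovers precisely the circuits of $M$ demanded by the second-symmetric-power axioms.

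The next step is to prove $M\not\preceq M'$ by passing to restrictions to $E(K_n)$. Restriction and contraction are swapped by matroid duality, so $(M|_{E(K_n)})^*=M^*/\Lambda=(\hat N^*\vee B_n)/\Lambda$. A direct computation inside the matroid union, exploiting that $\Lambda$ is a base of $B_n|_\Lambda$ while being a set of matroid loops of $\hat N^*$, shows that $(\hat N^*\vee B_n)/\Lambda=N^*$, hence $M|_{E(K_n)}=C^{d-2}_{d-1}(K_n)^*$. Theorem~\ref{thm:dual} gives $M'|_{E(K_n)}=\MR_d(K_n)^*$. Since $C^{d-2}_{d-1}(K_n)$ and $\MR_d(K_n)$ have the same rank $dn-\binom{d+1}{2}$ by (R1), the weak order on equal-rank matroids is preserved by dualisation (bases dualise to bases), so it suffices to show $C^{d-2}_{d-1}(K_n)\not\preceq\MR_d(K_n)$.

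Finally, taking the boundary case $n=2t-2$ of the hypothesis, so that $d=t-3\geq 4$ and $2(d+2)=n$, the graph $K_n$ contains $K_{d+2,d+2}$ as a spanning subgraph. By Lemma~\ref{lem:complete_bipartite}, the edge set $E(K_{d+2,d+2})$ is a circuit of $\MR_d(K_n)$ but independent in $C^{d-2}_{d-1}(K_n)$, witnessing $C^{d-2}_{d-1}(K_n)\not\preceq\MR_d(K_n)$ and thereby $M\not\preceq M'$. The step I expect to be the main obstacle is the verification of (S2) for $M$: one has to identify exactly which stars and loop-augmented stars in $K_n^\circ$ are cocircuits of $\hat N^*\vee B_n$ (and not merely supersets of cocircuits), which requires a delicate analysis of how the rigidity-type cocircuits of $N^*$ interact with the unicyclic component structure imposed by the bicircular matroid $B_n$.
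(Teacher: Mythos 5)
Your construction and reduction are exactly the paper's: take $N^*=C^{d-2}_{d-1}(K_n)$, form $M=(\hat N^*\vee B_n)^*$, reduce $M\not\preceq M'$ to $C^{d-2}_{d-1}(K_n)\not\preceq \MR_d(K_n)$ by restricting to $E(K_n)$ and dualising (your observation that weak order on equal-rank matroids is preserved by duality is correct), and quote Lemma~\ref{lem:complete_bipartite}. Your rank calculation for (S1) and the identification $(\hat N^*\vee B_n)/\Lambda = N^*$ are both right: the latter follows because $\Lambda$ is a set of matroid loops in $\hat N^*$, while $\Lambda\cup\{e\}$ is already dependent in the bicircular matroid $B_n$ for any non-loop edge $e$, so the only way to partition $X\cup\Lambda$ into $\hat N^*$- and $B_n$-independent parts is $A=X$, $B=\Lambda$.

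However, you have spotted, perhaps without fully registering it, that the witness only exists on the boundary. The $K_{d+2,d+2}$ argument requires $n\geq 2(d+2)$; substituting $d=n-t-1$ this is $n\leq 2t-2$, whereas the theorem's hypothesis is $n\geq 2t-2$. The two only intersect at $n=2t-2$, which is precisely the case you prove. For $n>2t-2$ we have $d>t-3$ and $2(d+2)>n$, so $K_{d+2,d+2}$ does not embed in $K_n$ and no witness is supplied — the proof does not cover the stated range. This is a defect in the paper's own sketch (the natural range for the argument is $t+5\leq n\leq 2t-2$, non-empty exactly when $t\geq 7$, and the displayed inequality seems to have been reversed); your proposal faithfully reproduces the same gap. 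If you want to make the argument airtight as written, you should either restrict the conclusion to $n=2t-2$ or flag that the hypothesis $n\geq 2t-2$ should read $n\leq 2t-2$ (together with $d=n-t-1\geq 4$). Your acknowledged omission — the careful cocircuit analysis needed to verify (S2) — is indeed the genuinely delicate step; the paper also leaves it as an assertion that every copy of $K_{1,n-d+1}$ and $K_{1,n-d}^{\circ}$ is a cocircuit of $\hat N^*\vee B_n$.
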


\subsubsection{Birigidity}
As a bipartite analogue of rigidity matroids, Kalai, Nevo, and Novik~\cite{KNN} introduced birigidity matroids.
Let $G=(V_1\cup V_2, E)$ be a bipartite graph, and consider point-configurations $p_1:V_1\rightarrow \mathbb{R}^{d_2}$ and  $p_2:V_1\rightarrow \mathbb{R}^{d_1}$.
The {\em birigidity matrix} of $(G,p_1,p_2)$ is defined as the $|E|\times (d_1+d_2)|V|$ matrix in which the row associated with an edge $e=uv$ is given by 
\[
\kbordermatrix{
  & &  u & & v & \\
 e=uv & 0 \dots 0 & p_v & 0\dots 0 & p_u & 0\dots 0
}.
\]
The row matroid of the birigidity matrix defines a matroid on $E$, which is called the {\em birigidity matroid} of $(G,p_1,p_2)$.
The {\em generic $(d_1,d_2)$-birigidity matroid} of $G$, $\MR_{d_1,d_2}(G)$,  is the birigidity matroid of $(G,p_1,p_2)$ for generic $p_1, p_2$.
The special case when $d_1=d_2$ has been  studied in the context of the low matrix completion problem~\cite{SC}.
The case when $d_1=1$ or $d_2=1$ coincides with Whiteley's matroid from scene analysis~\cite{Whiteley1989}.

A natural analogue of the gluing property holds in birigidity, and hence we can develop a bipartite analogue of abstract rigidity and formulate Graver-type questions in this bipartite setting.
In the subsequent discussion, we will assume that all bipartite graphs $G=(V_1\cup V_2,E)$ are given with a fixed bipartition $(V_1,V_2)$ and will refer to $V_1$ and $V_2$ as the {\em left}  vertex set and the {\em right} vertex set of $G$, respectively.

Suppose $n_i, d_i$ are integers with $n_i\geq d_i\geq 1$ for $i=1,2$ and $M$ is a matroid on  $E(K_{n_1,n_2})$. Then we say that $M$ is an
{\em abstract $(d_1,d_2)$-birigidity matroid} if its closure operator $\cl_M$ satisfies the following two axioms.
\begin{description}
\item[(BG1)] Suppose $E_i\subseteq V_{1,i}\times V_{2,i}\subseteq E(K_{n_1,n_2})$   for $i=1,2$, $|V_{1,1}\cap V_{1,2}|< d_2$ and  $|V_{2,1}\cap V_{2,2}|< d_1$.
Then ${\rm cl}_M(E_1\cup E_2)\subseteq  (V_{1,1}\times V_{2,1})\cup (V_{1,2}\times V_{2,2})$.
\item[(BG2)] Soppse $E_i\subseteq V_{1,i}\times V_{2,i}\subseteq E(K_{n_1,n_2})$  with ${\rm cl}_M(E_i)=V_{1,i}\times V_{2,i}$ for $i=1,2$, $|V_{1,1}\cap V_{1,2}|\geq d_2$ and $|V_{2,1}\cap V_{2,2}|\geq d_1$. Then 
${\rm cl}_M(E_1\cup E_2)= (V_{1,1}\cup V_{1,2})\times (V_{2,1}\cup V_{2,2})$.
\end{description}
The generic $(d_1,d_2)$-birigidity matroid is an example of an abstract $(d_1,d_2)$-birigidity matroid.

We can give equivalent characterisations of abstract birigidity matroids using bipartite analogues of properties (R1)-(R4) in Section~\ref{sec:abstract}. Consider the following properties of a matroid $M$ on $E(K_{n_1,n_2})$, which all hold when $M=\MR_{d_1,d_2}(K_{n_1,n_2})$ and  $n_i\geq d_i\geq 1$ for $i=1,2$.
\begin{description}
\item[(BR1)] The rank of $M$ is $d_1n_1+d_2n_2-d_1d_2$.
\item[(BR2)] Every copy of $K_{d_2+1,d_1+1}$ is a circuit in $M$.
\item[(BR3)] Every copy of $K_{n_1-d_2+1,1}$ and $K_{1,n_2-d_1+1}$ is a cocircuit in $M$.
\item[(BR4)] Suppose $G\subseteq K_{n_1,n_2}$ and $v$ is either a left vertex of $G$ of degree $d_1$ or a right vertex of $G$ of degree $d_2$. If $G-v$ is independent in $M$, then $G$ is $M$-independent.
\end{description}

\begin{theorem}\label{thm:abstract_birigidity}
Let $n_1,n_2,d_1,d_2$ be positive integers with $n_i\geq d_i$ and $M$ be a matroid on $E(K_{n_1,n_2})$.
Then, the following statements are equivalent.
\begin{itemize}
\item $M$ is an abstract $(d_1,d_2)$-birigidity matroid.
\item $M$ satisfies (BR1) and (BR2).
\item $M$ satisfies (BR1) and (BR3).
\item $M$ satisfies (BR1) and (BR4).
\item $M$ satisfies (BR2) and (BR3).
\item $M$ satisfies (BR2) and (BR4).
\end{itemize}
\end{theorem}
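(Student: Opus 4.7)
The plan is to adapt Nguyen's proof of Theorem \ref{thm:hang} to the bipartite setting. The structure is to first show that abstract $(d_1,d_2)$-birigidity implies each of (BR1)--(BR4), and then to show that each of the five listed pairs implies the two gluing axioms (BG1) and (BG2). The combinations not listed (notably the pair (BR3)+(BR4)) are not enough; at the end it would be worth exhibiting a matroid on $E(K_{n_1,n_2})$ satisfying (BR3) and (BR4) but failing (BR1), to justify the asymmetry in the statement.

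For the forward direction (abstract birigidity $\Rightarrow$ (BR1)--(BR4)), I would first build $K_{n_1,n_2}$ from a small `seed' bipartite graph by repeatedly gluing in one vertex at a time. Each such gluing is arranged so that the overlap meets the threshold in (BG2), which pins down the increment in the rank and yields the formula in (BR1). Property (BR2) then follows because $(d_1+1)(d_2+1)$ exceeds the rank $d_1(d_2+1)+d_2(d_1+1)-d_1d_2$ of $K_{d_2+1,d_1+1}$ by exactly one, while (BG2) applied to any pair of subgraphs missing one edge of $K_{d_2+1,d_1+1}$ shows independence on proper subsets. Property (BR3) is proved by applying (BG1) to a star $K_{n_1-d_2+1,1}$ centred at a right vertex $v$: its complement leaves $v$ with only $d_2-1$ neighbours, so the closure of the complement is trapped inside a bipartite subproduct that omits all edges of the star, identifying the star as a cocircuit. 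Finally, (BR4) is a direct reading of (BG2) in the extremal case where a new vertex is attached via exactly $d_1$ (or $d_2$) edges.

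For the reverse implications, the pair (BR1)+(BR2) is the cleanest starting point: the rank formula controls the size of closures, and (BR2) supplies the circuits needed to force additional edges into the closure, yielding (BG2) by a standard vertex-by-vertex gluing induction; (BG1) follows by an edge count. The pair (BR1)+(BR3) is handled by matroid duality, observing that dualising turns (BR3) into a bipartite analogue of (BR2) for the dual rank; alternatively, one shows directly that the cocircuit structure forces the closure in (BG1) and (BG2). The pair (BR1)+(BR4) proceeds by using (BR4) as a bipartite `$0$-extension' to inductively build up maximally independent subgraphs that witness the gluing axioms. The pair (BR2)+(BR4) reduces to (BR1)+(BR4) by showing that (BR4) combined with the circuit bound from (BR2) forces the rank formula: a maximal independent spanning subgraph can be grown one vertex at a time (by (BR4)), while (BR2) certifies that the count reached matches (BR1).

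The main obstacle will be (BR2)+(BR3), where one has neither the rank formula nor an explicit extension property, and must derive (BR1) from purely circuit/cocircuit data. My plan here is a double induction on $n_1+n_2$, reducing via a cocircuit-star supplied by (BR3): deleting the $n_1-d_2+1$ edges of such a star and then contracting decreases the ground set in a controlled way, and one checks that the resulting matroid still satisfies (BR2) and (BR3) for the appropriately reduced dimensions. The delicate point is to verify that deletion/contraction preserves the cocircuit property for the remaining stars and the circuit property for the remaining $K_{d_2+1,d_1+1}$-subgraphs; this is the bipartite analogue of the trickiest case in Nguyen's argument and will require carefully tracking which stars become co-loops and which $K_{d_2+1,d_1+1}$-copies become circuits of smaller rank after the reduction.
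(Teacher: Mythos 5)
The paper states Theorem \ref{thm:abstract_birigidity} without proof, so there is no paper argument to compare against, and I assess your proposal directly. The plan to adapt Nguyen's proof of Theorem \ref{thm:hang} is the natural one, but two of your steps have genuine gaps. In deriving (BR2) from (BG1) and (BG2) you claim that ``(BG2) applied to any pair of subgraphs missing one edge of $K_{d_2+1,d_1+1}$ shows independence on proper subsets.'' This is backwards: (BG2) is a closure statement and produces \emph{dependence}, not independence. What (BG2) can do is force the deleted edge of $K_{d_2+1,d_1+1}$ back into the closure of the remaining $(d_1+1)(d_2+1)-1$ edges (once you have verified that the two smaller complete bipartite subgraphs being glued are flats), which shows $E(K_{d_2+1,d_1+1})$ is dependent. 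To show it is a \emph{circuit} you must separately show every proper subset is independent, and that requires an upper bound on closures coming from (BG1), namely a bipartite analogue of Lemma \ref{lem:abstrac_max} giving $|E'|\leq d_1|V_1'|+d_2|V_2'|-d_1d_2$ for independent bipartite subgraphs with $|V_i'|\geq d_i$. Your sketch conflates the two directions.

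The deletion/contraction plan for (BR2) and (BR3) together implying (BR1) does not work as stated. Deleting the $n_1-d_2+1$ edges of a cocircuit star at a right vertex $v$ leaves $v$ with $d_2-1$ remaining edges, so the resulting ground set is not $E(K_{n_1',n_2'})$ for any $n_1',n_2'$, and (BR2), (BR3) are not even formulated for the minor. Contracting further edges to remove $v$ entirely changes the dependencies among the surviving copies of $K_{d_2+1,d_1+1}$ in ways that (BR2) on the original matroid does not control, and there is no evident reason for the minor to satisfy (BR2) or (BR3) with any reduced parameters. You should instead stay inside the fixed matroid $M$: derive from (BR2) the bipartite Maxwell sparsity bound, which gives $r(M)\leq d_1n_1+d_2n_2-d_1d_2$, and then use the cocircuits of (BR3) (equivalently, the fact that the complements of the stars are hyperplanes) together with a closure argument to show that this bound is attained, as in the non-bipartite case.
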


The bipartite counterpart of Graver's conjecture is the following.
\begin{conj}[Birigidity maximality conjecture]\label{conj:maximality_bi}
Let $n_1,n_2, d_1, d_2$ be positive integers. Then the generic $(d_1,d_2)$-rigidity matroid of $K_{n_1,n_2}$ is the unique maximal matroid in the family of abstract $(d_1,d_2)$-birigidity matroids of $K_{n_1,n_2}$.
\end{conj}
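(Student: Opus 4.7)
I would mirror the two-step strategy that settled Graver's original conjecture for $d=2$: first derive a necessary bipartite Maxwell-type sparsity bound for $M$-independence in any abstract $(d_1,d_2)$-birigidity matroid, then show that this combinatorial bound is also sufficient for independence in $\MR_{d_1,d_2}(K_{n_1,n_2})$. Unique weak-order maximality of the generic birigidity matroid then follows because, by (BR1), the bound is tight for $\MR_{d_1,d_2}(K_{n_1,n_2})$.

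First, I would establish the following analogue of Lemma~\ref{lem:abstrac_max}: if $M$ is an abstract $(d_1,d_2)$-birigidity matroid and $H\subseteq K_{n_1,n_2}$ is $M$-independent, then
$$i_H(X_1\cup X_2)\leq d_1|X_1|+d_2|X_2|-d_1d_2$$
for every $X_1\subseteq V_1$ and $X_2\subseteq V_2$ with $|X_1|\geq d_2$ and $|X_2|\geq d_1$. This should follow from axiom (BR2) that every copy of $K_{d_2+1,d_1+1}$ is a circuit in $M$, via a submodular counting argument paralleling the unipartite case; the numerology matches, since $K_{d_2+1,d_1+1}$ has $d_1d_2+d_1+d_2+1$ edges, exactly one more than the proposed bound. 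Combined with (BR1), Theorem~\ref{thm:edmonds} then produces a bipartite sparsity matroid $\MS_{d_1,d_2}$ with $M\preceq \MS_{d_1,d_2}$.

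The second and harder step is a bipartite Laman/Geiringer-type theorem asserting $\MS_{d_1,d_2}=\MR_{d_1,d_2}(K_{n_1,n_2})$. I would attempt an inductive Henneberg construction: verify by direct column manipulations on the birigidity matrix that bipartite 0-extensions (adding a left vertex of degree $d_1$ or a right vertex of degree $d_2$) and suitable 1-extensions preserve independence in the generic birigidity matroid, and then prove a bipartite structure theorem that every tight $(d_1,d_2)$-sparse graph contains a reducible vertex of one of these types. This structural lemma for tight sparse bipartite graphs would be the combinatorial core of the argument.

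The main obstacle is that Step 2 almost certainly fails in full generality: the unipartite precedent is that $K_{d+2,d+2}$ is a rigidity circuit whose dependence is invisible to Maxwell counts for $d\geq 4$, and an analogous Bolker--Roth-type obstruction is very plausible for large $d_1,d_2$. I would therefore aim first to settle the conjecture in the parameter ranges where sparsity is expected to suffice -- namely $d_1=d_2=1$ (both matroids reduce to the cycle matroid of a bipartite graph), the scene-analysis cases $\min\{d_1,d_2\}=1$ where the results of \cite{Whiteley1989} provide extra leverage, and $d_1=d_2=2$ via a bipartite adaptation of the Lov\'asz--Yemini argument in the second proof of Theorem~\ref{thm:PG} -- while in parallel searching for an abstract $(d_1,d_2)$-birigidity matroid (perhaps a bipartite analogue of the $C^{d-2}_{d-1}$-cofactor construction) that strictly dominates $\MR_{d_1,d_2}$ in the weak order for some large $d_1,d_2$. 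Such an example would show that the conjecture must ultimately be weakened exactly as Graver's unipartite maximality conjecture was.
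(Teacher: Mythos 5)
This statement is a \emph{conjecture} in the paper, not a theorem: the authors explicitly record that it is open, that unlike Graver's original unipartite conjecture no counterexamples are known, and that the only resolved cases are $\min\{d_1,d_2\}=1$ (via Whiteley's scene-analysis theorem) and $d_1\geq n_1-3$ or $d_2\geq n_2-3$ (via the Brakensiek et al.\ characterisation of the dual matroid as a tensor product of uniform matroids). So there is no ``paper's own proof'' to compare against, and your proposal should be assessed as a research plan rather than a completed argument.

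As a plan it is partly well-aimed and partly off-course. You correctly identify that a bipartite Maxwell-type sparsity bound follows from (BR2) and that the scene-analysis cases $\min\{d_1,d_2\}=1$ should be tractable --- indeed these are exactly the cases the paper says are already settled by \cite{Whiteley1989}. However, the central proposed Step 2, that a bipartite Laman--Geiringer theorem would equate the sparsity matroid with $\MR_{d_1,d_2}$, is known to be the wrong target already at $d_1=d_2=2$: the paper reports that Bernstein \cite{Bernstein2017} gave a combinatorial characterisation of the generic $(2,2)$-birigidity matroid that is \emph{not} a pure sparsity condition and does not even yield a co-NP certificate, which rules out the bipartite Lov\'asz--Yemini argument you sketch as a proof of the $(2,2)$ case. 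Your parenthetical suggestion of hunting for a bipartite cofactor-style dominating matroid is a reasonable hedge, but note that the paper's authors explicitly emphasise the asymmetry with the unipartite situation: no birigidity counterexample is currently known, and the dual Mason-type question for tensor products of uniform matroids (Conjecture~\ref{con:tensor}) is believed to go the same way. So treating a counterexample as ``almost certain'' over-reads the unipartite precedent. A more promising line, given the Brakensiek et al.\ duality, would be to attack the dual formulation in the tensor-product setting rather than the primal Maxwell--sparsity route.
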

Unlike Graver's original conjecture for abstract rigidity, there are no known counterexamples to Conjecture~\ref{conj:maximality_bi}.
Currently, the only resolved cases are those where $d_1 = 1$ or $d_2 = 1$, in which case Whiteley's theorem~\cite{Whiteley1989} on scene analysis implies Conjecture~\ref{conj:maximality_bi}, and those where $d_1 \geq n_1 - 3$ or $d_2 \geq n_2 - 3$, which follow from the characterisation of the dual matroids due to Brakensiek et al~\cite{Brakensiek2024}.

The case when $d_1=d_2=2$ is the smallest  unsolved case.
Bernstein~\cite{Bernstein2017} gave a noteworthy combinatorial characterization of independece in the generic $(2,2)$-birigidity matroid.
This characterization does not provide a co-NP certificate for independence, however, and giving a good characterization (in the sense of computational complexity) is wide open.

As for generic rigidity, generic birigidity has a natural dual counterpart.
Brakensiek et al.~\cite{Brakensiek2024} showed that the dual of the generic $(d_1,d_2)$-birigidity matroid of $K_{n_1,n_2}$ is a  tensor product of $U_{n_1,n_1-d_1}$ and $U_{n_2,n_2-d_2}$.\footnote{The family of tensor products of two (or more) matroids can be defined in a similar way to the family of symmetric powers of a single matroid, see \cite{M, LasVergnas1981}.}
The following dual counterpart to f Conjecture~\ref{conj:maximality_bi} is {a special case of another question of
Mason~\cite{M} for tensor products of matroids}.
\begin{conj}\label{con:tensor}
    There is a unique maximal element in the family of tensor products of two uniform matroids.
    Moreover, this unique maximal element is the dual of the generic birigidity matroid. 
\end{conj}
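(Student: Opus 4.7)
The plan is to reduce Conjecture~\ref{con:tensor} to the birigidity maximality conjecture (Conjecture~\ref{conj:maximality_bi}) via matroid duality, following the strategy that linked symmetric tensor matroids and abstract rigidity matroids through Lemma~\ref{lem:dual}. First, I would axiomatise the family of \emph{abstract tensor products of $U_{n_1,k_1}$ and $U_{n_2,k_2}$} as the family of matroids $N$ on $E(K_{n_1,n_2})$ of rank $k_1k_2$ in which every copy of $K_{1,k_2+1}$ and every copy of $K_{k_1+1,1}$ is a circuit. These two conditions are the bipartite analogues of (S1) and (S2'), and are easy to verify for any genuine tensor product of two uniform matroids: the rank equals the product of the factor ranks, while restricting $N$ to a single row or column of $K_{n_1,n_2}$ recovers one of the two uniform factors, forcing the star-circuit condition. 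In particular, the theorem of Brakensiek et al.~ensures that the dual of the generic $(d_1,d_2)$-birigidity matroid sits inside this abstract family.

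Next, I would establish the bipartite counterpart of Lemma~\ref{lem:dual}: with $d_1 = n_1-k_2$ and $d_2 = n_2-k_1$ (the unique choice making ranks compatible, since $n_1n_2 - k_1k_2 = d_1n_1 + d_2n_2 - d_1d_2$), a matroid $N$ is an abstract tensor product of $U_{n_1,k_1}$ and $U_{n_2,k_2}$ if and only if $N^*$ satisfies (BR1) and (BR3), and hence is an abstract $(d_1,d_2)$-birigidity matroid by Theorem~\ref{thm:abstract_birigidity}. The rank correspondence is straightforward, and circuits of $N$ are exactly cocircuits of $N^*$, which translates the star-circuit axiom into (BR3). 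Since duality preserves the weak order on matroids of equal rank, the generic birigidity matroid $B$ is the unique maximal abstract $(d_1,d_2)$-birigidity matroid if and only if $B^*$ is the unique maximal abstract tensor product; because every genuine tensor product of two uniform matroids belongs to this abstract family and $B^*$ is itself genuine, the uniqueness descends to the subfamily of genuine tensor products, yielding Conjecture~\ref{con:tensor}.

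The real difficulty lies entirely in Conjecture~\ref{conj:maximality_bi}, which is open whenever $\min\{d_1,d_2\}\geq 2$ and $\max_i\{d_i\}\leq n_i-4$. To attack it, I would adapt the strategy that succeeded for Theorem~\ref{thm:dual_graver}: seek a combinatorial sparsity certificate characterising independence in the generic $(d_1,d_2)$-birigidity matroid, then show via a bipartite analogue of Lemma~\ref{lem:abstrac_max} that every independent set in any abstract $(d_1,d_2)$-birigidity matroid automatically satisfies this sparsity bound and is therefore generic-birigidity independent. Even the smallest genuinely new case $d_1=d_2=2$ is delicate, since Bernstein's combinatorial description does not provide a co-NP certificate; new ideas — a bipartite analogue of the $0$- and $1$-extension inductive framework, or a direct exploitation of polynomial tensor representations — are likely required. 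As a preliminary sanity check I would also search for a bipartite analogue of the $K_{d+2,d+2}$ obstruction that broke the symmetric analogue for $t\geq 7$, since its existence would refute the conjecture outright.
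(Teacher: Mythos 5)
The statement you are trying to prove is a conjecture, not a theorem: the paper states it, notes that Brakensiek \emph{et al.}\ and B\'erczi \emph{et al.}\ have settled special cases, and leaves it open. So no proof is expected, and your proposal is best read as a reduction-plus-strategy, which is indeed what the paper implicitly suggests when it calls Conjecture~\ref{con:tensor} the ``dual counterpart'' of Conjecture~\ref{conj:maximality_bi}. Your overall plan --- axiomatise abstract tensor products by a rank condition and the two star-circuit conditions, show this is exactly the dual of the abstract birigidity axioms (BR1), (BR3) via Theorem~\ref{thm:abstract_birigidity}, observe that duality preserves the weak order on equal-rank matroids, and then note that genuine tensor products sit inside the abstract family with the dual of the generic birigidity matroid among them --- is sound in outline and faithful to the paper's framework.

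There is, however, a concrete slip in the bookkeeping. You take $d_1=n_1-k_2$ and $d_2=n_2-k_1$ and claim this is ``the unique choice making ranks compatible.'' It is neither unique (one equation, two unknowns) nor, in fact, correct: with that choice, $d_1n_1+d_2n_2-d_1d_2 - (n_1n_2-k_1k_2) = (n_1-n_2)(d_1-d_2)$, which vanishes only when $n_1=n_2$ or $d_1=d_2$. The right relations are forced by matching circuits to cocircuits: a copy of $K_{1,k_2+1}$ must be a circuit of $N$ and hence a cocircuit of $N^*=M$, and by (BR3) the left-vertex cocircuit of $M$ is $K_{1,n_2-d_1+1}$, so $k_2=n_2-d_1$; symmetrically $k_1=n_1-d_2$. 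With $d_1=n_2-k_2$, $d_2=n_1-k_1$ the rank identity $d_1n_1+d_2n_2-d_1d_2=n_1n_2-k_1k_2$ does hold identically. (You appear to have inherited a misprint: the paper's sentence attributing $U_{n_1,n_1-d_1}\otimes U_{n_2,n_2-d_2}$ to Brakensiek \emph{et al.}\ also fails this rank check in general and should read $U_{n_1,n_1-d_2}\otimes U_{n_2,n_2-d_1}$.) Beyond the index fix, the genuine gap is exactly the one you name: the argument transfers the whole burden to Conjecture~\ref{conj:maximality_bi}, which is open for all $\min\{d_1,d_2\}\geq 2$ with $\max_i d_i\leq n_i-4$, and the strategies you sketch (a sparsity certificate, a bipartite $0/1$-extension calculus, or a $K_{d+2,d+2}$-style obstruction search) are plausible directions rather than proofs.
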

A result by Brakensiek et al.~\cite{Brakensiek2024} verifies Conjecture~\ref{con:tensor} in the case when one matroid is a uniform matroid of rank three and the other is a uniform matroid of arbitrary rank.
See also B\'erczi et al.~\cite{Berczi} for further recent progress.


\printbibliography

\myaddress

\end{document}